\documentclass[a4paper]{amsart}
\usepackage{amsmath,amsthm,amsfonts}
\usepackage[utf8]{inputenc}
\usepackage{amssymb}
\DeclareMathAlphabet\mathbfcal{OMS}{cmsy}{b}{n}
\usepackage[foot]{amsaddr}
\usepackage{stmaryrd}
\usepackage[new]{old-arrows}
\usepackage{enumitem}
\usepackage{tikz-cd}
\usepackage{quiver}
\usepackage{url}
\usepackage[doi=false, backend=biber, style=alphabetic, sorting=nty, maxbibnames=5]{biblatex}
\DeclareFieldFormat[article]{title}{#1}
\DeclareFieldFormat{postnote}{#1}
\DeclareFieldFormat{multipostnote}{#1}
\usepackage[colorlinks, pdfpagelabels, urlcolor = blue, pdfstartview = FitH, bookmarksopen = true, bookmarksnumbered = true, linkcolor = black, plainpages = false, hypertexnames = false, citecolor = blue]{hyperref}
\usepackage{cleveref}
\usepackage{orcidlink}

\makeatletter
\newcommand*{\longdashrightarrow}{%
    \mathrel{\dabar@\dabar@\dabar@\mathchar"0\hexnumber@\symAMSa 4B}}
\makeatother

\newtheorem{theorem}{Theorem}[section]
\newtheorem{corollary}[theorem]{Corollary}
\newtheorem{lemma}[theorem]{Lemma}
\newtheorem{proposition}[theorem]{Proposition}

\newtheorem{claim}{Claim}

\theoremstyle{definition}
\newtheorem{definition}[theorem]{Definition}
\newtheorem{construction}[theorem]{Construction}

\newtheorem{assumption}[theorem]{Assumption}

\theoremstyle{remark}
\newtheorem{remark}[theorem]{Remark}

\newcommand{\Z}{\mathbb{Z}}
\newcommand{\C}{\mathbb{C}}
\newcommand{\Q}{\mathbb{Q}}
\newcommand{\OO}{\mathcal{O}}
\newcommand{\pic}{\mathrm{Pic}}
\newcommand{\im}{\mathrm{im}}

\newcommand{\id}{\mathrm{id}}

\title{Holomorphic 1-forms without zeros on K\"ahler manifolds}
\author{Simon Pietig}
\subjclass{32Q55, 32Q15, 32J17}
\address{Leibniz Universit\"at Hannover, Institut f\"ur Algebraische Geometrie, Welfengarten 1, 30167 Hannover}
\address{\href{mailto:pietig@math.uni-hannover.de}{pietig@math.uni-hannover.de}}
\address{\href{https://orcid.org/0009-0001-1800-6799}{Orcid iD}}

\begin{document}
\begin{abstract}
    Assuming the existence of good minimal models, we give a bimeromorphic classification of compact connected K\"ahler manifolds admitting a holomorphic 1-form without zeros.
\end{abstract}
\maketitle
\setcounter{tocdepth}{1}
\tableofcontents
\section{Introduction}
Let $X$ be a smooth projective variety over $\C$ with $g$ everywhere linearly independent holomorphic 1-forms $\omega_1,\ldots,\omega_g\in H^0(X,\Omega_X^1)$. In \cite{Popa_Schnell_Kodaira_dimension_zeros_1-forms}, Popa and Schnell showed that the Kodaira dimension of $X$ is bounded by $\kappa(X)\leq\dim X-g$.
\begin{theorem}\label{Popa-Schnell_introduction}\cite[][Thm. 2.1, Sec. 4]{Popa_Schnell_Kodaira_dimension_zeros_1-forms}
    Let $X$ be a smooth projective variety, and let $f\colon X\rightarrow A$ be a morphism to an abelian variety. If there is an ample line bundle $\mathcal{L}$ on $A$ and a positive integer $d$ such that
    \begin{equation*}
        H^0(X,\mathcal{K}_X^d\otimes f^*\mathcal{L}^\vee)\neq 0,
    \end{equation*}
    e.g. if $f$ factors birationally through the Iitaka fibration, then the zero locus $Z(f^*\omega)\neq\varnothing$ is non-empty for all $\omega\in H^0(A,\Omega_A^1)$. In particular, if $X$ admits $g$ everywhere linearly independent holomorphic 1-forms $\omega_1,\ldots,\omega_g$, if $X$ admits a good minimal model $X^{\mathrm{min}}$, and if $f\colon X^{\mathrm{min}}\rightarrow Y$ is the Iitaka fibration, then a general fiber of $f$ is mapped to a translate of a subtorus $A\subset\mathrm{Alb}(X)$ of $\dim A\geq g$.
\end{theorem}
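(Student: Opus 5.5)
The statement has two parts: a non-vanishing criterion for zeros of pulled-back forms, and a consequence for the Iitaka fibration of a good minimal model. I would prove them in that order.

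\textbf{First part.} The plan is the generic vanishing argument of Popa and Schnell, based on Hodge modules. Suppose, for contradiction, that $f^*\omega$ has no zeros for some $0\neq\omega\in H^0(A,\Omega_A^1)$. Consider the Hodge module $\mathcal{M}=H^0 f_*\Q_X^H[\dim X]$ and, more generally, the decomposition-theorem summands of $Rf_*\Q^H_X$. Their associated graded modules $\mathrm{gr}^F_\bullet \mathcal{M}$ are coherent sheaves on $T^*A=A\times V$, where $V=H^0(A,\Omega^1_A)$. The key input is the Laumon/Saito description of these sheaves via the complex
\begin{equation*}
  0\to f^*\Omega^0_A\otimes \mathcal{K}_X^{-1}\to \cdots
\end{equation*}
built from wedging with $f^*\omega$ on $\Omega_X^\bullet$. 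This identifies the support of $\mathrm{gr}^F$ over the point $\omega\in V$ with $f(Z(f^*\omega))$. Hence, if $Z(f^*\omega)=\varnothing$, the total $\mathrm{gr}^F$ has support disjoint from $A\times\{\omega\}$, and so by conicity it is disjoint from $A\times\C^*\omega$.

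The twist by $\mathcal{K}_X^d\otimes f^*\mathcal{L}^\vee$ enters through a cyclic covering trick. Choose a section $s\neq0$ of $\mathcal{K}_X^d\otimes f^*\mathcal{L}^{-1}$ and pass to the branched $d$-fold cover, resolved to $\pi\colon X'\to X$. There, $H^0(X',\mathcal{K}_{X'}\otimes f'^*\mathcal{L}^{-1})\neq0$ up to replacing $\mathcal{L}$ by a power pulled back along multiplication on $A$. Note that $X'$ may acquire zeros of $\pi^*f^*\omega$ only along the ramification, which must be controlled; this is delicate. The nonzero section then gives a nontrivial morphism $\mathcal{L}\to f'_*\omega_{X'}$, which is the lowest Hodge piece of $\mathcal{M}'$. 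Popa--Schnell's result that $\mathrm{gr}^F\mathcal{M}'$ is supported on $\bigcup (\text{abelian subvariety conormal})$ (Kashiwara-type/generic vanishing) shows the following: a nonzero map from an ample $\mathcal{L}$ forces the support of $\mathrm{gr}^F$ to contain the whole zero section's fibre directions, i.e.\ to surject onto $V$. To see this, Fourier--Mukai transform $f_*\omega$. Its transform has support equal to all of $\hat A$ when an ample bundle maps in, and the support in $V$ is then the cotangent of that locus, which is all of $V$. This contradicts the missing $\omega$.

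\textbf{Main obstacle.} The hardest step is making the covering trick compatible with the zero-locus statement. I would try to avoid the cover altogether by working with the Hodge module attached to the $d$-th root (a unitary local system/variation on $X\setminus\mathrm{div}(s)$), as Popa--Schnell do.

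\textbf{Second part.} Let $f\colon X^{\min}\to Y$ be the Iitaka fibration, with general fiber $F$ satisfying $\kappa(F)=0$. Compose to the Albanese, and let $A$ be the subtorus generated by translates of $\mathrm{alb}(F)$. We get $B=\mathrm{Alb}/A$, and $X\to B$ factors birationally through $Y$. Since $\mathcal{K}_{X^{\min}}$ is semiample and pulled back from an ample bundle on $Y$, the hypothesis $H^0(\mathcal{K}^d\otimes g^*\mathcal{L}^\vee)\neq0$ holds for the map $g\colon X\to B$ after enlarging $d$. By the first part, every form pulled back from $B$ has a zero on $X$. The $g$ independent forms $\omega_i$ span a $g$-dimensional space meeting every nonzero form, so $\dim H^0(B,\Omega^1)\leq \dim\mathrm{Alb}-g$. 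This gives $\dim A\geq g$.
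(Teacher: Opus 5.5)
Your second part is fine and is essentially the paper's argument for the Iitaka fibration. You apply the first part to a map from $X$ to an abelian variety that factors through the Iitaka base; you take $\mathrm{Alb}(X)/A$ where the paper takes $\mathrm{Alb}(\tilde Y)$, which makes your final dimension count slightly more direct. You then produce the twisted pluricanonical section via Kodaira's lemma on a resolution of the base, transfer it from $X^{\mathrm{min}}$ to $X$, and count dimensions. Note, however, that this needs the first part for $d>1$, since only a power $\mathcal{K}^d_{X^{\mathrm{min}}}$ is pulled back from the base. In general $H^0(X,\mathcal{K}_X\otimes f^*\mathcal{L}^\vee)$ can vanish while $H^0(X,\mathcal{K}_X^d\otimes f^*\mathcal{L}^\vee)\neq 0$.

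That is exactly where your first part has a genuine gap. Running the argument on the resolved cyclic cover $\varphi\colon Y\to X$ (notation of \Cref{construction_Villadsen_2}), where the case $d=1$ holds, only shows that each $\varphi^*f^*\omega$ vanishes somewhere on $Y$. Those zeros may all lie on the ramification or exceptional locus, where $d\varphi$ drops rank, and then $f^*\omega$ need not vanish anywhere on $X$. You flag this but do not resolve it, and your proposed remedy does not work either. The Hodge module of the $d$-th root local system on $X\setminus Z(s)$ has nontrivial monodromy around $Z(s)$. So its characteristic variety contains conormal directions to (strata of) $Z(s)$. The support of $\mathrm{gr}^F$ of its pushforward in $A\times V$ then again contains points $(f(x),\omega)$ where $f^*\omega(x)$ is a nonzero conormal vector to $Z(s)$, not a zero.

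You in fact have both halves of the argument: the support bound coming from $X$, and the positivity/generic vanishing on the cover. What is missing is the link between them, which Popa--Schnell and Villadsen (whom the paper follows in \Cref{Popa_Schnell_Kaehler_case}) supply by keeping the cover but using it only as the target of a comparison map.

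After the isogeny trick you have $0\neq s\in H^0(X,\mathcal{B}^d)$ with $\mathcal{B}=\mathcal{K}_X\otimes f^*\mathcal{L}^\vee$. The $d$-th root of $\varphi^*s$ is a section of $\varphi^*\mathcal{B}$. It gives injections $\mathcal{B}^\vee\otimes\Omega^k_X\hookrightarrow\varphi_*\Omega^k_Y$ compatible with wedging by pulled-back forms, hence a morphism $\mathcal{B}^\vee\otimes\mathcal{C}^\alpha_{X,\bullet}\to R\varphi_*\mathcal{C}^\alpha_{Y,\bullet}$ of Koszul-type complexes of graded $S$-modules (\Cref{Villadsen4}). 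Let $L^\alpha$ be the image of the induced map $M^\alpha\to N^\alpha$ on global sections. Then:
\begin{itemize}
\item As a quotient of $M^\alpha$, which is computed on $X$, it is supported in $q(Z_f)$, because the complex on $X$ is exact at every $v$ for which $f^*v$ has no zero on $X$.
\item As a submodule of $N^\alpha$, which is computed on $Y$, it is torsion free for general $\alpha$. This is the only place generic vanishing enters (Popa--Schnell for $\mathrm{gr}^F$ of the Hodge modules of $h\colon Y\to A$, or Simpson's linearity of Higgs jump loci, as in \Cref{image_torsion_fee_Villadsen}), and ramification is irrelevant there.
\item In the extreme degree it equals $H^0(X,f^*(\mathcal{L}\otimes\alpha))$, which is nonzero for general $\alpha$.
\end{itemize}
Hence $\mathrm{Supp}\,L^\alpha=V$, and so $q(Z_f)=V$.

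Relatedly, your heuristic that the support in $V$ is ``the cotangent of'' the support of the Fourier--Mukai transform should be replaced by the precise statement behind it: a linear subvariety of $\pic^0(A)\times V$ that surjects onto $\pic^0(A)$ is all of $\pic^0(A)\times V$.
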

To prove this result, Popa and Schnell applied the theory of mixed Hodge modules. Subsequently, Hao gave the following geometric explanation of this behavior for $\kappa(X)=\dim X-1$, i.e. $g=1$ \cite[][Thm. 1.3]{Hao_Nowhere_Vanishing_Holomorphic_One-Forms_on_Varieties_of_Kodaira_Codimension_One}. Recent work of Church \cite{church_gmm} and of Hao, Wang and Zhang \cite{Hao_Wang_Zhang_gmm} generalized Hao's result to general $g$ assuming the abundance conjecture.
\begin{theorem}\label{structure_dim_g_1-forms_projective}\cite[][Thm. 3.6]{church_gmm}\cite[][Thm. 1.3]{Hao_Wang_Zhang_gmm}
    Let $X$ be a smooth projective variety over $\C$ with $g$ everywhere linearly independent holomorphic 1-forms $\omega_1,\ldots,\omega_g\in H^0(X,\Omega_X^1)$. Suppose that $X$ admits a good minimal model. Then there is a finite \'etale cover $X'\rightarrow X$ such that $X'$ is birational to a product $W\times A$, where $A$ is an abelian variety of $\dim A\geq g$.
\end{theorem}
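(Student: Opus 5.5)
We prove \Cref{structure_dim_g_1-forms_projective} by induction on $g$, using \Cref{Popa-Schnell_introduction} at each step to locate an abelian variety inside the fibres of the Iitaka fibration. Let $X^{\mathrm{min}}$ be a good minimal model of $X$ and $f\colon X^{\mathrm{min}}\to Y$ its Iitaka fibration. By \Cref{Popa-Schnell_introduction}, a general fibre $F$ of $f$ maps to a translate of a subtorus $A\subset\mathrm{Alb}(X)$ with $\dim A\ge g$. Since $K_{X^{\mathrm{min}}}$ is semiample with Iitaka fibration $f$, a general fibre $F$ is a klt (even terminal) variety with $K_F\equiv 0$.

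The first step is to show that the morphism $F\to A$, after a finite \'etale cover of $F$, splits off $A$ as a factor. By the Beauville--Bogomolov decomposition for singular varieties with numerically trivial canonical class (Kawamata, Greb--Guenancia--Kebekus, Druel, H\"oring--Peternell), $F$ has a quasi-\'etale cover $\tilde F\cong Z\times B$ with $B$ abelian and $Z$ having $q=0$. Then $\tilde F\to A$ factors through $B$, and isotriviality of the pieces gives a surjection $B\to A'$, which we arrange to be an isogeny onto $A$. The second step is to spread this over $Y$. The Albanese map induces a morphism $X^{\mathrm{min}}\to\mathrm{Alb}(X)/A$ that is constant on the fibres of $f$, so it factors through $Y$. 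Restricting $\mathrm{alb}$, we obtain a family over $Y$ whose fibres map onto torsors under $A$. We then use Poincar\'e reducibility to choose a complement $A''\subset\mathrm{Alb}(X)$ with $A\times A''\to\mathrm{Alb}(X)$ an isogeny. Pulling back $X$ along this isogeny gives a finite \'etale cover $X'\to X$ together with a morphism $X'\to A$ whose restriction to general Iitaka fibres is the projection $\tilde F\to B\to A$.

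The final step is to show that $X'\to A$ is birationally a product, that is, $X'\sim_{\mathrm{bir}} W\times A$. The idea is to let $A$ act on $X'^{\mathrm{min}}$ fibrewise. Translations of $B$ act on each $\tilde F$, and automorphisms of minimal models extend, via rigidity of the connected component $\mathrm{Aut}^0$ in the family, to a generically free $A$-action on a model of $X'$ compatible with $X'\to A$. An $A$-variety with an equivariant map to $A$ is $A\times_{}W$ with $W$ the fibre over $0$; this is where the splitting is obtained.

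I expect the main obstacle to be this globalisation. Singular fibres, the quasi-\'etale (rather than \'etale) nature of the covers of $F$, and monodromy of the abelian factor over $Y$ must all be controlled to produce a global \'etale cover and a global group action. To handle this, I would first pass to a finite \'etale cover killing the monodromy on $H^1$ of the $B$-part, using that it is a finite group because it preserves a polarization and an integral lattice. I would then use that $\mathrm{Aut}^0(X'^{\mathrm{min}})$ is an abelian variety of dimension at least $g$, arguing as Church and Hao--Wang--Zhang do.
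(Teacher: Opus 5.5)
Your skeleton matches the paper's route, sketched in the introduction and carried out in Section 4 in the K\"ahler setting. Popa--Schnell gives $A$, a torus isogenous to $A$ acts along the Iitaka fibres, and an equivariant map to $A$ splits off $A$; that last observation is correct and is the untwisted version of \Cref{construction_W_higher_codim}. The gap is the step that carries all the weight: producing a \emph{global} action of a torus isogenous to $A$ on a birational model of a finite \'etale cover of $X$.

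\textbf{The monodromy argument does not give this action.} Preserving a lattice and a polarization only puts the monodromy into an integral symplectic group, which is infinite; finiteness needs a constant Hodge structure. The abelian part of the Iitaka fibres is in general not isotrivial. For instance, take $X=S\times E$ with $S\to C$ a non-isotrivial properly elliptic surface: the fibres are $S_c\times E$, and the $H^1(S_c)$-part has infinite monodromy, which no finite cover kills. Moreover, a cover of $Y^*$ trivializing a local system is usually branched along $Y\setminus Y^*$ and does not induce an \'etale cover of $X$. What is actually constant is only the quotient $\mathrm{Alb}(F_y)\twoheadrightarrow A$. The real task is to find a monodromy-invariant complement to its kernel. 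Once that is done, the complement has \emph{trivial} monodromy, because it injects equivariantly into the constant local system $H_1(A,\Z)$. The paper does this in \Cref{lemma_retraction_local_system}, via Saito's semisimplicity and Deligne's global invariant cycle theorem; this uses the maximality of $d(X)$ and the Isom-sheaf cover of \Cref{h_connected_fibers_c(eta)=0}. It then turns invariant sections into relative automorphisms over $Y^*$ via \Cref{relative_automorphisms_global_section_on_relative_Albanese_higher_codim}, which rests on the isogeny $\mathrm{Aut}^0(F)\to\mathrm{Alb}(F)$ of \Cref{isogeny_from_Automorphisms_to_Albanese_numerically_K-trivial}. Note that it is $\mathrm{Aut}^0(F)$ that acts on $F$, not translations of $B$ on the quasi-\'etale cover $\tilde F$. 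In the projective case, a Poincar\'e complement taken with respect to a relative polarization could replace semisimplicity, but that is not the argument you give.

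\textbf{The extension step is also unsupported.} ``Automorphisms of minimal models extend'' is not a valid principle: a birational self-map of a minimal model is only an isomorphism in codimension one and may be a flop. Even passing from automorphisms of $X^*$ over $Y^*$ to bimeromorphic self-maps of $X$ needs an argument. The paper supplies it in \Cref{automorphisms_on_Xstar_are_bimeromorphic_on_X_higher_codim}, via meromorphic extension of the vector fields and algebraicity of the leaves. Regularity on $X^{\mathrm{min}}$ then comes, in the proof of \Cref{proposition_torus_action_h_equivariant}, from connectedness of $A'$. It acts trivially on $H^2$, so each $\mathrm{tr}(a')$ fixes a K\"ahler class, and the negativity lemma forces it to be a morphism. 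In the projective case, Weil's regularization theorem on some model would also do, but you invoke neither. Finally, appealing to ``$\mathrm{Aut}^0(X'^{\mathrm{min}})$ is an abelian variety of dimension at least $g$, as Church and Hao--Wang--Zhang do'' is circular. That statement, compatible with $X'\to A$, is essentially the content of the theorem beyond Popa--Schnell. The announced induction on $g$ is also never used.
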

The main results of our paper are the following generalizations of \Cref{Popa-Schnell_introduction} and of \Cref{structure_dim_g_1-forms_projective} to the K\"ahler case. For more precise versions, see \Cref{Popa_Schnell_Kaehler_case}, \Cref{precise_main_result_higher_codim} and \Cref{corollary_precise_main_result_higher_codim} down below.
\begin{theorem}\label{Popa-Schnell_Kaehler_introduction}
    Let $X$ be a compact connected K\"ahler manifold, and let $f\colon X\rightarrow A$ be a morphism to an abelian variety. If there is an ample line bundle $\mathcal{L}$ on $A$ and a positive integer $d$ such that
    \begin{equation*}
        H^0(X,\mathcal{K}_X^d\otimes f^*\mathcal{L}^\vee)\neq 0,
    \end{equation*}
    e.g. if $f$ factors bimeromorphically through the Iitaka fibration, then the zero locus $Z(f^*\omega)\neq\varnothing$ is non-empty for all $\omega\in H^0(A,\Omega_A^1)$. In particular, if $X$ admits $g$ everywhere linearly independent holomorphic 1-forms $\omega_1,\ldots,\omega_g$, if $X$ admits a good minimal model $X^{\mathrm{min}}$, and if $f\colon X^{\mathrm{min}}\rightarrow Y$ is the Iitaka fibration, then a general fiber of $f$ is mapped to a translate of a subtorus $A\subset\mathrm{Alb}(X)$ of $\dim A\geq g$.
\end{theorem}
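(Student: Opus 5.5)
The plan is to reduce to the projective case treated by Popa and Schnell (\Cref{Popa-Schnell_introduction}) by passing from $X$ to an auxiliary projective variety. The key observation is that the hypothesis $H^0(X,\mathcal{K}_X^d\otimes f^*\mathcal{L}^\vee)\neq 0$ forces $f^*\mathcal{L}$ to be "dominated" by $\mathcal{K}_X^d$. In particular $\kappa(X)\geq \dim f(X)$, and the Iitaka fibration of $X$ essentially controls $f$.

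\textbf{Strategy A (direct Hodge-module argument).} The Popa--Schnell proof works with the pushforward $f_*$ of a twisted $\mathcal{K}_X^d$. Via a cyclic covering trick, this becomes the direct image of a polarizable Hodge module on $A$. The only place projectivity of $X$ enters is Saito's decomposition and vanishing theorems for the projective morphism $f$. For a proper morphism from a compact Kähler manifold, Saito's theory still applies: the direct image theorem for Kähler morphisms is available, and the relevant direct images are polarizable Hodge modules on the abelian variety $A$. The generic vanishing statements on $A$ (Kodaira--Saito vanishing, Fourier--Mukai) then only use $A$, which is projective.

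So I would redo the argument in three steps.
\begin{enumerate}
\item From a section $s$ of $\mathcal{K}_X^d\otimes f^*\mathcal{L}^\vee$, take the normalized $d$-th root cover, resolved to a compact Kähler $X_d$.
\item Obtain a Hodge module $M$ on $A\times \mathbb{P}$ or on $A$ whose graded pieces contain $\mathcal{L}^{-1}$-twisted information.
\item Assume $f^*\omega$ has no zeros. Then the characteristic variety of the direct image avoids the graph of $\omega$. Deduce that the Hodge module has trivial Euler characteristic along that direction, contradicting the positivity coming from $\mathcal{L}$.
\end{enumerate}

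\textbf{Strategy B (reduction to the projective case), as a fallback.} If the direct Kähler generalization is delicate, I would instead reduce by bimeromorphic geometry. Replace $X$ by a resolution of the Stein factorization of $f$, and note that zeros of $f^*\omega$ are unchanged under modifications only up to exceptional loci, which requires care. The non-vanishing condition implies $f$ has general fibers $F$ with $f|_F$ contracting to points. One could then pass to the Iitaka fibration $X\dashrightarrow Y$, where $Y$ is projective and $f$ factors through it. The main obstacle here is that zeros of 1-forms are not bimeromorphic invariants. For this reason I favor Strategy A.

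\textbf{The "in particular" part.} This follows as in the projective case. Let $f\colon X^{\mathrm{min}}\to Y$ be the Iitaka fibration with general fiber $F$, and consider the image of $F$ in $\mathrm{Alb}(X)$. Let $A$ be the smallest subtorus with a translate containing this image, and let $B=\mathrm{Alb}(X)/A$. Then the composite $X\to B$ factors bimeromorphically through the Iitaka fibration, because it is constant on the fibers. Since the Iitaka fibration is defined by $|\mathcal{K}^d|$ and $B$ is projective after choosing $B$ as an abelian variety quotient, pulling back an ample $\mathcal{L}$ gives the required section for large $d$. The first part then shows that every 1-form pulled back from $B$ has a zero. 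Since $\omega_1,\dots,\omega_g$ are everywhere independent, the span of the $\omega_i$ meets $H^0(B,\Omega^1)$ trivially. Hence
\[
g\leq \dim\mathrm{Alb}-\dim B=\dim A .
\]

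\textbf{Expected obstacles.} The main obstacle is Step (3) of Strategy A. This is the Kähler extension of Saito's direct image and the characteristic-variety argument. There I would invoke the Kähler version of Saito's theorem for projective-target morphisms, which is available since the target $A$ is projective. A second, smaller point to check is that $B$ is indeed an abelian variety, using that $\mathrm{Alb}(X)\to B$ is a quotient through which an abelian variety map factors.
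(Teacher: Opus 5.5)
\textbf{There is a genuine gap in the main assertion.} Step (3) of Strategy A carries the whole argument, and it does not work as stated.

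The Hodge module that sees $\mathcal{L}$ is the direct image under $h=f\circ\varphi$ from the resolved cyclic cover $Y$. Its characteristic variety need not avoid $A\times\{\omega\}$ even if $f^*\omega$ has no zeros, because $\varphi^*f^*\omega$ acquires zeros over the branch locus. Locally $z_1=w^d$ over a point where $Z(s)$ is smooth and reduced, and there $\varphi^*f^*\omega$ vanishes exactly over the zeros of $f^*\omega\vert_{Z(s)}$. Further zeros can appear along exceptional divisors of the resolution. Only modules built on $X$ itself are guaranteed to be supported in $q(Z_f)$.

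Moreover, an Euler characteristic count along a direction $\omega$ cannot contradict ampleness. The Euler characteristic of $R\Gamma(Y,\mathrm{Dol}(h^*\mathcal{L},h^*\omega)_\bullet)$ is $\sum_p(-1)^p\chi(Y,\Omega^p_Y\otimes h^*\mathcal{L})$. By Hirzebruch--Riemann--Roch this equals $\pm e(Y)$, independently of $\mathcal{L}$ and $\omega$.

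A smaller point: $\mathcal{K}_X^d\otimes f^*\mathcal{L}^\vee$ need not be a $d$-th power. Before extracting a $d$-th root you need the isogeny trick of \Cref{Villadsen3}.

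What is missing is the comparison through which the hypothesis is actually used. Set $\mathcal{B}=\mathcal{K}_X\otimes f^*\mathcal{L}^\vee$ and take $0\neq s\in H^0(X,\mathcal{B}^d)$. The cover gives injections $\mathcal{B}^\vee\otimes\Omega^k_X\hookrightarrow\varphi_*\Omega^k_Y$, hence a map of graded $S$-modules $M^\alpha_\bullet\to N^\alpha_\bullet$. These are the hypercohomologies of the Koszul-type complexes on $X$ (twisted by $\mathcal{B}^\vee\otimes f^*\alpha$) and on $Y$ (twisted by $h^*\alpha$).
\begin{itemize}
\item The image is supported in $q(Z_f)$, because $\mathrm{Dol}(f^*\alpha,f^*v)_\bullet$ is exact when $f^*v$ has no zeros.
\item The image is non-zero because in the extreme degree it is $H^0(X,f^*(\mathcal{L}\otimes\alpha))$. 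This is where ampleness enters.
\end{itemize}
To conclude $q(Z_f)=V$, you then need a structural result forcing non-zero submodules of $N^\alpha_\bullet$ to have full support.
\begin{itemize}
\item In Popa--Schnell this comes from Hodge-module vanishing and duality on $A$. You could plausibly supply the inputs via Saito's decomposition theorem for the K\"ahler morphism $h$, so Strategy A is repairable along those lines.
\item The paper instead follows Villadsen. It proves that $N^\alpha_\bullet$ is torsion-free for general $\alpha\in\pic^0(A)$, using Simpson's structure theorem for the jump loci $\Sigma^k_m(Y)$, which holds for compact K\"ahler $Y$.
\end{itemize}
Your sketch contains neither the image construction nor such an input. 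Strategy B, as you note yourself, founders on the non-invariance of zeros under bimeromorphic modifications.

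The ``in particular'' part is fine and is essentially the paper's argument. The paper works directly with $\mathrm{Alb}(\tilde{Y})$ for a resolution $\tilde{Y}$ of the projective base. Your $B=\mathrm{Alb}(X)/A$ is an abelian variety because $X\to B$ factors through $\tilde{Y}$, which makes $B$ a quotient of $\mathrm{Alb}(\tilde{Y})$.
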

\begin{theorem}\label{main_result_higher_codim}
    Let $X$ be a compact connected K\"ahler manifold, which admits a good minimal model $X^{\mathrm{min}}$. Let $f\colon X^{\mathrm{min}}\rightarrow Y$ be the Iitaka fibration, and suppose that a general fiber of $f$ is mapped to a translate of a subtorus $A\subset\mathrm{Alb}(X)$ of $\dim A=g>0$. Then there is a finite \'etale cover $X'\rightarrow X$, such that $X'$ is bimeromorphic to a locally trivial torus fiber bundle $\pi\colon Z\rightarrow W$ with trivial monodromy over a compact connected K\"ahler manifold $W$. The fibers are of $\dim\pi^{-1}(y)\geq g$.
\end{theorem}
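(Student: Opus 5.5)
We follow the strategy of Church and Hao--Wang--Zhang for \Cref{structure_dim_g_1-forms_projective}, replacing the projective inputs by Kähler analogues.

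\emph{Step 1: the torus fibres of the Iitaka fibration.} Set $n=\dim X$ and $\kappa=\kappa(X)$, and let $a\colon X\to\mathrm{Alb}(X)$ be the Albanese map. Since $X^{\mathrm{min}}$ is a good minimal model, $\mathcal{K}_{X^{\mathrm{min}}}$ is semiample. Hence the Iitaka fibration $f\colon X^{\mathrm{min}}\to Y$ is a morphism with $\mathcal{K}_{X^{\mathrm{min}}}\sim_{\Q} f^*H$ for some ample $\Q$-divisor $H$ on $Y$. A general fibre $F$ is therefore a klt variety with $\mathcal{K}_F\sim_\Q 0$. The Albanese map extends to $X^{\mathrm{min}}$, because $X^{\mathrm{min}}$ has rational singularities and the map is defined in codimension one. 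By hypothesis $F$ maps to a translate of the fixed subtorus $A$. By the Kähler Beauville--Bogomolov decomposition for klt spaces with $\mathcal{K}\equiv 0$ (Bakker--Guenancia--Lehn, Campana, Claudon--Graf--Guenancia), a quasi-étale cover of $F$ splits as $F'\times T$. Here $T$ is a complex torus and $F'$ has trivial augmented irregularity. So the image of $F$ in $A$ is a translate of all of $A$, and $F\to A$ has connected fibres after Stein factorization.

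\emph{Step 2: globalize to a quotient torus.} Let $B=\mathrm{Alb}(X)/A$. The composite $X^{\mathrm{min}}\to\mathrm{Alb}(X)\to B$ contracts the general fibres of $f$, so rigidity gives a meromorphic map $Y\dashrightarrow B$. Consider $X^{\mathrm{min}}\to Y\times_B \mathrm{Alb}(X)$. This target is an $A$-torsor over $Y$, i.e.\ a locally trivial torus bundle over (a resolution of) $Y$. The key claim is that over a Zariski-open subset of $Y$, the fibre $F$ is isogenous to a product of $A$ with a $K$-trivial variety, compatibly with the Albanese map. To prove it, first use Poincaré reducibility for the varying tori to show that the family of Albanese tori of the fibres is isotrivial. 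Then pass to a finite étale cover of $X$ that kills the finite monodromy of this family; here one uses that the torsion subgroups involved are finite and that $\pi_1(X)$ surjects onto the relevant group. After this cover, $X'$ is bimeromorphic to a fibre product $Z=W\times_{B'}\mathrm{Alb}'$, where $W$ resolves the relative Albanese-complement. Concretely, we take $W$ to be a resolution of the image of $X'$ under $X'\to\mathrm{Alb}(X')/A'\times Y$ fibred appropriately. Then $\pi\colon Z\to W$ is the pullback of the torus bundle $\mathrm{Alb}(X')\to\mathrm{Alb}(X')/A'$. It is thus a locally trivial $A'$-bundle with trivial monodromy, because the monodromy comes from translations in $\mathrm{Alb}(X')$ and so acts trivially on $H^1$ of the fibre. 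Its fibre dimension is $\dim A'=g$.

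\emph{Step 3: bimeromorphic identification.} Show that $X'\dashrightarrow Z$ is bimeromorphic by checking that it is generically finite of degree one. Degree one holds because the general fibre of $X'\to W$ is connected, and after the cover it maps isomorphically onto $A'$ by Step 1. The variety $W$ is compact Kähler, as a resolution of a subvariety of a Kähler space.

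\emph{Main obstacle.} The hardest step is the second. We must show that the fibres $F$ split off $A$ uniformly, and that the monodromy of the splitting is finite, so that a single finite étale cover trivializes it. Projective arguments use polarizations and Hodge-module vanishing, which are not available here. I would first try to exploit that $A$ is a fixed subtorus of $\mathrm{Alb}(X)$, so the relevant monodromy lies in the automorphisms of a fixed lattice preserving the flat structure; this should force finiteness via a Deligne-type semisimplicity argument for the variation $R^1f_*\Z$ on the smooth locus of $Y$.
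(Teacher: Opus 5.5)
Your proposal has a genuine gap where $W$ is constructed, and the step meant to feed into it is false as stated.

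\textbf{The isotriviality step.} Poincar\'e reducibility is not available for non-algebraic complex tori. This is exactly why the K\"ahler case cannot dualize $A\subset\mathrm{Alb}(X)$ to a projection $\mathrm{Alb}(X)\to A$. Moreover, the Albanese tori of the Iitaka fibres need not form an isotrivial family. Take $X=S\times E'$ with $S\to C$ a non-isotrivial elliptic surface over a curve of genus $\geq 2$. Then $q(S)=g(C)$, the Iitaka fibres are $E_t\times E'$, $A=E'$ and $g=1$, but $\mathrm{Alb}(E_t\times E')$ varies with $t$.

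What is true, and what the paper proves in \Cref{lemma_retraction_local_system}, is that the \emph{fixed part} of the weight-one variation on $Y^*$ is identified with $\Lambda\otimes\mathbb{R}$. The paper shows this using:
\begin{enumerate}
\item Saito's semisimplicity theorem;
\item Deligne's global invariant cycle theorem;
\item the isomorphism $\mathrm{Alb}(X)\cong\mathrm{Alb}((\tilde Y\times A)^\eta)$.
\end{enumerate}
That isomorphism in turn relies on maximizing $d(X)$ over \'etale covers. It also relies on first making the fibres of $h\colon X_{\tilde Y}\to(\tilde Y\times A)^\eta$ connected through an explicit Galois-cover construction, not merely by ``killing finite monodromy''. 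Semisimplicity is used to identify the fixed part, not to prove finiteness of monodromy.

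\textbf{The construction of $W$.} More seriously, your $W$ has the wrong dimension. As you note yourself, $X'\to\mathrm{Alb}(X')/A'$ contracts the Iitaka fibres, so it factors bimeromorphically through $Y$. The image of $X'$ in $\mathrm{Alb}(X')/A'\times Y$ is therefore just the graph of $Y\dashrightarrow\mathrm{Alb}(X')/A'$. Hence $W$ is bimeromorphic to $Y$, of dimension $\kappa(X)$, instead of having dimension $\dim X-g$. In the example above you would get $W=C$ and $Z=C\times E'$, a surface, while the correct answer is $W=S$.

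Step~1 does not supply the missing fibres either. It gives only $F\to B(y)\to A$, with $B(y)\to A$ finite \'etale and $F\to B(y)$ having connected but positive-dimensional fibres ($E_t$ in the example). So nothing in your argument produces a map $X'\to W$ whose general fibre maps isomorphically onto $A'$.

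\textbf{The missing idea.} $W$ must be built as a quotient of $X'$ by a torus \emph{action}. The paper shows that, after a further \'etale cover, an isogenous torus $A'$ acts on $X_{\tilde Y}$ with $h$ equivariant (\Cref{proposition_torus_action_h_equivariant}). It does this in three stages:
\begin{enumerate}
\item Since $\mathrm{Aut}^\circ\to\mathrm{Alb}$ is an isogeny for the $K$-trivial fibres, the fixed part lifts to relative automorphisms over $Y^*$.
\item These extend to bimeromorphic maps of $X_{\tilde Y}$, via meromorphic vertical vector fields and algebraic integrability of the resulting foliation.
\item They extend further to biholomorphisms of $X^{\mathrm{min}}$ by the negativity lemma.
\end{enumerate}
Only then can one glue the local slices $W_i(a_0)\subset h^{-1}(U_i\times\{a_0\})$ along the translations $\mathrm{tr}(-a_{ij})$. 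This gives $W$ and the bimeromorphic map $X_{\tilde Y}\dashrightarrow(\tilde W\times A)^{u^*\eta}$.
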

\begin{corollary}
    Let $X$ be a compact connected K\"ahler manifold, which admits $g$ everywhere linearly independent holomorphic 1-forms $\omega_1,\ldots,\omega_g\in H^0(X,\Omega_X^1)$. Suppose that $X$ admits a good minimal model. Then there is a finite \'etale cover $X'\rightarrow X$, such that $X'$ is bimeromorphic to a locally trivial torus fiber bundle $\pi\colon Z\rightarrow W$ with trivial monodromy over a compact connected K\"ahler manifold $W$. The fibers are of $\dim\pi^{-1}(y)\geq g$.
\end{corollary}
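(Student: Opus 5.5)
The corollary follows by combining the two preceding main results, \Cref{Popa-Schnell_Kaehler_introduction} and \Cref{main_result_higher_codim}. The only work is to match the hypotheses and to handle the fact that the second theorem asks for a subtorus of dimension exactly $g$.

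First I apply the second half of \Cref{Popa-Schnell_Kaehler_introduction}. Since $X$ carries $g$ everywhere linearly independent holomorphic 1-forms and admits a good minimal model $X^{\mathrm{min}}$, let $f\colon X^{\mathrm{min}}\rightarrow Y$ be the Iitaka fibration. Then a general fiber of $f$ is mapped into a translate of a subtorus $A\subset\mathrm{Alb}(X)$ with $\dim A\geq g$.

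Two remarks make this well defined.
\begin{itemize}
\item The Albanese map of $X$ induces a meromorphic map from $X^{\mathrm{min}}$ to $\mathrm{Alb}(X)$. It is holomorphic away from the indeterminacy locus, so the images of general fibers make sense, for instance by passing to a resolution.
\item The translates of $A$ are unchanged along general fibers. This is because a general fiber of the Iitaka fibration has $\kappa=0$, so its Albanese image is a translate of a fixed subtorus, and only countably many subtori exist.
\end{itemize}

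Next I replace $A$ by the subtorus $A_0\subset\mathrm{Alb}(X)$ generated by the images of general fibers, taken minimally. Its dimension $g':=\dim A_0$ satisfies $g'\geq g$ by the previous step, and in particular $g'>0$ since $g\geq1$ (the case $g=0$ is vacuous: take $X'=X$ and a point as the torus fiber). The hypothesis of \Cref{main_result_higher_codim} then holds with $g'$ in place of $g$. That theorem produces a finite étale cover $X'\rightarrow X$ such that $X'$ is bimeromorphic to a locally trivial torus bundle $\pi\colon Z\rightarrow W$ with trivial monodromy over a compact connected Kähler manifold $W$, with fibers of dimension at least $g'\geq g$. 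This is exactly the claim.

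The main obstacle is the second step: fixing a single subtorus that fits the exact-dimension hypothesis of \Cref{main_result_higher_codim}. The subtorus $A$ obtained from \Cref{Popa-Schnell_Kaehler_introduction} need not be minimal, so one has to take the smallest subtorus whose translates contain the images of general fibers. One also has to check that this subtorus is independent of the chosen general fiber. I would argue this by the countability of subtori of $\mathrm{Alb}(X)$ together with the irreducibility of $Y$. Everything else is a direct citation of the two preceding results.
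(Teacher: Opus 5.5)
Your proposal is correct and is the paper's argument. Like the paper, you combine the second half of \Cref{Popa-Schnell_Kaehler_introduction} (proved as \Cref{Iitaka_fibers_are_mapped_to_subtorus_of_dim_g'_geq_g}) with \Cref{main_result_higher_codim} (precisely, \Cref{precise_main_result_higher_codim}), applied with $\dim A\geq g$ in place of $g$.

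Your minimization step is redundant, however. In both results, ``mapped to a translate of $A$'' means the image of a general fiber \emph{is} a translate of $A$; the paper identifies this $A$ with $\ker(\mathrm{Alb}(X)\rightarrow\mathrm{Alb}(Y))$ in \Cref{Albanese_morphism_surjective_fibers_connected}. The bound $\dim A\geq g$ is proved for exactly this torus. If $A$ were merely some torus containing the images, shrinking it to $A_0$ would not preserve the bound, so your inference $g'\geq g$ is valid only under this reading, in which $A_0=A$.
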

\begin{remark}
    In contrast to the projective situation, one cannot expect that the finite \'etale covering space $X'$ of $X$ in \Cref{main_result_higher_codim} is bimeromorphic to a product $Z\cong W\times A$, see \cite[][Rem. 2.10]{Pietig_1-forms_Kaehler_threefolds}.
\end{remark}
\begin{remark}
    We suspect that the conclusion of \Cref{Popa-Schnell_introduction} holds true under a weaker (e.g. topological) assumption.\par
    Let $X$ be a compact connected K\"ahler manifold. Recall Kotschick's conjecture \cite{Kotschick_1-forms_fibrations_char-numbers}, which claims that the following conditions are equivalent:
    \begin{enumerate}[label=\textit{(\Alph*)}]
        \item\label{conditionA} $X$ admits a holomorphic one-form without zeros;
        \item\label{conditionB} $X$ admits a submersion to $S^1$.
    \end{enumerate}
    The implication $\ref{conditionA}\Rightarrow\ref{conditionB}$ was proven by Kotschick \cite[][Prop. 7]{Kotschick_1-forms_fibrations_char-numbers}. Corr{\^e}a, Koll{\'a}r, Schreieder and Wang constructed a counterexample to the implication $\ref{conditionB}\Rightarrow\ref{conditionA}$ \cite[][Thm. 26]{Correa_Kollar_Schreieder_Wang}. Their counterexample is a smooth complex projective variety $X$ of dimension 5 such that the Albanese morphism $\mathrm{alb}_X\colon X\rightarrow\mathrm{Alb}(X)$ is homotopy equivalent to a submersion, and every holomorphic 1-form $\omega\in H^0(X,\Omega^1_X)$ has a zero. The variety $X$ is bimeromorphic to a product $\mathbb{P}^4\times E$, where $E$ is an elliptic curve. In \cite[][Rem. 27]{Correa_Kollar_Schreieder_Wang}, the authors found for each $n\geq 7$ smooth complex projective varieties $X_{-\infty},X_0,\ldots X_{n-1}$ of Kodaira dimension $\kappa(X_k)=k$ such that the Albanese morphism $\mathrm{alb}_{X_k}\colon X_k\rightarrow\mathrm{Alb}(X_k)$ is homotopy equivalent to a submersion, and every holomorphic 1-form $\omega\in H^0(X_k,\Omega^1_{X_k})$ has a zero for all $k\in\{-\infty,0,\ldots,n-1\}$. As before the varieties $X_k$ are bimeromorphic to a product $Y\times E$, where $E$ is an elliptic curve.\par
    Note that even though the varieties $X_k$ do not admit holomorphic 1-forms without zeros, the conclusion in \Cref{Popa-Schnell_introduction} still holds true for the varieties $X_k$. This leads us to the following question: Let $X$ be a compact connected K\"ahler manifold which admits a submersion $f\colon X\rightarrow (S^1)^{2g}$, where $g>0$ is a positive integer. Suppose that $X$ admits a good minimal model $X^{\mathrm{min}}$. Let $f\colon X^{\mathrm{min}}\rightarrow Y$ be the Iitaka fibration. Is then the general fiber of $f$ mapped to a translate of a subtorus $A\subset\mathrm{Alb}(X)$ of $\dim A\geq g$?
\end{remark}
\begin{remark}
    OpenAI's recent results \cite[][Thm. 1.1]{OAI:Orbifold-and-logarithmic-Iitaka-subadditivity-September-26-2026}, \cite[][Thm. 1.2]{OAI:Log-abundance-for-compact-Kahler-spaces-under-logarithmic-Iitaka-subadditivity-October-4-2026} prove the abundance conjecture. We thus only need to assume the existence of a minimal model in \Cref{Popa-Schnell_Kaehler_introduction} and in \Cref{main_result_higher_codim}. However, since, to the author's knowledge, OpenAI's result have not yet passed the review process, we nonetheless assume the existence of a good minimal model in \Cref{Popa-Schnell_Kaehler_introduction} and in \Cref{main_result_higher_codim}.
\end{remark}
\subsection{Outline of the argument}
To prove \Cref{Popa-Schnell_Kaehler_introduction}, we follow Villadsen's proof \cite{Villadsen_proof_Popa-Schnell} of Popa's and Schnell's theorem \cite[][Thm. 2.1]{Popa_Schnell_Kodaira_dimension_zeros_1-forms}. The main technical tool is Simpson's structure theorem of the cohomology jump loci
\begin{equation*}
    \Sigma^k_m(X):=\{(\mathcal{L},\omega)\in M_{\mathrm{Dol}}(X)\mid\dim H_{\mathrm{Dol}}^k(X,\mathcal{L},\omega)\geq m\}
\end{equation*}
of Higgs line bundles \cite[][Thm. 4.2]{Simpson_moduli_rank_1_local_systems}. Simpson shows that in the projective case, the irreducible components of $\Sigma^k_m(X)$ are translates of images of pullback maps
\begin{equation*}
    g_{\mathrm{Dol}}\colon M_{\mathrm{Dol}}(T)\longrightarrow M_{\mathrm{Dol}}(X),
\end{equation*}
where $g\colon X\rightarrow T$ is a morphism to a torus. This structure result also holds if $X$ is a compact connected K\"ahler manifold. In particular, Villadsen's proof remains valid in the K\"ahler case.\par
We briefly recall the proof of \Cref{structure_dim_g_1-forms_projective}: Let $X$ be a smooth projective variety over $\C$ with $g$ everywhere linearly independent holomorphic 1-forms $\omega_1,\ldots,\omega_g\in H^0(X,\Omega_X^1)$. Let $X^{\mathrm{min}}$ be a good minimal model of $X$, and consider the Iitaka fibration $f\colon X^{\mathrm{min}}\rightarrow Y$. By \cite[][Thm. 2.1]{Popa_Schnell_Kodaira_dimension_zeros_1-forms}, the holomorphic 1-forms $\omega_1,\ldots,\omega_g$ cannot come from $Y$. In particular, the general fiber of $f$ is mapped to a translate of a fixed subtorus $A\subset \mathrm{Alb}(X)$ of $\dim A\geq g$. As $\mathrm{Alb}(X)$ is projective, we can dualize the inclusion $A\subset \mathrm{Alb}(X)$ to get a morphism
\begin{equation*}
    \varphi\colon X^{\mathrm{min}}\longrightarrow \mathrm{Alb}(X)\longrightarrow A.
\end{equation*}
The variety $W$ in \Cref{structure_dim_g_1-forms_projective}, as well as the splitting $X'\cong W\times A$ are then constructed from a general fiber of $\varphi\colon X^{\mathrm{min}}\rightarrow A$. In \cite{Hao_Schreieder_3-folds}, Hao and Schreieder use a similar argument to show that a smooth projective threefold of Kodaira codimension 1 over $\C$, that fibers over the circle $S^1$, admits a nowhere vanishing holomorphic 1-form.\par
Now, let $X$ be a compact connected K\"ahler manifold with $g$ everywhere linearly independent holomorphic 1-forms $\omega_1,\ldots,\omega_g\in H^0(X,\Omega^1_X)$. Note that $\mathrm{Alb}(X)$ need not be projective, and therefore, the morphism $\varphi$ does not exist in general.\par
To overcome this obstacle, suppose for simplicity that the base of the Iitaka fibration $f\colon X^{\mathrm{min}}\rightarrow Y$ is smooth. Consider the fiber product $\pi\colon Z:=\mathrm{Alb}(X)\times_{\mathrm{Alb}(Y)}Y\rightarrow Y$. We then obtain the commutative diagram
\[\begin{tikzcd}
	{X^{\mathrm{min}}} && Z \\
	& Y & {.}
	\arrow["h", from=1-1, to=1-3]
	\arrow["f"', from=1-1, to=2-2]
	\arrow["\pi", from=1-3, to=2-2]
\end{tikzcd}\]
We first show that $\pi\colon Z\rightarrow Y$ is a locally trivial torus fiber bundle with trivial monodromy and fiber $A$. This follows from the fact that $\mathrm{Alb}(X)\rightarrow\mathrm{Alb}(Y)$ is surjective with connected kernel, which in turn is true since $f$ is a fibration. In the second step, we prove that after replacing $X$ by a finite \'etale cover if necessary, we can assume that $h$ has connected fibers. In the third step, we prove that after replacing $X$ by a further finite \'etale cover, there is an $A$-action on $X^{\mathrm{min}}$ such that $h$ is an equivariant morphism. The $A$-action on $Z$ is given by translation on the fibers of $\pi$. The compact connected K\"ahler manifold $W$ is then constructed from the quotient of $X^{\mathrm{min}}/A$.\par
In every step, we use that $\pi$ is locally trivial, i.e. we can find an open cover $\{U_i\}_{i\in I}$ of $Y$ such that $\pi^{-1}(U_i)$ is isomorphic to $U_i\times A$ over $U_i$. In particular, the above diagram is locally of the form
\[\begin{tikzcd}
	{X_i:=f^{-1}(U_i)} && {U_i\times A} & \\
	& {U_i} && {A.}
	\arrow["{h_i}", from=1-1, to=1-3]
	\arrow["{f_i}"', from=1-1, to=2-2]
	\arrow["{\pi_i}", from=1-3, to=2-2]
	\arrow["{p_i}"', from=1-3, to=2-4]
\end{tikzcd}\]
The composition $X_i\rightarrow U_i\times A\rightarrow A$ can be viewed as the local equivalent of
\begin{equation*}
    \varphi\colon X^{\mathrm{min}}\longrightarrow \mathrm{Alb}(X)\longrightarrow A
\end{equation*}
in the projective case. We can thus perform the constructions in the projective case locally, and glue them to obtain the global analogue.\par
Let us elaborate on steps 2-4. In the second step, we consider for each $a\in A$ the Stein factorization of $h_i^{-1}(U_i\times\{a\})\rightarrow U_i$
\begin{equation*}
    h_i^{-1}(U_i\times\{a\})\longrightarrow V_i(a)\longrightarrow U_i.
\end{equation*}
We prove that the finite morphism $V_i(a)\rightarrow U_i$ is independent of $a\in A$, and these local patches glue to a finite morphism $V\rightarrow Y$. The morphism $h_V\colon X^{\mathrm{min}}\times_YV\rightarrow V$ has then connected fibers. Moreover, a local analysis shows that the base change $X^{\mathrm{min}}\times_YV\rightarrow X^{\mathrm{min}}$ induces a finite \'etale cover $X'\rightarrow X$. This concludes the second step.\par
In the third step, suppose for simplicity that $X^{\mathrm{min}}$ is smooth, and that $f$ is smooth over an affine open subvariety $Y^*\subset Y$. Set $X^*:=f^{-1}(Y^*)$. The fibers of $f$ over $Y^*$ are hence compact connected K\"ahler manifolds with vanishing first real Chern class. The Beauville--Bogomolov decomposition thus shows that for each $y\in Y^*$ the map
\begin{equation*}
    \mathrm{Aut}^\circ(f^{-1}(y))\longrightarrow\mathrm{Alb}(f^{-1}(y))
\end{equation*}
is an isogeny. We construct the relative Albanese of $f^*\colon X^*\rightarrow Y^*$, which is a torus fiber bundle $\xi\colon\mathcal{A}lb\rightarrow Y^*$. In general, we cannot assume that $\xi$ has a global section, since $f^*$ does not necessarily admit a global section. Note that the relative Albanese fits into the composition
\begin{equation*}
    X^*\longrightarrow\mathcal{A}lb\longrightarrow Z.
\end{equation*}
Let $\mathcal{S}$ be the sheaf of local sections of $\xi\colon\mathcal{A}lb\rightarrow Y^*$. One can then show that the induced map
\begin{equation*}
    \mathrm{Aut}^\circ(X^*/Y^*)\longrightarrow H^0(Y^*,\mathcal{S})^\circ
\end{equation*}
is surjective with finite kernel, where $H^0(Y^*,\mathcal{S})^\circ\subset H^0(Y^*,\mathcal{S})$ is the identity component. This is the relative version of the isogeny $\mathrm{Aut}^\circ(f^{-1}(y))\rightarrow\mathrm{Alb}(f^{-1}(y))$. Using Saito's decomposition theorem for K\"ahler morphisms \cite[][Thm. 0.5]{Saito_Kaehler_decomposition}, we prove that there is an isogeny $A'\rightarrow A$ such that
\begin{equation*}
    A'\subset H^0(Y^*,\mathcal{S})^\circ.
\end{equation*}
In particular, if $A''\subset\mathrm{Aut}^\circ(X^*/Y^*)$ is defined as the pre-image of $A'$ along $\mathrm{Aut}^\circ(X^*/Y^*)\rightarrow H^0(Y^*,\mathcal{S})^\circ$, then the composition $A''\rightarrow A'\rightarrow A$ is an isogeny. We construct a finite \'etale cover $Z'\rightarrow Z$, which is on each fiber over $Y$ the isogeny $A''\rightarrow A$. We show that the $A''$-action on $X^*$ can be extended to an $A''$-action on $X^{\mathrm{min}}$. Then $A''$ acts on the base change $X^{\mathrm{min}}\times_ZZ'$, and on $Z'$ such that $X^{\mathrm{min}}\times_ZZ'\rightarrow Z'$ is $A''$-equivariant. This concludes the third step.\par
It remains to construct the compact connected K\"ahler manifold $W$. Note that since $f$ and $h$ have connected fibers by the second step, there is for a fixed $a_0\in A$ one irreducible component of $h_i^{-1}(U_i\times\{a_0\})$ that dominates $U_i$. Denote this component by
\begin{equation*}
    W_i(a_0)\subset h_i^{-1}(U_i\times\{a\}).
\end{equation*}
Denote the $A$-action from step three by $\mathrm{tr}(a)\colon X^{\mathrm{min}}\rightarrow X^{\mathrm{min}}$ for $a\in A$. This allows to glue the local patches $W_i(a_0)\rightarrow U_i$ to the global $W\rightarrow Y$. We can then define the local meromorphic maps
\begin{align*}
    X_i&\longdashrightarrow W_i(a_0)\\
    x&\longmapsto\mathrm{tr}(a_0-p_i(h_i(x))),
\end{align*}
which can be viewed as local patches of the quotient map $X^{\mathrm{min}}\rightarrow X^{\mathrm{min}}/A$. These local meromorphic maps glue to a global meromorphic map $X^{\mathrm{min}}\longdashrightarrow W$. We hence obtain the meromorphic map
\begin{equation*}
    X^{\mathrm{min}}\longdashrightarrow W\times_YZ,
\end{equation*}
which is bimeromorphic. As the projection $W\times_YZ\rightarrow W$ is a locally trivial torus fiber bundle with trivial monodromy, this proves \Cref{main_result_higher_codim}.
\subsection{Conventions and notation}
By a complex analytic variety, we mean a Hausdorff, second-countable, irreducible, and reduced complex space. A complex analytic variety is called K\"ahler if it admits a K\"ahler metric in the sense of \cite[][II. 1]{Varouchas_Kaehler_spaces_proper_open_morphisms}. On a normal complex analytic variety $X$, we define the canonical sheaf as $\mathcal{K}_X:=i_*\mathcal{K}_{X^{\mathrm{sm}}}$, and the $m$-pluricanonical sheaf as $\mathcal{K}_X^m:=i_*\mathcal{K}_{X^{\mathrm{sm}}}^m$ for all $m\geq 0$, where $i\colon X^{\mathrm{sm}}\hookrightarrow X$ is the inclusion of the smooth locus. Similarly, we define the sheaf of reflexive 1-forms as $\Omega_X^{[1]}:=i_*\Omega_{X^{\mathrm{sm}}}^1$. A compact complex analytic K\"ahler variety $X$ is called $\Q$-factorial, if every Weil divisor is $\Q$-Cartier and if there is a positive integer $m$ such that $\mathcal{K}_X^m$ is invertible. We call $X$ minimal if it is normal, $\Q$-factorial, has terminal singularities, and the first Chern class $c_1^{BC}(\mathcal{K}_X)\in H_{BC}^{1,1}(X)$ of the canonical sheaf $\mathcal{K}_X$ is nef. It is furthermore called a good minimal model, if a positive multiple of $\mathcal{K}_X$ is globally generated. We work in the analytic topology rather than the Zariski topology. In particular, local properties of objects or morphisms are properties which hold true over analytically open subsets. Consider morphisms $X\rightarrow S\leftarrow Y$ between normal complex analytic varieties. If $X\times_SY$ is irreducible, we denote by $X\times_S^{\mathrm{norm}}Y$ the normalization of $X\times_SY$.
\subsection*{Acknowledgments}
I would like to thank Stefan Schreieder for suggesting this problem to me, for very helpful discussions, and for comments on this manuscript. The research was conducted in the framework of the DFG-funded research training group RTG 2965: From Geometry to Numbers.
\subsection{AI disclosure}
The mathematical content of this paper is the work of the author. Claude was used to assist with proofreading and to improve the clarity of the manuscript.
\section{Preliminaries}
\subsection{Quasi-\'etale morphisms}
\begin{definition}\cite[][Def. 1.1]{Catanese_quasi_etale}
    Let $X$ and $Y$ be normal complex analytic varieties. A surjective morphism $f\colon X\rightarrow Y$ is called \textit{quasi-\'etale} if there is a closed subvariety $Z\subset X$ of codimension $\geq 2$ such that $f\vert_{X\setminus Z}\colon X\setminus Z\rightarrow Y$ is \'etale.
\end{definition}
\begin{remark}\cite[][Rem. 3.1]{Catanese_quasi_etale}
    Let $f\colon X\rightarrow Y$ be a quasi-\'etale morphism between complex analytic varieties. Suppose that $Y$ is smooth and that $X$ is normal. Then $f$ is \'etale. In particular, the branch locus of a quasi-\'etale morphism $f\colon X\rightarrow Y$ is contained in the singular locus of $Y$.
\end{remark}
We need the following lemma, cf. \cite[][Prop. 3.2]{Catanese_quasi_etale}
\begin{lemma}\label{canonical_sing_plus_invertible_ascends_if_quasi-etale}
    Let $f\colon X\rightarrow Y$ be a quasi-\'etale morphism between normal complex analytic varieties.
    \begin{enumerate}[label=(\roman*)]
        \item If $\mathcal{K}_Y^m$ is invertible for some positive integer $m$, then $\mathcal{K}_X^m\cong f^*\mathcal{K}_Y^m$.
        \item Suppose that $Y$ has canonical singularities. Then $X$ has canonical singularities.
        \item Suppose that $Y$ has terminal singularities. Then $X$ has terminal singularities.
    \end{enumerate}
\end{lemma}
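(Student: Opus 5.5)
The plan is to reduce everything to the étale locus and then extend across a codimension $\geq 2$ subset, using normality (reflexivity) throughout. Let $Z\subset X$ be closed of codimension $\geq 2$ with $f|_{X\setminus Z}$ étale, and set $X^\circ:=X\setminus(Z\cup f^{-1}(Y^{\mathrm{sing}}))$. Since $f$ is finite, we have $\operatorname{codim} f^{-1}(Y^{\mathrm{sing}})\geq 2$, so $X^\circ\subset X^{\mathrm{sm}}$ has complement of codimension $\geq 2$. (If $f$ is not assumed finite, one first restricts to the finite part; in the intended applications $f$ is finite, and I would add this as a standing assumption if needed.)

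For (i): on $X^\circ$ the map $f$ is étale onto an open subset of $Y^{\mathrm{sm}}$, so $df$ identifies $\mathcal{K}_{X^\circ}^m\cong (f|_{X^\circ})^*\mathcal{K}_{Y^{\mathrm{sm}}}^m$. Hence the reflexive sheaf $\mathcal{K}_X^m$ and the invertible sheaf $f^*\mathcal{K}_Y^m$ agree on $X^\circ$. Both are reflexive on the normal space $X$ (the second because it is invertible), and both are determined by their restriction to $X^\circ$ via $i_*$, by Hartogs. So the isomorphism extends: $\mathcal{K}_X^m\cong f^*\mathcal{K}_Y^m$.

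For (ii) and (iii): by (i), with $m$ the index of $Y$, $\mathcal{K}_X^m$ is invertible, so $X$ is $\Q$-Gorenstein. Discrepancies are local, so we may work locally on $Y$. Take a log resolution $\mu\colon \tilde Y\to Y$. Let $\tilde X$ be the normalization of the main component of $X\times_Y\tilde Y$, and let $\nu\colon\tilde X\to X$ and $\tilde f\colon\tilde X\to\tilde Y$ be the induced maps, followed by a further resolution of $\tilde X$ if needed. Pulling back the identity $\mathcal{K}_{\tilde Y}=\mu^*\mathcal{K}_Y+\sum a_iE_i$ and using (i) together with Riemann–Hurwitz for the finite map $\tilde f$ (ramified only along divisors over the exceptional locus, since $f$ is étale in codimension one), we obtain for a divisor $F$ over $X$ lying above $E_i$ with ramification index $r$:
\[a(F,X)+1=r\,(a_i+1).\]
Thus $a_i\geq 0$ gives $a(F,X)\geq r-1\geq 0$, and $a_i>0$ gives $a(F,X)\geq 2r-1>0$. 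Every exceptional divisor over $X$ arises this way over some divisor over $Y$, using further blow-ups of $\tilde Y$, and exceptional divisors over $X$ map to exceptional divisors over $Y$ because $f$ is finite. This proves (ii) and (iii).

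The main obstacle is the discrepancy comparison formula in the analytic setting: justifying Hurwitz for finite maps between normal complex spaces and ensuring that every divisor over $X$ is dominated by one over $Y$. I would follow the classical argument in Kollár–Mori, Prop. 5.20, which is local and therefore transfers verbatim to analytic germs.
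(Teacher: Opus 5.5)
\emph{Verdict: genuine gap.} You only prove (ii) and (iii) for finite $f$, and the paper needs the non-finite case.

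Your argument for (i) is essentially the paper's. For \emph{finite} $f$, your treatment of (ii) and (iii) is the ramification computation behind Koll\'ar--Mori, Prop.~5.20, which the paper cites. The problem is that you propose adding finiteness as a hypothesis. In this paper a quasi-\'etale morphism is, following Catanese, merely a surjective morphism that is \'etale outside a closed subset $Z\subset X$ of codimension $\geq 2$. The non-finite case is exactly the one that is needed. In \Cref{good_minimal_model_ascend_finite_etale_cover}, parts (i) and (iii) are applied to $\tau\colon X'^{\mathrm{min}}\to Z\to X^{\mathrm{min}}$. Its first factor is a small $\Q$-factorialization, which is in general not finite, and that lemma is used repeatedly afterwards. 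So your remark that the intended applications only involve finite $f$ is incorrect, and the extra hypothesis would break the later arguments.

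The paper closes this with a Stein factorization $f=\tau\circ\mu$, where $\mu\colon X\to X'$ and $\tau\colon X'\to Y$.
\begin{enumerate}[label=(\alph*)]
\item Since $f$ is a local isomorphism at every $x\notin Z$, the set $\{x\}$ is a connected component of $f^{-1}(f(x))$. Hence $\mu^{-1}(\mu(x))=\{x\}$, and $\mu$, being proper with $\mu_*\OO_X=\OO_{X'}$, is an isomorphism near $x$.
\item So $\mu$ is bimeromorphic with exceptional locus inside $Z$, and $\tau$ is finite and \'etale off $\mu(Z)$, i.e.\ finite quasi-\'etale.
\item Your finite-case argument then applies to $\tau$.
\item Because $\mu$ contracts no divisor, $K_X=\mu^*K_{X'}$, every exceptional divisor over $X$ is exceptional over $X'$, and the discrepancies coincide.
\end{enumerate}
Alternatively, you could keep your route. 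Your formula $a(F,X)+1=r\,(a(E,Y)+1)$ only uses that $\tilde f$ is quasi-finite at the generic point of $F$. Moreover, $F$ exceptional over $X$ forces $E$ exceptional over $Y$ for any proper generically finite $f$, since images do not increase dimension. But this has to be said explicitly.

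Two smaller points. In (i), finiteness is not needed either: $f$ is a local isomorphism on $X\setminus Z$, so $X\setminus X^\circ\subset Z\cup X^{\mathrm{sing}}$. Also, from $a_i>0$ you cannot conclude $a(F,X)\geq 2r-1$. Discrepancies of terminal $\Q$-Gorenstein singularities are rational and may lie in $(0,1)$, e.g.\ $\tfrac12$ for $\C^3/\pm 1$. You only get $a(F,X)>r-1\geq 0$, which is still enough.
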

\begin{proof}
    To prove the first assertion, let $Z\subset X$ be a closed subvariety of codimension $\geq 2$ such that $f\vert_{X\setminus Z}\colon X\setminus Z\rightarrow Y$ is \'etale. Define $i\colon X^\circ:=X\setminus Z\hookrightarrow X$, and $j\colon Y^\circ:=Y\setminus f(Z)\hookrightarrow Y$. Hence, the morphism $f^\circ\colon X^\circ\rightarrow Y^\circ$ is surjective \'etale. After removing codimension $\geq 2$ subvarieties from $X^\circ$, and $Y^\circ$, we can furthermore assume that $X^\circ$, and $Y^\circ$ are smooth. Note that the codimensions of $X\setminus X^\circ$ and of $Y\setminus Y^\circ$ is at least two. Since $f^\circ$ is \'etale, $\mathcal{K}_{X^\circ}=(f^\circ)^*\mathcal{K}_{Y^\circ}$. Taking tensor powers shows $\mathcal{K}_{X^\circ}^m=(f^\circ)^*\mathcal{K}_{Y^\circ}^m$ for all positive integers $m$.\par
    By assumption, there is a positive integer $m$, such that $\mathcal{K}_Y^m:=j_*\mathcal{K}_{Y^\circ}^m$ is invertible. Thus, also $f^*\mathcal{K}_Y^m$ is invertible, and hence reflexive. Note that
    \begin{equation*}
        \mathcal{K}_X^m:=i_*\mathcal{K}_{X^\circ}^m=i_*(f^\circ)^*\mathcal{K}_{Y^\circ}^m
    \end{equation*}
    is a reflexive sheaf that is isomorphic to $f^*\mathcal{K}_Y^m$ on $X^\circ$. As $X\setminus X^\circ$ has codimension $\geq 2$, $\mathcal{K}_X^m\cong f^*\mathcal{K}_Y^m$. This proves the first assertion.\par
    To prove the second and third assertion, consider the Stein factorization
    \begin{equation*}
        X\stackrel{\mu}{\longrightarrow}X'\stackrel{\tau}{\longrightarrow}Y
    \end{equation*}
    of $f\colon X\rightarrow Y$. Note that $\tau$ is a finite quasi-\'etale morphism. Moreover, $\mu$ is proper bimeromorphic, and its exceptional locus is of codimension $\geq 2$. From \cite[][Thm. 5.20 (3)]{Kollar_Mori}, it follows that $X'$ has canonical, resp. terminal singularities. Let $\nu\colon\tilde{X}\rightarrow X$ be a resolution of singularities. Since $\mu$ does not contract any divisor, the exceptional loci of $\nu$ and $\mu\circ\nu$ agree, and so do the discrepancies. Thus, $X$ has canonical, resp. terminal singularities.
\end{proof}
\begin{lemma}\label{good_minimal_model_ascend_finite_etale_cover}
    Let $X$ be a compact connected K\"ahler manifold, which admits a good minimal model $X^{\mathrm{min}}$. Let $X'\rightarrow X$ be a finite \'etale cover. The following hold true.
    \begin{enumerate}[label=(\roman*)]
        \item $X'$ admits a good minimal model $X'^{\mathrm{min}}$, which admits a quasi-\'etale morphism $\tau\colon X'^{\mathrm{min}}\rightarrow X^{\mathrm{min}}$.
        \item If we denote the Iitaka fibration of $X^{\mathrm{min}}$ by $f\colon X^{\mathrm{min}}\rightarrow Y$, then the Iitaka fibration of $X'^{\mathrm{min}}$ is given as the morphism with connected fibers $f'\colon X'^{\mathrm{min}}\rightarrow Y'$ in the Stein factorization of $X'^{\mathrm{min}}\rightarrow X^{\mathrm{min}}\rightarrow Y$.
    \end{enumerate}
\end{lemma}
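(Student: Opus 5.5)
The plan is to construct $X'^{\mathrm{min}}$ directly from $X^{\mathrm{min}}$ by pulling back the cover. Write $\phi\colon X\dashrightarrow X^{\mathrm{min}}$ for the bimeromorphic map, and let $p\colon X'\rightarrow X$ be the given finite \'etale cover.

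\textbf{Step 1: the candidate model.} The map $\phi$ is a composition of divisorial contractions and flips. Its inverse therefore contracts no divisors: it is an isomorphism outside a closed subset $B\subset X^{\mathrm{min}}$ of codimension $\geq 2$. Set $U:=X^{\mathrm{min}}\setminus B$, so that $U$ embeds as an open subset of $X$. The restriction $p^{-1}(U)\rightarrow U$ is then a finite \'etale cover of a normal space. Since $B$ has codimension $\geq 2$ in the normal space $X^{\mathrm{min}}$, this cover extends uniquely, via the Grauert--Remmert extension theorem (normalization of $X^{\mathrm{min}}$ in the function field of $p^{-1}(U)$), to a finite morphism
\begin{equation*}
\tau\colon X'^{\mathrm{min}}\longrightarrow X^{\mathrm{min}}
\end{equation*}
with $X'^{\mathrm{min}}$ normal. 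By construction $\tau$ is \'etale over $U$, hence quasi-\'etale. Moreover $X'^{\mathrm{min}}$ is bimeromorphic to $X'$, since the two agree over $U$. Connectedness of $X'$ gives irreducibility of $X'^{\mathrm{min}}$. Finally, $X'^{\mathrm{min}}$ is K\"ahler, being finite over a K\"ahler space.

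\textbf{Step 2: checking the minimal model properties.} By \Cref{canonical_sing_plus_invertible_ascends_if_quasi-etale}, $X'^{\mathrm{min}}$ has terminal singularities, and $\mathcal{K}_{X'^{\mathrm{min}}}^m\cong\tau^*\mathcal{K}_{X^{\mathrm{min}}}^m$. Nefness and global generation of a multiple of the canonical sheaf both pull back under the finite map $\tau$, so $X'^{\mathrm{min}}$ satisfies these too.

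The delicate point is $\Q$-factoriality, which in general does not ascend along finite maps. I would handle this as follows. If a Weil divisor $D$ on $X'^{\mathrm{min}}$ fails to be $\Q$-Cartier, I would run a relative MMP, or take a small $\Q$-factorialization $X''\rightarrow X'^{\mathrm{min}}$, which exists for klt spaces in the K\"ahler setting. This step is small, so it preserves terminality. It is also crepant, since it is small over the base, so $\mathcal{K}$ is pulled back and hence stays nef and semiample. We may then replace $X'^{\mathrm{min}}$ by $X''$. The quasi-\'etale morphism required in (i) is then the composite $X''\rightarrow X'^{\mathrm{min}}\rightarrow X^{\mathrm{min}}$: it is surjective, and it is \'etale outside a codimension $\geq 2$ subset, because the small map is an isomorphism in codimension one. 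This is where I expect the main obstacle; it is precisely why the statement only asks for a quasi-\'etale morphism rather than a finite one.

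\textbf{Step 3: the Iitaka fibration.} We have $\mathcal{K}^m_{X'^{\mathrm{min}}}=\tau^*\mathcal{K}^m_{X^{\mathrm{min}}}$, and $\mathcal{K}^m_{X^{\mathrm{min}}}=f^*\mathcal{H}$ for an ample $\mathcal{H}$ on $Y$ (taking $m$ divisible). Hence a multiple of $\mathcal{K}_{X'^{\mathrm{min}}}$ is the pullback of $\mathcal{H}$ along $f\circ\tau$. Take the Stein factorization $X'^{\mathrm{min}}\xrightarrow{f'}Y'\xrightarrow{\sigma}Y$. Then $\sigma$ is finite, so $\sigma^*\mathcal{H}$ is ample, and $\mathcal{K}^m_{X'^{\mathrm{min}}}=f'^*\sigma^*\mathcal{H}$. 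The semiample fibration of $\mathcal{K}_{X'^{\mathrm{min}}}$ is therefore the connected-fiber morphism $f'$ onto $Y'$, which is the Iitaka fibration. This proves (ii).
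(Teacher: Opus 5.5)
Your proposal is correct and follows essentially the paper's proof. The paper obtains the same normal finite quasi-\'etale cover of $X^{\mathrm{min}}$ as the Stein factorization of $\tilde{X}'\rightarrow\tilde{X}\rightarrow X^{\mathrm{min}}$, where $\tilde{X}$ resolves $X\dashrightarrow X^{\mathrm{min}}$, rather than via Grauert--Remmert extension; it then takes a small $\Q$-factorialization (citing Hacon--Li--Xie), applies \Cref{canonical_sing_plus_invertible_ascends_if_quasi-etale} to the composite, and reads off the Iitaka fibration from the Stein factorization just as you do. One loose point: you justify that $X^{\mathrm{min}}\dashrightarrow X$ is an isomorphism in codimension one by appealing to an MMP, which is not assumed here; the claim still holds by the negativity lemma, or you can bypass it by working over the locus where $\tilde{X}\rightarrow X^{\mathrm{min}}$ is an isomorphism, whose complement automatically has codimension $\geq 2$.
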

\begin{proof}
    By Hironaka's resolution of indeterminacies, we can choose a compact connected K\"ahler manifold $\tilde{X}$ with proper bimeromorphic morphisms $X\leftarrow\tilde{X}\rightarrow X^{\mathrm{min}}$. The finite \'etale cover $X'\rightarrow X$ induced a finite \'etale cover $\tilde{X}'\rightarrow\tilde{X}$. Consider the Stein factorization
    \[\begin{tikzcd}
	   {\tilde{X}'} & {\tilde{X}} & {X^{\mathrm{min}}} \\
	   & {Z} & {.}
	   \arrow[from=1-1, to=1-2]
	   \arrow[from=1-1, to=2-2]
	   \arrow[from=1-2, to=1-3]
	   \arrow[from=2-2, to=1-3]
    \end{tikzcd}\]
    Note that the morphism $Z\rightarrow X^{\mathrm{min}}$ is finite \'etale outside its singular locus. In particular, it is finite quasi-\'etale. We can hence apply \Cref{canonical_sing_plus_invertible_ascends_if_quasi-etale} to conclude that $Z$ has terminal singularities. By \cite[][Thm. 1.5]{Hacon_Li_Xie_Q-factorialization}, there is a projective bimeromorphic morphism $\mu\colon X'^{\min}\rightarrow Z$, such that $X'^{\mathrm{min}}$ is normal and $\Q$-factorial, and such that the exceptional locus of $\mu$ has codimension $\geq 2$. Define $\tau$ as the composition
    \begin{equation*}
        \tau\colon X'^{\mathrm{min}}\longrightarrow Z\longrightarrow X^{\mathrm{min}}.
    \end{equation*}
    As the exceptional locus of $\mu$ has codimension $\geq 2$, and as $Z\rightarrow X^{\mathrm{min}}$ is finite quasi-\'etale, the composition $\tau$ is a quasi-\'etale morphism. We can hence apply \Cref{canonical_sing_plus_invertible_ascends_if_quasi-etale} again to conclude that $X'^{\mathrm{min}}$ has terminal singularities.\par
    Since $X^{\mathrm{min}}$ is $\Q$-factorial, there is a positive integer, such that $\mathcal{K}_{X^{\mathrm{min}}}^m$ is invertible. Thus, \Cref{canonical_sing_plus_invertible_ascends_if_quasi-etale} implies that $\mathcal{K}_{X'^{\mathrm{min}}}^m\cong\tau^*\mathcal{K}_{X^{\mathrm{min}}}^m$. In particular, as $\mathcal{K}_{X^{\mathrm{min}}}$ is nef, $\mathcal{K}_{X'^{\mathrm{min}}}$ is nef. Since there is a positive integer $n$ such that $\mathcal{K}_{X^{\mathrm{min}}}^n$ is globally generated, also $\tau^*\mathcal{K}_{X^\mathrm{min}}^{mn}\cong\mathcal{K}_{X'^{\mathrm{min}}}^{mn}$ is globally generated. Thus, $X'^{\mathrm{min}}$ is a good minimal model for $X'$. This proves the first assertion.\par
    To prove the second assertion, consider the commutative diagram
    \[\begin{tikzcd}
	   {X'^{\mathrm{min}}} & {\mathbb{P}(H^0(X'^{\mathrm{min}},\mathcal{K}_{X'^{\mathrm{min}}}^{mn})^\vee)} \\
	   {X^{\mathrm{min}}} & {\mathbb{P}(H^0(X^{\mathrm{min}},\mathcal{K}_{X^{\mathrm{min}}}^{mn})^\vee)}
	   \arrow["{\vert\mathcal{K}_{X'^{\mathrm{min}}}^{mn}\vert}", from=1-1, to=1-2]
	   \arrow["\tau", from=1-1, to=2-1]
	   \arrow[dashed, from=1-2, to=2-2]
	   \arrow["{\vert\mathcal{K}_{X^{\mathrm{min}}}^{mn}\vert}", from=2-1, to=2-2]
    \end{tikzcd}\]
    Taking the Stein factorizations of the horizontal arrows induces the commutative diagram
    \[\begin{tikzcd}
	   {X'^{\mathrm{min}}} & Z & {Y'} \\
	   & {X^{\mathrm{min}}} & Y
	   \arrow[from=1-1, to=1-2]
	   \arrow["{f'}", curve={height=-12pt}, from=1-1, to=1-3]
	   \arrow["\tau"', from=1-1, to=2-2]
	   \arrow[from=1-2, to=1-3]
	   \arrow[from=1-2, to=2-2]
	   \arrow[from=1-3, to=2-3]
	   \arrow["f", from=2-2, to=2-3]
    \end{tikzcd}\]
    where the morphism $Y'\rightarrow Y$ is finite, and $f$, resp. $f'$ are the Iitaka fibrations. Thus, the composition through the upper right hand corner is the Stein factorization of the morphism through the lower left hand corner.
\end{proof}
\subsection{Torus fiber bundles}
Most of the following material can be found in \cite{Claudon_tori_fiber_bundles}.
\begin{definition}\label{definition_torus_fiber_bundle}
    A \textit{torus fiber bundle} is defined as a smooth proper surjective morphism $f\colon X\rightarrow Y$ from a connected complex manifold $X$ to a connected complex manifold $Y$ such that every fiber is isomorphic to a positive-dimensional torus.
\end{definition}
Let $A=\C^g/\Lambda$ be a torus. Recall that the Albanese morphism with base point $0\in A$
\begin{equation*}
    A\stackrel{\sim}{\longrightarrow} \mathrm{Alb}(A):=H^0(A,\Omega^1_A)^\vee/H_1(A,\Z)
\end{equation*}
is a canonical isomorphism of tori. Poincar\'e duality yields a canonical isomorphism $H_1(A,\Z)\cong H^{2g-1}(A,\Z)$, and Serre duality yields a canonical isomorphism $H^0(A,\Omega^1_A)^\vee\cong H^{g-1,g}(A)$. Writing furthermore $H^{g-1,g}(A)\cong H^{2g-1}(A,\C)/F^g$, where $F^g$ is the degree $g$-part of the Hodge filtration, gives rise to a canonical isomorphism
\begin{equation*}
    A\stackrel{\sim}{\longrightarrow}\mathrm{Alb}(A)\cong (H^{2g-1}(A,\C)/F^g)/H^{2g-1}(A,\Z).
\end{equation*}
In particular, $A$ is determined by its Hodge structure in degree $2g-1$.
Let $f\colon X\rightarrow Y$ be a torus fiber bundle of fiber dimension $g>0$. Let $B\subset Y$ be an open subset isomorphic to an open ball $B\subset\C^n$ centered at the origin. Denote by $X_b$ the fiber over $b\in B$. Ehresmann's theorem \cite[][Thm. 9.3]{Voisin_HT_1} implies that the restriction maps $H^{2g-1}(f^{-1}(B),\C)\stackrel{\sim}{\rightarrow}H^{2g-1}(X_b,\C)$ are isomorphisms for all $b\in B$. We hence obtain canonical isomorphisms
    \begin{equation*}
        H^{2g-1}(X_b,\C)\cong H^{2g-1}(f^{-1}(B),\C)\cong H^{2g-1}(X_0,\C).
    \end{equation*}
    Denote by $F^g(b)$ the $g$-th part of the Hodge filtration $F^g(b)\subset H^{2g-1}(X_b,\C)$ for $b\in B$. By \cite[][Thm. 10.9]{Voisin_HT_1}, the period map
    \begin{align*}
        B&\longrightarrow \mathrm{Gr}(g, H^{2g-1}(X_0,\C))\\
        b&\longmapsto F^g(b)
    \end{align*}
    to the Grassmannian $\mathrm{Gr}(g, H^{2g-1}(X_0,\C))$ of $g$-dimensional subspaces in $H^{2g-1}(X_0,\C)$ is holomorphic. Thus, the subspaces $F^g(b)\subset H^{2g-1}(X_0,\C)$ vary holomorphically, and hence the quotient
    \begin{equation*}
        (R^{2g-1}f_*\Z\otimes\OO_Y)/(R^{g-1}f_*\Omega^g_{X/Y})
    \end{equation*}
    is a holomorphic vector bundle on $Y$. This gives rise to the following definitions.
\begin{definition}\label{def_variation_HS_Jacobian_fibration}
    Let $f\colon X\rightarrow Y$ be a torus fiber bundle of fiber dimension $g>0$.
    \begin{enumerate}[label=(\roman*)]
        \item The \textit{variation of Hodge structures $H$} associated to $f$ consists of the following data: the local system $H_\Z:=R^{2g-1}f_*\Z$ on $Y$ of free abelian groups of rank $2g$, the associated vector bundle $\mathcal{H}:=H_\Z\otimes\OO_Y$ on $Y$, the holomorphic subbundle $\mathcal{F}^{g}:=R^{g-1}f_*\Omega^g_{X/Y}$, and the quotient $\mathcal{H}^{g-1,g}:=\mathcal{H}/\mathcal{F}^{g}$.
        \item We say that $f$ has \textit{trivial monodromy} if the local system $H_\Z$ is isomorphic to $\Z^{2g}$.
        \item We define the \textit{Jacobian fibration} associated to $f$ as the complex manifold
        \begin{equation*}
            J_H:=\mathcal{H}^{g-1,g}/H_\Z
        \end{equation*}
        obtained by taking the quotient of the vector bundle $\mathcal{H}^{g-1,g}\rightarrow Y$ by the $H_\Z$-action together with the natural morphism $\pi\colon J_H\rightarrow Y$.
        \item The zero section of $\mathcal{H}^{g-1,g}\rightarrow Y$ induces the global section $\sigma_0\colon Y\rightarrow J_H$. The \textit{sheaf of local sections} of $\pi\colon J_H\rightarrow Y$ is a sheaf of abelian groups with addition of local sections and neutral element $\sigma_0$. It is denoted by $\mathcal{J}_H=\mathcal{H}^{g-1,g}/H_\Z$, where we identify the vector bundle $\mathcal{H}^{g-1,g}$ with its sheaf of local sections.
        \item Let $\eta\in H^1(Y,\mathcal{J}_H)$ be a cohomology class. The $\mathcal{J}_H$-torsor associated to $\eta$ is denoted by
        \begin{equation*}
            \pi^\eta\colon J^\eta_H\rightarrow Y.
        \end{equation*}
        \item We denote by $\mathrm{exp}\colon H^1(Y,\mathcal{H}^{g-1,g})\rightarrow H^1(Y,\mathcal{J}_H)$ and $c\colon H^1(Y,\mathcal{J}_H)\rightarrow H^2(Y,H_\Z)$ the maps induced by the short exact sequence $0\rightarrow H_\Z\rightarrow\mathcal{H}^{g-1,g}\rightarrow\mathcal{J}_H\rightarrow 0$, and call them the \textit{exponential}, resp. the \textit{Chern class} map.
    \end{enumerate}
\end{definition}
\begin{proposition}\label{classification_torus_fiber_bundles}\cite[][Prop. 2.1]{Claudon_tori_fiber_bundles}
    Let $f\colon X\rightarrow Y$ be a torus fiber bundle with associated variation of Hodge structures $H$. Then there is a unique cohomology class $\eta\in H^1(Y,\mathcal{J}_H)$ and an isomorphism $\varphi\colon X\stackrel{\sim}{\rightarrow}J_H^\eta$ over $Y$. Moreover, if $f\colon X\rightarrow Y$ admits a global section $\sigma\colon Y\rightarrow X$ then $\eta=0$ and $\varphi\colon X\rightarrow J_H$ can be chosen such that $\varphi\circ\sigma=\sigma_0$. Conversely, any cohomology class $\eta\in H^1(Y,\mathcal{J}_H)$ defines a torus fiber bundle $\pi^\eta\colon J_H^\eta\rightarrow Y$.
\end{proposition}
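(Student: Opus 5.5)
The plan is to trivialize $f$ locally, compare the local trivializations with the Jacobian fibration, and read off a class in $H^1(Y,\mathcal{J}_H)$ from the mismatch on overlaps.

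\emph{Local structure.} Since $f$ is smooth and proper, Ehresmann gives local $C^\infty$ trivializations. Holomorphically, I would cover $Y$ by small balls $B_i$ over which $f$ has a holomorphic section $\sigma_i$; such sections exist locally because $f$ is a submersion. Over $B_i$ I then use the relative Albanese (Abel--Jacobi) map based at $\sigma_i$:
\[
\varphi_i\colon f^{-1}(B_i)\longrightarrow J_H\vert_{B_i},\qquad x\longmapsto \int_{\sigma_i(f(x))}^{x}\ \in\ \mathcal{H}^{g-1,g}_{f(x)}/H_{\Z,f(x)}.
\]
Fiberwise this is the canonical isomorphism $X_b\cong \mathrm{Alb}(X_b)\cong (H^{2g-1}(X_b,\C)/F^g)/H^{2g-1}(X_b,\Z)$ recalled above, sending $\sigma_i(b)$ to $0$. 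The integral is taken along paths in the fiber, viewed as a functional on $H^0(X_b,\Omega^1)\cong (F^g)^\vee$ via Serre duality.

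\emph{Holomorphy of $\varphi_i$.} This is the main point to check. The relative $1$-forms form the holomorphic bundle $f_*\Omega^1_{X/Y}$, dual to $\mathcal{H}^{g-1,g}$. Integrating holomorphic relative forms along a holomorphic family of paths from $\sigma_i$ to $x$ depends holomorphically on $x$, modulo periods in $H_\Z$. So $\varphi_i$ is holomorphic. It is bijective on each fiber and a map over $B_i$, hence a biholomorphism.

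\emph{Gluing and the class $\eta$.} On $B_i\cap B_j$ we have $\varphi_i\circ\varphi_j^{-1}(y)=y+\tau_{ij}$, where $\tau_{ij}=\varphi_i\circ\sigma_j\in\mathcal{J}_H(B_i\cap B_j)$. Here I use that a fiberwise isomorphism of tori preserving the Albanese structure differs from the identity by a translation. The family $(\tau_{ij})$ is a Čech $1$-cocycle, and changing the $\sigma_i$ changes it by a coboundary. It therefore defines $\eta\in H^1(Y,\mathcal{J}_H)$, and since $J_H^\eta$ is glued by exactly these translations, the $\varphi_i$ glue to $\varphi\colon X\stackrel{\sim}{\rightarrow}J_H^\eta$.

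\emph{Uniqueness.} Suppose $X\cong J_H^{\eta'}$ over $Y$ compatibly with the identification of $H$, meaning the isomorphism induces the identity on the variation of Hodge structures. Then the comparison $J_H^\eta\cong J_H^{\eta'}$ is locally a translation by sections of $\mathcal{J}_H$, which forces $\eta=\eta'$.

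\emph{Global section and converse.} If there is a global section $\sigma$, take $\sigma_i=\sigma\vert_{B_i}$. Then all $\tau_{ij}=0$, so $\eta=0$ and $\varphi\circ\sigma=\sigma_0$. Conversely, for any $\eta$, gluing copies of $J_H\vert_{B_i}$ by the translations $\tau_{ij}$ produces a complex manifold with a smooth proper map $\pi^\eta$ to $Y$ whose fibers are tori, that is, a torus fiber bundle.
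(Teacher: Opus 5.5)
Your proposal is correct and follows the standard argument behind the cited result: local holomorphic sections, the based relative Albanese map as a local isomorphism onto $J_H$, and the translation cocycle $\tau_{ij}=\varphi_i\circ\sigma_j$. The paper gives no proof of its own and defers to Claudon, but it uses exactly this device later for the relative Albanese, with $\zeta_{ij}=a_i\circ s_j$. Your caveat that uniqueness must be read for isomorphisms inducing the identity on $H$ is also needed, since fiberwise inversion already gives $J_H^\eta\cong J_H^{-\eta}$ over $Y$.
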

\begin{corollary}\label{corollary_isotrivial_and_trivial_vhs}
    Let $f\colon X\rightarrow Y$ be a torus fiber bundle of fiber dimension $g$. Suppose that $f$ is isotrivial with fiber $A\cong\C^g/\Lambda$ and that its monodromy is trivial. Then $J_H=Y\times A$ and $\mathcal{J}_H\cong\OO_Y^{\oplus g}/\Lambda$.
\end{corollary}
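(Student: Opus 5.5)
We need to show that if $f\colon X\rightarrow Y$ is isotrivial with fiber $A=\C^g/\Lambda$ and has trivial monodromy, then $J_H=Y\times A$ and $\mathcal{J}_H\cong\OO_Y^{\oplus g}/\Lambda$. The plan is to show that the variation of Hodge structures $H$ is trivial, i.e. constant, and then read both identities off \Cref{def_variation_HS_Jacobian_fibration}.

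\emph{Step 1: the local system and its bundle.} Trivial monodromy gives $H_\Z\cong\Z^{2g}$ as a local system. Since every fiber is isomorphic to $A$, we can fix this trivialization so that it is the constant local system $H^{2g-1}(A,\Z)$. Then $\mathcal{H}=H_\Z\otimes\OO_Y\cong H^{2g-1}(A,\C)\otimes\OO_Y$ is a trivial bundle with its flat structure.

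\emph{Step 2: the Hodge filtration is constant.} This is the main obstacle. We must show that the period map $Y\rightarrow\mathrm{Gr}(g,H^{2g-1}(A,\C))$, $y\mapsto F^g(y)$, is constant, not merely that each $F^g(y)$ is isomorphic to the Hodge structure of $A$.

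The map is holomorphic by the discussion preceding \Cref{def_variation_HS_Jacobian_fibration}. Under the fixed identification, each fiber $X_y\cong A$ determines $F^g(y)$ as the image of $F^g(A)$ under $\gamma\otimes\C$ for some $\gamma\in\mathrm{Aut}(H^{2g-1}(A,\Z))$, because an isomorphism $X_y\cong A$ induces an integral isomorphism of Hodge structures. Hence the image of the period map lies in the orbit of $F^g(A)$ under the countable group $\mathrm{GL}(H^{2g-1}(A,\Z))$, which is a countable set. A holomorphic map from the connected manifold $Y$ with countable image is constant. So $F^g(y)=F^g(A)$ for all $y$, and the subbundle $\mathcal{F}^g=R^{g-1}f_*\Omega^g_{X/Y}$ is the constant subbundle $F^g(A)\otimes\OO_Y$.

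\emph{Step 3: conclude.} From Step 2,
\[
\mathcal{H}^{g-1,g}\cong\bigl(H^{2g-1}(A,\C)/F^g(A)\bigr)\otimes\OO_Y\cong H^{g-1,g}(A)\otimes\OO_Y\cong\OO_Y^{\oplus g},
\]
and $H_\Z$ embeds as the constant lattice $H^{2g-1}(A,\Z)$.

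By the canonical isomorphism $A\cong(H^{2g-1}(A,\C)/F^g)/H^{2g-1}(A,\Z)$ recalled before \Cref{def_variation_HS_Jacobian_fibration}, this lattice is identified with $\Lambda\subset\C^g$. Therefore the quotient is $J_H=\mathcal{H}^{g-1,g}/H_\Z\cong Y\times\C^g/\Lambda=Y\times A$. Passing to sheaves of local sections gives $\mathcal{J}_H\cong\OO_Y^{\oplus g}/\Lambda$, where $\Lambda$ is viewed as the constant subsheaf.
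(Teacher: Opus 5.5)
Your proof is correct, and it is the direct verification from \Cref{def_variation_HS_Jacobian_fibration} that the paper leaves implicit: the corollary is stated right after Claudon's classification without a written proof. Trivial monodromy trivializes $H_\Z$, isotriviality makes the period map constant, and then $\mathcal{H}^{g-1,g}/H_\Z$ is visibly $Y\times A$ with sheaf of sections $\OO_Y^{\oplus g}/\Lambda$. Your countability argument for the constancy of $y\mapsto F^g(y)$ is the one step that is not purely formal, and it is sound; continuity of the period map together with connectedness of $Y$ is already enough.
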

\begin{definition}\label{definition_isotrivial_and_trivial_vhs}
    In the situation of \Cref{corollary_isotrivial_and_trivial_vhs}, we denote the $\mathcal{J}_H$ torsor associated to $\eta\in H^1(Y,\mathcal{J}_H)$ by $\pi^\eta\colon(Y\times A)^\eta\rightarrow Y$.
\end{definition}
The following proposition builds on \cite[][Thm. 3.20, Prop. 3.23]{Claudon_Hoering_Lin_fundamental_group}, \cite[][Prop. 2.5]{Claudon_tori_fiber_bundles} and \cite[][Prop. 1.33]{Nakayama_elliptic-fibrations}.
\begin{proposition}\label{fundamental_properties_special_torus_bundles}\cite[][Prop. 2.7]{Pietig_1-forms_Kaehler_threefolds}
    Let $Y$ be a compact connected K\"ahler manifold, let $A=\C^g/\Lambda$ be a torus, and let $\eta\in H^1(Y,\OO_Y^{\oplus g}/\Lambda)$.
    \begin{enumerate}[label=(\roman*)]
        \item\label{torus_bundle_kaehler} The complex manifold $(Y\times A)^\eta$ is K\"ahler if and only if $c(\eta)\in H^2(Y,\Lambda)$ is torsion.
        \item\label{1-forms_on_torus_bundles} Suppose that $(Y\times A)^\eta$ is K\"ahler. Let $\pi\colon (Y\times A)^\eta\rightarrow Y$ be the projection. Then for every $y\in Y$ the restriction map
        \begin{equation*}
            H^0((Y\times A)^\eta,\Omega^1_{(Y\times A)^\eta})\longrightarrow H^0(\pi^{-1}(y),\Omega^1_{\pi^{-1}(y)})\cong H^0(A,\Omega^1_A)
        \end{equation*}
        fits into a short exact sequence
        \begin{equation*}
            0\longrightarrow H^0(Y,\Omega^1_Y)\longrightarrow H^0((Y\times A)^\eta,\Omega^1_{(Y\times A)^\eta})\longrightarrow H^0(A,\Omega^1_A)\longrightarrow 0.
        \end{equation*}
        In particular, any holomorphic 1-form $\omega\in H^0((Y\times A)^\eta,\Omega^1_{(Y\times A)^\eta})$ that does not come from $Y$ has no zeros.
        \item\label{torus_bundle_multisection} The following is equivalent:
        \begin{enumerate}[label=(\alph*)]
            \item\label{torus_bundle_multisection_a} The torus fiber bundle $(Y\times A)^\eta\rightarrow Y$ admits a multisection.
            \item\label{torus_bundle_multisection_b} There is a finite \'etale cover $Y'\rightarrow Y$ such that $(Y\times A)^\eta\times_YY'\cong Y'\times A$.
            \item\label{torus_bundle_multisection_c} The cohomology class $\eta\in H^1(Y,\OO_Y^{\oplus g}/\Lambda)$ is torsion.
        \end{enumerate}
    \end{enumerate}
\end{proposition}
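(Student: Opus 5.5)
We take the three assertions in the order \ref{torus_bundle_multisection}, \ref{torus_bundle_kaehler}, \ref{1-forms_on_torus_bundles}. The basic tool throughout is that $X^\eta:=(Y\times A)^\eta$ is a $\mathcal{J}$-torsor for $\mathcal{J}=\OO_Y^{\oplus g}/\Lambda$. Fiberwise multiplication by $m$ therefore gives a finite \'etale surjection $[m]\colon X^\eta\rightarrow X^{m\eta}$ over $Y$, of degree $m^{2g}$.

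\emph{Part \ref{torus_bundle_multisection}.}
\begin{itemize}
\item[\ref{torus_bundle_multisection_a}$\Rightarrow$\ref{torus_bundle_multisection_c}:] Take a multisection of degree $m$. Locally over $Y$ it consists of $m$ points counted with multiplicity, which we add using the torsor structure. The result is a point of $X^{m\eta}$, since a sum of $m$ points of a $\mathcal{J}$-torsor lies in the torsor twisted by $m\eta$. This defines a holomorphic section of $X^{m\eta}$; holomorphy across the branch locus holds because symmetric functions of the local branches are holomorphic. Hence $m\eta=0$ by \Cref{classification_torus_fiber_bundles}.
\item[\ref{torus_bundle_multisection_c}$\Rightarrow$\ref{torus_bundle_multisection_b}:] If $m\eta=0$, then $X^{m\eta}\cong Y\times A$ has the zero section $\sigma_0$. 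Set $Y':=[m]^{-1}(\sigma_0(Y))$. It is finite \'etale over $Y$, being locally the $m$-torsion translates of a local section. Take a connected component of $Y'$. The pullback of $X^\eta$ to it acquires a tautological section, so it is trivial by \Cref{classification_torus_fiber_bundles} and \Cref{corollary_isotrivial_and_trivial_vhs}.
\item[\ref{torus_bundle_multisection_b}$\Rightarrow$\ref{torus_bundle_multisection_a}:] The image of $Y'\times\{0\}$ in $X^\eta$ is a multisection.
\end{itemize}

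\emph{Part \ref{torus_bundle_kaehler}, direction ``$c(\eta)$ torsion $\Rightarrow$ K\"ahler''.} Suppose $m\,c(\eta)=0$. The exact sequence
\begin{equation*}
H^1(Y,\OO_Y^{\oplus g})\xrightarrow{\ \exp\ }H^1(Y,\mathcal{J})\xrightarrow{\ c\ }H^2(Y,\Lambda)
\end{equation*}
gives $m\eta=\exp(\xi)$ for some $\xi\in H^1(Y,\OO_Y^{\oplus g})$. By Hodge theory on the compact K\"ahler manifold $Y$, the map $H^1(Y,\Lambda\otimes\mathbb{R})\rightarrow H^1(Y,\OO_Y^{\oplus g})$ is surjective, so $\xi$ is the image of a real class $u$. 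Then $X^{m\eta}$ is the flat bundle $(\tilde Y\times A)/\pi_1(Y)$, where $\pi_1(Y)$ acts on $A$ by translations through $u$. A product of a $\pi_1(Y)$-invariant K\"ahler form on $\tilde Y$ and a flat form on $A$ descends, so $X^{m\eta}$ is K\"ahler. Since $[m]$ is finite \'etale, $[m]^*$ of that K\"ahler form is a K\"ahler form on $X^\eta$.

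\emph{Part \ref{torus_bundle_kaehler}, direction ``K\"ahler $\Rightarrow$ $c(\eta)$ torsion''.} Here I would use Blanchard's theorem: for a smooth proper morphism with K\"ahler total space, the Leray spectral sequence with $\mathbb{Q}$-coefficients degenerates at $E_2$. The monodromy is trivial, so $H^0(Y,R^1\pi_*\Z)=H^1(A,\Z)=\Lambda^\vee$. The differential
\begin{equation*}
d_2\colon H^0(Y,R^1\pi_*\Z)\rightarrow H^2(Y,\Z)
\end{equation*}
is contraction with $c(\eta)\in H^2(Y,\Lambda)$; I would verify this on a \v{C}ech cocycle, exactly as for principal circle bundles. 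Degeneration forces $d_2\otimes\mathbb{Q}=0$, hence $c(\eta)$ is torsion. I expect the identification of $d_2$ with $c(\eta)$ to be the main technical point of the whole proposition. An alternative that I would also consider is to restrict a K\"ahler class to a fiber and argue that it extends.

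\emph{Part \ref{1-forms_on_torus_bundles}.} The pullback $H^0(Y,\Omega^1_Y)\rightarrow H^0(X^\eta,\Omega^1_{X^\eta})$ is injective because $\pi$ is surjective.

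For exactness in the middle, let $\omega$ be a form that restricts to zero on every fiber. Its components along the fibers vanish, so $\omega$ is a section of $\pi^*\Omega^1_Y$. Since the fibers are compact and connected, $\pi_*\pi^*\Omega^1_Y=\Omega^1_Y$, so $\omega$ is pulled back from $Y$.

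For surjectivity, Blanchard degeneration makes $H^1(X^\eta,\mathbb{Q})\rightarrow H^0(Y,R^1\pi_*\mathbb{Q})=H^1(A,\mathbb{Q})$ surjective. This restriction map is a morphism of Hodge structures, which is strict for the Hodge filtration. Hence $H^{1,0}(X^\eta)\rightarrow H^{1,0}(A)$ is surjective.

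Finally, let $\omega$ be a form not coming from $Y$. By exactness its restriction to each fiber is a nonzero form; since the restriction map does not depend on $y$, this form is the same element of $H^0(A,\Omega^1_A)$ for every fiber. A nonzero holomorphic 1-form on a torus is translation-invariant and therefore vanishes nowhere. Its nonvanishing on $T_x\pi^{-1}(y)$ then forces $\omega(x)\neq0$, so $\omega$ has no zeros.
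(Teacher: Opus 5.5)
The paper does not prove this proposition; it quotes it from \cite{Pietig_1-forms_Kaehler_threefolds}, which builds on Claudon--H\"oring--Lin, Claudon and Nakayama. Your argument is correct and follows the standard lines of those sources:
\begin{itemize}
\item For (i), K\"ahler $\Rightarrow$ torsion, you use Blanchard--Deligne degeneration together with the transgression $d_2(\phi)=\phi_*c(\eta)$.
\item For the converse of (i), you use flat bundles with translation monodromy.
\item For (iii), you use the trace of a multisection together with the isogeny $[m]$.
\item For (ii), you use strictness of morphisms of Hodge structures.
\end{itemize}
I have two remarks.

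\emph{A step in (ii) is out of order.} You prove middle exactness only for forms that vanish on \emph{every} fiber. The kernel of restriction to one fixed $\pi^{-1}(y)$ is a priori larger. What closes this is the $y$-independence you invoke only at the very end, so it has to be established before the exactness argument. It takes one line:
\begin{itemize}
\item Holomorphic 1-forms on the compact K\"ahler manifold $(Y\times A)^\eta$ are closed.
\item In a local trivialization $U\times A$, write $\omega=\sum_i f_i\,da_i+\sum_j g_j\,dy_j$. Compactness of $A$ makes $f_i$ and $g_j$ independent of $a$.
\item Then $d\omega=0$ gives $\partial f_i/\partial y_j=0$.
\item The gluing maps $(y,a)\mapsto(y,a+\eta_{ij}(y))$ leave the $da_i$-coefficients unchanged.
\end{itemize}
Hence $\omega|_{\pi^{-1}(y)}$ is the same element of $H^0(A,\Omega^1_A)$ for all $y$.

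\emph{The paper's own tools give a shortcut.} The argument of \Cref{special_local_trivialization} uses only that $c(\eta)$ is torsion. It needs two facts: torsion classes die in $H^2(Y,\Lambda\otimes\mathbb{R})$, and $H^1(Y,\Lambda\otimes\mathbb{R})\to H^1(Y,\OO_Y^{\oplus g})$ is surjective by Hodge theory, which you already use. It then shows that $\eta$ itself, not just $m\eta$, lifts to $H^1(Y,\underline{\C^g/\Lambda})$. So $(Y\times A)^\eta$ is already $(\widetilde{Y}\times A)/\pi_1(Y)$, with $\widetilde{Y}$ the universal cover and $\pi_1(Y)$ acting on $A$ by translations.

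This has two consequences:
\begin{itemize}
\item In the converse of (i) you can drop the detour through $[m]$, since the K\"ahler form descends directly.
\item Surjectivity in (ii) becomes elementary. The invariant forms $da_1,\dots,da_g$ on $\widetilde{Y}\times A$ descend and restrict to a basis of $H^0(A,\Omega^1_A)$ on every fiber.
\end{itemize}
With this route, Leray degeneration is needed only once, for K\"ahler $\Rightarrow$ $c(\eta)$ torsion. Your route instead never has to lift $\eta$, and it gets surjectivity in (ii) straight from the K\"ahler hypothesis.
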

\begin{lemma}\label{special_local_trivialization}
    Let $Y$ be a compact connected K\"ahler manifold, let $A=\C^g/\Lambda$ be a torus, and let $\eta\in H^1(Y,\OO_Y^{\oplus g}/\Lambda)$ such that $(Y\times A)^\eta$ is K\"ahler.Then there is an open cover $\{U_i\}_{i\in I}$ of $Y$ which trivializes $\pi^\eta\colon (Y\times A)^\eta\rightarrow Y$, and isomorphisms
    \begin{equation*}
        \varphi_i\colon (\pi^\eta)^{-1}(U_i)\stackrel{\sim}{\longrightarrow}U_i\times A
    \end{equation*}
    over $U_i$ for each $i\in I$, such that the local patches $U_i\times A$, $U_j\times A$ are glued along
    \begin{equation*}
        (U_i\cap U_j)\times A\stackrel{\id\times\mathrm{tr}(a_{ij})}{\longrightarrow} (U_i\cap U_j)\times A,
    \end{equation*}
    where $a_{ij}\in A$.
\end{lemma}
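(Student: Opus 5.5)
The plan is to show that $\eta$ lies in the image of the natural map $H^1(Y,\underline{A})\rightarrow H^1(Y,\OO_Y^{\oplus g}/\Lambda)$, where $\underline{A}$ is the sheaf of locally constant $A$-valued functions. Once this holds, a Čech cocycle representing $\eta$ can be taken with constant values $a_{ij}\in A$ on a suitable cover. The gluing of the trivial patches $U_i\times A$ along $\id\times\mathrm{tr}(a_{ij})$ is then a $\mathcal{J}_H$-torsor with class $\eta$. By the uniqueness part of \Cref{classification_torus_fiber_bundles}, it is isomorphic over $Y$ to $(Y\times A)^\eta$, and this gives the isomorphisms $\varphi_i$.

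To produce the constant lift, I would compare two short exact sequences of sheaves on $Y$:
\begin{equation*}
    0\rightarrow\Lambda\rightarrow\C^g\rightarrow\underline{A}\rightarrow 0
    \quad\text{and}\quad
    0\rightarrow\Lambda\rightarrow\OO_Y^{\oplus g}\rightarrow\OO_Y^{\oplus g}/\Lambda\rightarrow 0.
\end{equation*}
Here $\C^g$ and $\Lambda$ denote constant sheaves, and the first sequence maps to the second via $\C^g\hookrightarrow\OO_Y^{\oplus g}$. This gives a commutative ladder of long exact sequences:
\begin{equation*}
    H^1(Y,\C^g)\rightarrow H^1(Y,\underline{A})\rightarrow H^2(Y,\Lambda)\rightarrow H^2(Y,\C^g)
\end{equation*}
on top, and
\begin{equation*}
    H^1(Y,\OO_Y^{\oplus g})\rightarrow H^1(Y,\OO_Y^{\oplus g}/\Lambda)\stackrel{c}{\rightarrow} H^2(Y,\Lambda)\rightarrow H^2(Y,\OO_Y^{\oplus g})
\end{equation*}
on the bottom. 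The connecting maps into $H^2(Y,\Lambda)$ agree, since the identity on $\Lambda$ is the leftmost vertical map.

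The diagram chase runs as follows.
\begin{enumerate}[label=(\arabic*)]
    \item Since $(Y\times A)^\eta$ is Kähler, \Cref{fundamental_properties_special_torus_bundles}\ref{torus_bundle_kaehler} says $c(\eta)$ is torsion.
    \item Its image in $H^2(Y,\C^g)$ is therefore zero, because $H^2(Y,\C^g)$ is torsion free. By exactness of the top row, $c(\eta)=\delta(\alpha)$ for some $\alpha\in H^1(Y,\underline{A})$.
    \item Let $\bar\alpha$ be the image of $\alpha$ in $H^1(Y,\OO_Y^{\oplus g}/\Lambda)$. Then $\eta-\bar\alpha$ satisfies $c(\eta-\bar\alpha)=0$, so it equals $\exp(\beta)$ for some $\beta\in H^1(Y,\OO_Y^{\oplus g})$.
    \item Since $Y$ is compact Kähler, Hodge theory gives that $H^1(Y,\C)\rightarrow H^1(Y,\OO_Y)$ is surjective, being the projection onto $H^{0,1}$. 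So $\beta$ lifts to $\gamma\in H^1(Y,\C^g)$.
    \item The image $\gamma'$ of $\gamma$ in $H^1(Y,\underline{A})$ maps to $\exp(\beta)$. Hence $\eta$ is the image of $\alpha+\gamma'$.
\end{enumerate}
The one non-formal input is the Hodge-theoretic surjectivity in step (4), which is the key step, together with torsion-freeness of $H^2(Y,\C^g)$ in step (2).

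Finally, I would choose an open cover $\{U_i\}$ of $Y$ by small geodesically convex balls, so that all $U_i$ and all intersections $U_i\cap U_j$ are connected (indeed contractible). Such a cover is Leray for both sheaves. Hence $\alpha+\gamma'$ is represented by a Čech cocycle of locally constant, and therefore constant, sections $a_{ij}\in A$ on each $U_i\cap U_j$. The cocycle condition $a_{ij}+a_{jk}=a_{ik}$ makes the gluing well defined. The resulting torsor is $\pi^\eta$ up to isomorphism over $Y$, which produces the desired $\varphi_i$.
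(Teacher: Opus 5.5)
Your proposal is correct and follows the paper's proof essentially step for step. It uses the same ladder comparing $0\to\Lambda\to\C^g\to\C^g/\Lambda\to 0$ with $0\to\Lambda\to\OO_Y^{\oplus g}\to\OO_Y^{\oplus g}/\Lambda\to 0$, and the same chase: first lift the torsion class $c(\eta)$ using that $H^2(Y,\C^g)$ is torsion-free, then lift the remainder $\exp(\beta)$ using the Hodge-theoretic surjectivity $H^1(Y,\C^g)\twoheadrightarrow H^1(Y,\OO_Y^{\oplus g})$. Your final paragraph, which realizes the lifted class by constant \v{C}ech cocycles $a_{ij}$ on a good cover and identifies the glued torsor with $(Y\times A)^\eta$, only makes explicit what the paper leaves implicit when it says it suffices to lift $\eta$ to $H^1(Y,\C^g/\Lambda)$.
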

\begin{proof}
    Consider the commutative diagram of short exact sequences
    \[\begin{tikzcd}
	   0 & {\underline{\Lambda}} & {\underline{\mathbb{C}^g}} & {\underline{\mathbb{C}^g/\Lambda}} & 0 \\
	   0 & {\underline{\Lambda}} & {\mathcal{O}_Y^{\oplus g}} & {\mathcal{O}_Y^{\oplus g}/\underline{\Lambda}} & {0.}
	   \arrow[from=1-1, to=1-2]
	   \arrow[from=1-2, to=1-3]
	   \arrow[from=1-2, to=2-2]
	   \arrow[from=1-3, to=1-4]
	   \arrow[from=1-3, to=2-3]
	   \arrow[from=1-4, to=1-5]
	   \arrow[from=1-4, to=2-4]
	   \arrow[from=2-1, to=2-2]
	   \arrow[from=2-2, to=2-3]
	   \arrow[from=2-3, to=2-4]
	   \arrow[from=2-4, to=2-5]
    \end{tikzcd}\]
    Applying cohomology to it induces the following diagram with exact rows
    \[\begin{tikzcd}
	   {H^1(Y,\Lambda)} & {H^1(Y,\mathbb{C}^g)} & {H^1(Y,\mathbb{C}^g/\Lambda)} & {H^2(Y,\Lambda)} & {H^2(Y,\mathbb{C}^g)} \\
	   {H^1(Y,\Lambda)} & {H^1(Y,\mathcal{O}_Y^{\oplus g})} & {H^1(Y,\mathcal{O}_Y^{\oplus g}/\Lambda)} & {H^2(Y,\Lambda)} & {H^2(Y,\mathcal{O}_Y^{\oplus g}).}
	   \arrow[from=1-1, to=1-2]
	   \arrow["\id", from=1-1, to=2-1]
	   \arrow["\exp'", from=1-2, to=1-3]
	   \arrow[two heads, from=1-2, to=2-2]
	   \arrow["c'", from=1-3, to=1-4]
	   \arrow[from=1-3, to=2-3]
	   \arrow[from=1-4, to=1-5]
	   \arrow["\id", from=1-4, to=2-4]
	   \arrow[from=1-5, to=2-5]
	   \arrow[from=2-1, to=2-2]
	   \arrow["\exp", from=2-2, to=2-3]
	   \arrow["c", from=2-3, to=2-4]
	   \arrow[from=2-4, to=2-5]
    \end{tikzcd}\]
    Note that the map $H^1(Y,\C^g)\twoheadrightarrow H^1(Y,\OO_Y^{\oplus g})$ is surjective by the Hodge decomposition. It suffices to show that $\eta$ lifts to a class in $H^1(Y,\C^g/\Lambda)$. Note that since $H^2(Y,\C)\rightarrow H^2(Y,\OO_Y)$ is not injective, we cannot apply the five lemma. Instead we argue by hand.\par
    As $(Y\times A)^\eta$ is K\"ahler, $c(\eta)$ is torsion by \Cref{fundamental_properties_special_torus_bundles}. Since $H^2(Y,\C^g)$ is torsion free, there is a class $\alpha\in H^1(Y,\C^g/\Lambda)$ such that $c'(\alpha)=c(\eta)$.\par
    Denote the image of $\alpha$ in $H^1(Y,\OO_Y^{\oplus g}/\Lambda)$ by $\theta$. Then $c(\eta-\theta)=0$. In particular, there is a class $\beta\in H^1(Y,\OO_Y^{\oplus g})$ such that $\exp(\beta)=\eta-\theta$.\par
    Lift $\beta$ to a class $\gamma\in H^1(Y,\C^g)$. Then $\exp'(\gamma)+\alpha\in H^1(Y,\C^g/\Lambda)$ is a lift of $\eta\in H^1(Y,\OO_Y^{\oplus g}/\Lambda)$.
\end{proof}
\begin{lemma}\label{lemma_extension_of_etale_cover_of_elliptic_curves}
    Let $Y$ be a compact connected K\"ahler manifold and let $\varphi\colon A'=\C^g/\Lambda'\rightarrow A=\C^g/\Lambda$ be a finite \'etale cover of tori. Suppose $\eta\in H^1(Y,\OO_Y^{\oplus g}/\Lambda)$ is a cohomology class with trivial Chern class. Then there exists a non-unique cohomology class $\eta'\in H^1(Y,\OO_Y^{\oplus g}/\Lambda')$ that lifts $\eta$, i.e. the map
    \begin{equation*}
        H^1(Y,\OO_Y^{\oplus g}/\Lambda')\longrightarrow H^1(Y,\OO_Y^{\oplus g}/\Lambda)
    \end{equation*}
    obtained by applying cohomology to the short exact sequence $0\rightarrow\Lambda/\Lambda'\rightarrow\OO_Y^{\oplus g}/\Lambda'\rightarrow\OO_Y^{\oplus g}/\Lambda\rightarrow 0$ sends $\eta'$ to $\eta$. In particular, we can extend $\varphi\colon A'\rightarrow A$ to a finite \'etale cover
    \begin{equation*}
        (Y\times A')^{\eta'}\longrightarrow(Y\times A)^{\eta}.
    \end{equation*}
\end{lemma}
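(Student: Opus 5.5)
We have to show that the map $H^1(Y,\OO_Y^{\oplus g}/\Lambda')\rightarrow H^1(Y,\OO_Y^{\oplus g}/\Lambda)$ hits $\eta$ whenever $c(\eta)=0$. The approach is to pass through $H^1(Y,\OO_Y^{\oplus g})$, where both exponential sequences share the same middle term.

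\emph{Step 1: the compatibility diagram.} Since $\Lambda'\subset\Lambda$ (the isogeny $\varphi$ is induced by the identity of $\C^g$), the short exact sequences
\[0\rightarrow\Lambda'\rightarrow\OO_Y^{\oplus g}\rightarrow\OO_Y^{\oplus g}/\Lambda'\rightarrow 0\quad\text{and}\quad 0\rightarrow\Lambda\rightarrow\OO_Y^{\oplus g}\rightarrow\OO_Y^{\oplus g}/\Lambda\rightarrow 0\]
fit into a morphism of sequences. This morphism is the inclusion $\Lambda'\hookrightarrow\Lambda$ on the left, the identity on $\OO_Y^{\oplus g}$ in the middle, and the quotient $q\colon\OO_Y^{\oplus g}/\Lambda'\rightarrow\OO_Y^{\oplus g}/\Lambda$ on the right. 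The kernel of $q$ is the constant sheaf $\Lambda/\Lambda'$, which gives the sequence in the statement. Naturality of the long exact sequences then yields $q_*\circ\exp'=\exp$ on $H^1(Y,\OO_Y^{\oplus g})$, where $\exp'$ denotes the exponential map for $\Lambda'$.

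\emph{Step 2: lifting through the exponential.} Because $c(\eta)=0$, exactness of the bottom sequence gives a class $\beta\in H^1(Y,\OO_Y^{\oplus g})$ with $\exp(\beta)=\eta$. Set $\eta':=\exp'(\beta)$. Then $q_*(\eta')=\exp(\beta)=\eta$, as required. The class $\eta'$ is not unique: it depends on the choice of $\beta$, which is determined only up to the image of $H^1(Y,\Lambda)$, and one may also add elements of the image of $H^1(Y,\Lambda/\Lambda')$.

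\emph{Step 3: the finite \'etale cover.} Represent $\eta'$ by a \v{C}ech cocycle $(s'_{ij})$ of local holomorphic maps $U_i\cap U_j\rightarrow\C^g/\Lambda'$ on a fine enough cover. Its image $(q\circ s'_{ij})$ then represents $\eta$. The torsor $(Y\times A')^{\eta'}$ is glued from the patches $U_i\times A'$ via translation by $s'_{ij}$, and likewise $(Y\times A)^{\eta}$ is glued from $U_i\times A$ via translation by $q\circ s'_{ij}$. The local maps $\id\times\varphi\colon U_i\times A'\rightarrow U_i\times A$ commute with these gluings, because $\varphi$ is a group homomorphism: $\varphi(x+s'_{ij})=\varphi(x)+q(s'_{ij})$. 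The local maps therefore glue to a global morphism $(Y\times A')^{\eta'}\rightarrow(Y\times A)^{\eta}$ over $Y$. This morphism is locally $\id\times\varphi$, so it is finite \'etale of degree $[\Lambda:\Lambda']$.

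No step is a genuine obstacle. The only care needed is to check the commutativity $q_*\circ\exp'=\exp$, which follows from functoriality of connecting maps, and the compatibility of the gluings in Step 3. Note that the Chern class $c'(\eta')$ of the lift is zero automatically, since $\eta'$ lies in the image of $\exp'$.
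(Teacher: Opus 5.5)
Your proof is correct and follows the paper's own argument. The paper's proof is just the commutative diagram of the two exponential sequences, with the identity on $H^1(Y,\OO_Y^{\oplus g})$: $c(\eta)=0$ gives $\eta=\exp(\beta)$, and $\exp'(\beta)$ is the lift. Your Step 3 additionally spells out the gluing construction of the finite \'etale cover $(Y\times A')^{\eta'}\rightarrow(Y\times A)^{\eta}$, which the paper leaves implicit.
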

\begin{proof}
    This follows from the commutative diagram
    \[\begin{tikzcd}
	   {H^1(Y,\mathcal{O}_Y^{\oplus g})} & {H^1(Y,\mathcal{O}_Y^{\oplus g}/\Lambda')} & {H^2(Y,\Lambda')} \\
	   {H^1(Y,\mathcal{O}_Y^{\oplus g})} & {H^1(Y,\mathcal{O}_Y^{\oplus g}/\Lambda)} & {H^2(Y,\Lambda).}
	   \arrow["\mathrm{exp}", from=1-1, to=1-2]
	   \arrow[equals, from=1-1, to=2-1]
	   \arrow["c", from=1-2, to=1-3]
	   \arrow[from=1-2, to=2-2]
	   \arrow[from=1-3, to=2-3]
	   \arrow["\mathrm{exp}", from=2-1, to=2-2]
	   \arrow["c", from=2-2, to=2-3]
    \end{tikzcd}\]
\end{proof}
\begin{definition}\label{definition_extension_of_etale_cover_of_elliptic_curves}
    In the situation of \Cref{lemma_extension_of_etale_cover_of_elliptic_curves}, we call $(Y\times A')^{\eta'}\rightarrow(Y\times A)^{\eta}$ an \textit{extension} of $\varphi\colon A'\rightarrow A$.
\end{definition}
\subsection{Numerically \textit{K}-trivial varieties}
\begin{definition}\cite[][Def. 3.1]{Bakker_Guenancia_Lehn}
    A normal compact complex analytic variety $X$ with rational singularities is called \textit{numerically $K$-trivial}, if there is a positive integer $m$ such that $\mathcal{K}_X^m$ is invertible, and if $c_1(\mathcal{K}_X)=0$ in $H^2(X,\Q)$.
\end{definition}
\begin{theorem}\label{decomposition_numerically_K-trivial_Kaehler_variety}\cite[][Thm. A]{Bakker_Guenancia_Lehn}
    Let $X$ be a numerically $K$-trivial compact K\"ahler variety with log terminal singularities. Then there is a finite quasi-\'etale morphism $X'\rightarrow X$ such that
    \begin{equation*}
        X'\cong T\times\prod\limits_iY_i\times\prod\limits_jZ_j,
    \end{equation*}
    where $T$ is a torus, each $Y_i$ is an irreducible Calabi-Yau variety, and each $Z_j$ is an irreducible holomorphic symplectic variety.
\end{theorem}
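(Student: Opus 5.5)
Since this is the decomposition theorem of Bakker--Guenancia--Lehn, which the paper cites rather than proves, I would not attempt a direct analytic argument. The plan is to reduce to the projective case, where the decomposition is known by the work of Greb--Guenancia--Kebekus, Druel, Horbez and H\"oring--Peternell, and then transport it back along a locally trivial family.

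\emph{Step 1: reduce to canonical singularities with trivial canonical sheaf.} Choose $m$ minimal with $\mathcal{K}_X^m$ invertible. Since $c_1(\mathcal{K}_X)=0$ in $H^2(X,\Q)$ and $X$ has rational singularities, $\mathcal{K}_X^{m}$ is a flat, hence torsion, line bundle; I expect to see this via $\mathrm{Pic}^0$ and the Hodge decomposition on a resolution. Replacing $m$ by a multiple, $\mathcal{K}_X^m\cong\OO_X$. The index-one cover is then a finite quasi-\'etale morphism $X_1\to X$ with $\mathcal{K}_{X_1}\cong\OO_{X_1}$. By \Cref{canonical_sing_plus_invertible_ascends_if_quasi-etale}, or its klt analogue, $X_1$ has canonical singularities. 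Since compositions of finite quasi-\'etale maps are finite quasi-\'etale, it suffices to prove the theorem for $X_1$.

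\emph{Step 2: split off the torus.} Consider the Albanese map $X_1\to\mathrm{Alb}(X_1)$ of a resolution. I would show that it is, after a further quasi-\'etale cover, an isotrivial fibration that becomes a product $T\times X_2$ with $\tilde q(X_2)=0$. The expected argument is the usual one: reflexive $1$-forms on a $K$-trivial klt space are parallel with respect to the singular Ricci-flat metric, so they have no zeros and generate a trivial subsheaf of $\Omega^{[1]}$. One can then iterate over all quasi-\'etale covers; this needs the finiteness of the augmented irregularity $\tilde q$, which should follow from the Bochner principle, and a bound on the degrees of such covers.

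\emph{Step 3 (main obstacle): decompose $X_2$ into irreducible Calabi--Yau and irreducible holomorphic symplectic factors.} Here I would use algebraic approximation. First, show that the locally trivial deformation space of $X_2$ is smooth, via a $T^1$-lifting argument using $\mathcal{K}_{X_2}\cong\OO$ and the degeneration of the relevant Hodge-to-de Rham spectral sequence for rational singularities. Second, show that the period map in $H^2$ is a local submersion onto an open subset of $H^2(X_2,\C)$, or at least that its image has $H^{1,1}$-rational points, namely a Kähler class perturbed to a rational one, on a dense set of fibers. This gives a locally trivial family $\mathcal{X}\to S$ with central fiber $X_2$ whose projective fibers are dense. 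I expect establishing this density is the hard part, especially when $h^{2,0}=0$, where $X_2$ is already projective by Kodaira's criterion applied to a resolution.

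\emph{Step 4: transport the decomposition back to $X_2$.} On projective fibers, apply the projective singular Beauville--Bogomolov decomposition. Because the family is locally trivial, the fundamental group of the smooth locus and the holonomy of the singular Ricci-flat metric are constant in the family, so the splitting type is constant. The decomposition of the tangent sheaf into holonomy-irreducible foliations, which is determined by the holonomy representation, then deforms flatly to $X_2$. By the algebraic integrability criterion, the leaves are closed on projective fibers, and this closedness passes to the limit as a limit of product decompositions after a uniform finite quasi-\'etale cover. This yields $X'\cong T\times\prod_iY_i\times\prod_jZ_j$ on $X_2$.
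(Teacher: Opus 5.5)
The paper does not prove this statement; it quotes it from Bakker--Guenancia--Lehn and uses it as a black box. Your outline follows the architecture of that source: index-one cover, unobstructed locally trivial deformations, density of projective fibres, transport of the projective decomposition. As an argument, however, it contains one false step and leaves the two substantive steps as expectations.

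The false step is in Step~1: ``flat, hence torsion'' is wrong. A line bundle with vanishing rational Chern class need not be torsion (take a non-torsion point of $\mathrm{Pic}^0$ of a torus). Nothing in the Hodge theory of $\mathrm{Pic}^0$ of a resolution distinguishes $\mathcal{K}_X^m$ from an arbitrary numerically trivial line bundle. That $\mathcal{K}_X$ is torsion is an abundance statement in numerical dimension zero: Kawamata's theorem in the projective case, and a separate, non-trivial result for compact K\"ahler klt spaces. Without it you do not get the index-one cover with $\mathcal{K}_{X_1}\cong\OO_{X_1}$, and the rest of your argument depends on that cover. A smaller point: $X_1$ is canonical because it is klt with Cartier canonical divisor, so its discrepancies are integers $>-1$. \Cref{canonical_sing_plus_invertible_ascends_if_quasi-etale} does not give this, since it assumes canonical singularities downstairs.

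In Step~3 the density mechanism is misstated. The period map does not take values in an open subset of $H^2(X_2,\C)$; when $h^{2,0}=0$ it is constant, and that case is the trivial one, not the hard one. What is needed is that the map sending ($s$, a real class of type $(1,1)$ on $X_s$) to its image in $H^2(X_2,\mathbb{R})$ is submersive at a K\"ahler class $\omega$. Given a surjective Kodaira--Spencer map, this is exactly surjectivity of contraction with $\omega$, $H^1(X_2,\mathcal{T}_{X_2})\to H^2(X_2,\OO_{X_2})$. For $\mathcal{K}_{X_2}\cong\OO_{X_2}$ this comes from $\mathcal{T}_{X_2}\cong\Omega^{[n-1]}_{X_2}$, duality and a Lefschetz-type injectivity. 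That is the key idea of the approximation step, and you do not supply it.

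In Step~4, constancy of holonomy along the family and a flat deformation of the splitting are asserted without proof. The essential content is also not the existence of a splitting of the tangent sheaf: the singular Ricci-flat metric exists on compact K\"ahler spaces, and its holonomy already gives one. What must be shown is that the leaves are closed and yield a product after a quasi-\'etale cover, and ``closedness passes to the limit'' is precisely the claim that needs proof. One route is:
\begin{enumerate}[label=(\alph*)]
    \item spread the cover over the family, using local constancy of $\pi_1$ of the regular locus;
    \item show the split locus is open, since locally trivial deformations of a torus times irreducible Calabi--Yau and holomorphic symplectic factors remain products, because $H^1$ of the tangent sheaf splits;
    \item use a compactness argument for leaves of bounded volume to reach the central fibre.
\end{enumerate}
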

The following consequence is well-known. However, since we were unable to find a reference, we give a short proof.
\begin{corollary}\label{isogeny_from_Automorphisms_to_Albanese_numerically_K-trivial}
    Let $X$ be a numerically $K$-trivial compact K\"ahler variety with log terminal singularities. Then the following assertions hold true.
    \begin{enumerate}[label=(\roman*)]
        \item The natural pairing
        \begin{equation*}
            H^0(X,\mathcal{T}_X)\times H^0(X,\Omega_X^{[1]})\longrightarrow\C
        \end{equation*}
        is non-degenerate.
        \item The natural map
        \begin{equation*}
            \mathrm{Aut}^\circ(X)\longrightarrow\mathrm{Alb}(X)
        \end{equation*}
        is an isogeny.
    \end{enumerate}
\end{corollary}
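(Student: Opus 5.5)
The plan is to reduce both assertions to the split cover supplied by \Cref{decomposition_numerically_K-trivial_Kaehler_variety}, prove them there, and then descend along the cover. Let $\tau\colon X'\rightarrow X$ be a finite quasi-\'etale morphism with $X'\cong T\times\prod_iY_i\times\prod_jZ_j$. After replacing $\tau$ by its Galois closure, we may assume $\tau$ is Galois with group $G$; the Galois closure is again quasi-\'etale, and by \Cref{canonical_sing_plus_invertible_ascends_if_quasi-etale} it is again numerically $K$-trivial with klt singularities. Applying \Cref{decomposition_numerically_K-trivial_Kaehler_variety} once more, we keep a split form, possibly at the cost of replacing $X'$ by a further cover; the Galois property can be restored by taking the Galois closure again.

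First I treat the split variety $X'$. The factors $Y_i$ and $Z_j$ have no reflexive $1$-forms: irreducible Calabi--Yau and irreducible holomorphic symplectic varieties satisfy $h^0(\Omega^{[1]})=0$ on every quasi-\'etale cover. They also have no vector fields, since $\mathcal{T}\cong\Omega^{[n-1]}$ because $\mathcal{K}$ is trivial, and $h^0(\Omega^{[n-1]})=0$ for these factors. Using a K\"unneth-type decomposition of reflexive forms and vector fields on the product, I obtain
\[
H^0(X',\mathcal{T}_{X'})=H^0(T,\mathcal{T}_T)\quad\text{and}\quad H^0(X',\Omega^{[1]}_{X'})=H^0(T,\Omega^1_T).
\]
On the torus $T$ these two spaces are dual via the constant vector fields and the constant forms. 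Consequently $\mathrm{Aut}^\circ(X')=T$, acting by translation, and $\mathrm{Alb}(X')=T$. The natural map between them is then the identity, so both assertions hold for $X'$.

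Next I descend to $X$. Since $\tau$ is \'etale in codimension one, reflexive forms and vector fields on $X$ are exactly the $G$-invariant ones on $X'$, by reflexivity and extension over codimension two. Because $G$ acts compatibly on $H^0(\mathcal{T}_{X'})$ and on $H^0(\Omega^{[1]}_{X'})$, the pairing is $G$-invariant. A $G$-invariant nondegenerate pairing restricts to a nondegenerate pairing on invariants, because the invariant subspace of a representation pairs perfectly with the invariant subspace of its dual by averaging. This proves (i).

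For (ii), both $\mathrm{Aut}^\circ(X)$ and $\mathrm{Alb}(X)$ have dimension $h^0(\Omega^{[1]}_X)$ by (i). Here I use that $\mathrm{Aut}^\circ(X)$ is a connected complex Lie group with Lie algebra $H^0(X,\mathcal{T}_X)$, and that $\mathrm{Alb}(X)$ is the dual of $H^0(\Omega^{[1]}_X)$ modulo its lattice, since klt implies rational singularities. The differential of $\mathrm{Aut}^\circ(X)\rightarrow\mathrm{Alb}(X)$ at the identity is the map $v\mapsto(\omega\mapsto\omega(v))$, which is an isomorphism by (i). The differential is constant in $v$ because contraction of a vector field with a form gives a global holomorphic function, hence a constant.

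It follows that the homomorphism is surjective with discrete, hence finite, kernel, provided $\mathrm{Aut}^\circ(X)$ is compact. Compactness comes from $X'$: $\mathrm{Aut}^\circ(X)$ lifts to the centralizer of $G$ in $\mathrm{Aut}^\circ(X')=T$, up to a finite ambiguity, and is thus the image of a closed subgroup of a torus. Alternatively, compactness follows since $\mathrm{Aut}^\circ(X)$ is abelian of dimension equal to that of the compact target and the map is a covering.

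I expect the main obstacle to be the descent step. It requires two things: showing that vector fields on $X$ lift to $X'$, which holds because flows lift along the quasi-\'etale cover once we pass to the Galois closure, and confirming the vanishing statements for reflexive forms on the singular Calabi--Yau and holomorphic symplectic factors. For the vanishing I would cite Bakker--Guenancia--Lehn's definitions, which build in the cover-invariant vanishing of reflexive $1$-forms and of $(n-1)$-forms.
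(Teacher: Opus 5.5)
Your route is the paper's: compute on the split cover from \Cref{decomposition_numerically_K-trivial_Kaehler_variety} and then descend. Your averaging over $G$ is the Galois form of the paper's trace maps, i.e.\ of the identity $\langle\pi_*v,\omega\rangle=\deg\pi\cdot\langle v,\pi^*\omega\rangle$.

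The step that fails as written is the reduction to a \emph{Galois} split cover. The Galois closure of a split cover need not be split, and re-applying the decomposition theorem destroys the Galois property again. Nothing in your argument makes this alternation stop, so you never actually produce a group $G$ with $X'$ split and $X=X'/G$. Forcing this would need extra input, such as finiteness of the profinite fundamental group of the regular locus of the irreducible Calabi--Yau and symplectic factors.

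The fix is to drop the requirement. Let $X_1\to X$ be the split cover and $X_2\to X$ its Galois closure, with group $G$ and $X_1=X_2/H$. You never need the pairing on $X_2$ itself to be perfect. Take $0\neq v\in H^0(X_2,\mathcal{T}_{X_2})^G\subset H^0(X_2,\mathcal{T}_{X_2})^H=H^0(X_1,\mathcal{T}_{X_1})$, and choose $w\in H^0(X_1,\Omega^{[1]}_{X_1})$ with $\langle v,w\rangle\neq 0$. Since $v$ is $G$-invariant and $\langle v,w\rangle$ is a constant, every translate $g\cdot w$ pairs with $v$ to the same nonzero constant, so the average of $w$ over $G/H$ works. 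This is exactly the paper's trace along the non-Galois map $X_1\to X$, and the same argument works with forms and vector fields exchanged. Lifting vector fields also needs no flows: they lift over the \'etale locus, and $\mathcal{T}_{X'}$ is reflexive.

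For (ii), you correctly notice that compactness of $\mathrm{Aut}^\circ(X)$ needs an argument; the paper just asserts it. However, your ``alternative'' justification is wrong. A connected complex Lie group covering a compact torus need not be compact: $\C^*\cong\C/\Z\to\C/(\Z+i\Z)$ has kernel $\Z$. So an isomorphism on tangent spaces only gives a covering, not an isogeny.

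Keep your first argument, which also works without the Galois hypothesis. The elements $t\in T=\mathrm{Aut}^\circ(X_1)$ with $(t\times t)(X_1\times_X X_1)=X_1\times_X X_1$ form a closed subgroup $C$. Its identity component is a compact subtorus whose Lie algebra consists of the lifts of $H^0(X,\mathcal{T}_X)$. The group $C^\circ$ descends to a holomorphic action on the normal space $X$ and maps onto $\mathrm{Aut}^\circ(X)$, which is therefore compact.
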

\begin{proof}
    To prove the first assertion, let $\pi\colon X'\rightarrow X$ be the finite quasi-\'etale cover from \Cref{decomposition_numerically_K-trivial_Kaehler_variety}, and let $p_1\colon X'\rightarrow T$ be the projection onto the first factor. First note, that since irreducible Calabi-Yau varieties and irreducible holomorphic symplectic varieties do not admit non-trivial reflexive 1-forms or holomorphic vector fields, there are isomorphisms
    \begin{align*}
        H^0(X',\Omega_{X'}^{[1]})&\cong H^0(T,\Omega_T^1),\\
        H^0(X',\mathcal{T}_{X'})&\cong H^0(T,\mathcal{T}_T).
    \end{align*}
    Since the pairing $H^0(T,\mathcal{T}_T)\times H^0(T,\Omega_T^1)\rightarrow\C$ is non-degenerate, it follows that the pairing
    \begin{equation*}
        H^0(X',\mathcal{T}_{X'})\times H^0(X',\Omega_{X'}^{[1]})\longrightarrow\C
    \end{equation*}
    is non-degenerate.\par
    By \cite[][Thm. 1.11]{Kebekus_Schnell_pullback_reflexive_forms}, a reflexive 1-form $\omega$ on $X$ pulls back to a reflexive 1-form $\pi^*\omega$ on $X'$, and as $\pi$ is generically smooth, we obtain an injection
    \begin{equation*}
        \pi^*\colon H^0(X,\Omega_X^{[1]})\longhookrightarrow H^0(X',\Omega_{X'}^1).
    \end{equation*}
    Similarly, we can push holomorphic vector field forward along $\pi$:
    \begin{equation*}
        \pi_*\colon H^0(X',\mathcal{T}_{X'})\longrightarrow H^0(X,\mathcal{T}_X\otimes\pi_*\OO_{X'})\stackrel{\mathrm{trace}}{\longrightarrow}H^0(X,\mathcal{T}_X).
    \end{equation*}
    Let $Z\subset X'$ be a closed subset of codimension $\geq 2$ such that $X'\setminus Z\rightarrow X$ is \'etale. In particular, $\pi^*\Omega_X^{[1]}\vert_{X'\setminus Z}\cong\Omega_{X'}^{[1]}\vert_{X'\setminus Z}$, and $\pi^*\mathcal{T}_X\vert_{X'\setminus Z}\cong\mathcal{T}_{X'}\vert_{X'\setminus Z}$. Consider the map
    \begin{align*}
        H^0(X',\Omega_{X'}^{[1]})&\longrightarrow H^0(X'\setminus Z,\Omega_{X'}^{[1]}\vert_{X'\setminus Z})\cong H^0(X',\setminus Z,\pi^*\Omega_{X}^{[1]}\vert_{X'\setminus Z})\\
        &\longrightarrow H^0(X\setminus\pi(Z),\Omega_X^{[1]}\vert_{X\setminus\pi(Z)}\otimes\pi_*\OO_{X'}\vert_{X\setminus\pi(Z)})\\
        &\stackrel{\mathrm{trace}}{\longrightarrow}H^0(X\setminus\pi(Z),\Omega_X^{[1]}\vert_{X\setminus\pi(Z)}),
    \end{align*}
    resp.
    \begin{equation*}
        H^0(X,\mathcal{T}_X)\stackrel{\pi^*}{\longrightarrow} H^0(X',\pi^*\mathcal{T}_X)\longrightarrow H^0(X'\setminus Z,\pi^*\mathcal{T}_X\vert_{X'\setminus Z})\cong H^0(X'\setminus Z,\mathcal{T}_{X'}\vert_{X'\setminus Z}).
    \end{equation*}
    Since $\Omega_X^{[1]}$, resp. $\mathcal{T}_{X'}:=(\Omega_{X'}^1)^\vee$, is reflexive, and as $\pi(Z)\subset X$, resp. $Z\subset X'$ has codimension $\geq 2$, the restriction map
    \begin{equation*}
        H^0(X,\Omega_X^{[1]})\stackrel{\sim}{\longrightarrow}H^0(X\setminus\pi(Z),\Omega_X^{[1]}\vert_{X\setminus\pi(Z)}),
    \end{equation*}
    resp.
    \begin{equation*}
        H^0(X',\mathcal{T}_{X'})\stackrel{\sim}{\longrightarrow} H^0(X'\setminus Z,\mathcal{T}_{X'}\vert_{X'\setminus Z})
    \end{equation*}
    is an isomorphism. We hence obtain well-defined maps
    \begin{align*}
        \pi_*\colon H^0(X',\Omega_{X'}^{[1]})&\longrightarrow H^0(X,\Omega_X^{[1]})\\
        \pi^*\colon H^0(X,\mathcal{T}_X)&\longrightarrow H^0(X',\mathcal{T}_{X'}).
    \end{align*}
    Note that $\pi_*\circ\pi^*$ equals multiplication by $\deg\pi$.\par
    Let $\omega$ be a non-trivial reflexive 1-form on $X$. Since the pairing on $X'$ is non-degenerate, there exists a holomorphic vector field $v$ on $X'$ such that $\langle v,\pi^*\omega\rangle\neq 0$. Hence,
    \begin{equation*}
        \langle \pi_*v,\omega\rangle=\deg\pi\cdot\langle v,\pi^*\omega\rangle\neq 0.
    \end{equation*}
    Now, let $v$ be a non-trivial holomorphic vector field on $X$. Since the pairing on $X'$ is non-degenerate, there exists a reflexive 1-form $\omega$ on $X'$ such that $\langle\pi^*v,\omega\rangle\neq 0$. Hence,
    \begin{equation*}
        \langle v,\pi_*\omega\rangle=\deg\pi\cdot\langle\pi^*v,\omega\rangle\neq 0.
    \end{equation*}
    This proves the first assertion.\par
    To prove the second assertion, note that both groups are compact complex Lie groups with tangent spaces
    \begin{align*}
        T_{\id}\mathrm{Aut}^\circ(X)&\cong H^0(X,\mathcal{T}_X),\\
        T_{\id}\mathrm{Alb}(X)&\cong H^0(X,\Omega_X^{[1]})^\vee.
    \end{align*}
    By the first assertion, the natural map on tangent spaces
    \begin{equation*}
        T_{\id}\mathrm{Aut}^\circ(X)\longrightarrow T_{\id}\mathrm{Alb}(X)
    \end{equation*}
    is an isomorphism. This suffices to prove the second assertion.
\end{proof}
\section{Popa--Schnell for K\"ahler manifolds}
\begin{theorem}\label{Popa_Schnell_thm}\cite[][Thm. 2.1]{Popa_Schnell_Kodaira_dimension_zeros_1-forms}
    Let $X$ be a smooth projective variety, and let $f\colon X\rightarrow A$ be a morphism to an abelian variety. If there is an ample line bundle $\mathcal{L}$ on $A$ and a positive integer $d$ such that
    \begin{equation*}
        H^0(X,\mathcal{K}_X^d\otimes f^*\mathcal{L}^\vee)\neq 0,
    \end{equation*}
    then the zero locus $Z(f^*\omega)\neq\varnothing$ is non-empty for all $\omega\in H^0(A,\Omega_A^1)$.
\end{theorem}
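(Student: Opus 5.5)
Following Villadsen's argument, I would argue by contradiction: assume some $\omega\in H^0(A,\Omega^1_A)$ has $Z(f^*\omega)=\varnothing$, and put the Higgs-bundle cohomology of $X$ against the nonvanishing of $H^0(X,\mathcal{K}_X^d\otimes f^*\mathcal{L}^\vee)$. The tool is Simpson's structure theorem for the jump loci $\Sigma^k_m(X)\subset M_{\mathrm{Dol}}(X)$. Its irreducible components are torsion translates of images $g_{\mathrm{Dol}}(M_{\mathrm{Dol}}(T))$ for morphisms $g\colon X\to T$ to tori, and it holds on projective varieties, which is what we need here.

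\emph{Step 1 (vanishing from the missing zeros).} For any $\mathcal{M}\in\mathrm{Pic}^0(X)$, the Higgs complex $(\Omega^\bullet_X\otimes\mathcal{M},\wedge t f^*\omega)$ is exact for every $t\neq0$, because wedging with a nowhere vanishing 1-form gives an exact Koszul complex. So the line $\{(\mathcal{M},t f^*\omega)\}$ lies outside every $\Sigma^k_1(X)$ for $t\neq 0$. The structure theorem describes each component through a map $g$, and this lets us conclude that for the degree-$\dim X$ cohomology, i.e. $H^0(X,\mathcal{K}_X\otimes\mathcal{M})$ paired with the Higgs direction, the locus $\{\mathcal{M}\in f^*\mathrm{Pic}^0(A): H^k(X,\mathcal{K}_X\otimes\mathcal{M})\neq0\}$ is a union of torsion translates of subtori $f^*\mathrm{Pic}^0(A')$, where the $A'$ are quotients of $A$ on which $\omega$ does not descend. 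Equivalently, the generic vanishing loci of $\mathcal{K}_X$ twisted along $f$ are proper in the $\omega$-direction. Twisting by Higgs fields $t\cdot f^*\omega$ and using upper semicontinuity, all the components avoid the cone through $\omega$.

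\emph{Step 2 (using the section).} Take $0\neq s\in H^0(X,\mathcal{K}_X^d\otimes f^*\mathcal{L}^\vee)$. Pass to a cyclic cover or log resolution in the standard Viehweg/Popa--Schnell way, so that $s$ makes $\mathcal{K}_X\otimes f^*\mathcal{L}^{-1/d}$ effective in a multiplier-ideal sense. Then, as in Villadsen, twisting by elements of $\mathrm{Pic}^0(A)$ gives $H^0(X,\mathcal{K}_X\otimes f^*\mathcal{N})\neq0$ for all $\mathcal{N}$ in a set that cannot lie in a finite union of translates of proper subtori avoiding $\omega$. Ampleness of $\mathcal{L}$ supplies the needed positivity: the corresponding Fourier--Mukai/generic vanishing sheaf $f_*(\mathcal{K}_X^{\otimes}\cdot)$ has full support. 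Combining this with Step 1 gives the contradiction.

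The ``in particular'' then follows formally. If $f\colon X^{\mathrm{min}}\to Y$ is the Iitaka fibration of a good minimal model, pull back the ample class on $Y$ to get $H^0(\mathcal{K}^d\otimes (\text{pullback of ample from } \mathrm{Alb}(Y))^\vee)\neq0$. Then every 1-form pulled back from $Y$ vanishes somewhere, so the span of $\omega_1,\dots,\omega_g$ meets the forms from $Y$ trivially. Hence the kernel of $\mathrm{Alb}(X)\to\mathrm{Alb}(Y)$, which contains the image of a general fiber, has dimension $\geq g$.

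The main obstacle is Step 2. Making Villadsen's positivity transfer rigorous requires checking that the relevant cohomology support loci for the twisted canonical sheaf are governed by Simpson's theorem and not only by the plain Green--Lazarsfeld loci. I would first try to reduce to the case where $s$ defines a divisor with simple normal crossings, by taking a resolution, which does not change the zero locus condition in an essential way, and then apply Simpson's theorem to the resulting cyclic cover.
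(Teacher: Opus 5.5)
There is a genuine gap, and it sits exactly where you place the main obstacle. In your argument the positivity coming from $s$ and the exactness coming from a zero-free $f^*\omega$ never live on the same space.

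\emph{Where the two steps fail to meet.} Your two steps do combine when $d=1$. There the sections $s\cdot f^*t$ with $t\in H^0(A,\mathcal{L}\otimes\alpha)$ put $f^*\mathrm{Pic}^0(A)\times\{0\}$ into $\Sigma^n_1(X)$. By Hodge symmetry, the linear component containing this set also contains the Higgs fields $t f^*\omega$, which gives the contradiction. The reduction via multiplication by $2d$ does not bring you to $d=1$; it only makes $(\mathcal{K}_X\otimes f^*\mathcal{L}^\vee)^d$ effective.

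For $d>1$ the section $s$ alone does not give the nonvanishing you claim in Step 2 on $X$. Take $X=C\times F$ with $g(C)\geq 2$, $F$ a surface of general type with $p_g(F)=0$, and $f$ the projection followed by the Abel--Jacobi map. The hypothesis holds for $d\gg0$, yet $H^0(X,\mathcal{K}_X\otimes f^*\mathcal{N})=H^0(C,\mathcal{K}_C\otimes\mathcal{N}\vert_C)\otimes H^0(F,\mathcal{K}_F)=0$ for every $\mathcal{N}\in\mathrm{Pic}^0(A)$. Passing to other cohomological degrees does not rescue this. For Ein--Lazarsfeld's threefold of general type, maximal Albanese dimension and $\chi(\omega_X)=0$, every locus $\{\alpha: H^k(X,\mathcal{K}_X\otimes\alpha)\neq0\}$ is proper.

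\emph{Why moving to the cover does not help.} Your repair is to work on the cyclic cover $\varphi\colon Y\to X$ branched along $Z(s)$, with $h=f\circ\varphi$. This does make $\mathcal{K}_Y\otimes h^*\mathcal{L}^\vee$ effective, but it destroys Step 1. The form $h^*\omega=\varphi^*f^*\omega$ in general acquires zeros on the ramification locus over the zeros of $f^*\omega\vert_{Z(s)}$, and on exceptional divisors of the resolution. So the Koszul complexes on $Y$ need not be exact.

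The multiplier-ideal rephrasing does not help either. On $X$ the positivity sits in $\mathcal{B}^\vee\otimes\mathcal{K}_X=f^*\mathcal{L}$, where $\mathcal{B}:=\mathcal{K}_X\otimes f^*\mathcal{L}^\vee$. In general $\mathcal{B}^\vee\notin\mathrm{Pic}^\tau(X)$, so Simpson's theorem on $X$ says nothing about its twisted Dolbeault cohomology.

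\emph{The missing idea.} What you need is the comparison the paper uses: Villadsen's argument, written there for the K\"ahler version and specializing to this case.
\begin{enumerate}[label=(\roman*)]
    \item The section of $\varphi^*\mathcal{B}$ gives injections $\varphi^*(\mathcal{B}^\vee\otimes\Omega^k_X)\hookrightarrow\Omega^k_Y$ compatible with wedging by $f^*v$. This yields a map of complexes of graded $\mathrm{Sym}^\bullet V^\vee$-modules $\mathcal{B}^\vee\otimes\mathcal{C}^\alpha_{X,\bullet}\to R\varphi_*\mathcal{C}^\alpha_{Y,\bullet}$, hence a map $M^\alpha_\bullet\to N^\alpha_\bullet$ with image $L^\alpha_\bullet$.
    \item $L^\alpha_\bullet$ is supported in $q(Z_f)$. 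It is a quotient of $M^\alpha_\bullet$, whose derived fiber at $v$ is computed on $X$ by $\mathcal{B}^\vee\otimes\mathrm{Dol}(f^*\alpha,f^*v)_\bullet$. That complex is exact when $f^*v$ has no zeros, and twisting by $\mathcal{B}^\vee$ does no harm.
    \item $L^\alpha_\bullet$ is torsion free for general $\alpha$, as a submodule of $N^\alpha_\bullet$. The module $N^\alpha_\bullet$ is locally free because Simpson's theorem, applied on $Y$ where zeros of $h^*v$ are harmless, makes $v\mapsto\dim H^i_{\mathrm{Dol}}(Y,h^*\alpha,h^*v)$ constant once $\alpha$ avoids finitely many proper subsets of $\mathrm{Pic}^0(A)$.
    \item $L^\alpha_\bullet$ is nonzero. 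In the extreme degree the map is $f^*(\mathcal{L}\otimes\alpha)\to\varphi_*\mathcal{K}_Y\otimes f^*\alpha$, using $R\varphi_*\mathcal{K}_Y=\varphi_*\mathcal{K}_Y$. So that graded piece is $H^0(X,f^*(\mathcal{L}\otimes\alpha))\neq0$.
\end{enumerate}
A nonzero torsion-free module has full support, so $q(Z_f)=V$. Without this device, or its Hodge-module analogue in Popa--Schnell, your outline does not reach a contradiction.
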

We show that the result holds for compact connected K\"ahler manifolds.
\begin{theorem}\label{Popa_Schnell_Kaehler_case}
    Let $X$ be a compact connected K\"ahler manifold, and let $f\colon X\rightarrow A$ be a morphism to an abelian variety. If there is an ample line bundle $\mathcal{L}$ on $A$ and a positive integer $d$ such that
    \begin{equation*}
        H^0(X,\mathcal{K}_X^d\otimes f^*\mathcal{L}^\vee)\neq 0,
    \end{equation*}
    then the zero locus $Z(f^*\omega)\neq\varnothing$ is non-empty for all $\omega\in H^0(A,\Omega_A^1)$.
\end{theorem}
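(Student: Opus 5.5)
We follow Villadsen's proof of \Cref{Popa_Schnell_thm} and check that each step only uses Hodge theory and Simpson's structure theorem for cohomology jump loci, both of which hold for compact K\"ahler manifolds. The argument is by contradiction. Suppose $\omega\in H^0(A,\Omega^1_A)$ is such that $f^*\omega$ has no zeros. For any unitary flat line bundle $P$ on $X$ (for instance $P=f^*P_\alpha$ with $\alpha\in\hat A$), the Higgs complex
\begin{equation*}
    \bigl(\Omega_X^\bullet\otimes P,\ \wedge f^*\omega\bigr)
\end{equation*}
is then exact, since a nowhere vanishing 1-form yields an exact Koszul complex. Hence, by the hypercohomology spectral sequence, $(P,f^*\omega)\notin\Sigma^k_1(X)$ for all $k$. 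The same holds for $(P,tf^*\omega)$ with $t\neq 0$.

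Next we use the jump loci. By Simpson's structure theorem (valid for compact K\"ahler $X$, as noted in the outline), every irreducible component of $\Sigma^k_m(X)$ is a torsion translate of an image $g_{\mathrm{Dol}}(M_{\mathrm{Dol}}(T))$ for a morphism $g\colon X\to T$ to a torus. Since $\Sigma^k_1$ is closed and $\C^*$-stable, the previous paragraph shows that no such component contains a point $(P,0)$ whose image component has $f^*\omega$ in the Higgs-field direction. In other words, for every component $(L_0\otimes g^*\mathrm{Pic}^0(T))\times g^*H^0(T,\Omega^1_T)$ meeting $f^*\hat A\times\{0\}$ nontrivially, we have $f^*\omega\notin g^*H^0(T,\Omega^1_T)$.

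Then we exploit the section $s\in H^0(X,\mathcal K_X^d\otimes f^*\mathcal L^\vee)$. Following Villadsen (or Popa--Schnell's Lemma), after possibly passing to an isogeny of $A$ we write $\mathcal L=\mathcal M^{d}$ and consider $B=\mathcal K_X\otimes f^*\mathcal M^{-1}$, so that $B^d$ has a section. A cyclic covering trick together with generic vanishing or Hodge symmetry gives nonvanishing of $H^0(X,\mathcal K_X\otimes f^*\mathcal M^{-1}\otimes f^*P_\alpha)$, or its Dolbeault analogue $H^{n}$ via Serre duality, for all $\alpha$ in a positive-dimensional family, and in fact for all $\alpha\in\hat A$. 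Here $n=\dim X$. Hence $f^*\hat A\subset\Sigma^n_1$ after twisting. This relies on Hodge theory on the cyclic cover, which is again K\"ahler, and on the Kähler version of Green--Lazarsfeld. The resulting component through $f^*\hat A$ therefore has the form
\begin{equation*}
    g_{\mathrm{Dol}}(M_{\mathrm{Dol}}(T)) \quad\text{with } g \text{ such that } f^*H^0(A,\Omega^1_A)\subset g^*H^0(T,\Omega^1_T).
\end{equation*}
It then contains $(P,f^*\omega)$, which contradicts exactness.

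The main obstacle is the third step: transporting the ampleness of $\mathcal L$ into the statement that the relevant jump locus contains all of $f^*M_{\mathrm{Dol}}(A)$, including the Higgs directions. In Villadsen's argument this uses a nonvanishing of $H^0(\mathcal K_X\otimes f^*\mathcal L^{-1}\otimes\cdots)$ obtained by a cyclic cover and Hodge-theoretic arguments. I would check carefully that every ingredient there survives without projectivity of $X$. Projectivity of $A$ is still available, which handles the ampleness, and for $X$ only the Kähler identities, harmonic theory for unitary flat bundles, and Simpson's theorem in the Kähler setting are needed.
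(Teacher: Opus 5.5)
Your first two steps are fine, but the whole argument rests on the third, which you flag as the obstacle yourself. The mechanism you sketch there does not work.

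The loci $\Sigma^k_m(X)$ live in $M_{\mathrm{Dol}}(X)\cong\mathrm{Pic}^\tau(X)\times H^0(X,\Omega^1_X)$ and only involve topologically trivial line bundles. So a nonvanishing of $H^0(X,\mathcal{K}_X\otimes f^*\mathcal{M}^{-1}\otimes f^*P_\alpha)$ with $\mathcal{M}$ ample says nothing about any $\Sigma^k_m(X)$, and ``$f^*\hat A\subset\Sigma^n_1$ after twisting'' is not something Simpson's theorem applies to.

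The nonvanishing itself is also false in general. A section of $B^d$ gives no section of $B\otimes f^*P_\alpha$. Even the larger group $H^0(X,\mathcal{K}_X\otimes f^*P_{\alpha'})$, into which the twisted one injects after multiplying by a section of a translate of $\mathcal{M}$, vanishes for general $\alpha'$ in a standard example. This is the Ein--Lazarsfeld threefold of general type and maximal Albanese dimension with $\chi(\mathcal{K}_X)=0$, and the hypothesis of the theorem holds there with $f=\mathrm{alb}_X$.

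The nonvanishing that is actually available is $H^0(X,B^{-1}\otimes\mathcal{K}_X\otimes f^*P_\alpha)=H^0(X,f^*(\mathcal{M}\otimes P_\alpha))\neq 0$ for general $\alpha$. It becomes a statement about canonical bundles twisted by topologically trivial line bundles only on the resolved cyclic cover $\varphi\colon Y\to X$. There, via $\varphi^*B^{-1}\hookrightarrow\OO_Y$, it gives $H^0(Y,\mathcal{K}_Y\otimes h^*P_\alpha)\neq 0$. On $Y$, however, your contradiction is gone: $h^*\omega=\varphi^*f^*\omega$ can acquire zeros over $Z(s)$ and along exceptional divisors, so the Koszul complex on $Y$ need not be exact.

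The missing idea is a device coupling $X$, where Koszul exactness holds, with $Y$, where the nonvanishing holds. Take $B$ as above (the paper's $\mathcal{B}$), $V=H^0(A,\Omega^1_A)$ and $S=\mathrm{Sym}^\bullet V^\vee$. Following Villadsen, the paper forms two graded $S$-modules:
\begin{itemize}
\item $M^\alpha_\bullet$, from the universal Koszul complex $B^{-1}\otimes\Omega^\bullet_X\otimes f^*P_\alpha\otimes S$ on $X$;
\item $N^\alpha_\bullet$, from its analogue $\Omega^\bullet_Y\otimes h^*P_\alpha\otimes S$ on $Y$.
\end{itemize}
The inclusions $B^{-1}\otimes\Omega^k_X\hookrightarrow\varphi_*\Omega^k_Y$ induce a map $M^\alpha_\bullet\to N^\alpha_\bullet$, and the argument then runs as follows.
\begin{itemize}
\item Koszul exactness on $X$ puts the support of $M^\alpha_\bullet$, hence of the image $L^\alpha_\bullet$, inside $q(Z_f)$.
\item The image is nonzero, because its degree $g-n$ piece is $H^0(X,f^*(\mathcal{M}\otimes P_\alpha))$.
\item Simpson's theorem is used on $Y$, not to find a component containing the Higgs directions. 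Instead it shows that for general $\alpha$ every linear subvariety either contains or misses $\{h^*P_\alpha\}\times h^*V$. Hence $v\mapsto\dim H^i_{\mathrm{Dol}}(Y,h^*P_\alpha,h^*v)$ is constant and $N^\alpha_\bullet$ is torsion free.
\end{itemize}
A nonzero submodule of a torsion-free module has full support, so $q(Z_f)=V$. Without this step, or a substitute for it, your third step does not go through.
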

We claim that the proof by Villadsen \cite{Villadsen_proof_Popa-Schnell} using Higgs bundles holds true in the K\"ahler setting. For the convenience of the reader, we summarize the argument here.\par
Recall that a Higgs line bundle $(\mathcal{L},\omega)$ on $X$ consists of a line bundle $\mathcal{L}$ on $X$ together with a holomorphic 1-form $\omega\in H^0(X,\Omega_X^1)$. A Higgs line bundle $(\mathcal{L},\omega)$ hence induces the \textit{Dolbeault complex}
\begin{equation*}
    \mathrm{Dol}(\mathcal{L},\omega)_\bullet:=\left[\mathcal{L}\stackrel{\omega\wedge}{\longrightarrow}\mathcal{L}\otimes\Omega_X^1\stackrel{\omega\wedge}{\longrightarrow}\cdots\stackrel{\omega\wedge}{\longrightarrow}\mathcal{L}\otimes\Omega_X^n\right].
\end{equation*}
The \textit{Dolbeault cohomology} of the Higgs line bundle $(\mathcal{L},\omega)$ is defined as the hypercohomology
\begin{equation*}
    H^k_{\mathrm{Dol}}(X,\mathcal{L},\omega):=H^k(R\Gamma(X,\mathrm{Dol}(\mathcal{L},\omega)_\bullet)).
\end{equation*}
Denote by
\begin{equation*}
    \pic^\tau(X):=\ker(\pic(X)\stackrel{c_1}{\longrightarrow}H^2(X,\Z)/\mathrm{torsion})\subset\pic(X)
\end{equation*}
the subgroup of the Picard group consisting of line bundles $\mathcal{L}$ such that $c_1(\mathcal{L})$ is torsion. The \textit{Dolbeault moduli space} $M_{\mathrm{Dol}}(X)$ of Higgs line bundles is thus isomorphic to
\begin{equation*}
    M_{\mathrm{Dol}}(X)\cong\pic^\tau(X)\times H^0(X,\Omega_X^1).
\end{equation*}
For integers $k$, $m$ define the cohomology jump loci
\begin{equation*}
    \Sigma^k_m(X):=\{(\mathcal{L},\omega)\in M_{\mathrm{Dol}}(X)\mid\dim H_{\mathrm{Dol}}^k(X,\mathcal{L},\omega)\geq m\}.
\end{equation*}
Consider a morphism $g\colon X\rightarrow T$ to a torus. We obtain a pullback map
\begin{equation*}
    g_{\mathrm{Dol}}\colon M_{\mathrm{Dol}}(T)\longrightarrow M_{\mathrm{Dol}}(X).
\end{equation*}
A \textit{linear subvariety} $L\subset M_{\mathrm{Dol}}(X)$ is a translate of $\im(g_{\mathrm{Dol}})$, where $g\colon X\rightarrow T$ is a morphism to a torus.
\begin{remark}\label{proper_linear_subvariety}
    The following observation will be useful in the proof of \Cref{Popa_Schnell_Kaehler_case}. Let $L\subset M_{\mathrm{Dol}}(X)$ be a linear subvariety. Then $L$ is a connected component of $M_{\mathrm{Dol}}(X)$ if and only if the forgetful morphism
    \begin{equation*}
        M_{\mathrm{Dol}}(X)\longrightarrow\pic^\tau(X)
    \end{equation*}
    sends $L$ surjectively to a connected component of $\pic^\tau(X)$. Indeed, Write $L=(\mathcal{L},\omega)+\im(g_{\mathrm{Dol}})$, where $g\colon X\rightarrow T$ is a morphism to a torus. Write
    \begin{equation*}
        \im(g_{\mathrm{Dol}})=g^*\pic^0(T)\times g^*H^0(T,\Omega_T^1).
    \end{equation*}
    Then $L$ is a connected component of $M_{\mathrm{Dol}}(X)$ if and only if $g^*\pic^0(T)=\pic^0(X)$ and $g^*H^0(T,\Omega_T^1)=H^0(X,\Omega_X^1)$. Hodge theoretic arguments show that if $g^*\pic^0(T)=\pic^0(X)$ then $g^*H^0(T,\Omega_T^1)=H^0(X,\Omega_X^1)$ is automatic. Thus, $L$ is a connected component of $M_{\mathrm{Dol}}(X)$ if and only if $g^*\pic^0(T)=\pic^0(X)$, and the latter assertion is equivalent to the condition that $L$ is mapped surjectively to a connected component of $\pic^\tau(X)$ by the forgetful morphism
    \begin{equation*}
        M_{\mathrm{Dol}}(X)\longrightarrow\pic^\tau(X).
    \end{equation*}
\end{remark}
The following theorem was proved by Simpson for smooth complex projective varieties. For a proof in the K\"ahler case, see \cite[][Cor. 18.6]{Schnell_generic_vanishing}.
\begin{theorem}\label{structure_cohomology_jump_loci}\cite[][Thm. 4.2]{Simpson_moduli_rank_1_local_systems}
    Every cohomology jump locus $\Sigma^k_m(X)$ is a finite union of linear subvarieties of $M_{\mathrm{Dol}}(X)$.
\end{theorem}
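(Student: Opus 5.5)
The plan is to follow Simpson's original strategy, which only uses Hodge theory on $X$ and hence carries over to compact K\"ahler manifolds. It compares the three incarnations of the moduli space of rank one objects: $M_{\mathrm{B}}(X)=\mathrm{Hom}(\pi_1(X),\C^*)$, $M_{\mathrm{DR}}(X)$ (flat line bundles) and $M_{\mathrm{Dol}}(X)\cong\pic^\tau(X)\times H^0(X,\Omega^1_X)$.

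\textbf{Step 1: correspondence.} In rank one the nonabelian Hodge correspondence is elementary. A harmonic representative of a class in $H^1(X,\C)$ splits into $(0,1)$ and $(1,0)$ parts by the Hodge decomposition, and this gives a real-analytic isomorphism $M_{\mathrm{Dol}}(X)\cong M_{\mathrm{B}}(X)$. Moreover, $M_{\mathrm{DR}}(X)\cong M_{\mathrm{B}}(X)$ is complex analytic via Riemann--Hilbert. The key input is that hypercohomology is preserved: $H^k_{\mathrm{Dol}}(X,\mathcal{L},\omega)$ and the cohomology $H^k(X,L)$ of the corresponding rank one local system $L$ have the same dimension. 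This follows from the formality/$\partial\bar\partial$-lemma for harmonic bundles (Simpson's Lemma 2.2 analogue), which holds on any compact K\"ahler manifold. Hence $\Sigma^k_m(X)$ corresponds to the Betti jump locus.

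\textbf{Step 2: algebraic properties of the jump loci.} On the Betti side, the jump locus is an algebraic subvariety of the algebraic torus $M_{\mathrm{B}}(X)$, defined over $\Q$, since it is cut out by rank conditions on a finite complex of free $\Z[\pi_1]$-modules. On the Dolbeault side it is a closed complex analytic subvariety by semicontinuity, invariant under $\C^*$ scaling of $\omega$. On the de Rham side it is analytic as well. Thus each irreducible component $S$ is simultaneously complex analytic in all three complex structures, algebraic in $M_{\mathrm{B}}$ and defined over $\bar{\Q}$.

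\textbf{Step 3: the main obstacle.} The hard part is Simpson's rigidity statement: such an $S$ is a torsion translate of a ``triple torus''. The first attempt is Simpson's argument. Via the exponential map $H^1(X,\C)\to M_{\mathrm{B}}(X)$, a subvariety analytic in both $M_{\mathrm{DR}}$ (linear structure of $H^1(X,\C)$ modulo lattice) and $M_{\mathrm{B}}$ (algebraic torus) is shown to be a translate of a subtorus. This uses the Ax--Lindemann/Gelfond--Schneider type transcendence result that an algebraic subvariety whose preimage under $\exp$ is analytic-linear must be a translated subtorus. Definition over $\bar\Q$ then forces the translates to be torsion points. Alternatively, one can follow Schnell's proof (Cor.~18.6) via the Fourier--Mukai/Hodge module route.

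\textbf{Step 4: conclusion.} A triple subtorus through the origin corresponds to a sub-Hodge structure $V\subset H^1(X,\Z)$ closed under the Hodge decomposition. The quotient of $\mathrm{Alb}(X)$ by the annihilator gives a torus $T$ and a morphism $g\colon X\to\mathrm{Alb}(X)\to T$ with $\im(g_{\mathrm{Dol}})$ equal to that subtorus. Translating back to $M_{\mathrm{Dol}}$ yields that every component of $\Sigma^k_m(X)$ is $(\mathcal{L},\omega)+\im(g_{\mathrm{Dol}})$. Finiteness of the union follows from noetherianity of the algebraic variety $M_{\mathrm{B}}$.
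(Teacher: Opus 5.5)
\textbf{There is a genuine gap in Step~3.} The paper does not prove this statement itself: it cites Simpson for projective $X$ and Schnell's notes (Cor.~18.6) for the K\"ahler case. Your Steps~1, 2 and~4 are fine. Step~3, however, carries the entire content of the theorem, and it does not work as written.

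``Analytic in $M_{\mathrm{DR}}$'' and ``analytic in $M_{\mathrm{B}}$'' are the same condition, since Riemann--Hilbert is a biholomorphism (as you say in Step~1). So they impose no rigidity: a general algebraic curve in $(\C^*)^{b_1}\cong M_{\mathrm{B}}^\circ$ with $b_1\geq 2$ satisfies both and is not a translated subtorus. The transcendence statement you quote assumes the preimage under $\exp$ is already affine-linear, which is precisely what has to be proved.

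The transcendence tools you name also do not transfer. An Ax-type comparison of two \emph{algebraic} structures on one complex Lie group, and Gelfond--Schneider for torsion, need algebraic and $\bar{\Q}$-structures on $M_{\mathrm{DR}}$. For a non-projective K\"ahler $X$, $\pic^0(X)$ is in general a non-algebraic torus and $X$ has no model over $\bar{\Q}$. So your assertion that Simpson's strategy ``only uses Hodge theory on $X$'' fails exactly at this step. Your torsion claim is likewise unsupported in the K\"ahler case, though the statement does not need it.

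\textbf{The missing idea} is to use what Step~2 provides and Step~3 never uses: analyticity for the Dolbeault complex structure $I$, which genuinely differs from the Betti structure $J$.
\begin{enumerate}
\item Translate a component into the identity component. This is allowed because $M_{\mathrm{Dol}}\cong M_{\mathrm{B}}$ is a group isomorphism.
\item Lift it to $\tilde{Z}\subset H^1(X,\C)$. It is analytic for $J$ (multiplication by $i$) and for $I$. These are constant complex structures that anticommute and are orthogonal for the flat $L^2$-metric on harmonic forms.
\item On the dense open set where $\tilde{Z}$ is smooth for both structures, the second fundamental form satisfies $\mathrm{II}(u,Iw)=I\,\mathrm{II}(u,w)$ and $\mathrm{II}(u,Jw)=J\,\mathrm{II}(u,w)$. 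Computing $\mathrm{II}(Iu,Jw)$ in two ways, using symmetry, gives $IJ\,\mathrm{II}=JI\,\mathrm{II}=-IJ\,\mathrm{II}$, hence $\mathrm{II}=0$.
\item Therefore every irreducible component of $\tilde{Z}$ is an affine subspace $v+W$ with $W$ stable under both $I$ and $J$.
\item Algebraicity in $M_{\mathrm{B}}$ makes $\exp(W)$ a subtorus, so $W=W_{\Q}\otimes\C$ with $W_{\Q}\subset H^1(X,\Q)$.
\item For such a conjugation-stable $W$, $I$-stability is equivalent to $W=W^{1,0}\oplus W^{0,1}$.
\end{enumerate}
This rational sub-Hodge structure is exactly the input your Step~4 requires.
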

\begin{lemma}\label{Villadsen3}\cite[][Lem. 11.1]{Popa_Schnell_Kodaira_dimension_zeros_1-forms}
    Let $f\colon X\rightarrow A$ be a morphism from a compact connected K\"ahler manifold $X$ to an abelian variety $A$. Suppose there is an ample line bundle $\mathcal{L}$ on $A$ such that
    \begin{equation*}
        H^0(X,\mathcal{K}_X^d\otimes f^*\mathcal{L}^\vee)\neq 0
    \end{equation*}
    for some positive integer $d$. Then there exists an isogeny $\varphi\colon A'\rightarrow A$, and an ample line bundle $\mathcal{L}'$ on $A'$ such that
    \begin{equation*}
        H^0(X',(\mathcal{K}_{X'}\otimes f'^*\mathcal{L}'^\vee)^d)\neq 0,
    \end{equation*}
    where $f'\colon X':=X\times_AA'\rightarrow A'$.
\end{lemma}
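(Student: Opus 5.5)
The plan is to take for $\varphi$ the multiplication map $[m]\colon A\rightarrow A$ with $m=d$, and to arrange that $\varphi^*\mathcal{L}$ becomes an exact $d$-th power of an ample line bundle. The only input about $X$ is that the base change $X'=X\times_AA'\rightarrow X$ is finite \'etale; the rest is a computation on the abelian variety $A$.

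First, set $A':=A$ and $\varphi:=[d]$, which is an isogeny, hence finite \'etale. Then $\mu\colon X'=X\times_AA'\rightarrow X$ is finite \'etale and surjective, so $\mathcal{K}_{X'}\cong\mu^*\mathcal{K}_X$. Moreover $\mu^*f^*\mathcal{L}\cong f'^*\varphi^*\mathcal{L}$. Pulling back a nonzero section of $\mathcal{K}_X^d\otimes f^*\mathcal{L}^\vee$ along the surjective map $\mu$ gives a nonzero section of
\begin{equation*}
    \mathcal{K}_{X'}^d\otimes f'^*(\varphi^*\mathcal{L})^\vee .
\end{equation*}
The space $X'$ may be disconnected. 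This is harmless, since we only need a nonzero section; alternatively one restricts to a connected component on which the pulled-back section is nonzero.

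Second, we show that $\varphi^*\mathcal{L}\cong\mathcal{L}'^{\,d}$ for some ample $\mathcal{L}'$ on $A'$. By the theorem of the cube,
\begin{equation*}
    [d]^*\mathcal{L}\cong\mathcal{L}^{(d^2+d)/2}\otimes(-1)^*\mathcal{L}^{(d^2-d)/2}.
\end{equation*}
Since $(-1)^*\mathcal{L}\otimes\mathcal{L}^\vee\in\pic^0(A)$, this yields $[d]^*\mathcal{L}\cong\mathcal{L}^{d^2}\otimes P$ with $P\in\pic^0(A)$. The group $\pic^0(A)$ is a complex torus and hence divisible, so $P\cong Q^{d}$ for some $Q\in\pic^0(A)$. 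Set $\mathcal{L}':=\mathcal{L}^{d}\otimes Q$. It is ample, being an ample bundle twisted by an element of $\pic^0$, and it satisfies $\mathcal{L}'^{\,d}\cong[d]^*\mathcal{L}$.

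Combining the two steps,
\begin{equation*}
    \mathcal{K}_{X'}^d\otimes f'^*(\varphi^*\mathcal{L})^\vee\cong(\mathcal{K}_{X'}\otimes f'^*\mathcal{L}'^\vee)^d
\end{equation*}
has a nonzero section, which is the claim. No step is a serious obstacle. The only points needing care are the cube-theorem identity for a possibly non-symmetric $\mathcal{L}$ together with the divisibility of $\pic^0(A)$, and the possible disconnectedness of $X'$. None of this uses projectivity of $X$, so the K\"ahler assumption causes no difficulty.
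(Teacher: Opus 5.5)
Your proof is correct and is essentially the paper's argument: pull back along a multiplication isogeny, which is harmless for the K\"ahler manifold $X$ because $X'\rightarrow X$ is finite \'etale. The only difference is that the paper (following Popa--Schnell) uses $[2d]$, for which the cube formula directly gives $[2d]^*\mathcal{L}\cong(\mathcal{L}^{2d+1}\otimes(-1)^*\mathcal{L}^{2d-1})^{d}$, whereas your choice of $[d]$ needs the extra, and valid, step of extracting a $d$-th root in the divisible group $\pic^0(A)$.
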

\begin{proof}
    One uses the multiplication by $2d$-map on $A$, which does not require projectivity of $X$.
\end{proof}
\begin{construction}\label{construction_Villadsen_2}
    In the situation and notation from \Cref{Villadsen3}, denote the finite \'etale cover by $\tau\colon X'\rightarrow X$. Note that to show $Z(f^*\omega)\neq\varnothing$ for all $\omega\in H^0(A,\Omega_A^1)$, it suffices to show $Z(f'^*\omega)\neq\varnothing$ for all $\omega\in H^0(A',\Omega_{A'}^1)$. We can thus replace $f\colon X\rightarrow A$ by $f'\colon X'\rightarrow A'$, and $\mathcal{L}$ by $\mathcal{L}'$ to assume
    \begin{equation*}
        H^0(X,(\mathcal{K}_{X}\otimes f^*\mathcal{L}^\vee)^d)\neq 0.
    \end{equation*}
    Set $\mathcal{B}:=\mathcal{K}_X\otimes f^*\mathcal{L}^\vee$, and denote by $s$ a non-zero global section of $\mathcal{B}^d$.\par
    Let $\pi\colon X_d\rightarrow X$ be the $d$-fold cyclic cover ramified along $Z(s)$, and let $Y\rightarrow X_d$ be a resolution of singularities.
    \[\begin{tikzcd}
	   Y & {X_d} & X \\
	   && A
	   \arrow[from=1-1, to=1-2]
	   \arrow["\varphi", curve={height=-12pt}, from=1-1, to=1-3]
	   \arrow["h"', from=1-1, to=2-3]
	   \arrow["\pi", from=1-2, to=1-3]
	   \arrow["f", from=1-3, to=2-3]
    \end{tikzcd}\]
    By construction,
    \begin{equation*}
        X_d=\mathrm{Spec}\left(\bigoplus\limits_{i=0}^{d-1}\mathcal{B}^{-i}\right).
    \end{equation*}
    Therefore,
    \begin{equation*}
        \pi_*\pi^*\mathcal{B}=\bigoplus\limits_{i=-1}^{d-2}\mathcal{B}^{-i},
    \end{equation*}
    which has a global section from the $i=0$ summand. The corresponding global section of $\pi^*B$ gives rise to a map
    \begin{equation*}
        \varphi^*\mathcal{B}^\vee\longrightarrow\OO_Y,
    \end{equation*}
    which is an isomorphism away from $Z(s)$. This induces injective maps
    \begin{equation*}
        \varphi^*(\mathcal{B}^\vee\otimes\Omega_X^k)\longhookrightarrow\Omega_Y^k.
    \end{equation*}
    Since $\OO_X\rightarrow\varphi_*\OO_Y$ is injective, the corresponding maps
    \begin{equation}\label{maps_Villadsen4}
        \mathcal{B}^\vee\otimes\Omega_X^k\longrightarrow\varphi_*\Omega_Y^k
    \end{equation}
    are also injective.\par
    Denote by $V:=H^0(A,\Omega_A^1)$ the space of holomorphic 1-forms on $A$, and consider the incidence variety
    \begin{equation*}
        Z_f:=\{(x,\omega)\in X\times V\mid (f^*\omega)_x=0\}\subset X\times V
    \end{equation*}
    and denote by $p\colon X\times V\rightarrow X$, $q\colon X\times V\rightarrow V$ the respective projections. Note that $q$ sends $Z_f$ to the set of holomorphic 1-forms, which have a zero. To prove \Cref{Popa_Schnell_Kaehler_case}, it hence suffices to show that $q(Z_f)=V$.\par
    Define $S:=\mathrm{Sym}^\bullet V^\vee$, and denote by $S[i]$ the graded $S$-module $S$ itself with grading $S[i]_j=\mathrm{Sym}^{i+j}V^\vee$. Set $n:=\dim X$ and $g:=\dim A$. Let $\mathcal{C}_{X,\bullet}$ be the complex of graded $\OO_X\otimes S$-modules
    \begin{equation*}
        \mathcal{C}_{X,\bullet}:=\left[\OO_X\otimes S[-g]\longrightarrow\Omega_X^1\otimes S[-g+1]\longrightarrow\cdots\longrightarrow\Omega_X^n\otimes S[-g+n]\right],
    \end{equation*}
    in degrees $-n$ to $0$. The differentials are induced by
    \begin{align*}
        \OO_X\otimes V&\longrightarrow\Omega_X^1\\
        \varphi\otimes\omega&\longmapsto\varphi f^*\omega.
    \end{align*}
    For $\alpha\in\pic^0(A)$, define $\mathcal{C}_{X,\bullet}^\alpha:=\mathcal{C}_{X,\bullet}\otimes f^*\alpha$.\par
    Similarly, define the complex of graded $\OO_Y\otimes S$-modules $\mathcal{C}_{Y,\bullet}$ as
    \begin{equation*}
        \mathcal{C}_{Y,\bullet}:=\left[\OO_Y\otimes S[-g]\longrightarrow\Omega_Y^1\otimes S[-g+1]\longrightarrow\cdots\longrightarrow\Omega_Y^n\otimes S[-g+n]\right],
    \end{equation*}
    in degrees $-n$ to $0$. For $\alpha\in\pic^0(A)$, define $\mathcal{C}_{Y,\bullet}^\alpha:=\mathcal{C}_{Y,\bullet}\otimes h^*\alpha$.
\end{construction}
\begin{lemma}\label{Villadsen4}\cite[][Lem. 2.4]{Villadsen_proof_Popa-Schnell}
    The maps in \cref{maps_Villadsen4} from \Cref{construction_Villadsen_2} induce a map of complexes of graded $\OO_X\otimes\mathcal{S}$-modules
    \begin{equation*}
        \mathcal{B}^\vee\otimes\mathcal{C}_{X,\bullet}^\alpha\longrightarrow R\varphi_*\mathcal{C}_{Y,\bullet}^\alpha.
    \end{equation*}
\end{lemma}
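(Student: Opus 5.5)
We need to construct a map of complexes $\mathcal{B}^\vee\otimes\mathcal{C}^\alpha_{X,\bullet}\to R\varphi_*\mathcal{C}^\alpha_{Y,\bullet}$. The plan is to first build an honest map of complexes $\mathcal{B}^\vee\otimes\mathcal{C}^\alpha_{X,\bullet}\to\varphi_*\mathcal{C}^\alpha_{Y,\bullet}$ termwise. We then compose it with the natural morphism $\varphi_*\mathcal{C}^\alpha_{Y,\bullet}\to R\varphi_*\mathcal{C}^\alpha_{Y,\bullet}$, which exists for any complex of sheaves, for instance via a Godement or injective resolution in the category of graded $\OO_Y\otimes S$-modules.

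In cohomological degree $-n+k$, the term on the left is $\mathcal{B}^\vee\otimes\Omega^k_X\otimes f^*\alpha\otimes S[-g+k]$. The term $\varphi_*$ of the right-hand complex is $\varphi_*(\Omega^k_Y\otimes h^*\alpha)\otimes S[-g+k]$. Since $h=f\circ\varphi$, the projection formula gives $\varphi_*(\Omega_Y^k\otimes\varphi^*f^*\alpha)\cong\varphi_*\Omega^k_Y\otimes f^*\alpha$. We therefore define the degree $-n+k$ component as the injection \eqref{maps_Villadsen4}, tensored with $f^*\alpha$ and with $\mathrm{id}_{S[-g+k]}$. This map is $\OO_X\otimes S$-linear and preserves the grading by construction, because $S$ only enters as a tensor factor.

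It remains to check that these components commute with the differentials. Both differentials are given by wedging with the tautological element $\sum_i f^*\omega_i\otimes e_i$, where $\omega_i$ is a basis of $V$ and $e_i$ the dual basis of $V^\vee\subset S$. On $Y$ the form $f^*\omega_i$ is replaced by $h^*\omega_i=\varphi^*f^*\omega_i$. The map \eqref{maps_Villadsen4} is the composite of $\varphi^*\colon\Omega^k_X\to\varphi_*\Omega^k_Y$ with multiplication by the section $t$ of $\varphi^*\mathcal{B}$ induced from $\varphi^*\mathcal{B}^\vee\to\OO_Y$. Pullback of forms is compatible with wedge products, so $\varphi^*(f^*\omega_i\wedge\beta)=h^*\omega_i\wedge\varphi^*\beta$. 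Multiplication by the function-like section $t$ commutes with wedging by a 1-form. The twist by $f^*\alpha$ is inert throughout. Hence the squares commute.

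The main point of care is to set up \eqref{maps_Villadsen4} precisely as the composite of pullback of forms with the multiplication by $t$, rather than as an abstract injection. Once it is written this way, the compatibility with the differentials is immediate, and the remaining work is bookkeeping of twists and gradings.
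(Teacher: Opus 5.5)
Your proposal is correct and is essentially the paper's argument. The paper starts from the same compatible maps $\varphi^*(\mathcal{B}^\vee\otimes\mathcal{C}^\alpha_{X,\bullet})\to\mathcal{C}^\alpha_{Y,\bullet}$ and applies $R\varphi_*$, the projection formula and the unit $\OO_X\to R\varphi_*\OO_Y$; this is the derived form of your step of mapping into $\varphi_*\mathcal{C}^\alpha_{Y,\bullet}$ and composing with $\varphi_*\mathcal{C}^\alpha_{Y,\bullet}\to R\varphi_*\mathcal{C}^\alpha_{Y,\bullet}$, and your explicit check that the differentials commute (pullback respects wedge products, contraction with $t$ commutes with wedging by $h^*\omega_i$) supplies a step the paper only asserts.
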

\begin{proof}
    The maps $\varphi^*(\mathcal{B}^\vee\otimes\Omega_X^k)\rightarrow\Omega_Y^k$ commute with the differentials on $Y$. This gives rise to
    \begin{equation*}
        \varphi^*(\mathcal{B}^\vee\otimes\mathcal{C}_{X,\bullet}^\alpha)\longrightarrow\mathcal{C}_{Y,\bullet}^\alpha.
    \end{equation*}
    Pushing forward along $\varphi$ thus yields
    \begin{equation*}
        R\varphi_*\varphi^*(\mathcal{B}^\vee\otimes\mathcal{C}_{X,\bullet}^\alpha)\longrightarrow R\varphi_*\mathcal{C}_{Y,\bullet}^\alpha.
    \end{equation*}
    The projection formula implies
    \begin{equation*}
        R\varphi_*\varphi^*(\mathcal{B}^\vee\otimes\mathcal{C}_{X,\bullet}^\alpha)\cong (\mathcal{B}^\vee\otimes\mathcal{C}_{X,\bullet}^\alpha)\otimes^LR\varphi_*\OO_Y.
    \end{equation*}
    The canonical map $\OO_X\rightarrow R\varphi_*\OO_Y$ thus gives rise to
    \begin{equation*}
        \mathcal{B}^\vee\otimes\mathcal{C}_{X,\bullet}^\alpha\longrightarrow(\mathcal{B}^\vee\otimes\mathcal{C}_{X,\bullet}^\alpha)\otimes^LR\varphi_*\OO_Y\longrightarrow R\varphi_*\mathcal{C}_{Y,\bullet}^\alpha.
    \end{equation*}
\end{proof}
Define the complexes of graded $S$-modules
\begin{align*}
    M_\bullet^\alpha&:=H^0(R\Gamma(X,\mathcal{B}^\vee\otimes\mathcal{C}_{X,\bullet}^\alpha)),\\
    N_\bullet^\alpha&:=H^0(R\Gamma(X,R\varphi_*\mathcal{C}_{Y,\bullet}^\alpha))=H^0(R\Gamma(Y,\mathcal{C}_{Y,\bullet}^\alpha)),
\end{align*}
where $H^0$ is the degree zero cohomology of the complexes of graded $S$-modules $R\Gamma(X,\mathcal{B}^\vee\otimes\mathcal{C}_{X,\bullet}^\alpha)$, resp. $R\Gamma(X,R\varphi_*\mathcal{C}_{Y,\bullet}^\alpha)$. In particular, $M_\bullet^\alpha$, and $N_\bullet^\alpha$ are graded $S$-modules, and the map from \Cref{Villadsen4} induces a map of graded $S$-modules
\begin{equation*}
    M_\bullet^\alpha\longrightarrow N_\bullet^\alpha.
\end{equation*}
\begin{lemma}\label{lemma_support_Z_f_image_non_zero_Villadsen}\cite[cf.][Lem. 14.1]{Popa_Schnell_Kodaira_dimension_zeros_1-forms}\cite[cf.][Proof of Thm. 1.1]{Villadsen_proof_Popa-Schnell}
    Define the graded $S$-module $L_\bullet^\alpha:=\im(M_\bullet^\alpha\rightarrow N_\bullet^\alpha)$. Then the following hold true.
    \begin{enumerate}[label=(\roman*)]
        \item The support of $L_\bullet^\alpha$ is contained in $q(Z_f)$.
        \item $L_\bullet^\alpha$ is non-zero.
    \end{enumerate}
\end{lemma}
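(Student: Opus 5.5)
For part (i) we will realize $M_\bullet^\alpha$ as the global sections of a coherent sheaf on $V$ supported on $q(Z_f)$; since $L_\bullet^\alpha$ is a quotient of $M_\bullet^\alpha$, its support is then contained in that of $M_\bullet^\alpha$. Part (ii) is settled by examining the lowest graded piece, where the complexes collapse to a single term and injectivity of \cref{maps_Villadsen4} applies directly. This follows Popa--Schnell and Villadsen.

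\emph{Part (i).} Graded $S$-modules correspond to $\C^*$-equivariant (quasi-)coherent sheaves on $V=\mathrm{Spec}\,S$. Under this dictionary, $\mathcal{B}^\vee\otimes\mathcal{C}^\alpha_{X,\bullet}$ is the $p_*$-graded incarnation of the complex on $X\times V$
\begin{equation*}
    \mathcal{K}^\alpha_\bullet:=\left[p^*(\mathcal{B}^\vee\otimes f^*\alpha)\longrightarrow p^*(\mathcal{B}^\vee\otimes f^*\alpha\otimes\Omega^1_X)\longrightarrow\cdots\longrightarrow p^*(\mathcal{B}^\vee\otimes f^*\alpha\otimes\Omega^n_X)\right],
\end{equation*}
up to the twists $S[-g+k]$. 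Its differential is wedge product with the tautological section $(x,\omega)\mapsto(f^*\omega)_x$ of $p^*\Omega^1_X$. Consequently $M_\bullet^\alpha$ is the module of global sections of $R^0q_*\mathcal{K}^\alpha_\bullet$.

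This complex is a Koszul complex attached to a section of a vector bundle. Wherever the section is nonzero, i.e.\ on the complement of $Z_f$, the complex is exact. Hence all cohomology sheaves of $\mathcal{K}^\alpha_\bullet$ are supported on $Z_f$. The hypercohomology spectral sequence $R^iq_*\mathcal{H}^j(\mathcal{K}^\alpha_\bullet)\Rightarrow R^{i+j}q_*\mathcal{K}^\alpha_\bullet$ then shows that $R^0q_*\mathcal{K}^\alpha_\bullet$ is supported on $q(Z_f)$. Since $X$ is compact, $q$ is proper, so Grauert's theorem makes everything coherent, and $q(Z_f)$ is a closed (conical) analytic subset.

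The expected obstacle is purely technical. One must match the algebraic graded-module language with analytic coherent sheaves on $V$, or else argue directly on the graded pieces using the $\C^*$-equivariance. I would handle it by noting that $V$ is affine and the whole construction is $\C^*$-equivariant and finite in each degree. It then suffices to check that any homogeneous element of $M^\alpha_\bullet$ is annihilated by a power of the ideal of $q(Z_f)$, which follows from the support statement for the coherent sheaf.

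\emph{Part (ii).} Look at the graded piece of degree $j=g-n$. Since $S[-g+k]_j=\mathrm{Sym}^{j-g+k}V^\vee$, this piece vanishes for $k<n$ and equals $\C$ for $k=n$. So in this degree both complexes are concentrated in cohomological degree $0$:
\begin{align*}
    M^\alpha_{g-n}&=H^0(X,\mathcal{B}^\vee\otimes\mathcal{K}_X\otimes f^*\alpha)=H^0(X,f^*(\mathcal{L}\otimes\alpha)),\\
    N^\alpha_{g-n}&=H^0(Y,\mathcal{K}_Y\otimes h^*\alpha),
\end{align*}
where we used $\mathcal{B}^\vee\otimes\mathcal{K}_X=f^*\mathcal{L}$. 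The map between them is $H^0$ of the injective sheaf map \cref{maps_Villadsen4} for $k=n$ (tensored with $f^*\alpha$), so it is injective on global sections. Finally, $\mathcal{L}\otimes\alpha$ is ample on the abelian variety $A$, so $h^0(A,\mathcal{L}\otimes\alpha)=\chi(\mathcal{L})>0$. Pulling back a nonzero section along $f$ gives a nonzero section of $f^*(\mathcal{L}\otimes\alpha)$, as $f$ is a morphism from the connected manifold $X$. Hence $L^\alpha_{g-n}\neq0$, which proves (ii) for every $\alpha\in\pic^0(A)$.
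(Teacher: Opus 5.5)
Your plan matches the paper's. For (i) you use exactness of the Koszul/Dolbeault complex away from $Z_f$. For (ii) you pass to the lowest graded piece $j=g-n$, where $M^\alpha_{g-n}=H^0(X,f^*(\mathcal{L}\otimes\alpha))$ injects into $N^\alpha_{g-n}$ by \cref{maps_Villadsen4}. Your remark that this injectivity needs no vanishing of $R^i\varphi_*\mathcal{K}_Y$ is correct, since a map from a sheaf in degree $0$ into $R\varphi_*$ factors through $\varphi_*$.

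\textbf{The gap is the final nonvanishing step.} You assert that the pullback of a nonzero $s\in H^0(A,\mathcal{L}\otimes\alpha)$ is nonzero ``as $f$ is a morphism from the connected manifold $X$''. Connectedness plays no role: $f^*s=0$ exactly when $f(X)\subset Z(s)$, and $f$ need not be surjective. For example, if $\mathcal{L}$ is a principal polarization, then $|\mathcal{L}\otimes\alpha|$ is a single translate of $\Theta$. For any $\alpha$ whose translate contains $f(X)$ (e.g.\ $f$ constant, or $f(X)$ a curve inside a translate of $\Theta$), every section of $\mathcal{L}\otimes\alpha$ pulls back to zero. So your argument does not establish $L^\alpha_{g-n}\neq 0$ for such $\alpha$, let alone ``for every $\alpha$''.

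\textbf{The missing ingredient is genericity of $\alpha$.} This is what the paper's ``general translate of a hyperplane section'' argument uses. Since $\mathcal{L}$ is ample, $\phi_{\mathcal{L}}\colon A\to\pic^0(A)$ is surjective, so $\mathcal{L}\otimes\alpha\cong t_a^*\mathcal{L}$ for some $a\in A$, and $t_a^*s$ has zero locus $Z(s)-a$. Fix $x_0\in X$. Then $f(x_0)\in Z(s)-a$ only for $a$ in the divisor $D:=Z(s)-f(x_0)$. Hence for $\alpha$ outside the proper closed subset $\phi_{\mathcal{L}}(D)$ you get $f^*t_a^*s\neq 0$, and so $L^\alpha_{g-n}\neq 0$. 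General $\alpha$ is all that the proof of \Cref{Popa_Schnell_Kaehler_case} needs, since \Cref{image_torsion_fee_Villadsen} is also only for general $\alpha$. If you want every $\alpha$, add upper semicontinuity of $\alpha\mapsto h^0(X,f^*(\mathcal{L}\otimes\alpha))$ on the irreducible $\pic^0(A)$.

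\textbf{On (i).} Your route via $Rq_*$ on $X\times V$ is sound in substance, but it forces the algebraic/analytic comparison that you only sketch. The paper avoids this by working with derived fibers. $R\Gamma(X,\mathcal{B}^\vee\otimes\mathcal{C}^\alpha_{X,\bullet})$ is a perfect complex of graded $S$-modules whose derived fiber at $v$ is $R\Gamma(X,\mathcal{B}^\vee\otimes\mathrm{Dol}(f^*\alpha,f^*v)_\bullet)$, and this vanishes when $f^*v$ has no zero. Nakayama then kills the complex, hence $M^\alpha_\bullet$, after localizing at $\mathfrak{m}_v$.
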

\begin{proof}
    Denote for $v\in V$ its maximal ideal in $S$ by $\mathfrak{m}_v\subset S$. To prove the first assertion, note that there is a quasi-isomorphism
    \begin{equation*}
        R\Gamma(X,\mathcal{B}^\vee\otimes\mathcal{C}_{X,\bullet}^\alpha)\otimes^L_SS/\mathfrak{m}_v\cong R\Gamma(X,\mathcal{B}^\vee\otimes\mathrm{Dol}(f^*\alpha,f^*v)_\bullet)
    \end{equation*}
    for all $v\in V$. If $f^*v$ has no zero, then $\mathrm{Dol}(f^*\alpha,f^*v)$ is exact, and hence also $\mathcal{B}^\vee\otimes\mathrm{Dol}(f^*\alpha,f^*v)_\bullet$ is exact. Thus, $R\Gamma(X,\mathcal{B}^\vee\otimes\mathcal{C}_{X,\bullet}^\alpha)\otimes^L_SS/\mathfrak{m}_v=0$ for all $v\in V$ such that $f^*v$ has no zero. This shows that $M_\bullet^\alpha$ is supported on $q(Z_f)$. Since $L_\bullet^\alpha$ is the image of $M_\bullet^\alpha$ under the map $M_\bullet^\alpha\rightarrow N_\bullet^\alpha$, also $L_\bullet^\alpha$ is supported on $q(Z_f)$.\par
    For the second assertion, let $k=g-n$. Then $\mathcal{C}_{X,k}=\mathcal{K}_X$, and $\mathcal{C}_{Y,k}=\mathcal{K}_Y$. Since $\varphi$ is generically finite, $R\varphi_*\mathcal{K}_Y\cong\varphi_*\mathcal{K}_Y$ by \cite[][Thm. 1]{Takegoshi_relative_vanishing}. Thus, the map $\mathcal{B}^\vee\otimes\mathcal{C}_{X,k}^\alpha\rightarrow R\varphi_*\mathcal{C}_{Y,k}^\alpha$ from \Cref{Villadsen4} is just the map $\mathcal{B}^\vee\otimes\mathcal{K}_X\otimes f^*\alpha\rightarrow\varphi_*\mathcal{K}_Y\otimes f^*\alpha$.\par
    For the graded $S$-module $L_\bullet^\alpha$, the above identification implies that $L_k^\alpha\cong H^0(X,f^*(\mathcal{L}\otimes\alpha))$. If $H^0(X,f^*(\mathcal{L}\otimes\alpha))=0$ for general $\alpha$, then $H^0(A,f_*\OO_X\otimes\mathcal{L}\otimes\alpha)=0$ for general $\alpha$. This implies that $f$ maps $X$ to a general translate of a hyperplane section in $A$, a contradiction. Thus, $L_\bullet^\alpha$ is non-zero.
\end{proof}
\begin{lemma}\label{image_torsion_fee_Villadsen}\cite[cf.][Prop. 2.2]{Villadsen_proof_Popa-Schnell}
    $N_\bullet^\alpha$ is torsion free for general $\alpha$.
\end{lemma}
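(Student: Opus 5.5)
The plan is to run Villadsen's argument (Prop. 2.2 in his paper) on $Y$ and check that each input has a Kähler analogue. The required Kähler input is \Cref{structure_cohomology_jump_loci} (Simpson's theorem in Schnell's Kähler version).

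First I would reinterpret $N_\bullet^\alpha$. By construction, $N_\bullet^\alpha$ is a degree-zero hypercohomology of the Koszul-type complex $\mathcal{C}_{Y,\bullet}^\alpha$ built from $h^*V\subset H^0(Y,\Omega^1_Y)$ and the line bundle $h^*\alpha$. For $v\in V$ there is a base change quasi-isomorphism
\begin{equation*}
    R\Gamma(Y,\mathcal{C}_{Y,\bullet}^\alpha)\otimes^L_S S/\mathfrak{m}_v\cong R\Gamma(Y,\mathrm{Dol}(h^*\alpha,h^*v)_\bullet)[\text{shift}].
\end{equation*}
Hence the fibers of the complex $R\Gamma(Y,\mathcal{C}^\alpha_{Y,\bullet})$ of graded $S$-modules over the points of $V$ are Dolbeault cohomologies of the Higgs line bundles $(h^*\alpha,h^*v)$. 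This places us on the slice $h_{\mathrm{Dol}}(\{\alpha\}\times V)$ of $M_{\mathrm{Dol}}(Y)$.

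Next I would show that the cohomology of this complex has the expected form. The standard route (Popa--Schnell, Villadsen) is as follows. If $\alpha$ is general, then $(h^*\alpha,h^*v)$ avoids every linear subvariety in the jump loci $\Sigma^k_m(Y)$ that does not contain the whole slice. By \Cref{structure_cohomology_jump_loci} these loci are finite unions of translates of $\im(g_{\mathrm{Dol}})$. The slice $\{h^*\alpha\}\times h^*V$ meets a linear subvariety $(\mathcal{M},\theta)+g^*\mathrm{Pic}^0(T)\times g^*H^0(T,\Omega^1_T)$ only in an affine subspace of $V$. If that subvariety does not contain the full $\mathrm{Pic}^0$-direction of $h^*\mathrm{Pic}^0(A)$, then for general $\alpha$ the intersection is empty. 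So, for general $\alpha$, the dimension of $H^k_{\mathrm{Dol}}(Y,h^*\alpha,h^*v)$ is governed by linear subspaces through the origin in $V$, namely the subspaces $\{v: h^*v\in g^*H^0(T,\Omega^1_T)\}$. I would use this together with a linearity statement: Simpson's formality, valid in the Kähler case since $Y$ is compact Kähler, identifies the complex locally near $0$ with the Koszul-type complex on $H^\bullet(Y,h^*\alpha)$ with differential $\wedge h^*v$. That complex is linear in $v$, so $R\Gamma(Y,\mathcal{C}_{Y,\bullet}^\alpha)$ is quasi-isomorphic to a complex of free graded $S$-modules
\begin{equation*}
    \left[H^\bullet(Y,\Omega_Y^\bullet\otimes h^*\alpha)\otimes S,\ \wedge h^*v\right].
\end{equation*}
By Hodge symmetry this is dual, up to complex conjugation and shifts, to the corresponding complex computing cohomology in the highest degree.

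With this description, $N_\bullet^\alpha$ is the top-degree cohomology, i.e. the extreme end of the complex, which is a kernel of a map between free $S$-modules: $N_\bullet^\alpha=\ker(\text{free}\to\text{free})$. A submodule of a free module is torsion free, which gives the claim. If the indexing instead makes $N$ the cokernel end, I would use the duality just mentioned (Serre duality on $Y$ plus Hodge symmetry exchanging $\alpha$ and $\alpha^{-1}$) to convert it to a kernel. That is exactly where genericity of $\alpha$ enters, since $\alpha^{-1}$ must also be general.

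The main obstacle is justifying the linear (formal) model of $R\Gamma(Y,\mathcal{C}^\alpha_{Y,\bullet})$ as a complex of graded $S$-modules globally over $V$, not just near $0$. The $\C^*$-equivariance given by the grading should let me propagate the local statement at the origin to all of $V$, so that is what I would try first. I would also double-check that the generic choice of $\alpha$ from \Cref{structure_cohomology_jump_loci} is compatible with the $\alpha$ used in \Cref{lemma_support_Z_f_image_non_zero_Villadsen}.
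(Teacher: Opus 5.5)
There is a genuine gap in your final step. $\mathcal{C}_{Y,\bullet}$ sits in degrees $-n,\dots,0$, so $R\Gamma(Y,\mathcal{C}^\alpha_{Y,\bullet})$, and your linear model built from the groups $H^q(Y,\Omega^p_Y\otimes h^*\alpha)$, occupies degrees $-n,\dots,n$. Degree $0$ collects exactly the terms with $p+q=n$. So $N^\alpha_\bullet$ is the \emph{middle} cohomology, not an end of the complex. It is a quotient $\ker/\im$ of free graded $S$-modules, and such a quotient can have torsion: already $S\stackrel{x}{\longrightarrow}S\stackrel{0}{\longrightarrow}S$ has middle cohomology $S/xS$. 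Serre duality combined with conjugation sends degree $0$ to degree $0$, so the duality fallback does not help either. Torsion in the middle degree is precisely what appears when $v\mapsto\dim H^k_{\mathrm{Dol}}(Y,h^*\alpha,h^*v)$ jumps on a proper closed subset of $V$. Your slice analysis explicitly leaves open jumping along proper linear subspaces of $V$ through the origin. Formality only gives a convenient presentation of the complex; it does nothing to exclude that jumping.

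The missing idea is that for general $\alpha$ there is no jumping at all. Take a component $L=(\mathcal{M},\theta)+g^*\pic^0(T)\times g^*H^0(T,\Omega^1_T)$ of $\Sigma^k_m(Y)$. Suppose $\pi_{\pic}(h_{\mathrm{Dol}}^{-1}(L))=\pic^0(A)$, where $\pi_{\pic}\colon M_{\mathrm{Dol}}(A)\to\pic^0(A)$ is the forgetful morphism. Then $h^*\pic^0(A)\subset g^*\pic^0(T)$, so on tangent spaces $h^*H^{0,1}(A)\subset g^*H^{0,1}(T)$ inside $H^1(Y,\C)$. Complex conjugation then gives $h^*H^{1,0}(A)\subset g^*H^{1,0}(T)$, i.e.\ $h^*V\subset g^*H^0(T,\Omega^1_T)$. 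This is the content of \Cref{proper_linear_subvariety}. Consequently the slice $\{h^*\alpha\}\times h^*V$ meets $L$ either for all $v$ or for none; it can never meet $L$ in a proper affine subspace of $V$. In every other case, the image of $h_{\mathrm{Dol}}^{-1}(L)$ in $\pic^0(A)$ is a proper closed subset, which a general $\alpha$ avoids. So for general $\alpha$ all fiber cohomology dimensions $\dim H^k_{\mathrm{Dol}}(Y,h^*\alpha,h^*v)$ are constant in $v\in V$. Now use that $R\Gamma(Y,\mathcal{C}^\alpha_{Y,\bullet})$ is a perfect complex of graded $S$-modules with derived fibers $R\Gamma(Y,\mathrm{Dol}(h^*\alpha,h^*v)_\bullet)$. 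Constant fiber cohomology in every degree forces every cohomology module, in particular $N^\alpha_\bullet$, to be locally free, hence torsion free. This is the paper's argument, and it needs no formality. Your linear model could serve as the explicit perfect complex in this last step, but it cannot replace the rank-constancy argument.
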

\begin{proof}
    Note that for all $v\in V$,
    \begin{equation*}
        R\Gamma(Y,\mathcal{C}_{Y,\bullet}^\alpha)\otimes^L_SS/\mathfrak{m}_v\cong R\Gamma(Y,\mathrm{Dol}(h^*\alpha,h^*v)_\bullet).
    \end{equation*}
    We will show that for general $\alpha$ the right hand side has cohomology ranks independent of $v\in V$. This implies that $N_\bullet^\alpha$ is a locally free module.\par
    To prove the claim, it suffices to show that the function
    \begin{align*}
        \varphi_\alpha\colon V&\longrightarrow\Z\\
        v&\longmapsto\dim H^i_{\mathrm{Dol}}(Y,h^*\alpha,h^*v)
    \end{align*}
    is constant in $v$ for general $\alpha$. Note that $\varphi_\alpha$ factors as
    \begin{align*}
        V&\stackrel{\psi_\alpha}{\longrightarrow} M_{\mathrm{Dol}}(Y)\stackrel{\varphi_\alpha}{\longrightarrow}\Z\\
        v&\longmapsto (h^*\alpha,h^*v).
    \end{align*}
    In particular, the function $\varphi_\alpha$ is non-constant if and only if $\im(\psi_\alpha)\cap\Sigma^k_m(Y)$ is a non-empty proper subset of $\im(\psi_\alpha)$ (the function $\varphi_\alpha$ is constant away from $\Sigma^k_m(Y)$ and jumps up on $\im(\psi_\alpha)\cap\Sigma^k_m(Y)$). By \Cref{structure_cohomology_jump_loci}, each irreducible component of $\Sigma^k_m(Y)$ is a linear subvariety. It hence suffices to show that for each linear subvariety $L\subset M_{\mathrm{Dol}}(Y)$ the image of $\psi_\alpha$ is either contained in $L$ or disjoint from $L$.\par
    Let $L\subset M_{\mathrm{Dol}}(Y)$ be a linear subvariety. Consider $h_{\mathrm{Dol}}\colon M_{\mathrm{Dol}}(A)\rightarrow M_{\mathrm{Dol}}(Y)$. Note that if $h_{\mathrm{Dol}}^{-1}(L)$ is non-empty, then $h_{\mathrm{Dol}}^{-1}(L)$ is a finite union of linear subvarieties in $M_{\mathrm{Dol}}(A)$. If $h_{\mathrm{Dol}}^{-1}(L)=M_{\mathrm{Dol}}(A)$, then $\im(\psi_\alpha)=\{\alpha\}\times h^*V\subset L$ for any $\alpha\in\pic^0(A)$. If $h_{\mathrm{Dol}}^{-1}(L)$ is a proper subset of $M_{\mathrm{Dol}}(A)$, then the forgetful morphism
    \begin{equation*}
        M_{\mathrm{Dol}}(A)\longrightarrow\pic^0(A)
    \end{equation*}
    sends $h_{\mathrm{Dol}}^{-1}(L)$ to a proper subset by \Cref{proper_linear_subvariety}. It hence suffices to take $\alpha$ to be outside the image of $h_{\mathrm{Dol}}^{-1}(L)$ in $\pic^0(A)$ under the forgetful morphism.
\end{proof}
\begin{proof}[Proof of \Cref{Popa_Schnell_Kaehler_case}]
    On the one hand, $L_\bullet^\alpha$ is non-zero by \Cref{lemma_support_Z_f_image_non_zero_Villadsen}. On the other hand, $L_\bullet^\alpha$ is torsion free for general $\alpha$ by \Cref{image_torsion_fee_Villadsen}. Thus, its support is all of $V$. By \Cref{lemma_support_Z_f_image_non_zero_Villadsen} this shows that $q(Z_f)=V$.
\end{proof}
\subsection{Application to the Iitaka fibration}
\begin{proposition}\label{Iitaka_fibers_are_mapped_to_subtorus_of_dim_g'_geq_g}
    Let $X$ be a compact connected K\"ahler manifold with $g$ everywhere linearly independent holomorphic 1-forms $\omega_1,\ldots,\omega_g$. Suppose that $X$ admits a good minimal model $X^{\mathrm{min}}$. Let $f\colon X^{\mathrm{min}}\rightarrow Y$ be the Iitaka fibration. Then the general fiber of $f$ is mapped to a translate of a subtorus $A\subset\mathrm{Alb}(X)$ of $\dim A\geq g$.
\end{proposition}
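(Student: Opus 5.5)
We argue by contradiction, reducing to \Cref{Popa_Schnell_Kaehler_case}. Since $X^{\mathrm{min}}$ has terminal, hence rational, singularities, the Albanese map $\mathrm{alb}\colon X^{\mathrm{min}}\rightarrow\mathrm{Alb}(X)$ is a morphism compatible with $\mathrm{alb}_X$. Each fiber $F_y=f^{-1}(y)$ is connected. So for each $y$ there is a smallest subtorus $A_y\subset\mathrm{Alb}(X)$ such that $\mathrm{alb}(F_y)$ lies in a translate of $A_y$. A compact torus has only countably many subtori, so $A_y=A$ is constant for $y$ in the complement of a countable union of proper analytic subsets. This $A$ is the subtorus in the statement, and it remains to show $\dim A\geq g$.

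Suppose $\dim A<g$. Put $B:=\mathrm{Alb}(X)/A$ and let $\rho\colon\mathrm{Alb}(X)\rightarrow B$ be the quotient. The restriction map $\mathrm{span}(\omega_1,\ldots,\omega_g)\subset H^0(\mathrm{Alb}(X),\Omega^1)\rightarrow H^0(A,\Omega^1_A)$ has target of dimension $<g$, so it has a nonzero element $\omega$ in its kernel. Then $\omega=\rho^*\omega_B$ for some $\omega_B\in H^0(B,\Omega^1_B)$. Also, $\omega$ is a nonzero combination of the $\omega_i$, so $\mathrm{alb}_X^*\omega$ has no zeros on $X$. The composition $\rho\circ\mathrm{alb}$ contracts the general fiber of $f$. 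Since the fibers of $f$ are connected and $Y$ is normal, the rigidity lemma shows that $\rho\circ\mathrm{alb}$ factors through a morphism $Y\rightarrow B$.

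Next we produce an abelian variety. The base $Y$ of the Iitaka fibration of a good minimal model is projective, being the Stein factorization of the map given by $|\mathcal{K}^{m}_{X^{\mathrm{min}}}|$. Hence, for a resolution $\tilde Y\rightarrow Y$, the map $\tilde Y\rightarrow B$ factors through the abelian variety $\mathrm{Alb}(\tilde Y)$. Its image is a translate of a subtorus $T\subset B$, which is a quotient of $\mathrm{Alb}(\tilde Y)$ and therefore an abelian variety. After replacing $\omega_B$ by its restriction to $T$ (and translating), we get a morphism $\psi\colon X\rightarrow T$ with $\psi^*\eta=\mathrm{alb}_X^*\omega$ nowhere vanishing, where $\eta:=\omega_B\vert_T$ is a 1-form on $T$.

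Finally we check the hypothesis of \Cref{Popa_Schnell_Kaehler_case} for $\psi$. Write $\mathcal{K}^m_{X^{\mathrm{min}}}\cong f^*\mathcal{H}$ with $\mathcal{H}$ ample on $Y$. Fix an ample $\mathcal{L}$ on $T$. Its pullback to $\tilde Y$ is nef, and the pullback of $\mathcal{H}$ is big, so $\mathcal{H}^k\otimes\mathcal{L}^\vee$ has a nonzero section on $\tilde Y$ for $k\gg0$. Choose a compact K\"ahler manifold $\tilde X$ with bimeromorphic morphisms $p\colon\tilde X\rightarrow X$ and $\mu\colon\tilde X\rightarrow X^{\mathrm{min}}$, such that $\tilde X\rightarrow Y$ lifts to $\tilde Y$. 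Since $X^{\mathrm{min}}$ has terminal singularities, $\mathcal{K}_{\tilde X}^{mk}\supset\mu^*\mathcal{K}^{mk}_{X^{\mathrm{min}}}$. Therefore $H^0(\tilde X,\mathcal{K}^{mk}_{\tilde X}\otimes p^*\psi^*\mathcal{L}^\vee)\neq0$. Because $p_*\mathcal{K}^{N}_{\tilde X}=\mathcal{K}^N_X$ for the smooth $X$, this section descends and gives $H^0(X,\mathcal{K}^{mk}_X\otimes\psi^*\mathcal{L}^\vee)\neq0$. Now \Cref{Popa_Schnell_Kaehler_case} says $\psi^*\eta$ has a zero, which contradicts the previous paragraph.

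The main obstacle is to make sure the twisted pluricanonical section lives on $X$ itself rather than only on a resolution of $X^{\mathrm{min}}$. On a resolution, the pullback of the nowhere vanishing form may acquire zeros along exceptional divisors, so the theorem must be applied on $X$. The descent step $p_*\mathcal{K}^N_{\tilde X}=\mathcal{K}^N_X$, together with the projectivity of $Y$ (which makes $T$ an abelian variety), is the step that needs the most care.
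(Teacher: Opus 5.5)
Your argument is correct and is essentially the paper's proof. Both apply the K\"ahler Popa--Schnell theorem to a map from $X$ that factors through the abelian variety $\mathrm{Alb}(\tilde Y)$, and both produce the twisted pluricanonical section from Kodaira's lemma on $\tilde Y$, the terminal-singularity comparison, and pushforward along $p$. The only difference is packaging: the paper applies the theorem directly to $X\rightarrow\mathrm{Alb}(\tilde Y)$ and then bounds $\dim\ker(\mathrm{Alb}(X)\rightarrow\mathrm{Alb}(\tilde Y))$, whereas you argue by contradiction through $\mathrm{Alb}(X)/A$ and its abelian subtorus $T$.

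One step needs a fix. Contracting only the general fibers of $f$ does not let the rigidity lemma produce a morphism $Y\rightarrow B$ when $Y$ is merely normal. You never need that morphism, though. The induced meromorphic map $\tilde Y\dashrightarrow B$ is already holomorphic because $\tilde Y$ is smooth and $B$ is a torus, and that map is all your later steps use.
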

\begin{proof}
    Choose a resolution of singularities $\mu\colon \tilde{Y}\rightarrow Y$. Note that $X^{\mathrm{min}}\times_Y\tilde{Y}$ contains a unique irreducible component that is bimeromorphic to $X^{\mathrm{min}}$. Denote by $X_{\tilde{Y}}$ the normalization of this component, and denote by $\nu\colon X_{\tilde{Y}}\rightarrow X^{\mathrm{min}}$, and by $f_{\tilde{Y}}\colon X_{\tilde{Y}}\rightarrow\tilde{Y}$ the induced morphism. As $X^{\mathrm{min}}$ has terminal singularities, it has rational singularities \cite[][Thm. 5.22]{Kollar_Mori}. The proof of \cite[][Lem. 8.1]{Kawamata_minimal_models_Kodaira_dim_fiber_spaces} shows that the meromorphic map
    \begin{equation*}
        X^{\mathrm{min}}\longdashrightarrow X\stackrel{\mathrm{alb}_X}{\longrightarrow}\mathrm{Alb}(X)
    \end{equation*}
    extends to a morphism
    \begin{equation*}
        \mathrm{alb}_{X^{\mathrm{min}}}\colon X^{\mathrm{min}}\longrightarrow\mathrm{Alb}(X).
    \end{equation*}
    Moreover, $f$ induces a morphism $\pi\colon\mathrm{Alb}(X)\rightarrow\mathrm{Alb}(\tilde{Y})$. We also choose a compact connected K\"ahler manifold $X'$ together with proper bimeromorphic morphisms $X\stackrel{p}{\leftarrow}X'\stackrel{q}{\rightarrow}X_{\tilde{Y}}$. To summarize, we obtain the commutative diagram
    \[\begin{tikzcd}
	   X & {X'} & {X_{\tilde{Y}}} & {X^{\mathrm{min}}} & {\mathrm{Alb}(X)} \\
	   && {\tilde{Y}} & Y & {\mathrm{Alb}(\tilde{Y}).}
	   \arrow["p"', from=1-2, to=1-1]
	   \arrow["q", from=1-2, to=1-3]
	   \arrow["\nu", from=1-3, to=1-4]
	   \arrow["{f_{\tilde{Y}}}"', from=1-3, to=2-3]
	   \arrow["{\mathrm{alb}_{X^{\mathrm{min}}}}", from=1-4, to=1-5]
	   \arrow["f", from=1-4, to=2-4]
	   \arrow["\pi", from=1-5, to=2-5]
	   \arrow["\mu", from=2-3, to=2-4]
	   \arrow["{\mathrm{alb}_{\tilde{Y}}}"', curve={height=12pt}, from=2-3, to=2-5]
    \end{tikzcd}\]
    Since $Y$ is projective, $\mathrm{Alb}(\tilde{Y})$ admits an ample line bundle. Fix an ample line bundle $\mathcal{L}$ on $\mathrm{Alb}(\tilde{Y})$. After replacing $\mathcal{L}$ by a positive multiple, we can assume that $\mathcal{L}$ is globally generated. In particular, $\mathrm{alb}^*_{\tilde{Y}}\mathcal{L}$ can be represented by an effective Cartier divisor. By the construction of the Iitaka fibration, there is an ample line bundle $\mathcal{M}$ on $Y$ such that $f^*\mathcal{M}=\mathcal{K}_{X^{\mathrm{min}}}^d$ for some positive integer $d$. In particular, $f^*_{\tilde{Y}}\mu^*\mathcal{M}\cong\nu^*\mathcal{K}_{X^{\mathrm{min}}}^d$. Note that $\mu^*\mathcal{M}$ is big. Thus, Kodaira's lemma \cite[][Prop. 2.2.6]{Lazarsfeld1} shows that after replacing $\mathcal{M}$ by a positive multiple, we can assume that
    \begin{equation*}
        H^0(\tilde{Y},\mu^*\mathcal{M}\otimes\mathrm{alb}_{\tilde{Y}}^*\mathcal{L}^\vee)\neq 0.
    \end{equation*}
    Since $X^{\mathrm{min}}$ is a good minimal model, we can assume after replacing $d$ with a suitable positive multiple, that $\mathcal{K}_{X^{\mathrm{min}}}^d$ is globally generated. Moreover, as $X^{\mathrm{min}}$ has terminal singularities, we obtain isomorphisms
    \begin{equation*}
        H^0(X,\mathcal{K}_X^n)\cong H^0(X^{\mathrm{min}},\mathcal{K}_{X^{\mathrm{min}}}^n)
    \end{equation*}
    for all positive integers $n$. Thus, the map
    \begin{equation*}
        H^0(X',p^*\mathcal{K}_X^{d})\otimes\OO_{X'}\longrightarrow p^*\mathcal{K}_X^{d}
    \end{equation*}
    has $q^*\nu^*\mathcal{K}_{X^{\mathrm{min}}}^{d}$ as its image. This induces the map
    \begin{equation*}
        q^*\nu^*\mathcal{K}_{X^{\mathrm{min}}}^{d}\longhookrightarrow p^*\mathcal{K}_X^{d}.
    \end{equation*}
    We thus have the composition
    \begin{align*}
        0&\neq H^0(\tilde{Y},\mu^*\mathcal{M}\otimes\mathrm{alb}_{\tilde{Y}}^*\mathcal{L}^\vee)\\
        &\longhookrightarrow H^0(X_{\tilde{Y}},f^*_{\tilde{Y}}\mathcal{M}\otimes f^*_{\tilde{Y}}\mathrm{alb}_{\tilde{Y}}^*\mathcal{L}^\vee)\cong H^0(X_{\tilde{Y}},\nu^*\mathcal{K}_{X^{\mathrm{min}}}^d\otimes\nu^*\mathrm{alb}_{X^{\mathrm{min}}}^*\pi^*\mathcal{L}^\vee)\\
        &\longhookrightarrow H^0(X',q^*\nu^*\mathcal{K}_{X^{\mathrm{min}}}^d\otimes q^*\nu^*\mathrm{alb}_{X^{\mathrm{min}}}^*\pi^*\mathcal{L}^\vee)\cong H^0(X',q^*\nu^*\mathcal{K}_{X^{\mathrm{min}}}^d\otimes p^*\mathrm{alb}_{X}^*\pi^*\mathcal{L}^\vee)\\
        &\longhookrightarrow H^0(X',p^*\mathcal{K}_X^d\otimes p^*\mathrm{alb}_X^*\pi^*\mathcal{L}^\vee)\\
        & \stackrel{\sim}{\longrightarrow} H^0(X,p_*(p^*\mathcal{K}_X^d\otimes p^*\mathrm{alb}_X^*\pi^*\mathcal{L}^\vee))\cong H^0(X,\mathcal{K}_X^d\otimes\mathrm{alb}_X^*\pi^*\mathcal{L}^\vee).
    \end{align*}
    Here, the arrow in the second line is the pullback along $f_{\tilde{Y}}\colon X_{\tilde{Y}}\rightarrow\tilde{Y}$. It is injective since $f_{\tilde{Y}}$ is dominant. The isomorphism in the second line uses that $f^*_{\tilde{Y}}\mu^*\mathcal{M}\cong\nu^*\mathcal{K}_{X^{\mathrm{min}}}^d$, and that $\mathrm{alb}_{\tilde{Y}}\circ f_{\tilde{Y}}=\pi\circ\mathrm{alb}_{X^{\mathrm{min}}}\circ\nu$. The arrow in the third line is the pullback along $q\colon X'\rightarrow X_{\tilde{Y}}$. It is injective since $q$ is dominant. The isomorphism in the third line uses that $\mathrm{alb}_{X^{\mathrm{min}}}\circ\nu\circ q=\mathrm{alb}_X\circ p$. The arrow in the fourth line uses the injection $q^*\nu^*\mathcal{K}_{X^{\mathrm{min}}}^{d}\hookrightarrow p^*\mathcal{K}_X^{d}$. Finally, the last arrow is the push forward along $p$, and the last isomorphism follows from the projection formula.\par
    From \Cref{Popa_Schnell_Kaehler_case}, it follows that for every holomorphic 1-form $\omega\in H^0(\mathrm{Alb}(Y),\Omega_{\mathrm{Alb}(Y)}^1)$, the pullback $\mathrm{alb}_X^*\pi^*\omega$ has a zero on $X$. Since we assume that $X$ admits $g$ everywhere linearly independent holomorphic 1-forms $\omega_1,\ldots,\omega_g$, the cokernel of
    \begin{equation*}
        \pi^*\colon H^0(\mathrm{Alb}(Y),\Omega_{\mathrm{Alb}(Y)}^1)\longrightarrow H^0(\mathrm{Alb}(X),\Omega_{\mathrm{Alb}(X)}^1)
    \end{equation*}
    has to be of dimension $\geq g$. As the general fiber of $f$ has numerically trivial canonical sheaf, it is mapped to a translate of a subtorus $A\subset\mathrm{Alb}(X)$. This subtorus thus has to be of $\dim \geq g$, since otherwise the image of $\mathrm{alb}_X$ is contained in a proper subtorus of $\mathrm{Alb}(X)$, which is impossible.
\end{proof}
\section{Proof of the main result}
\begin{theorem}\label{precise_main_result_higher_codim}
    Let $X$ be a compact connected K\"ahler manifold, which admits a good minimal model $X^{\mathrm{min}}$. Let $f\colon X^{\mathrm{min}}\rightarrow Y$ be the Iitaka fibration, and suppose that a general fiber of $f$ is mapped to a translate of a subtorus in $\mathrm{Alb}(X)$ of dimension $g>0$. Then there is a finite \'etale cover $X'\rightarrow X$, a compact connected K\"ahler manifold $W$, a complex torus $A=\C^{g'}/\Lambda$ of dimension $g'\geq g$, and a cohomology class $\eta\in H^1(W,\OO_W^{\oplus g'}/\Lambda)$ with $c(\eta)=0$ such that $X'$ is bimeromorphic to $(W\times A)^\eta$.
\end{theorem}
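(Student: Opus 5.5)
We follow the four-step strategy from the outline. First we reduce to the case where the base of the Iitaka fibration is smooth, with good properties. Choose a resolution $\mu\colon\tilde Y\to Y$ and let $X_{\tilde Y}$ be the normalization of the main component of $X^{\mathrm{min}}\times_Y\tilde Y$. The fibers of $f$ over a dense open set are numerically $K$-trivial with terminal singularities. By hypothesis each is mapped by $\mathrm{alb}_{X^{\mathrm{min}}}$ (constructed in the proof of \Cref{Iitaka_fibers_are_mapped_to_subtorus_of_dim_g'_geq_g}) onto a translate of a fixed subtorus. Let $A$ be the kernel of $\pi\colon\mathrm{Alb}(X)\to\mathrm{Alb}(\tilde Y)$, of dimension $g'$. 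Since $f$ has connected fibers, $\pi$ is surjective with connected kernel, and $g'\geq g$. Set $Z:=\mathrm{Alb}(X)\times_{\mathrm{Alb}(\tilde Y)}\tilde Y$.

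Then $Z\to\tilde Y$ is a torus fiber bundle with fiber $A$ and trivial monodromy. It is a pullback of the principal $A$-bundle $\mathrm{Alb}(X)\to\mathrm{Alb}(\tilde Y)$, so it is of the form $(\tilde Y\times A)^{\eta_0}$. It is K\"ahler, so $c(\eta_0)$ is torsion by \Cref{fundamental_properties_special_torus_bundles}. Moreover, $\eta_0$ is pulled back from a torus, where the corresponding class is in the image of $H^1(\mathrm{Alb}(\tilde Y),\C^{g'}/\Lambda)$. We then fix the special trivializations of \Cref{special_local_trivialization} with transition translations $a_{ij}$, and write $h\colon X_{\tilde Y}\to Z$ and $p_i\colon \pi^{-1}(U_i)\to A$.

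Step two makes $h$ have connected fibers. For each $a\in A$, Stein-factorize $h_i^{-1}(U_i\times\{a\})\to U_i$ as $V_i(a)\to U_i$. We show that $V_i(a)$ is independent of $a$: the family is flat in $a$ over the connected torus $A$, and finite covers of $U_i$ are rigid. Since the transitions are translations, the $V_i$ glue to a finite cover $V\to\tilde Y$. We then show that $X^{\mathrm{min}}\times_Y V\to X^{\mathrm{min}}$ is quasi-\'etale, using that $f$ has connected fibers and that branching would force a non-reduced fiber component over a divisor, which is excluded because $A$ acts fiberwise transitively on the image. This gives, via \Cref{good_minimal_model_ascend_finite_etale_cover}, a finite \'etale cover of $X$. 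Replacing $X$ by it, we may assume $h$ has connected fibers.

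Step three is the main obstacle: we must produce an action of an isogenous torus $A''$ on $X^{\mathrm{min}}$ (after a further \'etale cover) making $h$ equivariant. Over a smooth locus $Y^*$ of $f$, we apply \Cref{isogeny_from_Automorphisms_to_Albanese_numerically_K-trivial} fiberwise to get $\mathrm{Aut}^\circ(F)\to\mathrm{Alb}(F)$ as an isogeny. Relativizing, we build the relative Albanese $\mathcal{A}lb\to Y^*$ with sheaf of sections $\mathcal S$, and show that $\mathrm{Aut}^\circ(X^*/Y^*)\to H^0(Y^*,\mathcal S)^\circ$ is surjective with finite kernel. To see that a constant torus isogenous to $A$ sits inside $H^0(Y^*,\mathcal S)^\circ$, we use Saito's decomposition theorem: the local system $R^1f_*\Q$ splits off the constant piece $H^1(A,\Q)$ pulled back from $Z$. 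The resulting automorphisms extend from $X^*$ to $X^{\mathrm{min}}$ as bimeromorphic maps that are isomorphisms in codimension one. Because $K$ is $f$-trivial and $X^{\mathrm{min}}$ is minimal, they extend to biregular automorphisms. We lift $Z$ to $(\tilde Y\times A'')^{\eta'}$ via \Cref{lemma_extension_of_etale_cover_of_elliptic_curves}, using $c(\eta)=0$ after an \'etale cover of the base if needed, and base change.

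Finally we construct $W$. Fix $a_0\in A$ and let $W_i$ be the unique component of $h_i^{-1}(U_i\times\{a_0\})$ dominating $U_i$. The translations $\mathrm{tr}(a_{ij})$ glue the $W_i$ to a global $W\to\tilde Y$. The local maps $x\mapsto \mathrm{tr}(a_0-p_i(h(x)))x$ glue to a meromorphic map $X^{\mathrm{min}}\dashrightarrow W$. Combining this with $h$ gives a bimeromorphic map $X^{\mathrm{min}}\dashrightarrow W\times_{\tilde Y}Z=(W\times A)^{\eta}$, where $\eta$ is the pullback of $\eta'$. After resolving $W$, $(W\times A)^\eta$ is K\"ahler, and its Chern class vanishes after one more \'etale cover, by \Cref{fundamental_properties_special_torus_bundles}.
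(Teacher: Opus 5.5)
Your outline follows the paper's four steps, but the argument you give in Step 2 fails, and that argument is the crux of the step. Non-reduced fibers of $f$ are not excluded; they are exactly what makes the base change \'etale. The normalized base change is unramified along a component of $f^*D$ exactly when that component's multiplicity is divisible by the ramification index of $V\to Y$ along $D$.

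Concretely, take $X=(C\times E)/(\Z/2)$, with $\Z/2$ acting on a hyperelliptic curve $C$ by the hyperelliptic involution and on an elliptic curve $E$ by translation by a $2$-torsion point $t$. Then:
\begin{enumerate}
\item $X$ is its own good minimal model, $f\colon X\to\mathbb{P}^1$ is the Iitaka fibration, and $\mathrm{Alb}(X)=A=E/\langle t\rangle$.
\item General fibers map to $A$ with degree $2$, and your $V$ is $C$, branched over the images of the Weierstrass points.
\item $f$ has double fibers over those points, and $E$ still acts transitively on them.
\end{enumerate}
So $V$ does branch, and $C\times E\to X$ is \'etale only because of these multiplicities.

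Even a correct proof of quasi-\'etaleness over $X^{\mathrm{min}}$ would not suffice. \Cref{good_minimal_model_ascend_finite_etale_cover} passes from \'etale covers of $X$ to quasi-\'etale covers of $X^{\mathrm{min}}$, not back. A quasi-\'etale cover branched at terminal singular points induces no \'etale cover of $X$.

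The missing idea is the paper's use of the torus structure:
\begin{enumerate}
\item Using $c(\eta)=0$, extend the fiberwise isogeny $B\to A$ with kernel $G$ to a cover $(\tilde Y\times B)^\theta\to(\tilde Y\times A)^\eta$.
\item The sheaf of isomorphisms between the Stein factorization $S$ and $(\tilde Y\times B)^\theta$ over $(\tilde Y\times A)^\eta$ is a $G$-torsor on $Y^*$. It extends to a Galois cover $\tilde Z\to\tilde Y$.
\item Then $S_{\tilde Z}\cong(\tilde Z\times B)^{u^*\theta}$ $G$-equivariantly. Hence $S$ is bimeromorphically the quotient by $G$, which acts freely by fiberwise translations.
\item Pulling back the \'etale map $(\tilde Z\times B)^{u^*\theta}\to S_{\tilde Z/G}$ to a resolution $\tilde X$ gives an honest \'etale cover of a smooth model of $X$, however $\tilde Z\to\tilde Y$ branches.
\end{enumerate}

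Step 3 asserts two nontrivial facts without the needed argument.
\begin{enumerate}
\item That $\mathrm{tr}(a')$, defined on the Zariski-open $X^*$, extends bimeromorphically is not automatic for K\"ahler spaces. The paper proves it in stages. It extends the vector fields in $V_{\mathbb{R}}$ meromorphically, bounding $\langle v,\gamma\rangle$ by fiber integrals and using Riemann extension. It then applies Pereira's theorems to the resulting foliation to get an analytic orbit relation, and cuts out each graph with the Albanese map.
\item Your reason for biregularity is insufficient. An isomorphism in codimension one between minimal models may be a composition of flops, so nefness and $f$-triviality of $K$ do not force $\mathrm{tr}(a')$ to be a morphism. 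What is needed is that the $\mathrm{tr}(a')$ form a connected family, hence act trivially on the discrete group $H^2(X^{\mathrm{sm}},\Z)$. The negativity lemma then gives $p_1^*[\omega]=p_2^*[\omega]$ on a resolution of the graph, for a K\"ahler class $\omega$. So every curve contracted by $p_1$ is also contracted by $p_2$.
\end{enumerate}

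Smaller omissions:
\begin{enumerate}
\item You must show general fibers map onto the whole kernel $A$; otherwise $h$ is not surjective and the final map cannot be bimeromorphic.
\item $c(\eta)=0$ holds outright, since $H^2$ of a torus is torsion-free, so no extra \'etale cover is needed for it.
\item $\tilde W$ must be shown to be K\"ahler, e.g.\ as a holomorphic image of a compact K\"ahler manifold.
\end{enumerate}
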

\begin{corollary}\label{corollary_precise_main_result_higher_codim}
    Let $X$ be a compact connected K\"ahler manifold, which admits $g$ everywhere linearly independent holomorphic 1-forms $\omega_1,\ldots,\omega_g\in H^0(X,\Omega_X^1)$. Suppose that $X$ admits a good minimal model. Then there is a finite \'etale cover $X'\rightarrow X$, a compact connected K\"ahler manifold $W$, a complex torus $A=\C^{g'}/\Lambda$ of dimension $g'\geq g$, and a cohomology class $\eta\in H^1(W,\OO_W^{\oplus g'}/\Lambda)$ with $c(\eta)=0$ such that $X'$ is bimeromorphic to $(W\times A)^\eta$.
\end{corollary}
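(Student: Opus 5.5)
This corollary should follow by combining \Cref{Iitaka_fibers_are_mapped_to_subtorus_of_dim_g'_geq_g} with \Cref{precise_main_result_higher_codim}. By \Cref{Iitaka_fibers_are_mapped_to_subtorus_of_dim_g'_geq_g}, applied to $X$ with its $g$ everywhere linearly independent holomorphic 1-forms $\omega_1,\ldots,\omega_g$, the general fiber of the Iitaka fibration $f\colon X^{\mathrm{min}}\rightarrow Y$ is mapped to a translate of a subtorus $A_0\subset\mathrm{Alb}(X)$ with $g_0:=\dim A_0\geq g$. Since $g\geq 1$ in any nontrivial situation, we have $g_0>0$. If $g=0$, the statement is vacuous: take $X'=X$, $W=X$ and $A$ the trivial torus, or one can simply assume $g>0$.

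First I have to make sure the subtorus is well defined, i.e.\ that it does not depend on the general fiber. The fibers of $f$ form a connected family. Their images in $\mathrm{Alb}(X)$ are translates of subtori, and these subtori vary continuously. Subtori of a fixed torus form a countable set, since they correspond to saturated sublattices of $H_1$. Hence the subtorus is locally constant, and therefore constant over the connected open dense set of general points of $Y$.

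We are then exactly in the hypotheses of \Cref{precise_main_result_higher_codim}, with $g_0$ in place of $g$. That theorem yields:
\begin{itemize}
\item[] a finite \'etale cover $X'\rightarrow X$,
\item[] a compact connected K\"ahler manifold $W$,
\item[] a torus $A=\C^{g'}/\Lambda$ with $g'\geq g_0\geq g$,
\item[] a class $\eta\in H^1(W,\OO_W^{\oplus g'}/\Lambda)$ with $c(\eta)=0$,
\end{itemize}
such that $X'$ is bimeromorphic to $(W\times A)^\eta$. This is precisely the assertion of the corollary.

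The only step needing care is the well-definedness of the subtorus across general fibers. It is handled by the countability argument above, and it is presumably already implicit in the statement of \Cref{Iitaka_fibers_are_mapped_to_subtorus_of_dim_g'_geq_g}. Everything else is a direct invocation of earlier results, so I expect no real obstacle.
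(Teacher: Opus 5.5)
Your proposal is correct and matches the paper's proof: apply the proposition on Iitaka fibers to get a subtorus of dimension $g_0\geq g$, then invoke the precise main theorem with $g_0$ in place of $g$. Your extra countability argument for the independence of the subtorus from the general fiber is harmless but unnecessary, since that proposition already asserts a single fixed subtorus $A\subset\mathrm{Alb}(X)$.
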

\begin{proof}
    Let $X^{\mathrm{min}}$ be the good minimal model, and let $f\colon X^{\mathrm{min}}\rightarrow Y$ be the Iitaka fibration. \Cref{Iitaka_fibers_are_mapped_to_subtorus_of_dim_g'_geq_g} shows that a general fiber of $f$ is mapped to a translate of a subtorus $A\subset\mathrm{Alb}(X)$ of $\dim A\geq g$. By \Cref{precise_main_result_higher_codim}, there is a finite \'etale cover $X'\rightarrow X$, a compact connected K\"ahler manifold $W$, a complex torus $A=\C^{g'}/\Lambda$ of dimension $g'\geq g$, and a cohomology class $\eta\in H^1(W,\OO_W^{\oplus g'}/\Lambda)$ with $c(\eta)=0$ such that $X'$ is bimeromorphic to $(W\times A)^\eta$.
\end{proof}
For the rest of this section, we fix the following notation. Let $X$ be a compact connected K\"ahler manifold, which admits a good minimal model $X^{\mathrm{min}}$. Let $f\colon X^{\mathrm{min}}\rightarrow Y$ be the Iitaka fibration, and suppose that a general fiber of $f$ is mapped to a translate of a subtorus $A\subset\mathrm{Alb}(X)$ of $\dim A=g>0$. Choose a resolution of singularities $\tilde{Y}\rightarrow Y$. Note that $X^{\mathrm{min}}\times_Y\tilde{Y}$ contains a unique irreducible component that is bimeromorphic to $X^{\mathrm{min}}$. Denote by $X_{\tilde{Y}}$ the normalization of this component, and denote by $f_{\tilde{Y}}\colon X_{\tilde{Y}}\rightarrow\tilde{Y}$ the induced morphism. As $X^{\mathrm{min}}$ has terminal singularities, it has rational singularities \cite[][Thm. 5.22]{Kollar_Mori}. The proof of \cite[][Lem. 8.1]{Kawamata_minimal_models_Kodaira_dim_fiber_spaces} shows that the meromorphic map
\begin{equation*}
    X^{\mathrm{min}}\longdashrightarrow X\stackrel{\mathrm{alb}_X}{\longrightarrow}\mathrm{Alb}(X)
\end{equation*}
extends to a morphism
\begin{equation*}
    \mathrm{alb}_{X^{\mathrm{min}}}\colon X^{\mathrm{min}}\longrightarrow\mathrm{Alb}(X),
\end{equation*}
which we call the Albanese morphism of $X^{\mathrm{min}}$. Denote the composition
\begin{equation*}
    X_{\tilde{Y}}\longrightarrow X^{\mathrm{min}}\stackrel{\mathrm{alb}_{X^{\mathrm{min}}}}{\longrightarrow}\mathrm{Alb}(X)
\end{equation*}
by $\mathrm{alb}_{X_{\tilde{Y}}}$. Note that $f$ induces a morphism $\pi\colon\mathrm{Alb}(X)\rightarrow\mathrm{Alb}(Y):=\mathrm{Alb}(\tilde{Y})$. We hence obtain the commutative diagram
\[\begin{tikzcd}
	{X_{\tilde{Y}}} & {\mathrm{Alb}(X)} \\
	{\tilde{Y}} & {\mathrm{Alb}(Y).}
	\arrow["{\mathrm{alb}_{X_{\tilde{Y}}}}", from=1-1, to=1-2]
	\arrow["{f_{\tilde{Y}}}"', from=1-1, to=2-1]
	\arrow["\pi", from=1-2, to=2-2]
	\arrow["{\mathrm{alb}_{\tilde{Y}}}", from=2-1, to=2-2]
\end{tikzcd}\]
Our first aim is to prove the following proposition
\begin{proposition}\label{key_diagram_higher_codim}
    Write $A=\C^{g}/\Lambda$. Then there is a cohomology class $\eta\in H^1(\tilde{Y},\OO_{\tilde{Y}}^{\oplus g}/\Lambda)$ such that $\mathrm{Alb}(X)\times_{\mathrm{Alb}(Y)}\tilde{Y}$ is isomorphic to $(\tilde{Y}\times A)^\eta$ over $\tilde{Y}$. Moreover, $c(\eta)=0$. In particular, we obtain the commutative diagram
    \begin{equation}\label{key_diagram_display_higher_codim}
        \begin{tikzcd}
	       {X_{\tilde{Y}}} && {(\tilde{Y}\times A)^\eta} \\
	       & {\tilde{Y}} & {.}
	       \arrow["h", from=1-1, to=1-3]
	       \arrow["{f_{\tilde{Y}}}"', from=1-1, to=2-2]
	       \arrow["{\pi^\eta}", from=1-3, to=2-2]
        \end{tikzcd}
    \end{equation}
\end{proposition}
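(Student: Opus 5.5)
We first show that $\pi\colon\mathrm{Alb}(X)\rightarrow\mathrm{Alb}(Y)$ is a surjective homomorphism of tori whose kernel is connected and equal to $A$, up to the choice of origin. Surjectivity holds because $f_{\tilde{Y}}$ is surjective, so $\mathrm{alb}_{\tilde{Y}}(\tilde{Y})$, which generates $\mathrm{Alb}(Y)$, lies in the image of $\pi$. For connectedness we use that $f_{\tilde{Y}}$ has connected fibers. The induced map $H_1(X_{\tilde{Y}},\Z)\rightarrow H_1(\tilde{Y},\Z)$ is surjective, since a fibration with connected fibers is surjective on $\pi_1$. Hence $H_1(\mathrm{Alb}(X),\Z)\rightarrow H_1(\mathrm{Alb}(Y),\Z)$ is surjective, and the kernel $K:=\ker\pi$ is a connected subtorus. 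Next we check that $K$ equals the subtorus $A$. Each general fiber of $f$ is mapped into a fiber of $\pi$, so $A\subset K$. For the converse, $\dim K=h^{1,0}(X)-h^{1,0}(\tilde{Y})$. The cokernel of $\pi^*$ on 1-forms embeds into $H^0(F,\Omega^1_F)$ for a general fiber $F$: a 1-form vanishing on general fibers descends to $\tilde{Y}$, by the standard argument for fibrations. That cokernel restricts injectively to the 1-forms pulled back from $A$ along $F\rightarrow A$, which gives $\dim K\leq\dim A$. Since $K$ is connected, $K=A$.

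With $K=A$, the map $\pi\colon\mathrm{Alb}(X)\rightarrow\mathrm{Alb}(Y)$ is a principal $A$-bundle. It is locally trivial because $\pi$ is a surjective homomorphism of tori, and local holomorphic sections exist since the differential is surjective on Lie algebras. The base change $Z:=\mathrm{Alb}(X)\times_{\mathrm{Alb}(Y)}\tilde{Y}\rightarrow\tilde{Y}$ is therefore a torus fiber bundle with fiber $A$. Its transition functions are translations by local holomorphic maps $U_i\cap U_j\rightarrow A$. Hence it is isotrivial with trivial monodromy, because translations act trivially on $H^{2g-1}(A,\Z)$. By \Cref{corollary_isotrivial_and_trivial_vhs} and \Cref{classification_torus_fiber_bundles}, $Z\cong(\tilde{Y}\times A)^\eta$ for a class $\eta\in H^1(\tilde{Y},\OO_{\tilde{Y}}^{\oplus g}/\Lambda)$. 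Concretely, $\eta$ is the pullback along $\mathrm{alb}_{\tilde{Y}}$ of the class $\eta_0\in H^1(\mathrm{Alb}(Y),\OO^{\oplus g}/\Lambda)$ of the principal $A$-bundle $\pi$.

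The step requiring care is $c(\eta)=0$. We have $c(\eta)=\mathrm{alb}_{\tilde{Y}}^*c(\eta_0)$, so it suffices to show $c(\eta_0)=0$ in $H^2(\mathrm{Alb}(Y),\Lambda)$. For a principal torus bundle, $c$ is the obstruction to lifting the transition cocycle to $\C^g$. Topologically, $c(\eta_0)$ is the Euler/characteristic class of the torus bundle $\mathrm{Alb}(X)\rightarrow\mathrm{Alb}(Y)$, viewed as a $T^{2g}$-bundle, and it is detected by the transgression $d_2\colon H^1(A,\Z)\rightarrow H^2(\mathrm{Alb}(Y),\Z)$ in the Leray spectral sequence. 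Since $\mathrm{Alb}(X)$ is a torus, we have an exact sequence
\begin{equation*}
0\rightarrow H_1(A,\Z)\rightarrow H_1(\mathrm{Alb}(X),\Z)\rightarrow H_1(\mathrm{Alb}(Y),\Z)\rightarrow 0
\end{equation*}
of free abelian groups. This splits, so $H^1(\mathrm{Alb}(X),\Z)\rightarrow H^1(A,\Z)$ is surjective and the transgression vanishes. Hence $c(\eta_0)=0$.

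An alternative argument avoids the spectral sequence. Choose a real splitting $\mathrm{Alb}(X)\cong A\times_{\mathrm{top}}\mathrm{Alb}(Y)$ compatible with the lattices, obtained by splitting the exact sequence of lattices. This gives a global continuous section, and the transition functions lift to $\C^g$. I expect the main obstacle to be making this identification of $c$ with the topological obstruction precise, together with the earlier justification that $K$ is exactly $A$.
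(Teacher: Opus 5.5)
Your proposal is correct. Its first half follows the paper: surjectivity of $\pi$, connectedness of $\ker\pi$ via surjectivity on $H_1$, and the identification $\ker\pi=A$ by comparing 1-forms on a general fiber. Then comes local triviality with translation transition functions, followed by \Cref{classification_torus_fiber_bundles}. This is \Cref{Albanese_morphism_surjective_fibers_connected} plus the first lines of the paper's proof. Your dimension count $\dim K\leq\dim A$ is an equivalent reformulation of the paper's argument with a 1-form on $K$ that vanishes on $A$.

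The genuine difference is the proof of $c(\eta)=0$. The paper argues in one line. Since $\mathrm{Alb}(X)\cong(\mathrm{Alb}(Y)\times A)^{\eta'}$ is K\"ahler, $c(\eta')$ is torsion by \Cref{fundamental_properties_special_torus_bundles}~\ref{torus_bundle_kaehler}. The integral cohomology of a torus is torsion free, so $c(\eta')=0$, and $c(\eta)=\mathrm{alb}_{\tilde{Y}}^*c(\eta')=0$. You instead show directly that the principal $A$-bundle $\mathrm{Alb}(X)\rightarrow\mathrm{Alb}(Y)$ is topologically trivial. The lattice sequence splits, and the $\mathbb{R}$-linear extension of a splitting descends to a continuous homomorphic section. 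You then use that $c$ only sees the topological class.

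The one point you flagged as the obstacle is routine. The morphism of exact sequences induced by $\OO^{\oplus g}\rightarrow\mathcal{C}^0(\C^g)$ factors $c$ as $H^1(\,\cdot\,,\OO^{\oplus g}/\Lambda)\rightarrow H^1(\,\cdot\,,\mathcal{C}^0(A))\rightarrow H^2(\,\cdot\,,\Lambda)$. The last arrow is an isomorphism because $\mathcal{C}^0(\C^g)$ is fine, so $c$ vanishes as soon as there is a global continuous section. Your global-section version is therefore cleaner than the Leray/transgression version, which needs the extra identification of $d_2$ with $c$.

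What your route buys is independence from the K\"ahler criterion of \Cref{fundamental_properties_special_torus_bundles}, a result imported from earlier work. It shows, for purely topological reasons, that any surjective homomorphism of complex tori with connected kernel has vanishing Chern class. The paper's route is shorter given that the criterion is already available.
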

In the following, we will show that after replacing $X$ by a finite \'etale cover if necessary, the morphism $h$ in diagram (\ref{key_diagram_display_higher_codim}) satisfies additional properties.\par
By our assumption, the general fiber of $f_{\tilde{Y}}$ is mapped to a translate of a subtorus $A\subset\mathrm{Alb}(X)$ of $\dim A= g$. We define the positive integer $d(X):=\dim A$. Note that $g\leq d(X)\leq\dim X$.
\begin{lemma}
    Let $X'\rightarrow X$ be a finite \'etale cover. Then the number $d(X')$ is well-defined, and $d(X')\geq d(X)$.
\end{lemma}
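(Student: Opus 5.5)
We need to show two things. First, $X'$ again satisfies the hypotheses, so that $d(X')$ makes sense: a good minimal model exists and the general Iitaka fiber maps to a translate of a subtorus of $\mathrm{Alb}(X')$. Second, the dimension of that subtorus is at least $\dim A$.

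\textbf{Well-definedness.} By \Cref{good_minimal_model_ascend_finite_etale_cover}, $X'$ admits a good minimal model $X'^{\mathrm{min}}$ with a quasi-\'etale morphism $\tau\colon X'^{\mathrm{min}}\rightarrow X^{\mathrm{min}}$. The Iitaka fibration $f'\colon X'^{\mathrm{min}}\rightarrow Y'$ is the connected part of the Stein factorization of $f\circ\tau$, and $Y'\rightarrow Y$ is finite. A general fiber $F'$ of $f'$ is therefore a connected component of $\tau^{-1}(F)$ for a general fiber $F$ of $f$. The restriction $F'\rightarrow F$ is quasi-\'etale, so by \Cref{canonical_sing_plus_invertible_ascends_if_quasi-etale} $F'$ is numerically $K$-trivial with terminal singularities. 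Applying \Cref{isogeny_from_Automorphisms_to_Albanese_numerically_K-trivial} to $F'$, $\mathrm{Aut}^\circ(F')$ surjects onto $\mathrm{Alb}(F')$. Since $F'\rightarrow\mathrm{Alb}(F')$ is equivariant for these actions and $\mathrm{alb}_{X'^{\mathrm{min}}}|_{F'}$ factors through $\mathrm{Alb}(F')$, the image of $F'$ in $\mathrm{Alb}(X')$ is a translate of the image of the torus $\mathrm{Alb}(F')$, hence a translate of a subtorus $A'_{F'}$.

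The subtorus $A'_{F'}$ does not depend on the general fiber. Subtori of $\mathrm{Alb}(X')$ form a countable set, and $F'$ varies in a connected family over a Zariski open subset of $Y'$, so $A'_{F'}$ is constant. Hence $d(X'):=\dim A'$ is well-defined.

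\textbf{Inequality.} Choose the resolutions compatibly and use the commutative square
\[\begin{tikzcd}
	{X'^{\mathrm{min}}} & {\mathrm{Alb}(X')} \\
	{X^{\mathrm{min}}} & {\mathrm{Alb}(X)}
	\arrow[from=1-1, to=1-2]
	\arrow["\tau"', from=1-1, to=2-1]
	\arrow["{\alpha}", from=1-2, to=2-2]
	\arrow[from=2-1, to=2-2]
\end{tikzcd}\]
where $\alpha$ is induced by functoriality of the Albanese. It commutes because both compositions are the extensions of the same meromorphic map $X'\rightarrow\mathrm{Alb}(X)$, and such extensions are unique. Since $\tau(F')=F$ (the map $\tau$ is finite over the generic locus and $F'$ dominates $F$), the image of $F'$ under $\alpha$ is the image of $F$ in $\mathrm{Alb}(X)$, a translate of $A$. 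So $\alpha(A')$ is a translate of $A$, and since $\alpha$ is a homomorphism, $\alpha(A')=A$. Therefore $\dim A'\geq\dim A$.

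The main obstacle is the well-definedness step. It needs the fibers of $f'$, which may be singular, to fall under \Cref{isogeny_from_Automorphisms_to_Albanese_numerically_K-trivial}. It also needs the claim that $\mathrm{Alb}(F')\rightarrow\mathrm{Alb}(X')$ has image a subtorus: this is automatic, since the image of a homomorphism of tori is a subtorus. Finally, it needs the claim that $F'$ surjects onto its Albanese image, which follows from the transitive $\mathrm{Aut}^\circ$-action on $\mathrm{Alb}(F')$. I would verify carefully that the general fiber $F'$ is normal with terminal singularities, using generic smoothness of $f'$ on the terminal, hence codimension-$\geq3$-singular, variety $X'^{\mathrm{min}}$.
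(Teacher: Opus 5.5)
Your proposal is correct and follows the paper's route. You use the ascent lemma to get $X'^{\mathrm{min}}$ with the quasi-\'etale $\tau\colon X'^{\mathrm{min}}\rightarrow X^{\mathrm{min}}$ and the Iitaka fibration as the Stein factorization of $f\circ\tau$. Then the commutative square of Albanese morphisms shows that a general fiber of $f'$ maps onto a translate of $A$, which forces $\dim A'\geq\dim A$. The paper's proof consists only of this diagram; your extra argument for well-definedness fills in what the paper leaves implicit. That argument passes through the quasi-\'etale restriction $F'\rightarrow F$ and the isogeny $\mathrm{Aut}^\circ(F')\rightarrow\mathrm{Alb}(F')$ to show that general fibers of $f'$ map onto translates of a single subtorus.
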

\begin{proof}
    Choose a proper bimeromorphic morphism $\tilde{X}\rightarrow X$ from a compact connected K\"ahler manifold $\tilde{X}$ such that the bimeromorphic map $X\dashrightarrow X^{\mathrm{min}}$ extends to a morphism $\tilde{X}\rightarrow X^{\mathrm{min}}$. The finite \'etale cover $X'\rightarrow X$ induces a finite \'etale cover $\tilde{X}'\rightarrow\tilde{X}$. By \Cref{good_minimal_model_ascend_finite_etale_cover}, $X'$ admits a good minimal model $X'^{\mathrm{min}}$, and there is a quasi-\'etale morphism $\tau\colon X'^{\mathrm{min}}\rightarrow X^{\mathrm{min}}$. Moreover, the Iitaka fibration of $X'^{\mathrm{min}}$ is given by the fibration part $X'^{\mathrm{min}}\rightarrow Y'$ of the Stein factorization of $X'^{\mathrm{min}}\rightarrow X^{\mathrm{min}}\rightarrow Y$. Consider the commutative diagram
    \[\begin{tikzcd}
	   {\mathrm{Alb}(X')} & {\mathrm{Alb}(X)} \\
	   {X'^{\mathrm{min}}} & {X^{\mathrm{min}}} \\
	   {Y'} & Y
	   \arrow[two heads, from=1-1, to=1-2]
	   \arrow[from=2-1, to=1-1]
	   \arrow[from=2-1, to=2-2]
	   \arrow[from=2-1, to=3-1]
	   \arrow[from=2-2, to=1-2]
	   \arrow[from=2-2, to=3-2]
	   \arrow[from=3-1, to=3-2]
    \end{tikzcd}\]
    This shows that $d(X')\geq d(X)$.
\end{proof}
As $d(X)$ grows with finite \'etale covers, and as $d(X)$ is bounded from above by $\dim X$, we can replace $X$ by a finite \'etale cover if necessary so that the following assumption is satisfied.
\begin{assumption}\label{assumption_on_dim_Albanese_image_of_Iitaka_fibers}
    \begin{equation*}
        d(X)=\max\{d(X')\mid X'\rightarrow X\;\text{finite \'etale}\}.
    \end{equation*}
\end{assumption}
\begin{lemma}\label{Albanese_morphism_surjective_fibers_connected}
    The following hold true.
    \begin{enumerate}[label=(\roman*)]
        \item\label{surjective_connected} Let $g\colon S\rightarrow T$ be a fibration between normal compact complex analytic varieties. Then the induced morphism $\mathrm{Alb}(S)\twoheadrightarrow\mathrm{Alb}(T)$ is surjective and has connected kernel, where $\mathrm{Alb}(S)$, resp. $\mathrm{Alb}(T)$ are the Albanese tori of a resolution of singularities $\tilde{S}\rightarrow S$, resp. $\tilde{T}\rightarrow T$.
        \item\label{description_Albanese_kernel} The induced morphism $\mathrm{Alb}(X)\twoheadrightarrow\mathrm{Alb}(Y)$ is surjective, and $A=\ker(\mathrm{Alb}(X)\rightarrow\mathrm{Alb}(Y))$.
    \end{enumerate}
\end{lemma}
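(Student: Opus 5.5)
For \ref{surjective_connected}, I will pass to resolutions and reduce to the dual statement on holomorphic 1-forms and first homology. After replacing $S$ by a resolution $\tilde S$ with a morphism $\tilde g\colon\tilde S\rightarrow\tilde T$, the map $\tilde g$ is still a fibration, i.e.\ surjective with connected fibers. Albanese tori are bimeromorphic invariants of smooth models, so this replacement is harmless. Surjectivity of $\mathrm{Alb}(\tilde S)\rightarrow\mathrm{Alb}(\tilde T)$ follows because $\tilde g^*\colon H^0(\tilde T,\Omega^1_{\tilde T})\rightarrow H^0(\tilde S,\Omega^1_{\tilde S})$ is injective, as $\tilde g$ is dominant. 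Dually, the induced map on tangent spaces $H^0(\tilde S,\Omega^1)^\vee\rightarrow H^0(\tilde T,\Omega^1)^\vee$ is surjective, hence so is the morphism of tori. For connectedness of the kernel, the group of components of the kernel is the cokernel of $H_1(\tilde S,\Z)/\mathrm{tors}\rightarrow H_1(\tilde T,\Z)/\mathrm{tors}$. It therefore suffices to show that $\tilde g_*\colon\pi_1(\tilde S)\rightarrow\pi_1(\tilde T)$ is surjective. This is the standard fact that a proper surjective holomorphic map with connected fibers between connected manifolds induces a surjection on fundamental groups. One way to see it: the image of $\pi_1(\tilde S)$ has finite index, since over the dense open set where $\tilde g$ is smooth it is a fibration, so its image is everything there. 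A nontrivial finite cover $T'\rightarrow\tilde T$ through which $\tilde g$ factors would contradict connectedness of the fibers via Stein factorization. A full-rank sublattice of the same rank whose quotient torus is connected is what we need, so surjectivity up to finite index already gives connected kernel once we note that a finite-index image forces a factorization through a finite étale cover, which is trivial by connected fibers.

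For \ref{description_Albanese_kernel}, surjectivity of $\pi$ is \ref{surjective_connected} applied to the fibration $X_{\tilde Y}\rightarrow\tilde Y$, since $X_{\tilde Y}$ is bimeromorphic to $X$. Let $K:=\ker\pi$, which is a subtorus by \ref{surjective_connected}. Since $\pi\circ\mathrm{alb}_{X_{\tilde Y}}=\mathrm{alb}_{\tilde Y}\circ f_{\tilde Y}$, every fiber of $f_{\tilde Y}$ maps into a translate of $K$, so $A\subset K$. For the reverse inclusion, I will consider the quotient $\mathrm{Alb}(X)/A$. The general fibers map to points there, so the composition $X_{\tilde Y}\rightarrow\mathrm{Alb}(X)/A$ is constant on general fibers and hence on all fibers of $f_{\tilde Y}$. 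It therefore descends to a morphism $\tilde Y\rightarrow\mathrm{Alb}(X)/A$, because $f_{\tilde Y}$ has connected fibers and $\tilde Y$ is normal, which gives a rigidity-type descent.

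By the universal property of $\mathrm{alb}_{\tilde Y}$, this morphism factors through $\mathrm{Alb}(Y)$ up to translation. Consequently $\mathrm{Alb}(X)\rightarrow\mathrm{Alb}(X)/A$ factors through $\pi$, using that $\mathrm{alb}_{X_{\tilde Y}}$ generates $\mathrm{Alb}(X)$. This gives $K\subset A$.

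I expect the main obstacle to be the connected-kernel statement in \ref{surjective_connected}, namely justifying surjectivity on $\pi_1$, or at least on $H_1$ modulo torsion, for a fibration with possibly singular fibers. I would first try the Stein-factorization and finite-cover argument sketched above.
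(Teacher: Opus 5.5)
\textbf{Part (i).} Your argument is essentially the paper's, although it is phrased in a muddled way. Your sentence about the smooth locus already gives full surjectivity on fundamental groups, so the finite-index detour is not needed. Over the locus $T^*$ where $\tilde g$ is smooth, Ehresmann's theorem and the homotopy sequence with connected fiber give $\pi_1(S^*)\twoheadrightarrow\pi_1(T^*)$. Moreover $\pi_1(T^*)\to\pi_1(\tilde T)$ is onto because $\tilde T\setminus T^*$ is a proper analytic subset. This is exactly the paper's argument.

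Your finite-cover idea is also a valid alternative if you phrase it with lattices. Surjectivity of $\mathrm{Alb}(\tilde S)\to\mathrm{Alb}(\tilde T)$ forces the image $\Lambda'$ of $H_1(\tilde S,\Z)$ to have finite index $d$ in $\Lambda_T:=H_1(\tilde T,\Z)/\mathrm{torsion}$. Pulling back the isogeny $H^0(\tilde T,\Omega^1_{\tilde T})^\vee/\Lambda'\to\mathrm{Alb}(\tilde T)$ along $\mathrm{alb}_{\tilde T}$ gives a connected \'etale cover of degree $d$ through which $\tilde g$ factors. Connectedness of the fibers then forces $d=1$.

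\textbf{Part (ii).} Here you take a genuinely different route, and one step is not justified as written. The paper argues dually. It takes a holomorphic $1$-form on $K$ vanishing on $A$ and extends it to $\mathrm{Alb}(X)$. Its pullback vanishes on the fibers of $f_{\tilde Y}$, hence comes from $\tilde Y$, which is a contradiction. You instead push the Albanese map to the quotient torus via the composite $\psi\colon X_{\tilde Y}\to\mathrm{Alb}(X)/A$, descend $\psi$ to $\tilde Y$, and finish with the universal property of $\mathrm{Alb}(\tilde Y)$. Your final steps are fine: the factorization through an affine map $\gamma$, and $q=\gamma\circ\pi$ because the Albanese image generates.

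The problem is the step ``constant on general fibers, hence on all fibers''. Connected fibers and normality of the base let a morphism descend \emph{once} it contracts every fiber. They do not propagate constancy from general fibers to special ones. For example, the projection $\mathrm{Bl}_p\mathbb{P}^2\to\mathbb{P}^1$ from $p$ contracts every general fiber of $\mathrm{Bl}_p\mathbb{P}^2\to\mathbb{P}^2$, but not the exceptional curve.

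The missing input is that the target is a torus. The image of $(f_{\tilde Y},\psi)$ in $\tilde Y\times\mathrm{Alb}(X)/A$ is the graph of a meromorphic map $\tilde Y\dashrightarrow\mathrm{Alb}(X)/A$. A meromorphic map from a complex manifold to a complex torus is holomorphic: resolve it by blow-ups, whose fibers are rationally chain connected, and note that tori contain no rational curves. If $\beta$ denotes this morphism, then $\psi$ and $\beta\circ f_{\tilde Y}$ agree on a dense open set, hence everywhere, and the rest of your argument goes through.

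The paper's version needs the analogous descent fact for $1$-forms: a holomorphic $1$-form vanishing on the general fiber of a fibration onto a manifold is a pullback. So once this point is filled in, the two routes cost about the same.
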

\begin{proof}
    To show that the morphism $\mathrm{Alb}(S)\rightarrow\mathrm{Alb}(T)$ is surjective with connected fibers, note that we can choose bimeromorphic replacements for $S$ and $T$ to assume that $S$ and $T$ are both smooth. Recall that
    \begin{align*}
        \mathrm{Alb}(S)&=H^0(S,\Omega_S^1)^\vee/H_1(S,\Z),\\
        \mathrm{Alb}(T)&=H^0(T,\Omega_T^1)^\vee/H_1(T,\Z).
    \end{align*}
    Thus, the morphism $\mathrm{Alb}(S)\rightarrow\mathrm{Alb}(T)$ is surjective if and only if $g^*\colon H^0(T,\Omega_T^1)\rightarrow H^0(S,\Omega_S^1)$ is injective, and the morphism $\mathrm{Alb}(S)\rightarrow\mathrm{Alb}(T)$ has connected fibers if and only if the map $H_1(S,\Z)/\mathrm{torsion}\rightarrow H_1(T,\Z)/\mathrm{torsion}$ is surjective.\par
    As, $S$, and $T$ are smooth, the fibration $g\colon S\rightarrow T$ is generically smooth. Thus, the map $g^*\colon H^0(T,\Omega_T^1)\rightarrow H^0(S,\Omega_S^1)$ is injective. This proves the surjectivity of $\mathrm{Alb}(S)\rightarrow\mathrm{Alb}(T)$.\par
    To prove the surjectivity of $H_1(S,\Z)/\mathrm{torsion}\rightarrow H_1(T,\Z)/\mathrm{torsion}$, denote by $T^*\subset T$ the open subset over which $g$ is smooth. By shrinking $T^*$ if necessary, we can assume that the complement $T\setminus T^*$ is a divisor. As $g^*\colon S^*\rightarrow T^*$ is a smooth morphism, Ehresmann's theorem \cite[][Thm. 9.3]{Voisin_HT_1} implies that it is locally diffeomorphic to a trivial fiber bundle with differential geometric fiber $F$. Note that $F$ is connected, i.e. $\pi_0(F)=\{*\}$. By \cite[][Prop. 4.48]{Hatcher_AT}, $g^*\colon S^*\rightarrow T^*$ is a Serre fibration. The long exact sequence of homotopy groups associated to a Serre fibration \cite[][Thm. 4.41]{Hatcher_AT}
    \begin{equation*}
        \pi_1(F)\longrightarrow\pi_1(S^*)\longrightarrow\pi_1(T^*)\longrightarrow\pi_0(F)=\{*\}
    \end{equation*}
    shows that the map $\pi_1(S^*)\twoheadrightarrow\pi_1(T^*)$ is surjective. As $T\setminus T^*$ is a divisor, the map $\pi_1(T^*)\twoheadrightarrow\pi_1(T)$ is surjective as well. Thus, also the map $\pi_1(S)\rightarrow\pi_1(T)$ must be surjective. Taking the abelianization preserves the surjectivity. The isomorphisms $H_1(S,\Z)\cong\pi_1(S)^\mathrm{ab}$, $H_1(T,\Z)\cong\pi_1(T)^\mathrm{ab}$ then show that the map
    \begin{equation*}
        H_1(S,\Z)/\mathrm{torsion}\longrightarrow H_1(T,\Z)/\mathrm{torsion}
    \end{equation*}
    must be surjective as well. This proves \cref{surjective_connected}.\par
    To prove the second assertion, note that $f\colon X^{\mathrm{min}}\rightarrow Y$ is a fibration. Thus, by \cref{surjective_connected}, the morphism $\mathrm{Alb}(X)\twoheadrightarrow\mathrm{Alb}(Y)$ is surjective and has connected kernel. It remains to show that $A=\ker(\mathrm{Alb}(X)\rightarrow\mathrm{Alb}(Y))$. Note that by construction, $A$ is a subtorus in the connected torus $\ker(\mathrm{Alb}(X)\twoheadrightarrow\mathrm{Alb}(Y))$. Suppose this inclusion is proper. Then there exists a non-zero holomorphic 1-form $\omega$ on the subtorus
    \begin{equation*}
        \ker\left(\mathrm{Alb}(X)\twoheadrightarrow\mathrm{Alb}(Y)\right)\subset\mathrm{Alb}(X),
    \end{equation*}
    which restricts to zero on $A$. Extend the holomorphic 1-form $\omega$ on $\ker(\mathrm{Alb}(X)\twoheadrightarrow\mathrm{Alb}(Y))$ to a holomorphic 1-form $\tilde{\omega}$ on $\mathrm{Alb}(X)$. This shows, that $\mathrm{alb}_{X_{\tilde{Y}}}^*\tilde{\omega}$ restricts to zero on the fibers of $f_{\tilde{Y}}$. Thus, $\mathrm{alb}_{X_{\tilde{Y}}}^*\tilde{\omega}$ must be a pullback of a holomorphic 1-form on $\mathrm{Alb}(Y)$. However, as $\tilde{\omega}$ restricts to the non-trivial holomorphic 1-form $\omega$ on $\ker(\mathrm{Alb}(X)\twoheadrightarrow\mathrm{Alb}(Y))$, it cannot be a pullback from $\mathrm{Alb}(Y)$, a contradiction.
\end{proof}
\begin{proof}[Proof of \Cref{key_diagram_higher_codim}]
    \Cref{Albanese_morphism_surjective_fibers_connected} shows that $\mathrm{Alb}(X)\rightarrow\mathrm{Alb}(Y)$ is a locally trivial torus fiber bundle with trivial monodromy and fiber $A=\C^{g}/\Lambda$. Thus, by \Cref{classification_torus_fiber_bundles}, there is a cohomology class $\eta'\in H^1(\mathrm{Alb}(Y),\OO_{\mathrm{Alb}(Y)}^{\oplus g}/\Lambda)$ such that
    \begin{equation*}
        \mathrm{Alb}(X)\cong (\mathrm{Alb}(Y)\times A)^{\eta'}.
    \end{equation*}
    This implies that
    \begin{equation*}
        \tilde{Y}\times_{\mathrm{Alb}(Y)}\mathrm{Alb}(X)\cong (\tilde{Y}\times A)^\eta,
    \end{equation*}
    where $\eta\in H^1(\tilde{Y},\OO_{\tilde{Y}}^{\oplus g}/\Lambda)$ is the pullback of $\eta'$ to $\tilde{Y}$. Since $\mathrm{Alb}(X)$ is K\"ahler, $c(\eta')$ is torsion. Recall that the integral cohomology of a torus is torsion free. Thus, $c(\eta')=0$. Hence, also $c(\eta)=0$.
\end{proof}
Our next aim is to prove the following proposition.
\begin{proposition}\label{h_connected_fibers_c(eta)=0}
    After replacing $X$ by a finite \'etale cover if necessary, we can assume that $h$ has connected fibers, and that still $c(\eta)=0$ in diagram (\ref{key_diagram_display_higher_codim}).
\end{proposition}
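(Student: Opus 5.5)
Following the outline in the introduction, I will take the Stein factorization of $h\colon X_{\tilde Y}\to(\tilde Y\times A)^\eta$,
\begin{equation*}
    X_{\tilde Y}\stackrel{h'}{\longrightarrow} V\stackrel{\rho}{\longrightarrow}(\tilde Y\times A)^\eta ,
\end{equation*}
and show that $\rho$ becomes an isogeny of torus bundles after a finite étale base change. First, $h$ is surjective. A general fiber of $f_{\tilde Y}$ is mapped onto a translate of $A$, because its image is a translate of a subtorus of $A$ and $A$ equals the full kernel by \Cref{Albanese_morphism_surjective_fibers_connected}. Hence $\rho$ is finite and surjective.

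Next I would prove that $V\to\tilde Y$ is, over a dense open subset $Y^*\subset\tilde Y$, a torus bundle and $\rho$ is fiberwise an isogeny. Over $Y^*$, a fiber $F$ of $f_{\tilde Y}$ is numerically $K$-trivial with mild singularities. By \Cref{isogeny_from_Automorphisms_to_Albanese_numerically_K-trivial}, its Albanese map $F\to\mathrm{Alb}(F)$ has connected fibers, and $\mathrm{Alb}(F)\to A$ (after a translation) is a surjective homomorphism, hence an isogeny up to connected kernel. The Stein factorization of $F\to A$ is therefore $F\to\mathrm{Alb}(F)/K\to A$ with $K$ connected. So $V_y\to A$ is an isogeny of degree $N$ independent of $y$, since $N$ is locally constant.

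I will then use the local trivializations of \Cref{special_local_trivialization}: over $U_i$ the bundle is $U_i\times A$. On each $U_i$ the finite cover $\rho^{-1}(U_i\times A)\to U_i\times A$ is étale over $Y^*$ with fiber the isogeny $A_y'\to A$. Its kernel is a finite subgroup of $A$ varying continuously, hence locally constant in $y$. Because the gluings are translations, which commute with these subgroups, the subgroups agree on overlaps. This gives one isogeny $A'=\C^g/\Lambda'\to A$ and identifies $V$, at least over $Y^*$ and then everywhere after normalization, with an extension $(\tilde Y\times A')^{\eta'}$ as in \Cref{lemma_extension_of_etale_cover_of_elliptic_curves}. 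The class $c(\eta)=0$ guarantees that such $\eta'$ exists, and since the gluing data can be taken to be constant translations, we may arrange $c(\eta')=0$ as well. The main obstacle is this step: ruling out ramification of $\rho$ over $\tilde Y\setminus Y^*$ and nontrivial monodromy of the kernel subgroup. For monodromy, I would first replace $X$ by the finite étale cover trivializing the finite local system of kernels. Pulling back $\mathrm{Alb}(X)\to\mathrm{Alb}(X)$-isogenies to $X$ gives genuine étale covers of $X$. For ramification, I would argue via purity: $V$ is normal and $(\tilde Y\times A)^\eta$ is smooth, so the branch locus is a divisor, and a divisor on which the fiberwise isogeny degenerates cannot occur because $\rho$ is a group-compatible map of tori over $Y^*$, whose kernel extends as a finite flat subgroup scheme.

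Finally, I would use the isogeny $A'\to A$ to build an isogeny $\mathrm{Alb}(X)'\to\mathrm{Alb}(X)$ extending it, which is possible since $\mathrm{Alb}(X)=(\mathrm{Alb}(Y)\times A)^{\eta_0}$ with $c(\eta_0)=0$, again by \Cref{lemma_extension_of_etale_cover_of_elliptic_curves}. Pulling this back along $\mathrm{alb}_X$ gives a finite étale cover $X'\to X$. By the universal property of the Stein factorization, $X'_{\tilde Y}\to(\tilde Y\times A')^{\eta'}$ has connected fibers. \Cref{Albanese_morphism_surjective_fibers_connected} and \Cref{good_minimal_model_ascend_finite_etale_cover} identify the new Iitaka fibration, and the maximality in \Cref{assumption_on_dim_Albanese_image_of_Iitaka_fibers} keeps the torus dimension equal to $g$. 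The new $h$ then has connected fibers, and $c(\eta')=0$ as before because the new Albanese is a torus, so its integral cohomology is torsion free.
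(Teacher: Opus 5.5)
\paragraph{Gap: $V$ is not an \'etale extension.} The gap is your middle step, the identification of $V$ with an extension $(\tilde Y\times A')^{\eta'}$ that is \'etale over $(\tilde Y\times A)^\eta$.

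Constancy of the fiberwise isogeny is correct. There is in fact no monodromy of the sublattice $\Lambda'\subset\Lambda$ at all, since the gluings are translations. But constancy only fixes each $V_y\to A$ up to isomorphism over $A$, and such isomorphisms are unique only up to the deck group $G=\ker(A'\to A)$. Comparing $V|_{Y^*}$ with a fixed extension therefore produces a $G$-torsor over $Y^*$ (the sheaf $\mathcal{Z}$ of \Cref{construction_Z}). This torsor is in general ramified along $\tilde Y\setminus Y^*$.

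Purity does not exclude this, because the branch divisor can be $(\pi^\eta)^{-1}(E)$ for a divisor $E\subset\tilde Y\setminus Y^*$. Your finite flat subgroup scheme argument does not apply either, since $V|_{Y^*}$ is only a twisted form of $Y^*\times A'$ and has no section.

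Here is an example. Let $C\to Y$ be a double cover of a curve of large genus branched at points $b_i$, and let $t$ be a $2$-torsion point of an elliptic curve $E'$. Set $X=(C\times E')/(\Z/2)$, with the generator acting by the covering involution and by translation by $t$. Then:
\begin{itemize}
\item $X\to Y$ is the Iitaka fibration;
\item $\mathrm{Alb}(X)=J(Y)\times A$ with $A=E'/\langle t\rangle$, and $\eta=0$;
\item $h\colon X\to Y\times A$ is finite of degree $2$ and branched along $\bigcup_i\{b_i\}\times A$.
\end{itemize}
So $V=X$ is not \'etale over $Y\times A$, and no \'etale cover trivializing a local system of kernels changes this.

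\paragraph{Gap: connectedness over the old base.} Your last step fails for the same reason. Suppose $\mathrm{Alb}(X)'\to\mathrm{Alb}(X)$ extends $A'\to A$ with kernel $G\neq 0$. Then the general fiber of $X'\to\tilde Y$ is $F\times_AA'$, a disjoint union of $|G|$ copies of $F$. Hence $X'_{\tilde Y}\to(\tilde Y\times A')^{\eta'}$ has disconnected fibers over the old base; in the example, this is $X'=C\times E'\to Y\times E'$. The universal property of the Stein factorization gives no connectedness here.

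What actually happens is that the Iitaka base of $X'$ becomes a degree-$|G|$, typically ramified, cover of $Y$. This is the paper's $Z$ (in the example, $C$), obtained by extending the torsor $\mathcal{Z}$ across $\tilde Y\setminus Y^*$. Connectedness must be proved over that new base, which is what \Cref{description_Stein_factorization_higher_codim} and the subsequent argument do. The cover of $X$ is \'etale because $G$ acts freely on the $B$-factor even where it fixes points of $Z$.

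\paragraph{How to repair it.} Your cover is in fact the right one: over $Y^*$, the paper's $\tilde X'$ is the pullback of $(\tilde Y\times B)^\theta\to(\tilde Y\times A)^\eta$ along $h$. So the proposal can be fixed as follows.
\begin{itemize}
\item Drop the claim that $V$ is \'etale.
\item Identify the new Iitaka base with the Stein factorization of $X'\to Y$, using \Cref{good_minimal_model_ascend_finite_etale_cover}.
\item Observe that each component of $F\times_AA'$ maps to $A'$ via the Stein factorization of $F\to A$ followed by a translation.
\item Conclude that the new $h'$ has connected general fibers, hence connected fibers, since its target is normal.
\end{itemize}
This change of base is exactly the missing idea.
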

Consider the Stein factorization
\[\begin{tikzcd}
	{X_{\tilde{Y}}} && {(\tilde{Y}\times A)^\eta} \\
	& S & {.}
	\arrow["h", from=1-1, to=1-3]
	\arrow[from=1-1, to=2-2]
	\arrow["{\overline{h}}"', from=2-2, to=1-3]
\end{tikzcd}\]
\begin{lemma}
    Let $D\subset (\tilde{Y}\times A)^\eta$ be the branch locus of $\overline{h}\colon S\rightarrow (\tilde{Y}\times A)^\eta$. Then the following assertions hold true.
    \begin{enumerate}[label=(\roman*)]
        \item $\pi^\eta(D)$ is a proper analytic subset of $\tilde{Y}$.
        \item Set $Y^*:=\tilde{Y}\setminus\pi^\eta(D)$. Then for all $y\in Y^*$, the morphism $\overline{h}$ induces a finite \'etale cover $S_y:=(\pi^\eta\circ\overline{h})^{-1}(y)\rightarrow (\pi^\eta)^{-1}(y)$.
        \item Moreover, if we identify $(\pi^\eta)^{-1}(y)\cong A$, then the induced finite \'etale cover $S_y\rightarrow A$ is isomorphic to a fixed finite \'etale cover $\sigma\colon B\rightarrow A$ for all $y\in Y^*$.
    \end{enumerate}
\end{lemma}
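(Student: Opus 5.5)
The plan is to prove (i) by showing that $\overline{h}$ is étale over the fiber $(\pi^\eta)^{-1}(y)$ for a general point $y\in\tilde{Y}$. Then (ii) follows formally, and (iii) comes from a monodromy argument over the connected set $Y^*$.

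For (i), I take $y\in\tilde{Y}$ general, outside a proper analytic subset. Then the fiber $F_y:=f_{\tilde{Y}}^{-1}(y)$ is a fiber of the Iitaka fibration of $X^{\mathrm{min}}$. It is therefore a normal, numerically $K$-trivial compact Kähler variety with terminal singularities. By assumption and \Cref{Albanese_morphism_surjective_fibers_connected}\ref{description_Albanese_kernel}, $h$ maps $F_y$ onto a translate of $A$ inside $(\pi^\eta)^{-1}(y)\cong A$. Since the image has dimension $g$, $h\vert_{F_y}\colon F_y\rightarrow A$ is surjective.

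Next I analyse $h\vert_{F_y}$ with \Cref{decomposition_numerically_K-trivial_Kaehler_variety}. Pass to the quasi-étale cover $F_y'\cong T\times\prod_iY_i\times\prod_jZ_j$. The factors $Y_i$ and $Z_j$ carry no reflexive $1$-forms, so every morphism from $F_y'$ to a torus factors through the projection to $T$, followed by a homomorphism of tori plus a translation. The projection to $T$ has connected fibers. Descending to $F_y$, I conclude that $F_y\rightarrow\mathrm{Alb}(F_y)$ has connected fibers, and that $h\vert_{F_y}$ factors as $F_y\rightarrow\mathrm{Alb}(F_y)\rightarrow A$ with the second map a surjective homomorphism up to translation. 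The Stein factorization of $F_y\rightarrow A$ is then $F_y\rightarrow B_y\rightarrow A$, where $B_y$ is a disjoint union of torus quotients of $\mathrm{Alb}(F_y)$ by the connected component of the kernel. In particular $B_y\rightarrow A$ is finite étale.

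The step I expect to be the main obstacle is to identify $B_y$ with the actual fiber $S_y$ of the global Stein factorization. I would use that over a dense Zariski open $U\subset\tilde{Y}$ the map $f_{\tilde{Y}}$ is flat with reduced normal fibers, and that $S\rightarrow\tilde{Y}$ is flat with normal fibers there by generic flatness and generic normality. Formation of $h_*\OO$ then commutes with base change to $y\in U$ (cohomology and base change for the finite part, and connectedness of fibers is preserved generically). This gives $S_y\cong B_y$ over $A$ for general $y$.

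Granting this, $\overline{h}$ is étale along a general fiber $(\pi^\eta)^{-1}(y)$. Since the target is smooth and $S$ is normal, purity of the branch locus makes $D$ a divisor or empty. A divisor meeting no general fiber cannot dominate $\tilde{Y}$, so $\pi^\eta(D)$ is a proper analytic subset by properness of $\pi^\eta$. For (ii), if $y\in Y^*$ then $(\pi^\eta)^{-1}(y)\cap D=\varnothing$. Hence $\overline{h}$ is finite étale over a neighbourhood of this fiber, and restricting to the fiber gives the finite étale cover $S_y\rightarrow(\pi^\eta)^{-1}(y)$.

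For (iii), $Y^*$ is connected as the complement of a proper analytic subset in the connected manifold $\tilde{Y}$. Using \Cref{special_local_trivialization}, I cover $Y^*$ by contractible opens $U_i$ with $(\pi^\eta)^{-1}(U_i)\cong U_i\times A$, glued by translations $\mathrm{tr}(a_{ij})$. Over $U_i\times A$ the finite étale cover $\overline{h}$ corresponds to a finite $\pi_1(U_i\times A)=\Lambda$-set. Since $U_i$ is contractible, this is the $\Lambda$-set of $S_y\rightarrow A$ for every $y\in U_i$, so the isomorphism class of $S_y\rightarrow A$ is locally constant in $y$. Translations act trivially on $\pi_1(A)=\Lambda$, and a finite étale cover of $A$ is isomorphic to its pullback under a translation. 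Hence the transition maps do not change the isomorphism class. Connectedness of $Y^*$ then gives a single finite étale cover $\sigma\colon B\rightarrow A$ with $S_y\rightarrow A$ isomorphic to $\sigma$ for all $y\in Y^*$.
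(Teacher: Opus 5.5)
Your proposal is correct. For (i) and (ii) it follows the paper's route: restrict to a general fibre $F_y$, use \Cref{decomposition_numerically_K-trivial_Kaehler_variety} to see that the Stein factorization $F_y\rightarrow B_y\rightarrow A$ has $B_y$ a torus finite \'etale over $A$, identify $B_y$ with $S_y$, and conclude that $Y^*$ contains a dense open subset.

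The paper simply asserts the identification $B_y\cong S_y$. Your generic flatness and normality input is what actually makes that step work. It is cleaner to phrase it directly rather than via cohomology and base change. The fibres of $F_y\rightarrow S_y$ are fibres of $X_{\tilde{Y}}\rightarrow S$, hence connected, so normality of $S_y$ forces $S_y\cong B_y$. The scheme-theoretic fibres of $\overline{h}$ over points of $(\pi^\eta)^{-1}(y)$ are those of $S_y\rightarrow A$, hence reduced. So $\overline{h}$ is unramified, and therefore \'etale, along that fibre, because $S$ is normal and the target is smooth.

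Two small points. Purity of the branch locus is not needed: $\pi^\eta(D)$ is analytic by properness and misses an open set. Also, $B_y$ is connected, so it is a single torus, not a disjoint union.

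For (iii) your route differs slightly. The paper uses connectedness of $Y^*$ together with countability of finite \'etale covers of $A$. You instead prove local constancy explicitly, via the translation-glued trivializations of \Cref{special_local_trivialization} and the $\Lambda$-set of the cover over contractible $U_i$. Your version also handles the non-canonical identification $(\pi^\eta)^{-1}(y)\cong A$, so it spells out what the countability argument leaves implicit.
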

\begin{proof}
    By \cite[][Lem. 5.17]{Kollar_Mori}, there is a non-empty open subset $Y^\circ\subset Y$ such that the fibers of $f\colon X^{\mathrm{min}}\rightarrow Y$ are reduced and have terminal singularities over $Y^\circ$, and such that the complement $Y\setminus Y^\circ$ is an analytic subset. By shrinking $Y^\circ$ if necessary, we can assume that the proper bimeromorphic morphism $\tilde{Y}\rightarrow Y$ is an isomorphism over $Y^\circ$, i.e. $Y^\circ$ is smooth. In particular, we can view $Y^\circ$ as a connected open dense subset of $\tilde{Y}$. For $y\in Y^\circ$, consider the Stein factorization
    \[\begin{tikzcd}
	   {f_{\tilde{Y}}^{-1}(y)} && A \\
	   & {B(y)} & {.}
	   \arrow[from=1-1, to=1-3]
	   \arrow[from=1-1, to=2-2]
	   \arrow[from=2-2, to=1-3]
    \end{tikzcd}\]
    Note that $f_{\tilde{Y}}^{-1}(y)$ is a compact K\"ahler variety with terminal singularities and numerically trivial canonical sheaf.\par
    The decomposition theorem for numerically $K$-trivial compact K\"ahler varieties \cite[][Thm. A]{Bakker_Guenancia_Lehn}, shows that $B(y)$ is a complex torus. Therefore, the morphism $B(y)\rightarrow A$ is finite \'etale.\par
    Note that for all $y\in Y^\circ$, we can identify $B(y)\rightarrow A$ with $(\pi^\eta\circ\overline{h})^{-1}(y)\rightarrow (\pi^\eta)^{-1}(y)$. In particular, $\overline{h}$ is unramified over $(\pi^\eta)^{-1}(Y^\circ)$. Thus, $Y^*$ contains the non-empty open subset $Y^\circ$. This proves the first and second assertion.\par
    To prove the third assertion, note that since $Y^*$ is connected, and since $A$ admits only countably many finite \'etale covers, the morphism $S_y\rightarrow A$ does not depend on $y\in Y^*$.
\end{proof}
\begin{construction}\label{construction_Z}
    Write $B=\C^{g}/\Gamma$, and set $G:=\ker(\sigma\colon B\rightarrow A)$. As $c(\eta)=0$, \Cref{lemma_extension_of_etale_cover_of_elliptic_curves} gives a cohomology class $\theta\in H^1(\tilde{Y},\OO_{\tilde{Y}}^{\oplus g}/\Gamma)$ such that the torus fiber bundle $\pi^\theta\colon(\tilde{Y}\times B)^\theta\rightarrow\tilde{Y}$ is K\"ahler, $c(\theta)=0$, and a finite \'etale cover
    \begin{equation*}
        \rho\colon (\tilde{Y}\times B)^\theta\longrightarrow (\tilde{Y}\times A)^\eta.
    \end{equation*}
    Note that $G$ acts on $(\tilde{Y}\times B)^\theta$ as the group of deck transformations of $\rho$.\par
    For all open subsets $U\subset Y^*$, define
    \begin{equation*}
        \mathcal{Z}(U):=\mathrm{Isom}_{(\pi^\eta)^{-1}(U)}(S\vert_U,(\pi^\theta)^{-1}(U)):=\{\varphi\colon S\vert_U\stackrel{\sim}{\longrightarrow}(\pi^\theta)^{-1}(U)\mid \rho\circ\varphi=\overline{h}\}.
    \end{equation*}
    See \cite[][Sec. 3 Step 1]{Hao_Wang_Zhang_gmm} for a related approach. We claim that the assignment $U\mapsto\mathcal{Z}(U)$ defines a locally constant $G$-equivariant sheaf of sets.\par
    Indeed, the sheaf property is straightforward to verify. The $G$-action on $(\tilde{Y}\times B)^\theta$ induced a $G$-action on $\mathcal{Z}$ via
    \begin{align*}
        \mathcal{Z}(U)&\longrightarrow\mathcal{Z}(U)\\
        \varphi&\longmapsto g\circ\varphi,
    \end{align*}
    where $g\colon (\tilde{Y}\times B)^\theta\rightarrow (\tilde{Y}\times B)^\theta$ is the deck transformation corresponding to $g\in G$. To show that the sheaf $\mathcal{Z}$ is locally constant, let $U\subset Y^*$ be a polydisc. Then $(\pi^\eta)^{-1}(U)\cong U\times A$, and hence $(\pi^\eta\circ\overline{h})^{-1}(U)\cong U\times B$. Clearly, the set of isomorphisms $U\times B\stackrel{\sim}{\rightarrow} U\times B$ over $U\times A$ consists of translations by elements in $G$.\par
    Choose an open cover $\{U_i\}_{i\in I}$ over $Y^*$ that trivializes $(\tilde{Y}\times A)^\eta\vert_{Y^*}$, and $\mathcal{Z}$. Note that for all $i\in I$, there is a unique topology and complex structure on $\mathcal{Z}(U_i)$ such that the morphism $Z_i\rightarrow U_i$ from the associated complex manifold $Z_i$ is finite \'etale. Since $\mathcal{Z}$ is a sheaf, these local patches glue to a complex manifold $Z^*$, and the induced morphism $u^*\colon Z^*\rightarrow Y^*$ is finite \'etale with group of deck transformations $G$.\par
    Note that the diagonal morphism $Z^*\rightarrow Z^*\times_{Y^*}Z^*$ gives rise to a global section of $(u^*)^{-1}\mathcal{Z}$. In particular, we obtain a $G$-equivariant isomorphism
    \begin{equation*}
        S\times_{\tilde{Y}}Z^*\stackrel{\sim}{\longrightarrow} (\tilde{Y}\times B)^\theta\times_{\tilde{Y}}Z^*
    \end{equation*}
    over $(\tilde{Y}\times A)^\eta\times_{\tilde{Y}}Z^*$.\par
    The Grauert--Remmert theorem \cite[][Exp. XII, Thm. 5.4]{SGA} allows to extend $Z^*\rightarrow Y^*$ to a unique normal finite cover $Z\rightarrow\tilde{Y}$ with Galois group $G$. If $Z$ is not connected, we replace it by a connected component and replace $G$ by the stabilizer of this connected component. Note that $Z/G$ is normal and the morphism $Z/G\rightarrow\tilde{Y}$ is an isomorphism over $Y^*$. Since $\tilde{Y}$ is normal, $Z/G\cong\tilde{Y}$.\par
    Since functorial resolutions of singularities exists \cite[][Thm. 13.2]{Bierstone_Milman}, we can choose a resolution of singularities $\tilde{Z}\rightarrow Z$ such that the $G$-action on $Z$ lifts to a $G$-action on $\tilde{Z}$. Denote the composition by $u\colon\tilde{Z}\rightarrow Z\rightarrow\tilde{Y}$. Note that the $G$-actions on $(\tilde{Y}\times B)^\theta$ and on $\tilde{Z}$ induce a $G$-action on $(\tilde{Y}\times B)^\theta\times_{\tilde{Y}}\tilde{Z}\cong(\tilde{Z}\times B)^{u^*\theta}$. Since $G$ acts fixed point freely on $(\tilde{Y}\times B)^\theta$, the $G$-action on $(\tilde{Z}\times B)^{u^*\theta}$ is fixed point free as well.
\end{construction}
\begin{lemma}\label{description_Stein_factorization_higher_codim}
    Denote by $S_{\tilde{Z}}$ the normalization of the unique irreducible component of $S\times_{\tilde{Y}}\tilde{Z}$ that is bimeromorphic to $S\times_{\tilde{Y}}Z^*$, and denote by $S_{\tilde{Z}/G}$ the normalization of the unique irreducible component of $S\times_{\tilde{Y}}\tilde{Z}/G$ that is bimeromorphic to $S$. Then there is a canonical $G$-equivariant isomorphism
    \begin{equation*}
        S_{\tilde{Z}}\stackrel{\sim}{\longrightarrow}(\tilde{Z}\times B)^{u^*\theta}
    \end{equation*}
    over $(\tilde{Z}\times A)^{u^*\eta}$. Moreover, taking the quotient by $G$ results in the isomorphism
    \begin{equation*}
        S_{\tilde{Z}/G}\stackrel{\sim}{\longrightarrow}(\tilde{Z}\times B)^{u^*\theta}/G.
    \end{equation*}
\end{lemma}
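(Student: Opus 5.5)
The plan is to start from the $G$-equivariant isomorphism over the étale locus built in \Cref{construction_Z}, namely
\begin{equation*}
    S\times_{\tilde{Y}}Z^*\stackrel{\sim}{\longrightarrow}(\tilde{Y}\times B)^\theta\times_{\tilde{Y}}Z^*,
\end{equation*}
which lies over $(\tilde{Y}\times A)^\eta\times_{\tilde{Y}}Z^*$. I would then extend it across the complement of $Z^*$ by a normalization argument. Since $\tilde{Z}\rightarrow Z$ is an isomorphism over $Z^*$, the right hand side is the restriction of $(\tilde{Z}\times B)^{u^*\theta}=(\tilde{Y}\times B)^\theta\times_{\tilde{Y}}\tilde{Z}$ to $u^{-1}(Y^*)$. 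This space is a smooth connected complex manifold, because it is a torus bundle over the connected manifold $\tilde{Z}$. Both $S_{\tilde{Z}}$ and $(\tilde{Z}\times B)^{u^*\theta}$ come with maps to $(\tilde{Z}\times A)^{u^*\eta}$: the first is the base change of $\overline{h}$, the second the base change of $\rho$. Both maps are finite. For $S_{\tilde{Z}}$ this holds because $\overline{h}$ is finite, and finiteness survives base change and normalization. For the torus bundle it holds because $\rho$ is finite étale.

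The key step is to identify both spaces as the normalization of $(\tilde{Z}\times A)^{u^*\eta}$ in one and the same finite extension of meromorphic function fields. Over the dense open set lying above $u^{-1}(Y^*)$, the isomorphism above identifies the two finite covers. Hence their function fields agree as extensions of the function field of $(\tilde{Z}\times A)^{u^*\eta}$. A normal complex space that is finite over a normal base is determined, up to unique isomorphism over the base, by this extension; this is the analytic version of normalization in a field extension, or equivalently the uniqueness part of Grauert--Remmert already used in \Cref{construction_Z}. Since $(\tilde{Z}\times B)^{u^*\theta}$ is smooth, hence normal, and $S_{\tilde{Z}}$ is normal by construction, the isomorphism over the open set extends uniquely to an isomorphism $S_{\tilde{Z}}\cong(\tilde{Z}\times B)^{u^*\theta}$ over $(\tilde{Z}\times A)^{u^*\eta}$. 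I would also check that $S_{\tilde{Z}}$ is irreducible and equal to the component specified in the lemma; this follows because $(\tilde{Z}\times B)^{u^*\theta}$ is connected. Canonicity and $G$-equivariance come from uniqueness of the extension. The isomorphism is $G$-equivariant over the dense open set, with $G$ acting on $S\times_{\tilde{Y}}\tilde{Z}$ through $\tilde{Z}$ and on the torus bundle diagonally as in \Cref{construction_Z}. For each $g\in G$, the two compositions agree on a dense open subset, hence everywhere.

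For the quotient statement, I would take $G$-quotients of the equivariant isomorphism. The action of $G$ on $(\tilde{Z}\times B)^{u^*\theta}$ is fixed point free, so the quotient is a normal complex space. It remains to identify $S_{\tilde{Z}}/G$ with $S_{\tilde{Z}/G}$. The quotient $S_{\tilde{Z}}/G$ is normal, because quotients of normal spaces by finite groups are normal. It maps finitely to $(\tilde{Z}\times A)^{u^*\eta}/G=(\tilde{Z}/G)\times_{\tilde{Y}}(\tilde{Y}\times A)^\eta$, or rather to the relevant component of it, and over $Y^*$ it is bimeromorphic to $S\times_{\tilde{Y}}(\tilde{Z}/G)$. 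Applying the same normalization-uniqueness argument over this base gives $S_{\tilde{Z}}/G\cong S_{\tilde{Z}/G}$.

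\textbf{Main obstacle.} The delicate point is bookkeeping rather than any deep input. One must make sure the chosen irreducible components are the right ones, and that $\tilde{Z}/G$ (which need not be normal a priori, nor equal to $\tilde{Y}$) is handled correctly. I would first try to normalize $\tilde{Z}/G$ and argue everything over its normalization, using that the component bimeromorphic to $S$ is unique.
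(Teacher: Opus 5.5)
Your proposal is correct and follows essentially the paper's route. Both $S_{\tilde{Z}}$ and $(\tilde{Z}\times B)^{u^*\theta}$ are normal and finite over $(\tilde{Z}\times A)^{u^*\eta}$ and agree over the preimage of $Z^*$, so the isomorphism from the construction of $Z$ extends and remains $G$-equivariant by density; you also justify $S_{\tilde{Z}}/G\cong S_{\tilde{Z}/G}$, which the paper only asserts. Two small corrections: base the extension step on Grauert--Remmert uniqueness (or on the fact that a finite bimeromorphic map onto a normal space is an isomorphism) rather than on global meromorphic function fields, which are blind to the cover $B\to A$ when $A$ is non-algebraic; and $\tilde{Z}/G$ is automatically normal, being a finite quotient of a manifold.
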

\begin{proof}
    Recall from \Cref{construction_Z} there is a canonical $G$-equivariant isomorphism
    \begin{equation*}
        S\times_{\tilde{Y}}Z^*\stackrel{\sim}{\longrightarrow} (\tilde{Y}\times B)^\theta\times_{\tilde{Y}}Z^*\cong (\tilde{Z}\times B)^{u^*\theta}\times_{\tilde{Z}}Z^*
    \end{equation*}
    over $(\tilde{Y}\times A)^\eta\times_{\tilde{Y}}Z^*$.\par
    On the one hand, the composition
    \begin{equation*}
        S_{\tilde{Z}}\longrightarrow S\times_{\tilde{Y}}\tilde{Z}\longrightarrow (\tilde{Y}\times A)^\eta\times_{\tilde{Y}}\tilde{Z}\cong (\tilde{Z}\times A)^{u^*\eta}
    \end{equation*}
    is a finite morphism of degree $\vert G\vert$. On the other hand, the morphism
    \begin{equation*}
        (\tilde{Z}\times B)^{u^*\theta}\cong(\tilde{Y}\times B)^\theta\times_{\tilde{Y}}\tilde{Z}\stackrel{u^*\rho}{\longrightarrow}(\tilde{Y}\times A)^\eta\times_{\tilde{Y}}\tilde{Z}\cong (\tilde{Z}\times A)^{u^*\eta}
    \end{equation*}
    is a finite morphism of degree $\vert G\vert$.\par
    The above argument shows that both finite morphisms agree over $Z^*$. Thus, the above $G$-equivariant isomorphism extends to an isomorphism
    \begin{equation*}
        S_{\tilde{Z}}\stackrel{\sim}{\longrightarrow}(\tilde{Z}\times B)^{u^*\theta}.
    \end{equation*}
    Note that the $G$-action on $\tilde{Z}$ induces a $G$-action on $S_{\tilde{Z}}$. This makes the above isomorphism $G$-equivariant. Also note, that $S_{\tilde{Z}}/G\cong S_{\tilde{Z}/G}$. In particular, taking the quotient of the above $G$-equivariant isomorphism results in the isomorphism
    \begin{equation*}
        S_{\tilde{Z}/G}\stackrel{\sim}{\longrightarrow}(\tilde{Z}\times B)^{u^*\theta}/G.
    \end{equation*}
\end{proof}
We are now in position to prove \Cref{h_connected_fibers_c(eta)=0}.
\begin{proof}[Proof of \Cref{h_connected_fibers_c(eta)=0}]
    Denote by $X_{\tilde{Z}/G}$ the unique irreducible component of $X^{\mathrm{min}}\times_Y\tilde{Z}/G$ that is bimeromorphic to $X^{\mathrm{min}}$. Choose a resolution of singularities $\tilde{X}\rightarrow X_{\tilde{Z}/G}$, and define
    \begin{equation*}
        \tilde{X}':=\tilde{X}_{\tilde{Z}/G}\times_{S_{\tilde{Z}/G}}(\tilde{Z}\times B)^{u^*\theta}.
    \end{equation*}
    Since $G$ acts fixed point freely on $(\tilde{Z}\times B)^{u^*\theta}$ by \Cref{construction_Z} and since $(\tilde{Z}\times B)^{u^*\theta}/G\cong S_{\tilde{Z}/G}$ by \Cref{description_Stein_factorization_higher_codim}, the induced morphism $\tilde{X}'\rightarrow\tilde{X}$ is finite \'etale. Recall that $\tilde{X}$ is bimeromorphic to $X$. Thus, there is a finite \'etale cover $X'\rightarrow X$ such that $X'$ is bimeromorphic to $\tilde{X}'$.\par
    By \Cref{good_minimal_model_ascend_finite_etale_cover}, $X'$ admits a good minimal model $X'^{\mathrm{min}}$, which admits also a quasi-\'etale morphism $X'^{\mathrm{min}}\rightarrow X^{\mathrm{min}}$, and the Iitaka fibration $f'\colon X'^{\mathrm{min}}\rightarrow Y'$ of $X'^{\mathrm{min}}$ is given by the Stein factorization of $X'^{\mathrm{min}}\rightarrow X^{\mathrm{min}}\rightarrow Y$. Thus, $f'$ is bimeromorphically equivalent to the Stein factorization of $\tilde{X}'\rightarrow\tilde{X}\rightarrow\tilde{Z}/G\rightarrow Y$. From the commutative diagram
    \[\begin{tikzcd}
	   {\tilde{X}'} & {(\tilde{Z}\times B)^\theta} \\
	   {\tilde{X}} & {S_{\tilde{Z}/G},}
	   \arrow[from=1-1, to=1-2]
	   \arrow[from=1-1, to=2-1]
	   \arrow[from=1-2, to=2-2]
	   \arrow[from=2-1, to=2-2]
    \end{tikzcd}\]
    it follows that $f'$ is bimeromorphic to the Stein factorization of $\tilde{X}'\rightarrow\tilde{Z}\rightarrow Y$. Therefore, $Y'$ fits into the Stein factorization of $\tilde{Z}\rightarrow Y$, i.e. there is a proper bimeromorphic morphism $\tilde{Z}\rightarrow Y'$.\par
    Note that $X'^{\mathrm{min}}\times_{Y'}\tilde{Z}$ contains a unique irreducible component that is bimeromorphic to $X'^{\mathrm{min}}$. Denote by $X'_{\tilde{Z}}$ the normalization of this component, and denote by $f'_{\tilde{Z}}\colon X'_{\tilde{Z}}\rightarrow\tilde{Z}$ the induced morphism. The universal property of the fiber product gives rise to the morphism
    \begin{equation*}
        X'_{\tilde{Z}}\longrightarrow (\tilde{Y}\times A)^\eta\times_{\tilde{Y}}\tilde{Z}.
    \end{equation*}
    Note that the Stein factorization of this morphism is given by
    \begin{equation*}
        X'_{\tilde{Z}}\longrightarrow S_{\tilde{Z}}\longrightarrow(\tilde{Y}\times A)^\eta\times_{Y'}\tilde{Z}.
    \end{equation*}
    Thus, using the isomorphism in \Cref{description_Stein_factorization_higher_codim} results in the morphism with connected fibers
    \begin{equation*}
        h'\colon X'_{\tilde{Z}}\longrightarrow (\tilde{Z}\times B)^{u^*\theta}.
    \end{equation*}
    Finally, note that since $c(\theta)=0$, $c(u^*\theta)=u^*c(\theta)=0$.
\end{proof}
From now on, we assume that we are in the situation and notation from \Cref{h_connected_fibers_c(eta)=0}. Our next aim is to prove the following proposition.
\begin{proposition}\label{proposition_torus_action_h_equivariant}
    After replacing $X$ by a finite \'etale cover if necessary, the torus $A$ acts on $X_{\tilde{Y}}$ by translations $\mathrm{tr}(a)\colon X_{\tilde{Y}}\rightarrow X_{\tilde{Y}}$ such that the morphism $h$ in diagram (\ref{key_diagram_display_higher_codim}) becomes equivariant, where we endow $(\tilde{Y}\times A)^\eta$ with the natural $A$-action. In other words, the following diagram commutes for all $a\in A$
    \[\begin{tikzcd}
	   {X_{\tilde{Y}}} & {X_{\tilde{Y}}} \\
	   {(\tilde{Y}\times A)^\eta} & {(\tilde{Y}\times A)^\eta.}
	   \arrow["{\mathrm{tr}(a)}", from=1-1, to=1-2]
	   \arrow[from=1-1, to=2-1]
	   \arrow[from=1-2, to=2-2]
	   \arrow["{\mathrm{tr}(a)}", from=2-1, to=2-2]
    \end{tikzcd}\]
    Moreover, $h$ still has connected fibers and $c(\eta)=0$.
\end{proposition}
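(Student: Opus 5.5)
We follow the third step of the outline in the introduction. First we work over a dense Zariski open subset $Y^*\subset\tilde{Y}$ over which $f_{\tilde{Y}}$ is smooth enough. Concretely, $Y^*$ is chosen so that $X^*:=f_{\tilde{Y}}^{-1}(Y^*)\rightarrow Y^*$ is flat with fibers that are numerically $K$-trivial with terminal singularities, and so that $h$ restricted to $X^*$ has connected fibers onto $(\pi^\eta)^{-1}(Y^*)$. For each $y\in Y^*$, \Cref{isogeny_from_Automorphisms_to_Albanese_numerically_K-trivial} gives an isogeny $\mathrm{Aut}^\circ(X_y)\rightarrow\mathrm{Alb}(X_y)$. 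Since $h_y\colon X_y\rightarrow A$ has connected fibers, the induced $\mathrm{Alb}(X_y)\rightarrow A$ is surjective.

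We then globalize this over $Y^*$. We form the relative Albanese $\xi\colon\mathcal{A}lb\rightarrow Y^*$, a torus fiber bundle through which $X^*\rightarrow(\pi^\eta)^{-1}(Y^*)$ factors. Let $\mathcal{S}$ be its sheaf of local sections. The fiberwise isogenies glue to a map $\mathrm{Aut}^\circ(X^*/Y^*)\rightarrow H^0(Y^*,\mathcal{S})^\circ$, and we show it is surjective with finite kernel. For surjectivity we deform fiberwise vector fields, using that $f_*\mathcal{T}_{X^*/Y^*}$ is locally free of rank equal to the Albanese dimension by the non-degenerate pairing of \Cref{isogeny_from_Automorphisms_to_Albanese_numerically_K-trivial}. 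The map $\mathcal{A}lb\rightarrow(\pi^\eta)^{-1}(Y^*)$ is a fiberwise surjection of torus bundles, and its target is $(\tilde{Y}\times A)^\eta$ with trivial monodromy and constant fiber $A$. The key point is to produce a subtorus $A'\subset H^0(Y^*,\mathcal{S})^\circ$, i.e. a family of global translation sections of $\mathcal{A}lb$, that maps isogenously onto $A$. For this we use Saito's decomposition theorem for K\"ahler morphisms: it says that the variation of Hodge structure $R^1$ of $\mathcal{A}lb$ (equivalently of $X^*$) splits off the constant sub-VHS $H^1(A)$ pulled back along $h$, up to isogeny. A constant sub-VHS with trivial monodromy yields a constant subtorus bundle $Y^*\times A'\subset\mathcal{A}lb$ (up to isogeny), hence global sections. \emph{This is the step I expect to be the main obstacle}: one must also check that the splitting is defined over $\Z$ up to finite index, and control the monodromy, possibly after a finite étale base change. \Cref{assumption_on_dim_Albanese_image_of_Iitaka_fibers} is used here to rule out extra monodromy that would enlarge $d(X)$ after an étale cover.

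Let $A''$ be the preimage of $A'$ in $\mathrm{Aut}^\circ(X^*/Y^*)$. Then $A''\rightarrow A$ is an isogeny with kernel $K$. By \Cref{lemma_extension_of_etale_cover_of_elliptic_curves}, and since $c(\eta)=0$, this isogeny extends to a finite étale cover $(\tilde{Y}\times A'')^{\eta''}\rightarrow(\tilde{Y}\times A)^\eta$ with $c(\eta'')=0$. We base change $h$ along it and keep the component dominating $X_{\tilde{Y}}$. As in the proof of \Cref{h_connected_fibers_c(eta)=0}, this produces a finite étale cover $X'\rightarrow X$. We check that $X'$ is étale by showing that $K$ acts freely, which holds because $K$ acts freely on the torus fibers.

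Next we extend the $A''$-action from $X^*$ to all of $X_{\tilde{Y}}$, or at least to $X^{\mathrm{min}}$. The action is by bimeromorphic automorphisms preserving $\mathcal{K}$. On a minimal model with canonical bundle pulled back from $Y$, a connected group of bimeromorphic self-maps over $Y$ acts biregularly, since flops are not generated in connected families and the graph closure is flat over $A''$. It is then pulled back to the normalized fiber product $X_{\tilde{Y}}$.

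Finally, $h$ is equivariant on the dense set $X^*$ by construction, hence everywhere. Renaming $A''$ as $A$ and $\eta''$ as $\eta$ gives the statement. Connectedness of the fibers of $h$ is preserved, because the new $h$ is the base change of a morphism with connected fibers along the cover $A''\rightarrow A$ by its own factor. Equivalently, the Stein factorization is trivial by the maximality in \Cref{assumption_on_dim_Albanese_image_of_Iitaka_fibers} and the argument of \Cref{h_connected_fibers_c(eta)=0}.
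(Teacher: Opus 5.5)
Your outline follows the paper's architecture: relative Albanese, $\mathrm{Aut}^\circ(X^*_{\tilde{Y}}/Y^*)\to H^0(Y^*,\mathcal{J}_H)^\circ$ surjective with finite kernel, a torus of flat sections isogenous to $A$, extension of the action, and base change along an extension of the isogeny. But the step that carries the K\"ahler difficulty is only asserted: ``the action is by bimeromorphic automorphisms.'' Elements of $\mathrm{Aut}^\circ(X^*_{\tilde{Y}}/Y^*)$ need not extend meromorphically across $f_{\tilde{Y}}^{-1}(\tilde{Y}\setminus Y^*)$. Since $Y^*$ is Stein, this group is infinite-dimensional. For example, on $\C\times E\to\C\subset\mathbb{P}^1$ the relative automorphism $(y,e)\mapsto(y,e+[\exp(y)])$ is the exponential of a vertical vector field, yet it is not bimeromorphic on $\mathbb{P}^1\times E$.

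So you must use that $A''$ comes from \emph{flat real} sections, and this is the content of \Cref{automorphisms_on_Xstar_are_bimeromorphic_on_X_higher_codim}. First, one checks that $A'\times X^*_{\tilde{Y}}\to X^*_{\tilde{Y}}$ is holomorphic. Second, the vector fields in $V_{\mathbb{R}}$ pair with relative $1$-forms through classes $\beta\in H^{2n-1}(\tilde{X},\mathbb{R})$ (global invariant cycle theorem). Hence these pairings are bounded, extend by Riemann's extension theorem, and the fields become meromorphic on $\tilde{X}$. Third, the saturated rank-$g$ foliation they span is algebraically integrable by Pereira's theorems, which gives an analytic orbit relation $R\subset\tilde{X}\times\tilde{X}$. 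Finally, $\overline{\Gamma^*_{a'}}$ is a component of $\{(x,y)\in R\mid \mathrm{alb}(y)=\mathrm{alb}(x)+\tilde{\alpha}(a')\}$.

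Your biregularity step (``flops are not generated in connected families and the graph closure is flat over $A''$'') presupposes this analytic family of graphs, and even then it is a heuristic. The paper makes it precise in three moves. The negativity lemma shows $p_1,p_2$ contract the same divisors. The connected group acts trivially on the finitely generated group $H^2(X^{\mathrm{sm}},\Z)$. The negativity lemma again gives $p_1^*[\omega]=p_2^*[\omega]$, so $p_1$- and $p_2$-contracted curves coincide.

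In the step you flagged as the main obstacle, the difficulty is misidentified. Since $\underline{\Lambda}$ is a trivial local system, semisimplicity already gives a trivial (hence monodromy-free) rational sub-local system of $H_\Z\otimes\mathbb{R}$ surjecting onto it, so there is no monodromy to control. The real problem is Hodge compatibility. A complement chosen this way lies in the fixed part, but nothing forces it to be a sub-Hodge structure. Without a rational polarization you cannot choose one that is (Poincar\'e reducibility fails for non-algebraic tori), so you do not obviously get a complex subtorus.

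The paper avoids choosing any complement. In \Cref{lemma_retraction_local_system} it proves that the whole fixed part $H^0(Y^*,H_\Z\otimes\mathbb{R})$ maps \emph{isomorphically} onto $\Lambda\otimes\mathbb{R}$. Injectivity comes from dualizing: the global invariant cycle theorem identifies the dual of the fixed part with $H^1(\mathrm{Alb}(X),\mathbb{R})/H^1(\mathrm{Alb}(Y),\mathbb{R})$, and then \Cref{isomorphism_between_Albanese_X_and_torus_fiber_bundle} applies. That lemma is where \Cref{assumption_on_dim_Albanese_image_of_Iitaka_fibers} enters: it keeps the kernel of $\mathrm{Alb}(X)\to\mathrm{Alb}(Y)$ of dimension $\dim A$ after the cover of \Cref{h_connected_fibers_c(eta)=0}. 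It is not used to rule out monodromy.
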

\begin{lemma}\label{isomorphism_between_Albanese_X_and_torus_fiber_bundle}
    The morphism $h\colon X_{\tilde{Y}}\rightarrow (\tilde{Y}\times A)^\eta$ induces an isomorphism
    \begin{equation*}
        \mathrm{Alb}(X)\stackrel{\sim}{\longrightarrow}\mathrm{Alb}((\tilde{Y}\times A)^\eta).
    \end{equation*}
\end{lemma}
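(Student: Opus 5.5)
Let me write $P:=(\tilde{Y}\times A)^\eta$, which is a compact K\"ahler manifold by \Cref{fundamental_properties_special_torus_bundles}\ref{torus_bundle_kaehler}, since $c(\eta)=0$. The plan is to show that the natural morphism $h_*\colon\mathrm{Alb}(X)\cong\mathrm{Alb}(X_{\tilde{Y}})\rightarrow\mathrm{Alb}(P)$ is an isomorphism by comparing both tori with the morphism $P\rightarrow\mathrm{Alb}(X)$ that comes from the construction of $P$ as a fiber product. Concretely, \Cref{key_diagram_higher_codim} and the fiber product structure give a projection $p\colon P\cong\mathrm{Alb}(X)\times_{\mathrm{Alb}(Y)}\tilde{Y}\rightarrow\mathrm{Alb}(X)$. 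In the situation after \Cref{h_connected_fibers_c(eta)=0}, $\mathrm{Alb}(X)$ may be replaced by the Albanese of the current cover, but the structure is the same. By construction, $p\circ h=\mathrm{alb}_{X_{\tilde{Y}}}$. Passing to Albanese tori, we get morphisms
\begin{equation*}
    \mathrm{Alb}(X)\stackrel{h_*}{\longrightarrow}\mathrm{Alb}(P)\stackrel{p_*}{\longrightarrow}\mathrm{Alb}(X),
\end{equation*}
whose composition is the map induced by $\mathrm{alb}_{X_{\tilde{Y}}}$, namely the identity. Hence $h_*$ is injective and $p_*$ is surjective. It then suffices to show that $\dim\mathrm{Alb}(P)=\dim\mathrm{Alb}(X)$, or, more precisely, that $p_*$ is injective as well.

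For the dimension count I would use \Cref{fundamental_properties_special_torus_bundles}\ref{1-forms_on_torus_bundles}, which gives
\begin{equation*}
    h^0(P,\Omega^1_P)=h^0(\tilde{Y},\Omega^1_{\tilde{Y}})+g=\dim\mathrm{Alb}(Y)+g.
\end{equation*}
\Cref{Albanese_morphism_surjective_fibers_connected}\ref{description_Albanese_kernel} says that $\mathrm{Alb}(X)\rightarrow\mathrm{Alb}(Y)$ is surjective with kernel $A$, so $\dim\mathrm{Alb}(X)=\dim\mathrm{Alb}(Y)+g$ as well. Therefore $h_*$ is an injective morphism between tori of the same dimension, which makes it an isogeny, and the identity $p_*\circ h_*=\id$ then forces it to be an isomorphism. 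Indeed, a split injection between connected tori of equal dimension is bijective.

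The main obstacle I expect is in justifying the identity $p_*\circ h_*=\id$ at the level of Albanese tori. This requires knowing that the map $\mathrm{Alb}(X_{\tilde{Y}})\rightarrow\mathrm{Alb}(X)$ induced by $\mathrm{alb}_{X_{\tilde{Y}}}$ is the identity under the identification $\mathrm{Alb}(X_{\tilde{Y}})\cong\mathrm{Alb}(X)$. That identification holds because $X_{\tilde{Y}}$ is bimeromorphic to $X$ and has rational singularities (it is normal and bimeromorphic to $X^{\mathrm{min}}$, and one can work on a resolution). One also needs to keep track of base points, which only affect the maps by translations, so any discrepancy can be absorbed into a translation. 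If $\mathrm{alb}_{X_{\tilde{Y}}}$ is really the Albanese map of a resolution, its universal property gives the claim directly. As an alternative, one can argue at the level of 1-forms: $p^*$ composed with $h^*$ on $H^0(\Omega^1)$ recovers $\mathrm{alb}^*$, which is an isomorphism. Hence $h^*\colon H^0(P,\Omega^1_P)\rightarrow H^0(X,\Omega^1_X)$ is surjective and, by the dimension count, bijective. For the lattice, $H_1(X,\Z)/\mathrm{torsion}\rightarrow H_1(P,\Z)/\mathrm{torsion}$ is injective because it is split by $p_*$. It is surjective because, by the same argument as in \Cref{Albanese_morphism_surjective_fibers_connected}\ref{surjective_connected}, $h$ is surjective with connected fibers by \Cref{h_connected_fibers_c(eta)=0}.
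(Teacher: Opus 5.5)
Your splitting argument has a genuine gap at the first step: the existence of $p\colon P\to\mathrm{Alb}(X)$ with $p\circ h=\mathrm{alb}_{X_{\tilde Y}}$. The lemma lives in the situation after \Cref{h_connected_fibers_c(eta)=0}, which is why the paper's proof opens with ``since $h$ has connected fibers''. There the current $X$ is a finite \'etale cover $X'$ of the original manifold $X_{\mathrm{orig}}$, and the base is $\tilde Z$. The morphism $h$ is $X'_{\tilde Z}\to(\tilde Z\times B)^{u^*\theta}$, obtained from a Stein factorization. Its target is the pullback of an \'etale cover $(\tilde Y\times B)^\theta\to(\tilde Y\times A)^\eta$, built from a non-unique lift $\theta$ of $\eta$.

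This target is not constructed as $\mathrm{Alb}(X')\times_{\mathrm{Alb}(Y')}\tilde Z$. So nothing gives you a map to the Albanese of the \emph{current} $X$ through which $\mathrm{alb}$ factors. The only natural map goes to $\mathrm{Alb}(X_{\mathrm{orig}})$, and the composite $\mathrm{Alb}(X')\to\mathrm{Alb}(P)\to\mathrm{Alb}(X_{\mathrm{orig}})$ is just the natural pushforward. That map need not be injective, since $\tilde Z\to\tilde Y$ may be ramified and $\mathrm{Alb}(\tilde Z)$ may be larger than $\mathrm{Alb}(\tilde Y)$.

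Knowing that the Albanese map of the current $X$ is constant on the fibers of $h$ is essentially the content of the lemma; it is immediate once $h_*$ is an isomorphism. So ``the structure is the same'' is circular. Your argument is correct for the original $h$ of \Cref{key_diagram_higher_codim}, where $P$ literally is the fiber product. But that is not the morphism the lemma is about, nor the one used later, e.g.\ in \Cref{lemma_retraction_local_system}.

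A related omission is in your dimension count. You take $\dim\ker(\mathrm{Alb}(X)\to\mathrm{Alb}(Y))$ to be the fiber dimension $g$ of $P$. For the current cover, \Cref{Albanese_morphism_surjective_fibers_connected}\ref{description_Albanese_kernel} only says this kernel has dimension $d$ of the cover. That this equals $g$ is exactly \Cref{assumption_on_dim_Albanese_image_of_Iitaka_fibers}, which you never invoke; in your framework it was hidden in the unjustified identification of $P$ with a fiber product.

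Once you use the assumption, the retraction is unnecessary, and the ingredients in your last paragraph suffice. Since $h$ is surjective with connected fibers, \Cref{Albanese_morphism_surjective_fibers_connected}\ref{surjective_connected} shows $h_*$ is surjective with connected kernel. Moreover $\dim\mathrm{Alb}(X)=\dim\mathrm{Alb}(Y)+g=\dim\mathrm{Alb}(P)$, the second equality by \Cref{fundamental_properties_special_torus_bundles}. A connected kernel of dimension zero is trivial. This is the paper's proof. You could instead reapply \Cref{key_diagram_higher_codim} to the cover and factor the resulting map through $h$, but that needs the same two inputs (connected fibers of $h$ and the maximality assumption), so it is no shortcut.
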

\begin{proof}
    Since $h$ has connected fibers, \Cref{Albanese_morphism_surjective_fibers_connected} shows that the morphism $\mathrm{Alb}(X)\rightarrow\mathrm{Alb}((\tilde{Y}\times A)^\eta)$ is surjective with connected fibers. It hence suffices to show that the fibers are zero-dimensional. This follows from the commutative diagram
    \[\begin{tikzcd}
	   {\mathrm{Alb}(X)} && {\mathrm{Alb}((\tilde{Y}\times A)^\eta)} \\
	   & {\mathrm{Alb}(Y)} & {,}
	   \arrow[from=1-1, to=1-3]
	   \arrow[from=1-1, to=2-2]
	   \arrow[from=1-3, to=2-2]
    \end{tikzcd}\]
    since both diagonal arrows have kernel of dimension equal to $\dim A$ by \Cref{assumption_on_dim_Albanese_image_of_Iitaka_fibers}, resp. by \Cref{fundamental_properties_special_torus_bundles}.
\end{proof}
\begin{construction}\label{construction_relative_Albanese}
    In the situation and notation from \Cref{h_connected_fibers_c(eta)=0}, choose a functorial resolution of singularities $\tilde{X}\rightarrow X_{\tilde{Y}}$ \cite[][Thm. 13.2]{Bierstone_Milman}, and denote the induced fibration by $\tilde{f}\colon\tilde{X}\rightarrow\tilde{Y}$.\par
    By \cite[][Lem. 5.17]{Kollar_Mori}, there is a non-empty open subset $Y^*\subset Y$ such that the fibers of $f\colon X^{\mathrm{min}}\rightarrow Y$ are reduced and have terminal singularities over $Y^*$, and such that the complement $Y\setminus Y^*$ is an analytic subset. By shrinking $Y^*$ if necessary, we can assume that the proper bimeromorphic morphism $\tilde{Y}\rightarrow Y$ is an isomorphism over $Y^*$, i.e. $Y^*$ is smooth. In particular, we can identify $Y^*$ with a connected open dense subset $Y^*\subset\tilde{Y}$. By shrinking $Y^*$ further if necessary, we can assume that $Y^*$ is affine ($Y$ is a projective variety), that $\tilde{f}$ is smooth over $Y^*$, that $f_{\tilde{Y}}$ is flat over $Y^*$, and that the morphisms on fibers
    \begin{equation*}
        \tilde{f}^{-1}(y)\longrightarrow f^{-1}_{\tilde{Y}}(y)
    \end{equation*}
    are bimeromorphic for all $y\in Y^*$. Note that each fiber $f_{\tilde{Y}}^{-1}(y)$ is a numerically $K$-trivial K\"ahler variety with terminal singularities. Denote by $\tilde{f}^*\colon \tilde{X}^*:=\tilde{f}^{-1}_{\tilde{Y}}(Y^*)\rightarrow Y^*$ the restricted morphism.\par
    Let $n$ be the relative dimension of $\tilde{f}^*$. Consider the variation of Hodge structures $H$ associated to $\tilde{f}^*$, i.e. $H$ consists of the local system
    \begin{equation*}
        H_\Z:=R^{2n-1}(\tilde{f}^*)_*\underline{\Z}/\mathrm{torsion},
    \end{equation*}
    the holomorphic vector bundle $\mathcal{H}:=H_\Z\otimes\OO_{Y^*}$, the holomorphic subbundle
    \begin{equation*}
        \mathcal{F}^n:=R^{n-1}(\tilde{f}^*)_*\Omega^n_{\tilde{X}^*/Y^*}\subset\mathcal{H},
    \end{equation*}
    and the quotient $\mathcal{H}^{n-1,n}:=\mathcal{H}/\mathcal{F}^n$. Consider the Jacobian fibration
    \begin{equation*}
        \xi\colon J_H:=\mathcal{H}^{n-1,n}/H_\Z\longrightarrow Y^*,
    \end{equation*}
    and denote its sheaf of local sections by $\mathcal{J}_H=\mathcal{H}^{n-1,n}/H_\Z$.\par
    Since $\tilde{f}^*$ is smooth over $Y^*$, we can find an open cover $\{U_i\}_{i\in I}$ of $Y^*$ such that $\tilde{f}^*$ admits a local section $s_i\colon U_i\rightarrow\tilde{X}^*_{\tilde{Y}}$ over each $U_i$. These local sections give rise to local relative Albanese morphisms
    \begin{equation*}
        a_i\colon (\tilde{f}^*)^{-1}(U_i)\longrightarrow J_H\vert_{U_i}
    \end{equation*}
    over $U_i$. On $U_i\cap U_j$, define the local section $\zeta_{ij}$ of $J_H\rightarrow Y^*$ as
    \begin{equation*}
        \zeta_{ij}\colon U_i\cap U_j\stackrel{s_j\vert_{U_i\cap U_j}}{\longrightarrow}(\tilde{f}^*)^{-1}(U_i\cap U_j)\stackrel{a_i\vert_{U_i\cap U_j}}{\longrightarrow}J_H\vert_{U_i\cap U_j}.
    \end{equation*}
    Then the following diagram commutes
    \[\begin{tikzcd}
	   {(\tilde{f}^*)^{-1}(U_i\cap U_j)} & {J_H\vert_{U_i\cap U_j}} \\
	   {(\tilde{f}^*)^{-1}(U_i\cap U_j)} & {J_H\vert_{U_i\cap U_j}}
	   \arrow["{a_i\vert_{U_i\cap U_j}}", from=1-1, to=1-2]
	   \arrow["{\mathrm{id}}"', from=1-1, to=2-1]
	   \arrow["{\mathrm{tr}(\zeta_{ij})}", from=1-2, to=2-2]
	   \arrow["{a_j\vert_{U_i\cap U_j}}", from=2-1, to=2-2]
    \end{tikzcd}\]
    Moreover, a calculation shows that the local sections $\zeta_{ij}$ satisfy the cocycle condition, i.e. we obtain a class $\zeta\in H^1(Y^*,\mathcal{J}_H)$. Thus, the local relative Albanese morphisms glue to a global relative Albanese morphism
    \begin{equation*}
        \mathrm{alb}_{\tilde{X}^*/Y^*}\colon \tilde{X}^*_{\tilde{Y}}\longrightarrow J_H^\zeta
    \end{equation*}
    over $Y^*$. Moreover, since the fibers of $f^*_{\tilde{Y}}$ have terminal singularities the proof of \cite[][Lem. 8.1]{Kawamata_minimal_models_Kodaira_dim_fiber_spaces} shows that the global relative Albanese morphism factors through $\tilde{X}^*\rightarrow X^*_{\tilde{Y}}$:
    \begin{equation*}
        \mathrm{alb}_{\tilde{X}^*/Y^*}\colon \tilde{X}^*\longrightarrow X^*_{\tilde{Y}}\stackrel{\mathrm{alb}_{X^*_{\tilde{Y}}/Y^*}}{\longrightarrow} J_H^\zeta.
    \end{equation*}
\end{construction}
\begin{lemma}\label{relative_automorphisms_global_section_on_relative_Albanese_higher_codim}
    Define $H^0(Y^*,\mathcal{J}_H)^\circ\subset H^0(Y^*,\mathcal{J}_H)$ as the image of $H^0(Y^*,\mathcal{H}^{n-1,n})\rightarrow H^0(Y^*,\mathcal{J}_H)$. Denote the image of the exponential map $\exp\colon H^0(Y^*,(f^*_{\tilde{Y}})_*\mathcal{T}_{X^*_{\tilde{Y}}/Y^*})\rightarrow\mathrm{Aut}(X^*_{\tilde{Y}}/Y^*)$ by $\mathrm{Aut}^\circ(X^*_{\tilde{Y}}/Y^*)$. Then there is a surjective homomorphism with finite kernel
    \begin{equation*}
        \mathrm{Aut}^\circ(X^*_{\tilde{Y}}/Y^*)\longrightarrow H^0(Y^*,\mathcal{J}_H)^\circ.
    \end{equation*}
\end{lemma}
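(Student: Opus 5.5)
We construct the homomorphism fiberwise and then globalize. For $y\in Y^*$, write $X_y:=f_{\tilde{Y}}^{-1}(y)$; it is a numerically $K$-trivial compact K\"ahler variety with terminal singularities. By \Cref{isogeny_from_Automorphisms_to_Albanese_numerically_K-trivial}, the map $\mathrm{Aut}^\circ(X_y)\to\mathrm{Alb}(X_y)$ is an isogeny, and the pairing $H^0(X_y,\mathcal{T}_{X_y})\times H^0(X_y,\Omega^{[1]}_{X_y})\to\C$ is perfect. Since $\tilde{f}^{-1}(y)\to X_y$ is bimeromorphic and $X_y$ has rational singularities, $H^0(X_y,\Omega^{[1]}_{X_y})\cong H^0(\tilde{f}^{-1}(y),\Omega^1)$, so $\mathrm{Alb}(X_y)$ is identified with the fiber $J_{H,y}=H^{n-1,n}/H_\Z$ of the Jacobian fibration. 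The fiberwise map $\mathrm{Aut}^\circ(X_y)\to\mathrm{Alb}(X_y)$ is given by $\phi\mapsto\mathrm{alb}(\phi(x))-\mathrm{alb}(x)$, which is independent of $x$ and of the base point. Consequently no choice of section is needed, and the map makes sense relative to the torsor $J_H^\zeta$: a relative automorphism $\phi$ of $X^*_{\tilde{Y}}/Y^*$ yields the local section $y\mapsto\mathrm{alb}_{X^*_{\tilde{Y}}/Y^*}(\phi(x))-\mathrm{alb}_{X^*_{\tilde{Y}}/Y^*}(x)$ of $\mathcal{J}_H$. Using the local Albanese maps $a_i$ from \Cref{construction_relative_Albanese}, this difference is well defined on each $U_i$; the local sections glue because the transition maps are the translations $\mathrm{tr}(\zeta_{ij})$, which cancel in differences. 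This gives a group homomorphism $\mathrm{Aut}(X^*_{\tilde{Y}}/Y^*)\to H^0(Y^*,\mathcal{J}_H)$.

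Next we pass to the infinitesimal level and restrict to the identity components. Flatness of $f_{\tilde{Y}}$ over $Y^*$ and the fiberwise perfect pairing give a pairing of sheaves $(f^*_{\tilde{Y}})_*\mathcal{T}_{X^*_{\tilde{Y}}/Y^*}\times(\text{relative reflexive }1\text{-forms})\to\OO_{Y^*}$. Fiberwise, $h^0(\mathcal{T}_{X_y})=h^0(\Omega^{[1]}_{X_y})=q(X_y)$ is constant, because it is the rank of $H^{1}$ Hodge pieces of the smooth family $\tilde{f}^*$. By Grauert, both pushforwards are therefore locally free of the same rank, and the pairing is a perfect pairing of vector bundles. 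Hence the differential
\begin{equation*}
    (f^*_{\tilde{Y}})_*\mathcal{T}_{X^*_{\tilde{Y}}/Y^*}\stackrel{\sim}{\longrightarrow}\big((\tilde{f}^*)_*\Omega^1_{\tilde{X}^*/Y^*}\big)^\vee\cong\mathcal{H}^{n-1,n}
\end{equation*}
is an isomorphism of vector bundles; the last identification is Serre duality on fibers. We then check that the homomorphism of the first paragraph is compatible with exponentials: $\exp$ of a relative vector field $v$ maps to $\exp(\text{image of }v)$ in $\mathcal{J}_H=\mathcal{H}^{n-1,n}/H_\Z$. This can be verified fiberwise, where it is the statement that $\mathrm{Aut}^\circ(X_y)\to\mathrm{Alb}(X_y)$ is a Lie group homomorphism with the differential identified above. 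It follows that the image of $\mathrm{Aut}^\circ(X^*_{\tilde{Y}}/Y^*)$ equals the image of $H^0(Y^*,\mathcal{H}^{n-1,n})$, namely $H^0(Y^*,\mathcal{J}_H)^\circ$. This proves surjectivity.

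For the finiteness of the kernel, let $\phi=\exp(v)$ lie in the kernel. Its image $\exp(\bar v)$ vanishes, where $\bar v\in H^0(Y^*,\mathcal{H}^{n-1,n})$ is the image of $v$, so $\bar v$ is a section of $H_\Z$ and hence a flat integral section. The kernel is therefore contained in $\exp$ of $H^0(Y^*,H_\Z)$, a finitely generated group. Fiberwise, the kernel of $\mathrm{Aut}^\circ(X_y)\to\mathrm{Alb}(X_y)$ is a finite group $K_y$, so $\phi$ restricts on each fiber to an element of $K_y$. Moreover, $\phi|_{X_y}$ is determined by $\bar v(y)\in H_{\Z,y}$ modulo the lattice of $\ker(\exp\colon H^0(\mathcal{T}_{X_y})\to\mathrm{Aut}^\circ(X_y))$, which has finite index in $H_{\Z,y}$ by the isogeny. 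Thus the kernel injects into $H^0(Y^*,H_\Z)/H^0(Y^*,\Lambda')$, where $\Lambda'\subset H_\Z$ is the finite-index sub-local system given by the period lattice of $\mathrm{Aut}^\circ$. This quotient is finite.

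The main obstacle is the identification in the second paragraph. One must show that the relative tangent pushforward is locally free and dual to the Hodge bundle $\mathcal{H}^{n-1,n}$ over a base where the fibers are only singular terminal varieties. For this I would first use Kebekus--Schnell extension of reflexive forms to a resolution, uniformly in families, and then verify base change by means of the constancy of $h^0$ on fibers.
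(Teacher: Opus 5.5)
Your proposal is correct and follows essentially the same route as the paper. The homomorphism is the glued fiberwise difference of local relative Albanese maps, which is exactly the paper's $\lambda_i=a_i\circ\tilde{g}\circ s_i$ since $a_i\circ s_i$ is the zero section. Surjectivity comes from identifying $(f^*_{\tilde{Y}})_*\mathcal{T}_{X^*_{\tilde{Y}}/Y^*}$ with $((\tilde{f}^*)_*\Omega^1_{\tilde{X}^*/Y^*})^\vee\cong\mathcal{H}^{n-1,n}$, compatibly with the exponentials, via Grauert, \Cref{isogeny_from_Automorphisms_to_Albanese_numerically_K-trivial} and duality. Finiteness of the kernel comes from the fiberwise isogeny. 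The paper transports vector fields to $\tilde{X}^*$ through the functorial resolution instead of pairing them with reflexive forms on the singular family, but that difference is cosmetic.

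Two small corrections:
\begin{itemize}
\item The difference construction only gives a homomorphism on $\mathrm{Aut}^\circ(X^*_{\tilde{Y}}/Y^*)$, not on all of $\mathrm{Aut}(X^*_{\tilde{Y}}/Y^*)$. For fiberwise $x\mapsto -x$ on a family of tori, $\mathrm{alb}(\phi(x))-\mathrm{alb}(x)$ is not constant on fibers.
\item Your claim that the period lattices form a finite-index sub-local system $\Lambda'\subset H_\Z$ needs proof. You can bypass it. Each $\exp(v_\lambda)$ with $\lambda\in H^0(Y^*,H_\Z)$ has finite order on every fiber, so by Baire some power is the identity over an open subset of $Y^*$, hence everywhere by the identity theorem. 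The kernel is then an abelian group generated by finitely many torsion elements, so it is finite. The paper itself gives this step only as a one-line appeal to the fiberwise isogeny.
\end{itemize}
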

\begin{proof}
    Let us construct a group homomorphism
    \begin{equation*}
        \mathrm{Aut}^\circ(X^*_{\tilde{Y}}/Y^*)\longrightarrow H^0(Y^*,\mathcal{J}_H),
    \end{equation*}
    and show that its image is $H^0(Y^*,\mathcal{J}_H)^\circ$, and that its kernel is finite.\par
    As the resolution of singularities $\tilde{X}^*\rightarrow X^*_{\tilde{Y}}$ from \Cref{construction_relative_Albanese} was chosen to be a functorial, any automorphism $g\colon X^*_{\tilde{Y}}\rightarrow X^*_{\tilde{Y}}$ over $Y^*$ lifts to a unique automorphism $\tilde{g}\colon\tilde{X}^*\rightarrow\tilde{X}^*$ over $Y^*$. In the notation from \Cref{construction_relative_Albanese}, define the local section $\lambda_i$ of $J_H\rightarrow Y^*$ over $U_i$ as
    \begin{equation*}
        \lambda_i\colon U_i\stackrel{s_i}{\longrightarrow}(\tilde{f}^*)^{-1}(U_i)\stackrel{\tilde{g}_i}{\longrightarrow}(\tilde{f}^*)^{-1}(U_i)\stackrel{a_i}{\longrightarrow}J_H\vert_{U_i}.
    \end{equation*}
    Then the following diagram commutes
    \[\begin{tikzcd}
	   {(\tilde{f}^*)^{-1}(U_i)} & {J_H\vert_{U_i}} \\
	   {(\tilde{f}^*)^{-1}(U_i)} & {J_H\vert_{U_i}.}
	   \arrow["{a_i}", from=1-1, to=1-2]
	   \arrow["{\tilde{g}_i}"', from=1-1, to=2-1]
	   \arrow["{\mathrm{tr}(\lambda_i)}", from=1-2, to=2-2]
	   \arrow["{a_i}", from=2-1, to=2-2]
    \end{tikzcd}\]
    Consider the commutative diagram
    \[\begin{tikzcd}
	   & {(\tilde{f}^*)^{-1}(U_i\cap U_j)} && {J_H\vert_{U_i\cap U_j}} \\
	   {(\tilde{f}^*)^{-1}(U_i\cap U_j)} && {J_H\vert_{U_i\cap U_j}} \\
	   & {(\tilde{f}^*)^{-1}(U_i\cap U_j)} && {J_H\vert_{U_i\cap U_j}} \\
	   {(\tilde{f}^*)^{-1}(U_i\cap U_j)} && {J_H\vert_{U_i\cap U_j},}
	   \arrow[from=1-2, to=1-4]
	   \arrow[from=1-2, to=3-2]
	   \arrow["{\mathrm{tr}(\zeta_{ij})}"{description}, from=1-4, to=3-4]
	   \arrow["{\tilde{g}_{ij}}"{description}, from=2-1, to=1-2]
	   \arrow[from=2-1, to=2-3]
	   \arrow[from=2-1, to=4-1]
	   \arrow["{\mathrm{tr}(\lambda_i\vert_{U_i\cap U_j})}"{description}, from=2-3, to=1-4]
	   \arrow["{\mathrm{tr}(\zeta_{ij})}"{description}, from=2-3, to=4-3]
	   \arrow[from=3-2, to=3-4]
	   \arrow["{\tilde{g}_{ij}}"{description}, from=4-1, to=3-2]
	   \arrow[from=4-1, to=4-3]
	   \arrow["{\mathrm{tr}(\lambda_j\vert_{U_i\cap U_j})}"{description}, from=4-3, to=3-4]
    \end{tikzcd}\]
    where the upper horizontal arrows are  given by $a_i\vert_{U_i\cap U_j}$, and the lower horizontal arrows are given by $a_j\vert_{U_i\cap U_j}$. This implies
    \begin{equation*}
        \lambda_j\vert_{U_i\cap U_j}+\zeta_{ij}=\lambda_i\vert_{U_i\cap U_j}+\zeta_{ij}.
    \end{equation*}
    In particular, the local sections $\lambda_i$ and $\lambda_j$ agree on $U_i\cap U_j$, and hence they glue to a global section $\lambda\in H^0(Y^*,\mathcal{J}_H)$. This defines a group homomorphism
    \begin{equation*}
        \mathrm{Aut}^\circ(X^*_{\tilde{Y}}/Y^*)\longrightarrow H^0(Y^*,\mathcal{J}_H).
    \end{equation*}
    Consider the exponential map
    \begin{equation*}
        \exp\colon H^0(Y^*,(f^*_{\tilde{Y}})_*\mathcal{T}_{X^*_{\tilde{Y}}/Y^*})\longrightarrow\mathrm{Aut}^\circ(X^*_{\tilde{Y}}/Y^*).
    \end{equation*}
    We will construct an isomorphism $H^0(Y^*,(f^*_{\tilde{Y}})_*\mathcal{T}_{X^*_{\tilde{Y}}/Y^*})\stackrel{\sim}{\rightarrow} H^0(Y^*,\mathcal{H}^{n-1,n})$ such that the following diagram commutes
    \[\begin{tikzcd}
	   {H^0(Y^*,(f^*_{\tilde{Y}})_*\mathcal{T}_{X^*_{\tilde{Y}}/Y^*})} & {H^0(Y^*,\mathcal{H}^{n-1,n})} \\
	   {\mathrm{Aut}^\circ(X^*_{\tilde{Y}}/Y^*)} & {H^0(Y^*,\mathcal{J}_H)^\circ.}
	   \arrow["\sim", from=1-1, to=1-2]
	   \arrow[two heads, from=1-1, to=2-1]
	   \arrow[from=1-2, to=2-2]
	   \arrow[from=2-1, to=2-2]
    \end{tikzcd}\]
    Note that this implies that the image of $\mathrm{Aut}^\circ(X^*_{\tilde{Y}}/Y^*)\rightarrow H^0(Y^*,\mathcal{J}_H)$ is equal to $H^0(Y^*,\mathcal{J}_H)^\circ$.\par
    By \Cref{isogeny_from_Automorphisms_to_Albanese_numerically_K-trivial}, we have for all $y\in Y^*$
    \begin{equation*}
        \dim H^0(f^{-1}_{\tilde{Y}}(y),\mathcal{T}_{X^*_{\tilde{Y}}/Y^*}\vert_{f^{-1}_{\tilde{Y}}(y)})=\dim H^0(f^{-1}_{\tilde{Y}}(y),\mathcal{T}_{f^{-1}_{\tilde{Y}}(y)})=\dim\mathrm{Alb}(f^{-1}_{\tilde{Y}}(y))=\mathrm{rk}\mathcal{H}^{n-1,n},
    \end{equation*}
    which is constant. Since $f_{\tilde{Y}}$ is flat over $Y^*$, Grauert's theorem \cite[][§7, Satz 5]{Grauert_analytische_Garbentheorie_Modulraeume} implies that $(f^*_{\tilde{Y}})_*\mathcal{T}_{X^*_{\tilde{Y}}/Y^*}$ is locally free.\par
    Note that there is a canonical map
    \begin{equation*}
        (\tilde{f}^*)_*\mathcal{T}_{\tilde{X}^*/Y^*}\longrightarrow (f^*_{\tilde{Y}})_*\mathcal{T}_{X^*_{\tilde{Y}}/Y^*}.
    \end{equation*}
    As $\tilde{X}\rightarrow X_{\tilde{Y}}$ is a functorial resolution of singularities, the above map is an isomorphism on each fiber. Since $\tilde{f}^*$ is smooth, $(\tilde{f}^*)_*\mathcal{T}_{\tilde{X}^*/Y^*}$ is locally free. Hence, we conclude $(\tilde{f}^*)_*\mathcal{T}_{\tilde{X}^*/Y^*}\cong(f^*_{\tilde{Y}})_*\mathcal{T}_{X^*_{\tilde{Y}}/Y^*}$.\par
    Next, note that there is a canonical map
    \begin{equation*}
        (\tilde{f}^*)_*\mathcal{T}_{\tilde{X}^*/Y^*}\longrightarrow ((\tilde{f}^*)_*\Omega_{\tilde{X}^*/Y^*}^1)^\vee.
    \end{equation*}
    As $\tilde{f}^*$ is smooth, both sheaves are locally free. The fiber over $y\in Y^*$ on the left hand side can be identified with $H^0(\tilde{f}^{-1}(y),\mathcal{T}_{\tilde{f}^{-1}(y)})\cong H^0(f^{-1}_{\tilde{Y}}(y),\mathcal{T}_{f_{\tilde{Y}}^{-1}(y)})$, where we use that $\tilde{X}\rightarrow X_{\tilde{Y}}$ is a functorial resolution of singularities. The fiber over $y\in Y^*$ on the right hand side can be identified with $H^0(\tilde{f}^{-1}(y),\Omega_{\tilde{f}^{-1}(y)}^1)\cong H^0(f^{-1}_{\tilde{Y}}(y),\Omega_{f^{-1}_{\tilde{Y}}(y)}^{[1]})$ by \cite[][Cor. 1.7]{Kebekus_Schnell_pullback_reflexive_forms}. Moreover, \Cref{isogeny_from_Automorphisms_to_Albanese_numerically_K-trivial} shows that the natural map
    \begin{equation*}
        H^0(f^{-1}_{\tilde{Y}}(y),\mathcal{T}_{f_{\tilde{Y}}^{-1}(y)})\longrightarrow H^0(f^{-1}_{\tilde{Y}}(y),\Omega_{f^{-1}_{\tilde{Y}}(y)}^{[1]})^\vee
    \end{equation*}
    is an isomorphism. Therefore, the map of locally free sheaves
    \begin{equation*}
        (\tilde{f}^*)_*\mathcal{T}_{\tilde{X}^*/Y^*}\longrightarrow ((\tilde{f}^*)_*\Omega_{\tilde{X}^*/Y^*}^1)^\vee
    \end{equation*}
    is an isomorphism on each fiber, and thus it is an isomorphism.\par
    By Poincar\'e-duality, the natural pairing
    \begin{equation*}
        R^1(\tilde{f}^*)_*\underline{\C}\times R^{2n-1}(\tilde{f}^*)_*\underline{\C}\longrightarrow R^{2n}(\tilde{f}^*)_*\underline{\C}\cong\underline{\C}
    \end{equation*}
    is everywhere non-degenerate. This pairing is zero on $(\tilde{f}^*)_*\Omega_{\tilde{X}^*/Y^*}^1\times\mathcal{F}^n$. We hence obtain a canonical isomorphism
    \begin{equation*}
        ((\tilde{f}^*)_*\Omega_{\tilde{X}^*/Y^*}^1)^\vee\stackrel{\sim}{\longrightarrow}\mathcal{H}^{n-1,n}.
    \end{equation*}
    Composing these natural isomorphisms results in
    \begin{equation*}
        (f^*_{\tilde{Y}})_*\mathcal{T}_{X^*_{\tilde{Y}}/Y^*}\stackrel{\sim}{\longrightarrow}(\tilde{f}^*)_*\mathcal{T}_{\tilde{X}^*/Y^*}\stackrel{\sim}{\longrightarrow}((\tilde{f}^*)_*\Omega_{\tilde{X}^*/Y^*}^1)^\vee\stackrel{\sim}{\longrightarrow}\mathcal{H}^{n-1,n},
    \end{equation*}
    and taking global sections yields the desired isomorphism
    \begin{equation*}
        H^0(Y^*,(f^*_{\tilde{Y}})_*\mathcal{T}_{X^*_{\tilde{Y}}/Y^*})\stackrel{\sim}{\longrightarrow}H^0(Y^*,\mathcal{H}^{n-1,n}).
    \end{equation*}
     This concludes the proof of the surjectivity.\par
    The finiteness of the kernel follows from \Cref{isogeny_from_Automorphisms_to_Albanese_numerically_K-trivial}.
\end{proof}
\begin{lemma}\label{morphism_h'_factors_through_relative_Albanese}
    In the situation and notation from \Cref{relative_automorphisms_global_section_on_relative_Albanese_higher_codim}, denote the restriction of the morphism $h\colon X_{\tilde{Y}}\rightarrow (\tilde{Y}\times A)^\eta$ to $X^*_{\tilde{Y}}$ by $h^*\colon X^*_{\tilde{Y}}\rightarrow (Y^*\times A)^{\eta^*}$, where $\eta^*\in H^1(Y^*,\OO_{Y^*}^{\oplus g}/\Lambda)$ is the restriction of $\eta$ to $Y^*$. Then the morphism $h^*$ factors as
    \begin{equation*}
        X^*_{\tilde{Y}}\stackrel{\mathrm{alb}_{X^*_{\tilde{Y}}/Y^*}}{\longrightarrow} J_H^\zeta\longrightarrow (Y^*\times A)^{\eta^*}.
    \end{equation*}
\end{lemma}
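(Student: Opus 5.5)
The factorization will be built locally over the cover $\{U_i\}_{i\in I}$ of $Y^*$ from \Cref{construction_relative_Albanese}, using the universal property of the Albanese fiber by fiber. The pieces are then glued by checking that the translation cocycles match. After refining the cover, I may assume that each $U_i$ also trivializes $(Y^*\times A)^{\eta^*}$ as in \Cref{special_local_trivialization}. So there are isomorphisms $\varphi_i\colon (\pi^\eta)^{-1}(U_i)\cong U_i\times A$, glued by translations $\mathrm{tr}(a_{ij})$ with constants $a_{ij}\in A$.

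\emph{Local maps.} Fix $i$ and the local section $s_i$, with local relative Albanese $a_i\colon(\tilde f^*)^{-1}(U_i)\to J_H\vert_{U_i}$. Consider the composite $p_i\circ\varphi_i\circ h^*$ on $(\tilde f^*)^{-1}(U_i)$, translated so that it sends $s_i(y)$ to $0$; explicitly,
\begin{equation*}
x\longmapsto p_i\varphi_i h^*(x)-p_i\varphi_i h^*(s_i(\tilde f(x))).
\end{equation*}
On each fiber $\tilde f^{-1}(y)$ this is a morphism to the torus $A$ sending the base point to $0$. By the universal property of the Albanese of $\tilde f^{-1}(y)$, it factors uniquely through a homomorphism of tori $J_{H,y}\to A$. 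These homomorphisms vary holomorphically in $y$, because they are induced by the pullback of $1$-forms $H^0(A,\Omega_A^1)\to H^0(\tilde f^{-1}(y),\Omega^1)$, which is a map of holomorphic bundles $\OO^{\oplus g}\to(\tilde f^*)_*\Omega^1_{\tilde X^*/Y^*}$. Dualizing gives a bundle map $\mathcal{H}^{n-1,n}\to\OO_{U_i}^{\oplus g}$ that carries $H_\Z$ into $\Lambda$ and therefore descends to $J_H\vert_{U_i}\to U_i\times A$. Adding back the section $y\mapsto p_i\varphi_i h^*(s_i(y))$, I obtain a morphism $\beta_i\colon J_H\vert_{U_i}\to(\pi^\eta)^{-1}(U_i)$ over $U_i$. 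It is a homomorphism composed with a translation, and it satisfies $h^*=\beta_i\circ a_i$ on $(\tilde f^*)^{-1}(U_i)$. Since $h^*$ factors through $X^*_{\tilde Y}$ and so does $a_i$ by \Cref{construction_relative_Albanese}, working on $\tilde X^*$ is harmless.

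\emph{Gluing.} On $U_i\cap U_j$ I have $a_j=\mathrm{tr}(\zeta_{ij})\circ a_i$. Hence both $\beta_i\circ a_i$ and $\beta_j\circ\mathrm{tr}(\zeta_{ij})\circ a_i$ equal $h^*$. The fibers of $a_i$ generate the fibers of $J_H$, because the Albanese image generates; by the uniqueness in the universal property, two affine maps agreeing after composition with $a_i$ coincide. So $\beta_i=\beta_j\circ\mathrm{tr}(\zeta_{ij})$ as maps $J_H\vert_{U_i\cap U_j}\to(\pi^\eta)^{-1}(U_i\cap U_j)$. This is exactly the compatibility needed for the $\beta_i$ to descend to a single morphism $\beta\colon J_H^\zeta\to(Y^*\times A)^{\eta^*}$, since $J_H^\zeta$ is glued from the $J_H\vert_{U_i}$ via $\mathrm{tr}(\zeta_{ij})$. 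By construction $\beta\circ\mathrm{alb}_{X^*_{\tilde Y}/Y^*}=h^*$.

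\emph{Main obstacle.} The delicate step is the holomorphic dependence on $y$ and the well-definedness of the descent of $\mathcal{H}^{n-1,n}\to\OO^{\oplus g}$ modulo the lattices. I would handle it as follows. The homomorphism part of $\beta_i$ is the dual of the pullback on $1$-forms, and the lattice condition comes from the induced map $H_\Z=R^{2n-1}\tilde f_*\Z/\mathrm{torsion}\to\Lambda$, obtained from $h^*_*$ on $H_1$ of fibers via Poincaré duality. This map is locally constant, so it is compatible with the Poincaré-duality identification $((\tilde f^*)_*\Omega^1_{\tilde X^*/Y^*})^\vee\cong\mathcal{H}^{n-1,n}$ used in \Cref{relative_automorphisms_global_section_on_relative_Albanese_higher_codim}.
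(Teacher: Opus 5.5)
Your proposal is correct and follows the paper's route. The paper's proof only says that the factorization holds locally by the universal property of the Albanese and that the local factorizations glue. Your version supplies the details it omits: holomorphic dependence in $y$ via pullback of $1$-forms with lattices preserved, and the compatibility $\beta_i=\beta_j\circ\mathrm{tr}(\zeta_{ij})$, obtained from uniqueness of affine maps on the torus generated by the Albanese image.
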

\begin{proof}
    This is true locally by the universal property of the Albanese. Gluing the local factorizations together yields the global factorization.
\end{proof}
\begin{construction}\label{construction_retraction_local_systems}
    In the situation and notation from \Cref{morphism_h'_factors_through_relative_Albanese}, consider the variation of Hodge structures $\Lambda$ associated to the torus fiber bundle $\pi^\eta\colon (\tilde{Y}\times A)^\eta\rightarrow\tilde{Y}$ restricted to $Y^*$, i.e. $\Lambda$ consists of the local system
    \begin{equation*}
        \underline{\Lambda}=R^{2g-1}\pi^\eta_*\Z\vert_{Y^*},
    \end{equation*}
    the holomorphic vector bundle $\mathcal{G}:=\underline{\Lambda}\otimes\OO_{Y^*}$, the holomorphic subbundle
    \begin{equation*}
        \mathcal{E}^{g}:=R^{g-1}\pi^\eta_*\Omega^{g}_{(\tilde{Y}\times A)^\eta/\tilde{Y}}\vert_{Y^*}\subset\mathcal{G},
    \end{equation*}
    and the quotient $\mathcal{G}^{g-1,g}:=\mathcal{G}/\mathcal{E}^{g}$. The Jacobian fibration is given by $Y^*\times A\rightarrow Y^*$. Denote its sheaf of local sections by $\mathcal{I}_\Lambda=\mathcal{G}^{g-1,g}/\underline{\Lambda}$. Note that the morphism
    \begin{equation*}
        J_H^\zeta\longrightarrow (Y^*\times A)^{\eta^*}
    \end{equation*}
    from \Cref{morphism_h'_factors_through_relative_Albanese} induces a natural map
    \begin{equation*}
        \alpha\colon H_\Z\otimes\mathbb{R}\longrightarrow\underline{\Lambda}\otimes\mathbb{R}.
    \end{equation*}
    Note that this map is surjective, since $J_H^\zeta\rightarrow (Y^*\times A)^{\eta^*}$ is surjective.
    By Saito's decomposition theorem for proper K\"ahler morphisms \cite[][Thm. 0.5]{Saito_Kaehler_decomposition}, the local system $H_\Z\otimes\mathbb{R}$ is semi-simple. Therefore, the composition
    \begin{equation*}
        \underline{H^0(Y^*,H_\Z\otimes\mathbb{R})}\longhookrightarrow H_\Z\otimes\mathbb{R}\longrightarrow\underline{\Lambda}\otimes\mathbb{R}
    \end{equation*}
    is surjective. We claim that it is also injective.
\end{construction}
\begin{lemma}\label{lemma_retraction_local_system}
    The composition
    \begin{equation*}
        \underline{H^0(Y^*,H_\Z\otimes\mathbb{R})}\longrightarrow\underline{\Lambda}\otimes\mathbb{R}
    \end{equation*}
    from \Cref{construction_retraction_local_systems} is an isomorphism.
\end{lemma}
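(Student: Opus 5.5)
Surjectivity is already established in \Cref{construction_retraction_local_systems}, so it remains to prove injectivity. Both sides are local systems of real vector spaces on the connected manifold $Y^*$. The source is constant of rank $r:=\dim H^0(Y^*,H_\Z\otimes\mathbb{R})$, and the target has rank $2g$. Hence it suffices to show $r\leq 2g$, i.e. that the flat part of $H_\Z\otimes\mathbb{R}$ has rank at most $2g$.

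To bound $r$ I would relate flat global sections of $H$ to the Albanese of $X$. By Poincar\'e duality on the fibers, $H_\Z\otimes\mathbb{R}\cong (R^1(\tilde f^*)_*\mathbb{R})^\vee$. So global flat sections of $H_\Z\otimes\mathbb{R}$ correspond to (duals of) the invariant part $H^0(Y^*,R^1(\tilde f^*)_*\mathbb{R})$; semisimplicity from Saito's theorem lets us pass between invariants and coinvariants. By Deligne's global invariant cycle theorem, valid for proper K\"ahler morphisms via Saito's decomposition theorem, the restriction
\begin{equation*}
H^1(\tilde X,\mathbb{R})\longrightarrow H^0(Y^*,R^1(\tilde f^*)_*\mathbb{R})
\end{equation*}
is surjective. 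Its kernel contains $\tilde f^*H^1(\tilde Y,\mathbb{R})$; in fact the Leray spectral sequence shows the kernel is exactly the image of $H^1(\tilde Y,\mathbb{R})$, using that $\tilde f$ has connected fibers and that $\tilde Y\setminus Y^*$ causes no extra classes after passing to $\tilde X$. Therefore
\begin{equation*}
r\leq b_1(X)-b_1(\tilde Y)=2\dim\mathrm{Alb}(X)-2\dim\mathrm{Alb}(Y)=2\dim A=2g,
\end{equation*}
by \Cref{Albanese_morphism_surjective_fibers_connected}\ref{description_Albanese_kernel}. Since the composition is surjective onto a rank $2g$ local system and the source has rank $r\leq 2g$, it is an isomorphism.

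The main obstacle is the middle step: establishing the invariant cycle surjectivity in the K\"ahler setting and pinning down the kernel as the image of $H^1(\tilde Y)$. This needs care because $Y^*$ is only an open subset and the comparison runs through Leray degree-one terms. I would first try to handle it with mixed Hodge theory or Saito's decomposition, which gives $H^1(\tilde X)\supset H^0(\tilde Y,R^1)$-type summands via the intersection cohomology of the decomposition. If that fails, I would fall back on the argument from \Cref{isomorphism_between_Albanese_X_and_torus_fiber_bundle}: a flat section not coming from $A$ would yield a real $1$-class on $X$ that is nonzero on fibers and killed by the map to $\mathrm{Alb}((\tilde Y\times A)^\eta)$, contradicting the isomorphism $\mathrm{Alb}(X)\cong\mathrm{Alb}((\tilde Y\times A)^\eta)$.
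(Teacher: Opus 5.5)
Your argument is correct. It uses the same tools as the paper but a different decisive step.

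The paper dualizes and identifies both sides explicitly. Via Poincar\'e duality, Saito's semisimplicity and Deligne's global invariant cycle theorem it gets $H^0(Y^*,H_\Z\otimes\mathbb{R})^\vee\cong H^1(\tilde X,\mathbb{R})/H^1(\tilde Y,\mathbb{R})\cong H^1(\mathrm{Alb}(X),\mathbb{R})/H^1(\mathrm{Alb}(Y),\mathbb{R})$. It does the same for $(\Lambda\otimes\mathbb{R})^\vee$ on the torus-bundle side. It then concludes from the isomorphism $\mathrm{Alb}(X)\cong\mathrm{Alb}((\tilde Y\times A)^\eta)$ of \Cref{isomorphism_between_Albanese_X_and_torus_fiber_bundle}, tacitly using that the two identifications are compatible with the map induced by $h$.

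You apply these tools only to the source and let the already established surjectivity finish the job by a rank count. So you never need to describe the map itself or use the Albanese isomorphism. For $r\leq 2g$ you only need three facts:
\begin{itemize}
\item surjectivity of $H^1(\tilde X,\mathbb{R})\to H^0(Y^*,R^1(\tilde f^*)_*\mathbb{R})$;
\item the inclusion of $\tilde f^*H^1(\tilde Y,\mathbb{R})$ in its kernel;
\item injectivity of $\tilde f^*$.
\end{itemize}
Your claim that the kernel is exactly this image, and your fallback argument, are therefore superfluous.

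One point you should make explicit is the equality $\dim\mathrm{Alb}(X)-\dim\mathrm{Alb}(Y)=\tfrac12\,\mathrm{rk}\,\underline{\Lambda}$. After the finite \'etale replacements of \Cref{h_connected_fibers_c(eta)=0}, this needs not only \Cref{Albanese_morphism_surjective_fibers_connected} but also \Cref{assumption_on_dim_Albanese_image_of_Iitaka_fibers}. The paper's proof of \Cref{isomorphism_between_Albanese_X_and_torus_fiber_bundle} uses it in the same way.

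What the paper's route buys is an explicit description of the dual map. Yours yields only equality of ranks, which is all the lemma requires.
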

\begin{proof}
    We already know that this map is surjective. To prove injectivity, we show that the dual map
    \begin{equation*}
        (\Lambda\otimes\mathbb{R})^\vee\longrightarrow H^0(Y^*,H_\Z\otimes\mathbb{R})^\vee
    \end{equation*}
    is surjective.\par
    Let $\tilde{X}\rightarrow X_{\tilde{Y}}$ be the resolution of singularities from \Cref{construction_relative_Albanese}, and let $\tilde{f}\colon\tilde{X}\rightarrow\tilde{Y}$ be the induced morphism. Recall that $H_\Z$ was defined $H_\Z:=R^{2n-1}(\tilde{f}^*)_*\underline{\Z}/\mathrm{torsion}$, and hence
    \begin{equation*}
        H_\Z\otimes\mathbb{R}\cong R^{2n-1}(\tilde{f}^*)_*\underline{\mathbb{R}}.
    \end{equation*}
    Define the local system $L:=R^{1}(\tilde{f}^*)_*\underline{\mathbb{R}}$, and note that the Poincar\'e-duality induces a non-degenerate pairing
    \begin{equation*}
        L\times (H_\Z\otimes\mathbb{R})\longrightarrow\underline{\mathbb{R}}.
    \end{equation*}
    By Saito's decomposition theorem for proper K\"ahler morphisms \cite[][Thm. 0.5]{Saito_Kaehler_decomposition}, the local system $L$ is semi-simple. This shows that the induced pairing
    \begin{equation*}
        H^0(Y^*,L)\times H^0(Y^*,H_\Z\otimes\mathbb{R})\longrightarrow\mathbb{R}
    \end{equation*}
    is non-degenerate. In particular, we can identify $H^0(Y^*,H_\Z\otimes\mathbb{R})^\vee$ with $H^0(Y^*,L)$. Furthermore, Ehresmann's theorem \cite[][Thm. 9.3]{Voisin_HT_1} allows to identify $H^0(Y^*,L)$ with $H^1(\tilde{f}^{-1}(y),\mathbb{R})^{\pi_1(Y^*)}$ for a fixed $y\in Y^*$. We hence obtain
    \begin{equation*}
        H^0(Y^*,H_\Z\otimes\mathbb{R})^\vee\cong H^1(\tilde{f}^{-1}(y),\mathbb{R})^{\pi_1(Y^*)}.
    \end{equation*}
    Note that Deligne's global invariant cycle theorem \cite[][Thm. 4.24]{Voisin_HT_2} implies that the sequence
    \begin{equation*}
        0\longrightarrow H^1(\tilde{Y},\mathbb{R})\longrightarrow H^1(\tilde{X},\mathbb{R})\longrightarrow H^1(\tilde{f}^{-1}(y),\mathbb{R})^{\pi_1(Y^*)}\longrightarrow 0
    \end{equation*}
    is short exact. We can thus identify 
    \begin{equation*}
        H^0(Y^*,H_\Z\otimes\mathbb{R})^\vee\cong H^1(\tilde{X},\mathbb{R})/H^1(\tilde{Y},\mathbb{R})\cong H^1(\mathrm{Alb}(X),\mathbb{R})/H^1(\mathrm{Alb}(Y),\mathbb{R}).
    \end{equation*}
    Similarly, we can also identify
    \begin{equation*}
        (\Lambda\otimes\mathbb{R})^\vee\cong H^1(\mathrm{Alb}((Y\times A)^\eta),\mathbb{R})/H^1(\mathrm{Alb}(Y),\mathbb{R}),
    \end{equation*}
    and thus the map $(\Lambda\otimes\mathbb{R})^\vee\rightarrow H^0(Y^*,H_\Z\otimes\mathbb{R})^\vee$ can be identified with
    \begin{equation*}
        H^1(\mathrm{Alb}((\tilde{Y}\times A)^\eta),\mathbb{R})/H^1(\mathrm{Alb}(Y),\mathbb{R})\longrightarrow H^1(\mathrm{Alb}(X),\mathbb{R})/H^1(\mathrm{Alb}(Y),\mathbb{R}).
    \end{equation*}
    This is an isomorphism by \Cref{isomorphism_between_Albanese_X_and_torus_fiber_bundle}.
\end{proof}
\begin{construction}\label{construction_vector_fields_higher_codim}
    Applying the isomorphism from \Cref{lemma_retraction_local_system} to the commutative diagram of short exact sequences
    \[\begin{tikzcd}
	   0 & {H_\Z} & {H_\Z\otimes\mathbb{R}} & {(H_\Z\otimes\mathbb{R})/H_\Z} & 0 \\
	   0 & {\underline{\Lambda}} & {\underline{\Lambda}\otimes\mathbb{R}} & {(\underline{\Lambda}\otimes\mathbb{R})/\underline{\Lambda}} & 0
	   \arrow[from=1-1, to=1-2]
	   \arrow[from=1-2, to=1-3]
	   \arrow[from=1-2, to=2-2]
	   \arrow[from=1-3, to=1-4]
	   \arrow[from=1-3, to=2-3]
	   \arrow[from=1-4, to=1-5]
	   \arrow[from=1-4, to=2-4]
	   \arrow[from=2-1, to=2-2]
	   \arrow[from=2-2, to=2-3]
	   \arrow[from=2-3, to=2-4]
	   \arrow[from=2-4, to=2-5]
    \end{tikzcd}\]
    induces the commutative diagram of short exact sequences
    \[\begin{tikzcd}
	   0 & {H^0(Y^*,H_\Z)} & {H^0(Y^*,H_\Z\otimes\mathbb{R})} & {\ker(\delta)} & 0 \\
	   0 & {\Lambda} & {\Lambda\otimes\mathbb{R}} & {(\Lambda\otimes\mathbb{R})/\Lambda} & {0.}
	   \arrow[from=1-1, to=1-2]
	   \arrow[from=1-2, to=1-3]
	   \arrow[from=1-2, to=2-2]
	   \arrow[from=1-3, to=1-4]
	   \arrow[from=1-3, to=2-3]
	   \arrow[from=1-4, to=1-5]
	   \arrow[from=1-4, to=2-4]
	   \arrow[from=2-1, to=2-2]
	   \arrow[from=2-2, to=2-3]
	   \arrow[from=2-3, to=2-4]
	   \arrow[from=2-4, to=2-5]
    \end{tikzcd}\]
    Here, $\delta\colon H^0(Y^*,(H_\Z\otimes\mathbb{R})/H_\Z)\rightarrow H^1(Y^*,H_\Z)$ is the boundary map. Note that we showed that the vertical map in the middle is an isomorphism, and hence the vertical map on the left hand side is injective with finite cokernel. Thus, the map
    \begin{equation*}
        \ker(\delta)\longrightarrow(\Lambda\otimes\mathbb{R})/\Lambda
    \end{equation*}
    is surjective with finite kernel. Consider the composition
    \begin{equation*}
        \ker(\delta)\subset H^0(Y^*,(H_\Z\otimes\mathbb{R})/H_\Z)\longhookrightarrow H^0(Y^*,\mathcal{J}_H),
    \end{equation*}
    and note that this composition has image in $H^0(Y^*,\mathcal{J}_H)^\circ$. By \Cref{relative_automorphisms_global_section_on_relative_Albanese_higher_codim},
    the homomorphism
    \begin{equation*}
        \mathrm{Aut}^\circ(X^*_{\tilde{Y}}/Y^*)\longrightarrow H^0(Y^*,\mathcal{J}_H)^\circ
    \end{equation*}
    is surjective with finite kernel. Define $A'$ as the identity component of the preimage of $\ker(\delta)$ along this homomorphism. Then the composition
    \begin{equation*}
        A'\longrightarrow\ker(\delta)\longrightarrow(\Lambda\otimes\mathbb{R})/\Lambda\cong A
    \end{equation*}
    is surjective with finite kernel. This gives $A'$ the structure of a torus, and the map $\alpha\colon A'\rightarrow A$ becomes an isogeny. Thus, every element $a'\in A'$ defines an automorphism
    \begin{equation*}
        \mathrm{tr}(a')\colon X^*_{\tilde{Y}}\stackrel{\sim}{\longrightarrow}X^*_{\tilde{Y}}
    \end{equation*}
    such that the following diagram commutes
    \[\begin{tikzcd}
	   {X^*_{\tilde{Y}}} & {X^*_{\tilde{Y}}} \\
	   {(\tilde{Y}\times A)^\eta} & {(\tilde{Y}\times A)^\eta.}
	   \arrow["{\mathrm{tr}(a')}", from=1-1, to=1-2]
	   \arrow[from=1-1, to=2-1]
	   \arrow[from=1-2, to=2-2]
	   \arrow["{\mathrm{tr}(\alpha(a'))}", from=2-1, to=2-2]
    \end{tikzcd}\]
\end{construction}
\begin{lemma}\label{automorphisms_on_Xstar_are_bimeromorphic_on_X_higher_codim}
    The following assertions are true.
    \begin{enumerate}[label=(\roman*)]
        \item The induced map $A'\times X^*_{\tilde{Y}}\rightarrow X^*_{\tilde{Y}}$ is a morphism.
        \item For all $a'\in A'$, the automorphism $\mathrm{tr}(a')\colon X^*_{\tilde{Y}}\stackrel{\sim}{\rightarrow}X^*_{\tilde{Y}}$ defines a bimeromorphic map $\mathrm{tr}(a')\colon X_{\tilde{Y}}\stackrel{\sim}{\dashrightarrow}X_{\tilde{Y}}$.
    \end{enumerate}
\end{lemma}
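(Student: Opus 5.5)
The plan is to prove \textit{(i)} directly by realizing the $A'$-action as the flow of a finite-dimensional family of relative vector fields. For \textit{(ii)}, we first extend the whole action meromorphically over $A'\times X_{\tilde{Y}}$ and then specialize to individual $a'\in A'$.

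For \textit{(i)}, let $W\subset H^0(Y^*,\mathcal{H}^{n-1,n})$ be the image of the finite-dimensional real vector space $H^0(Y^*,H_\Z\otimes\mathbb{R})$. By \Cref{lemma_retraction_local_system}, $W\cong\Lambda\otimes\mathbb{R}\cong\C^g$. This identification is fibrewise the $\mathbb{R}$-linear isomorphism $\Lambda\otimes\mathbb{R}\cong\mathcal{G}^{g-1,g}$, so the complex structure it induces on $W$ is the one of $A$, and the resulting map $W\to H^0(Y^*,\mathcal{H}^{n-1,n})$ is $\C$-linear. Via the isomorphism $H^0(Y^*,(f^*_{\tilde{Y}})_*\mathcal{T}_{X^*_{\tilde{Y}}/Y^*})\cong H^0(Y^*,\mathcal{H}^{n-1,n})$ from the proof of \Cref{relative_automorphisms_global_section_on_relative_Albanese_higher_codim}, we obtain a holomorphic vector field $v_w$ on $X^*_{\tilde{Y}}$ that depends $\C$-linearly on $w\in W$ and is tangent to the compact fibres of $f^*_{\tilde{Y}}$. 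Hence it is complete, and the flow map
\begin{equation*}
    W\times X^*_{\tilde{Y}}\longrightarrow X^*_{\tilde{Y}},\qquad (w,x)\longmapsto\exp(v_w)(x)
\end{equation*}
is holomorphic: it is the time-one flow of a holomorphic vector field on $W\times X^*_{\tilde{Y}}$ that is vertical over $W$. By \Cref{construction_vector_fields_higher_codim}, $A'$ is the quotient of $W$ by the discrete subgroup acting trivially, with the complex structure pulled back from $A$ along $\alpha$. So the map descends to a holomorphic map $A'\times X^*_{\tilde{Y}}\to X^*_{\tilde{Y}}$. This proves \textit{(i)}.

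For \textit{(ii)}, let $\Gamma^*\subset A'\times X_{\tilde{Y}}\times X_{\tilde{Y}}$ be the graph of the action morphism from \textit{(i)}. This is a closed analytic subset of $A'\times X^*_{\tilde{Y}}\times X^*_{\tilde{Y}}$ of dimension $g+\dim X$, contained in $A'\times(X_{\tilde{Y}}\times_{\tilde{Y}}X_{\tilde{Y}})$. I will show its closure $\Gamma$ in $A'\times X_{\tilde{Y}}\times X_{\tilde{Y}}$ is analytic, using Bishop's theorem. This makes the action a meromorphic map $A'\times X_{\tilde{Y}}\dashrightarrow X_{\tilde{Y}}$. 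For general $a'$, the fibre $\Gamma_{a'}$ is then the closure of the graph of $\mathrm{tr}(a')$, an irreducible analytic set projecting bimeromorphically to both factors (with inverse $\mathrm{tr}(-a')$), so $\mathrm{tr}(a')$ is bimeromorphic. An arbitrary $a'$ is handled by writing $a'=b+(a'-b)$ with $b$ and $a'-b$ both general, so $\mathrm{tr}(a')=\mathrm{tr}(b)\circ\mathrm{tr}(a'-b)$ is a composition of bimeromorphic maps.

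The main obstacle is the volume bound: showing $\Gamma^*$ has finite mass with respect to $\omega_{A'}\boxplus\omega\boxplus\omega$, where $\omega$ is a Kähler form on $X_{\tilde{Y}}$. The degeneration over $\tilde{Y}\setminus Y^*$ could a priori make the horizontal derivatives of the flow blow up. I would first try to exploit the equivariance with respect to $h$, namely $h\circ\mathrm{tr}(a')=\mathrm{tr}(\alpha(a'))\circ h$. Choose on $(\tilde{Y}\times A)^\eta$ a Kähler form $\omega_Z$ that is invariant under the global $A$-translation action; such a form exists by averaging over the compact group $A$. Write $\omega=h^*\omega_Z+\omega_1$. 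The mass is a sum of integrals of mixed products of $\omega$ and $\mathrm{tr}(a')^*\omega$, and the $h^*\omega_Z$-parts are $\mathrm{tr}$-invariant. The remaining terms I would control by a Stokes argument along the path $t\mapsto\exp(tv_w)$. The derivative $\frac{d}{dt}\mathrm{tr}_t^*\omega_1=d\,\mathrm{tr}_t^*(\iota_{v_w}\omega_1)$ is exact with a primitive that is vertical for $h$, since $v_w$ is fibrewise a translation vector field lifting a vector field on the torus bundle. That lets the boundary contributions be bounded in terms of $\omega$-volumes of fibres of $h$, which are uniformly bounded by compactness. If this Stokes estimate fails to close, the fallback is to apply Bishop to the graph inside $X_{\tilde{Y}}\times_{(\tilde{Y}\times A)^\eta}\mathrm{tr}(\alpha(a'))^*X_{\tilde{Y}}$. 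There only fibrewise volumes over $(\tilde{Y}\times A)^\eta$ enter, and these are bounded because $h$ is proper.
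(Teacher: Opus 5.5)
Your part (i) is essentially the paper's argument. The paper likewise reduces holomorphy of the action to the fact that the preimage $V_{\mathbb{R}}$ of $H^0(Y^*,H_\Z\otimes\mathbb{R})$ is a complex subspace, using $\Lambda\otimes\mathbb{R}\cong H^{2g-1}(A,\C)/H^{g-1}(A,\Omega_A^g)$. In (ii), the passage from general to arbitrary $a'$ is also fine.

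The gap is the step you yourself flag: nothing in the proposal shows that the graph has finite mass near $X_{\tilde{Y}}\setminus X^*_{\tilde{Y}}$, and neither suggested route can do so as stated.

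\emph{Stokes route.} $X^*_{\tilde{Y}}$ has no boundary, so integrating $d\int_0^1\mathrm{tr}_t^*(\iota_{v_w}\omega_1)\,dt$ by parts means using cut-offs near $X_{\tilde{Y}}\setminus X^*_{\tilde{Y}}$. The resulting terms involve $\lvert v_w\rvert_\omega$ and $\mathrm{tr}_t^*\omega$ there, which are exactly the quantities whose blow-up you must exclude. A priori $v_w$ lives only on $X^*_{\tilde{Y}}$; even the paper only shows it is meromorphic on $\tilde{X}$. Equivariance controls only $\iota_{v_w}h^*\omega_Z=h^*(\iota_{h_*v_w}\omega_Z)$. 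It says nothing about the components of $v_w$ along the fibres of $h$, which govern the size of your primitive, so calling the primitive ``vertical for $h$'' does not help.

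\emph{Fallback.} This misreads Bishop's theorem, which needs a bound on the full $2\dim X$-dimensional mass of the graph. Bounded volumes of its slices over $(\tilde{Y}\times A)^\eta$ do not suffice. For instance, take $X=\mathbb{P}^1\times E$ with $h$ the first projection and $X^*=\C\times E$, and let $\phi(s,e)=(s,e+\exp(s))$, with addition in $E=\C/\Lambda_E$. This $\phi$ is an automorphism of $X^*$ commuting with $h$, $h$ is proper, and every slice of its graph is the graph of a translation of $E$ with the same volume. Yet $\phi$ is not meromorphic on $X$, since $s\mapsto\exp(s)$ modulo $\Lambda_E$ does not extend holomorphically over $\infty$. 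So the inputs you use (equivariance, properness of $h$, bounded fibre volumes) cannot by themselves give (ii).

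The missing idea is to use that $\mathrm{tr}(a')$ comes from \emph{flat} real sections $w\in H^0(Y^*,H_\Z\otimes\mathbb{R})$; this is what rules out sections like $\exp(s)$. The paper turns flatness into boundedness in two steps.
\begin{itemize}
\item By Deligne's global invariant cycle theorem, $w$ is the restriction of a class $\beta\in H^{2n-1}(\tilde{X},\mathbb{R})$. Hence $\lvert\langle v,\gamma\rangle(y)\rvert=\lvert\int_{\tilde{f}^{-1}(y)}\widehat{\gamma}\wedge\beta\rvert$ is bounded by a constant times the constant fibre volume. Riemann extension then makes every $v\in V_{\mathbb{R}}$ a meromorphic vertical vector field on $\tilde{X}$.
\item Meromorphy of the fields alone does not control their flows: the time-one flow of $s^{-1}\partial_e$ on $\Delta\times E$ is $e\mapsto e+1/s$, which is not meromorphic. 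So the paper adds a second step. The saturated foliation spanned by these fields has, by Pereira's theorems, a meromorphic quotient $\tilde{X}\dashrightarrow S$ whose general fibres are the $A'$-orbits. This yields an analytic relation $R\subset\tilde{X}\times\tilde{X}$. Then $\overline{\Gamma^*_{a'}}$ is an irreducible component of $\{(x,y)\in R\mid\mathrm{alb}_{\tilde{X}}(y)=\mathrm{alb}_{\tilde{X}}(x)+\tilde{\alpha}(a')\}$.
\end{itemize}
A Bishop-type proof might still be possible, but only with a mass estimate that genuinely uses this flatness. As written, (ii) is not proved.
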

\begin{proof}
    We make use of the notation in \Cref{construction_relative_Albanese}, \Cref{construction_retraction_local_systems} and in \Cref{construction_vector_fields_higher_codim}.\par
    To prove the first assertion, consider the commutative diagram
    \[\begin{tikzcd}
	   {H^0(Y^*,(f^*_{\tilde{Y}})_*\mathcal{T}_{X^*_{\tilde{Y}}/Y^*})} & {H^0(Y^*,\mathcal{H}^{n-1,n})} \\
	   {\mathrm{Aut}^\circ(X^*_{\tilde{Y}}/Y^*)} & {H^0(Y^*,\mathcal{J}_H)^\circ.}
	   \arrow["\sim", from=1-1, to=1-2]
	   \arrow[two heads, from=1-1, to=2-1]
	   \arrow[from=1-2, to=2-2]
	   \arrow[from=2-1, to=2-2]
    \end{tikzcd}\]
    from the proof of \Cref{relative_automorphisms_global_section_on_relative_Albanese_higher_codim}. Recall that the natural map $H^0(Y^*,H_\Z\otimes\mathbb{R})\hookrightarrow H^0(Y^*,\mathcal{H}^{n-1,n})$ is injective, and define $V_{\mathbb{R}}$ as the pre-image of $H^0(Y^*,H_\Z\otimes\mathbb{R})$ in $H^0(Y^*,(f^*_{\tilde{Y}})_*\mathcal{T}_{X^*_{\tilde{Y}}/Y^*})$. We then have the commutative diagram
    \[\begin{tikzcd}
	   {H^0(Y^*,(f^*_{\tilde{Y}})_*\mathcal{T}_{X^*_{\tilde{Y}}/Y^*})} & {H^0(Y^*,\mathcal{H}^{n-1,n})} & {H^0(Y^*,\mathcal{G}^{g-1,g})} \\
	   {V_{\mathbb{R}}} & {H^0(Y^*,H_\Z\otimes\mathbb{R})} & {\Lambda\otimes\mathbb{R}} \\
	   {A'} & {\ker(\delta)} & {A.}
	   \arrow["\sim", from=1-1, to=1-2]
	   \arrow[from=1-2, to=1-3]
	   \arrow[hook, from=2-1, to=1-1]
	   \arrow["\sim", from=2-1, to=2-2]
	   \arrow[from=2-1, to=3-1]
	   \arrow[hook, from=2-2, to=1-2]
	   \arrow["\sim", from=2-2, to=2-3]
	   \arrow[from=2-2, to=3-2]
	   \arrow[hook, from=2-3, to=1-3]
	   \arrow[from=2-3, to=3-3]
	   \arrow[from=3-1, to=3-2]
	   \arrow[from=3-2, to=3-3]
    \end{tikzcd}\]
    Recall that the lower row is surjective with finite kernel, and that the complex structure on $A'$ is induced by the complex structure on $A$ via $A'\rightarrow A$. Since the vertical map on the lower left hand side can be identified with the universal cover of $A'$, we obtain a complex structure on $V_{\mathbb{R}}$. To show that the map $A'\times X^*_{\tilde{Y}}\rightarrow X^*_{\tilde{Y}}$ is a morphism, it suffices to show that $V_{\mathbb{R}}$ is a complex subspace of $H^0(Y^*,(f^*_{\tilde{Y}})_*\mathcal{T}_{X^*_{\tilde{Y}}/Y^*})$. Since the upper row in the above diagram is a composition of complex linear maps, the diagram shows that $V_{\mathbb{R}}$ is a complex subspace of $H^0(Y^*,(f^*_{\tilde{Y}})_*\mathcal{T}_{X^*_{\tilde{Y}}/Y^*})$ if and only if $\Lambda\otimes\mathbb{R}$ is a complex subspace of $H^0(Y^*,\mathcal{G}^{g-1,g})$, where we endow $\Lambda\otimes\mathbb{R}$ with the complex structure induced by the universal covering map $\Lambda\otimes\mathbb{R}\rightarrow A$.\par
    Recall that $\mathcal{G}$ was defined as $\mathcal{G}:=\underline{\Lambda}\otimes\OO_{Y^*}$, and that $\mathcal{G}^{g-1,g}$ was defined as the quotient $\mathcal{G}/\mathcal{E}^{g}$, where
    \begin{equation*}
        \mathcal{E}^{g}:=R^{g-1}\pi^\eta_*\Omega^{g}_{(\tilde{Y}\times A)^\eta/\tilde{Y}}\vert_{Y^*}\subset\mathcal{G}.
    \end{equation*}
    Since translation on a torus acts trivial on $H^{g,g-1}$, we can identify $\mathcal{E}^{g}\cong H^{g-1}(A,\Omega_A^{g})\otimes\OO_{Y^*}$, and thus
    \begin{equation*}
        \mathcal{G}^{g-1,g}\cong H^{2g-1}(A,\C)/H^{g-1}(A,\Omega_A^{g})\otimes\OO_{Y^*}.
    \end{equation*}
    Since we can write
    \begin{equation*}
        A\cong\mathrm{Alb}(A)\cong (H^{2g-1}(A,\C)/H^{g-1}(A,\Omega_A^{g}))/H^{2g-1}(A,\Z),
    \end{equation*}
    the map
    \begin{equation*}
        \Lambda\otimes\mathbb{R}\longrightarrow H^{2g-1}(A,\C)/H^{g-1}(A,\Omega_A^{g})
    \end{equation*}
    is an isomorphism of complex vector spaces. As the right hand side naturally defines a complex subspace of $H^0(Y^*,\mathcal{G}^{g-1,g})$, the first assertion follows.\par
    To prove the second assertion, we have to show that the closure of the graph of $\mathrm{tr}(a')\colon X^*_{\tilde{Y}}\rightarrow X^*_{\tilde{Y}}$ defines an analytic subset of $X_{\tilde{Y}}\times X_{\tilde{Y}}$. Denote by $\tilde{X}\rightarrow X_{\tilde{Y}}$ the functorial resolution of singularities from \Cref{construction_relative_Albanese}.
    \begin{claim}\label{claim_meromorphic_vector_fields}
        All vector fields $v\in V_{\mathbb{R}}$ extend to meromorphic vertical vector fields on $\tilde{X}$.
    \end{claim}
    \begin{proof}[Proof of \Cref{claim_meromorphic_vector_fields}]
        Note that the fiber-wise bilinear pairing $\tilde{f}_*\mathcal{T}_{\tilde{X}/\tilde{Y}}\times\tilde{f}_*\Omega_{\tilde{X}/\tilde{Y}}^1\rightarrow\OO_{\tilde{Y}}$ induces a map
        \begin{equation*}
            \psi\colon\tilde{f}_*\mathcal{T}_{\tilde{X}/\tilde{Y}}\rightarrow(\tilde{f}_*\Omega_{\tilde{X}/\tilde{Y}}^1)^\vee.
        \end{equation*}
        Recall that $\tilde{f}$ is smooth over $Y^*$. As the resolution of singularities $\tilde{X}\rightarrow X_{\tilde{Y}}$ is functorial, vector fields on the fibers of $f_{\tilde{Y}}$ lift uniquely to vector fields on the fibers of $\tilde{f}$. This implies that $\psi$ is an isomorphism over $Y^*$. Note that $\tilde{f}_*\mathcal{T}_{\tilde{X}/\tilde{Y}}$ is a subsheaf of $\tilde{f}_*\mathcal{T}_{\tilde{X}}$, and hence torsion free. This shows that $\psi$ is injective.\par
        Let $D\subset\tilde{Y}$ be a polydisc, and let $U\subset D$ be an open subset such that $\overline{U}\subset D$. Let $\gamma$ be a local section of $\tilde{f}_*\Omega_{\tilde{X}/\tilde{Y}}^1$ over $D$. We can view $\gamma$ as a relative 1-form on $\tilde{f}^{-1}(D)$. Since the sheaves of smooth 1-forms on $\tilde{X}$, resp. $\tilde{Y}$ are acyclic, we can lift $\gamma$ to a smooth 1-form $\widehat{\gamma}$ on $\tilde{f}^{-1}(D)$ such that
        \begin{equation*}
            \widehat{\gamma}\vert_{\tilde{f}^{-1}(y)}=\gamma\vert_{\tilde{f}^{-1}(y)}
        \end{equation*}
        for all $y\in D\cap Y^*$.\par
        Fix a K\"ahler class $\omega$ on $\tilde{X}$. Let $v\in V_{\mathbb{R}}$, and denote its image on $H^0(Y^*,H_\Z\otimes\mathbb{R})$ by $w$ by the above isomorphism $V_{\mathbb{R}}\stackrel{\sim}{\rightarrow}H^0(Y^*,H_\Z\otimes\mathbb{R})$. Deligne's global invariant cycle theorem \cite[][Thm. 4.24]{Voisin_HT_2} implies that there is a cohomology class $\beta\in H^{2n-1}(\tilde{X},\mathbb{R})$ such that $\beta$ is mapped to $w$. We can hence calculate for all $y\in U\cap Y^*$
        \begin{equation*}
            \vert\langle v,\gamma\rangle(y)\vert=\left\vert\int\limits_{\tilde{f}^{-1}(y)}\widehat{\gamma}\wedge\beta\right\vert\leq \int\limits_{\tilde{f}^{-1}(y)}\vert\widehat{\gamma}\wedge\beta\vert\leq C\cdot\mathrm{vol}_\omega(\tilde{f}^{-1}(y)),
        \end{equation*}
        where $C$ is an upper bound of $\vert\widehat{\gamma}\wedge\beta\vert$ on the compact subset $\tilde{f}^{-1}(\overline{U})$. Since all fibers of $\tilde{f}$ are homologous,
        \begin{equation*}
            \mathrm{vol}_\omega(\tilde{f}^{-1}(y))=\langle[\omega]^n,[\tilde{f}^{-1}(y)]\rangle/n!
        \end{equation*}
        is independent of $y\in Y^*$. This shows that for all $v\in V_{\mathbb{R}}$, and $\gamma\in \Gamma(U\cap Y^*,\tilde{f}_*\Omega_{\tilde{X}/\tilde{Y}}^1)$ the function
        \begin{align*}
            \langle v,\gamma\rangle\colon U\cap Y^*&\longrightarrow\C\\
            y&\longmapsto\langle v,\gamma\rangle (y)
        \end{align*}
        is bounded. Moreover, since $v$ is a holomorphic vector field, and since $\gamma$ is a relative holomorphic 1-form, the function $\langle v,\gamma\rangle$ is holomorphic on $U\cap Y^*$. Riemann's extension theorem hence allows to extend this to a holomorphic function $\overline{\langle v,\gamma\rangle}$ on $U$.\par
        Define $\overline{v}$ by the assignment
        \begin{align*}
            \Gamma(U,(\tilde{f}_*\Omega_{\tilde{X}/\tilde{Y}}^1)^\vee)&\longrightarrow\Gamma(U,\OO_{\tilde{Y}})\\
            \gamma&\longmapsto\overline{\langle v,\gamma\rangle}.
        \end{align*}
        Note that this assignment is $\OO_{\tilde{Y}}$-linear, since the pairing $\langle v,\gamma\rangle$ of a holomorphic vector field and a relative holomorphic 1-form is $\OO_{\tilde{Y}}$-linear. Thus, since $U$ is an open subset of a polydisc, we can identify $\overline{v}$ with a local section
        \begin{equation*}
            \overline{v}\in\Gamma(U,(\tilde{f}_*\Omega_{\tilde{X}/\tilde{Y}}^1)^\vee).
        \end{equation*}
        By construction, $\overline{v}\vert_{U\cap Y^*}=\psi(v)$.\par
        Consider a point $y_0\in U\cap(\tilde{Y}\setminus Y^*)$. Choose a holomorphic function $u$ defined on $U$ that vanishes at $y_0$. As the cokernel of $\psi$ is supported on $\tilde{Y}\setminus Y^*$, there is a positive integer $k$ such that $u^k$ annihilates $\mathrm{coker}(\psi)$ near $y_0$. This implies that there is a local section $s$ of $\tilde{f}_*\mathcal{T}_{\tilde{X}/\tilde{Y}}$ over $U$ such that
        \begin{equation*}
            \psi(s)=u^k\overline{v}
        \end{equation*}
        on $U$. Recall that $\psi(v)=\overline{v}\vert_{U\cap Y^*}$, and hence $\psi(u^kv)=u^k\overline{v}\vert_{U\cap Y^*}$. As $\psi$ is injective, we conclude $u^kv=s\vert_{U\cap Y^*}$. In particular, $v=s/u^k$ is a meromorphic vector field on $U$.\par
        Since $\tilde{Y}$ is compact, there is an effective divisor $\Delta\subset\tilde{Y}$ supported on $\tilde{Y}\setminus Y^*$, and a non-negative integer $k$ such that $v$ defines a global section of $\tilde{f}_*\mathcal{T}_{\tilde{X}/\tilde{Y}}\otimes\OO_{\tilde{Y}}(k\Delta)$. The identification
        \begin{equation*}
            H^0(\tilde{Y},\tilde{f}_*\mathcal{T}_{\tilde{X}/\tilde{Y}}\otimes\OO_{\tilde{Y}}(k\Delta))\cong H^0(\tilde{X},\mathcal{T}_{\tilde{X}/\tilde{Y}}\otimes\OO_{\tilde{X}}(k\tilde{f}^*\Delta))
        \end{equation*}
        shows that $v$ extends to a meromorphic relative vector field on $\tilde{X}$.
    \end{proof}
    \begin{claim}\label{claim_orbit_relation_is_analytic}
        There is a closed analytic subset $R\subset\tilde{X}\times\tilde{X}$ such that for general $x\in\tilde{X}^*\subset\tilde{X}$,
        \begin{equation*}
            R\cap (\{x\}\times\tilde{X})=\{x\}\times A'x.
        \end{equation*}
    \end{claim}
    \begin{proof}[Proof of \Cref{claim_orbit_relation_is_analytic}]
        Choose a complex basis $v_1,\ldots,v_{g}$ of $V_{\mathbb{R}}$. \Cref{claim_meromorphic_vector_fields} shows that for each $v_i$ there is a non-negative integer $k_i$, and a divisor $\Delta_i$ supported on $\tilde{X}\setminus\tilde{X}^*$ such that $v_i$ defines a global section of $\mathcal{T}_{\tilde{X}/\tilde{Y}}\otimes\OO_{\tilde{X}}(k_i\Delta_i)$, i.e. we obtain a non-trivial map $\OO_{\tilde{X}}(-k_i\Delta_i)\rightarrow\mathcal{T}_{\tilde{X}/\tilde{Y}}$. Define $\mathcal{F}\subset\mathcal{T}_{\tilde{X}}$ as the saturation of the image of the map
        \begin{equation*}
            \bigoplus\limits_{i=1}^{g}\OO_{\tilde{X}}(-k_i\Delta_i)\longrightarrow\mathcal{T}_{\tilde{X}/\tilde{Y}}\longhookrightarrow\mathcal{T}_{\tilde{X}}.
        \end{equation*}
        Note that $\mathcal{F}\vert_{\tilde{X}^*}$ can be identified with $V_{\mathbb{R}}\otimes_\C\OO_{\tilde{X}^*}$. Consider the Lie-bracket
        \begin{equation*}
            [-,-]\colon\mathcal{F}\otimes\mathcal{F}\longrightarrow\mathcal{T}_{\tilde{X}}\longrightarrow\mathcal{T}_{\tilde{X}}/\mathcal{F},
        \end{equation*}
        which is $\OO_{\tilde{X}}$-linear since we quotient out $\mathcal{F}$. On $\tilde{X}^*$, this map is trivial, since the Lie bracket is trivial on $V_{\mathbb{R}}$. As $\mathcal{F}$ is saturated, the quotient $\mathcal{T}_{\tilde{X}}/\mathcal{F}$ is torsion-free. Thus, the Lie-bracket is trivial on all of $\tilde{X}$. This shows that $\mathcal{F}$ is a foliation of dimension $g$. Every point in $\tilde{X}^*$ is a regular point of the foliation $\mathcal{F}$, and the leaves are the orbits $A'x$.\par
        Let $Z\subset\tilde{X}$ be a subvariety, and denote its ideal sheaf by $\mathcal{I}_Z$. By spelling out the Leibniz rule, one sees that the differentiating along a holomorphic vector field induces a $\OO_{\tilde{X}}$-linear map
        \begin{equation*}
            \mathcal{F}\otimes\mathcal{I}_Z\longrightarrow\OO_Z.
        \end{equation*}
        We call a subvariety $Z\subset\tilde{X}$ invariant by $\mathcal{F}$, if the induced map $\mathcal{F}\otimes\mathcal{I}_Z\rightarrow\OO_Z$ is trivial.\par
        By Pereira's first theorem \cite[][Thm. 24]{Deroin_Guillot_foliations}, the union of the closed analytic subvarieties of $\tilde{X}$ of dimension $g$ that are invariant by $\mathcal{F}$ is the union of countably many closed analytic subvarieties. Since this union contains $\tilde{X}^*$, it is all of $\tilde{X}$, i.e. every point in $\tilde{X}$ lies in an invariant subvariety. We can hence apply Pereira's second theorem \cite[][Thm. 25]{Deroin_Guillot_foliations} to conclude that there exists a compact complex analytic variety $S$ of $\dim S=\dim\tilde{X}-g$ in the Fujiki class $\mathcal{C}$, and a dominant meromorphic map $p\colon\tilde{X}\dashrightarrow S$ such that the leaves of $\mathcal{F}$ are contained in the fibers of $p$.\par
        Denote by $\Gamma\subset\tilde{X}\times S$ the closure of the graph of $p$. As the leaves of $\mathcal{F}$ are connected, after taking the Stein factorization of $\Gamma\rightarrow S$ if necessary, we can assume that the fibers of $p'\colon\Gamma\rightarrow S$ are connected. Define $R\subset\tilde{X}\times\tilde{X}$ as the image of the morphism
        \begin{equation*}
            \Gamma\times_S\Gamma\longrightarrow\tilde{X}\times\tilde{X},
        \end{equation*}
        which is a closed analytic subset, since the morphism $\Gamma\times_S\Gamma\rightarrow\tilde{X}\times\tilde{X}$ is proper. To show that $R$ satisfies the property in \Cref{claim_orbit_relation_is_analytic}, let $x\in\tilde{X}^*$ be a general point. Note that we can identify $R\cap (\{x\}\times\tilde{X})$ with the image of
        \begin{equation*}
            \Gamma_x\times_S\Gamma\longrightarrow\{x\}\times\tilde{X},
        \end{equation*}
        where $\Gamma_x:=\Gamma\cap(\{x\}\times S)$. Since $x$ is general, $p$ is defined at $x$, i.e. $\Gamma_x=\{(x,p(x))\}$. We calculate
        \begin{equation*}
            \Gamma_x\times_S\Gamma=\{((x,p(x),(y,s))\in\Gamma_x\times\Gamma\mid p(x)=s\}=\{(x,p(x),y,p(x))\mid y\in(p')^{-1}(p(x))\}.
        \end{equation*}
        Since $x$ is general, we can assume that the fiber $(p')^{-1}(p(x))$ is irreducible. As it contains $A'x$, $(p')^{-1}(p(x))=A'x$. We can hence identify
        \begin{equation*}
            \Gamma_x\times_S\Gamma\cong\{(x,y)\in\tilde{X}\times\tilde{X}\mid y\in A'x\}.
        \end{equation*}
        This shows that for general $x\in\tilde{X}^*$, $R\cap(\{x\}\times\tilde{X})=\{x\}\times A'x$.
    \end{proof}
    To conclude the proof of the second assertion, note that since $\tilde{X}\rightarrow X_{\tilde{Y}}$ is a functorial resolution of singularities, the automorphisms $\mathrm{tr}(a')\colon X^*_{\tilde{Y}}\rightarrow X^*_{\tilde{Y}}$ lift to automorphisms $\mathrm{tr}(a')\colon\tilde{X}^*\rightarrow\tilde{X}^*$. Denote the graph of this morphism by $\Gamma^*_{a'}\subset\tilde{X}^*\times\tilde{X}^*$. It hence suffices to show that the closure
    \begin{equation*}
        \overline{\Gamma^*_{a'}}\subset\tilde{X}\times\tilde{X}
    \end{equation*}
    is a closed analytic subset.\par
    By \Cref{isomorphism_between_Albanese_X_and_torus_fiber_bundle} the morphism $\tilde{X}\rightarrow (\tilde{Y}\times A)^\eta$ induces an isomorphism
    \begin{equation*}
        \mathrm{Alb}(\tilde{X})\stackrel{\sim}{\longrightarrow}\mathrm{Alb}((\tilde{Y}\times A)^\eta).
    \end{equation*}
    Define the morphism $\tilde{\alpha}\colon A'\rightarrow\mathrm{Alb}(\tilde{X})$ as the composition
    \begin{equation*}
        A'\stackrel{\alpha}{\longrightarrow}A\stackrel{\mathrm{alb}_{(\tilde{Y}\times A)^\eta}}{\longrightarrow}\mathrm{Alb}((\tilde{Y}\times A)^\eta)\stackrel{\sim}{\longrightarrow}\mathrm{Alb}(\tilde{X}).
    \end{equation*}
    Note that $\tilde{\alpha}$ has finite kernel. Since for all $a\in A$ the diagram
        \[\begin{tikzcd}
	   {(\tilde{Y}\times A)^\eta} & {(\tilde{Y}\times A)^\eta} \\
	   {\mathrm{Alb}((\tilde{Y}\times A)^\eta)} & {\mathrm{Alb}((\tilde{Y}\times A)^\eta)}
	   \arrow["{\mathrm{tr}(a)}", from=1-1, to=1-2]
	   \arrow[from=1-1, to=2-1]
	   \arrow[from=1-2, to=2-2]
	   \arrow["{\mathrm{tr}(a)}", from=2-1, to=2-2]
    \end{tikzcd}\]
    commutes, it follows that
    \begin{equation*}
        \mathrm{alb}_{\tilde{X}}\vert_{\tilde{X}^*}\circ\mathrm{tr}(a')=\mathrm{alb}_{\tilde{X}}\vert_{\tilde{X}^*}+\tilde{\alpha}(a')
    \end{equation*}
    for all $a'\in A'$.\par
    Fix $a'\in A'$, and define
    \begin{equation*}
        G:=\{(x,y)\in R\mid \mathrm{alb}_{\tilde{X}}(y)=\mathrm{alb}_{\tilde{X}}(x)+\tilde{\alpha}(a')\},
    \end{equation*}
    which is a closed analytic subvariety of $\tilde{X}\times\tilde{X}$. By \Cref{claim_orbit_relation_is_analytic}, a general point in $R\cap(\tilde{X}^*\times\tilde{X}^*)$ is of the form $(x,\mathrm{tr}(b')(x))$. Hence, a general point in $G\cap(\tilde{X}^*\times\tilde{X}^*)$ is of the form $(x,\mathrm{tr}(b')(x))$ with $\tilde{\alpha}(b')=\tilde{\alpha}(a')$. Thus,
    \begin{equation*}
        G\cap\tilde{X}\times\tilde{X}=\bigcup\limits_{b'\in\ker(\tilde{\alpha})}\overline{\Gamma^*_{a'+b'}}.
    \end{equation*}
    As the left hand side is a closed analytic subset, so is the right hand side. This shows, that $\overline{\Gamma^*_{a'}}$ is an irreducible component of the closed analytic subset $G\cap\tilde{X}\times\tilde{X}$, and thus is a closed analytic subset itself. This proves the second assertion.
\end{proof}
We are now in position to prove \Cref{proposition_torus_action_h_equivariant}.
\begin{proof}[Proof of \Cref{proposition_torus_action_h_equivariant}]
    We first show that the morphisms $\mathrm{tr}(a')\colon X^*_{\tilde{Y}}\rightarrow X^*_{\tilde{Y}}$ extend to morphisms $\mathrm{tr}(a')\colon X_{\tilde{Y}}\rightarrow X_{\tilde{Y}}$. Recall that $X_{\tilde{Y}}$ is the normalization of the unique irreducible component in $X^{\mathrm{min}}\times_{Y}\tilde{Y}$, which is bimeromorphic to $X^{\mathrm{min}}$. \Cref{automorphisms_on_Xstar_are_bimeromorphic_on_X_higher_codim} gives rise to bimeromorphic maps
    \begin{equation*}
        \mathrm{tr}(a')\colon X^{\mathrm{min}}\longdashrightarrow X^{\mathrm{min}}.
    \end{equation*}
    It hence suffices to show that these bimeromorphic maps extend to morphisms $\mathrm{tr}(a')\colon X^{\mathrm{min}}\rightarrow X^{\mathrm{min}}$ over $Y$ for all $a'\in A'$.\par
    Define $X^*\subset X^{\mathrm{min}}$ as the image of $X^*_{\tilde{Y}}$ along the morphism $X_{\tilde{Y}}\rightarrow X^{\mathrm{min}}$, and note that translation by $a'\in A'$ is a morphism on $X^*$. Denote by $\Gamma^*\subset X^*\times X^*$ the graph of $\mathrm{tr}(a')\colon X^*\rightarrow X^*$, let $\overline{\Gamma}\subset X^{\mathrm{min}}\times X^{\mathrm{min}}$ be the closure. Choose a resolution of singularities $\tilde{\Gamma}\rightarrow\overline{\Gamma}$. We then have the commutative diagram of bimeromorphic morphisms
    \[\begin{tikzcd}
	   & {\tilde{\Gamma}} & \\
	   {X^{\mathrm{min}}} && {X^{\mathrm{min}}}
	   \arrow["{p_1}"', from=1-2, to=2-1]
	   \arrow["{p_2}", from=1-2, to=2-3]
    \end{tikzcd}\]
    such that $p_2\circ p_1^{-1}$ equals $\mathrm{tr}(a')$ as bimeromorphic maps. Denote the classes of $\mathcal{K}_{\tilde{\Gamma}}$, resp. $\mathcal{K}_{X^{\mathrm{min}}}$ in $H^{1,1}_{BC}(\tilde{\Gamma})$, resp. $H^{1,1}_{BC}(X^{\mathrm{min}})$ by $K_{\tilde{\Gamma}}$, resp. $K_{X^{\mathrm{min}}}$. Write
    \begin{equation*}
        K_{\tilde{\Gamma}}=p_1^*K_{X^{\mathrm{min}}}+\sum a_iE_i=p_2^*K_{X^{\mathrm{min}}}+\sum b_jF_j,
    \end{equation*}
    where $E_i$, resp. $F_j$ are exceptional prime divisors of $p_1$, resp. $p_2$. As $X^{\mathrm{min}}$ has terminal singularities, the rational coefficients $a_i$, resp. $b_j$ are all positive.\par
    Define
    \begin{equation*}
        D:=p_1^*K_{X^{\mathrm{min}}}-p_2^*K_{X^{\mathrm{min}}}.
    \end{equation*}
    As $K_{X^{\mathrm{min}}}$ is nef, the same holds for $p_2^*K_{X^{\mathrm{min}}}$. Since $p_1^*K_{X^{\mathrm{min}}}$ is trivial on the fibers of $p_1$, $-D$ is this $p_1$-nef. By the negativity lemma \cite[][Lem. 1.3]{Wang_Iitaka_conjecture_Kaehler_fiber_spaces}, $D$ is effective if and only if $(p_1)_*D$ is. We calculate
    \begin{equation*}
        (p_1)_*D=(p_1)_*(p_1^*K_{X^{\mathrm{min}}}-p_2^*K_{X^{\mathrm{min}}})=(p_1)_*(\sum b_jF_j-\sum a_iE_i)=\sum b_j(p_1)_*F_j,
    \end{equation*}
    since the $E_i$ are $p_1$-exceptional. Thus, $D$ is effective. Similarly, we can show that $-D$ is effective, by applying the negativity lemma to $p_2$. It follows, that $D$ is trivial, i.e.
    \begin{equation*}
        \sum a_iE_i=\sum b_jF_j.
    \end{equation*}
    In particular, $p_1$, and $p_2$ contract the same prime divisors.\par
    Let $I\subset X^{\mathrm{min}}$ be the indeterminacy locus of $\mathrm{tr}(a')$. Note that $I$ has codimension $\geq 2$. Suppose that there is a prime divisor $E\subset X^{\mathrm{min}}\setminus I$, on which $\mathrm{tr}(a')$ is not a local isomorphism. If $\mathrm{tr}(a')(E)$ is still a divisor, then $\mathrm{tr}(a')$ is generically finite of degree 1 on $E$, hence bimeromorphic. Since the target is normal, $\mathrm{tr}(a')$ is generically an isomorphism on $E$, a contradiction to the choice of $E$. This shows that $\mathrm{tr}(a')(E)$ cannot be a divisor. Therefore, its strict transformation $\tilde{E}$ on $\tilde{\Gamma}$ is a prime divisor, which is contracted by $p_2$ but not by $p_1$, a contradiction. Thus, there are closed analytic subsets $Z$, $Z'\subset X^{\mathrm{min}}$ of codimension $\geq 2$ such that
    \begin{equation*}
        \mathrm{tr}(a')\colon X^{\mathrm{min}}\setminus Z\stackrel{\sim}{\longrightarrow} X^{\mathrm{min}}\setminus Z'
    \end{equation*}
    is an isomorphism.\par
    Denote by $X^{\mathrm{sm}}\subset X^{\mathrm{min}}$ the smooth locus of $X^{\mathrm{min}}$. Define
    \begin{equation*}
        X^\circ:=(X^{\mathrm{min}}\setminus Z)\cap X^{\mathrm{sm}}\cap\mathrm{tr}(a')^{-1}(X^{\mathrm{sm}}),
    \end{equation*}
    and note that the complement $X^{\mathrm{min}}\setminus X^\circ$ has codimension $\geq 2$. Define $H:=H^2(X^{\mathrm{sm}},\Z)$. By \cite[][§3 Thm. 2]{Lojasiewicz_triangulation}, $X^{\mathrm{min}}$ can be triangulated such that each cell is either disjoint from $X^{\mathrm{sm}}$ or contained in $X^{\mathrm{sm}}$. As $X^{\mathrm{min}}$ is compact, this implies that $H$ is a finitely generated abelian group. Note that since the complements $X^{\mathrm{sm}}\setminus X^\circ$, $X^{\mathrm{sm}}\setminus\mathrm{tr}(a')(X^\circ)$ have codimension $\geq 2$, the restriction maps
    \begin{align*}
        H&\stackrel{\sim}{\longrightarrow} H^2(X^\circ,\Z),\\
        H&\stackrel{\sim}{\longrightarrow} H^2(\mathrm{tr}(a')(X^\circ),\Z)
    \end{align*}
    are isomorphisms. Thus, $\mathrm{tr}(a')$ induces an isomorphism
    \begin{equation*}
        \rho(a')\colon H\stackrel{\sim}{\longrightarrow} H.
    \end{equation*}
    We hence obtain a group homomorphism $A'\times H\rightarrow H$, which must be constant since $A'$ is a connected Lie group and since $H$ is discrete.\par
    Note that since $\tilde{\Gamma}$ is a compact K\"ahler manifold, $H^{1,1}_{BC}(\tilde{\Gamma})\cong H^{1,1}(\tilde{\Gamma})\subset H^2(\tilde{\Gamma},\mathbb{R})$. Fix a K\"ahler class $\omega$ on $X^{\mathrm{min}}$. We claim that $p_1^*[\omega]$ and $p_2^*[\omega]$ agree in $H^2(\tilde{\Gamma},\mathbb{R})$. Indeed, let $\delta:=p_1^*[\omega]-p_2^*[\omega]$, and let $\Gamma^\circ:=p_1^{-1}(X^\circ)$. Since $\mathrm{tr}(a')$ induces the identity on $H$, one sees that $\delta\vert_{\Gamma^\circ}=0$. Set $\Delta:=\tilde{\Gamma}\setminus\Gamma^\circ$. Recall that the kernel of the restriction map $H^{1,1}(\tilde{\Gamma})\rightarrow H^{1,1}(\Gamma^\circ)$ is generated by the divisorial components of $\Delta$. Note that each divisorial component of $\Delta$ is contracted by $p_1$, since $p_1(\Delta)\subset X^{\mathrm{min}}\setminus X^\circ$ is of codimension $\geq 2$. This allows to write $\delta=[B]$, where
    \begin{equation*}
        B=\sum \lambda_iB_i,
    \end{equation*}
    where each $B_i$ is $p_1$-exceptional. Let $C\subset\tilde{\Gamma}$ be a curve, which is contracted by $p_1$. Then
    \begin{equation*}
        B\cdot C=-p_2^*\omega\cdot C\leq 0.
    \end{equation*}
    In particular, $-B$ is $p_1$-nef. Since $(p_1)_*B=0$, the negativity lemma \cite[][Lem. 1.3]{Wang_Iitaka_conjecture_Kaehler_fiber_spaces} implies that $B$ is effective. Reversing the roles of $p_1$, $p_2$, $\mathrm{tr}(a')$, $\mathrm{tr}(a')^{-1}$ shows that $-B$ is effective. Thus, $B=0$, and hence $\delta=0$. This proves $p_1^*[\omega]=p_2^*[\omega]$.\par
    Let $C\subset\tilde{\Gamma}$ be a curve contracted by $p_1$. This is equivalent to $C\cdot p_1^*\omega=0$. Hence, the calculation
    \begin{equation*}
        C\cdot p_2^*\omega=C\cdot p_1^*\omega= 0
    \end{equation*}
    shows that the curve $C$ is also contracted by $p_2$. Therefore, we obtain a morphism $\mathrm{tr}(a')\colon X^{\mathrm{min}}\rightarrow X^{\mathrm{min}}$.\par
    To finish the proof, recall that $c(\eta)=0$ by \Cref{h_connected_fibers_c(eta)=0}. Write $A'=\C^{g}/\Lambda'$. We can by \Cref{lemma_extension_of_etale_cover_of_elliptic_curves} find a cohomology class $\eta'\in H^1(\tilde{Y},\OO_{\tilde{Y}}^{\oplus g}/\Lambda')$ such that there is a finite \'etale cover
    \begin{equation*}
        (\tilde{Y}\times A')^{\eta'}\longrightarrow(\tilde{Y}\times A)^\eta.
    \end{equation*}
    Moreover, $c(\eta')=0$. Choose a resolution of singularities $\tilde{X}\rightarrow X_{\tilde{Y}}$. Since $(\tilde{Y}\times A')^{\eta'}\rightarrow(\tilde{Y}\times A)^\eta$ is finite \'etale, the morphism
    \begin{equation*}
        \tilde{X}\times_{(\tilde{Y}\times A)^\eta}(\tilde{Y}\times A')^{\eta'}\rightarrow \tilde{X}
    \end{equation*}
    is finite \'etale as well. Since $X$ is bimeromorphic to $\tilde{X}$, there is a finite \'etale cover $X'\rightarrow X$ such that $X'$ is bimeromorphic to $\tilde{X}\times_{(\tilde{Y}\times A)^\eta}(\tilde{Y}\times A')^{\eta'}$.\par
    Consider the Stein factorization
    \[\begin{tikzcd}
	   {X_{\tilde{Y}}\times_{(\tilde{Y}\times A')^{\eta'}}(\tilde{Y}\times A')^{\eta'}} & {X_{\tilde{Y}}} & {X^{\mathrm{min}}} \\
	   & Z & {.}
	   \arrow[from=1-1, to=1-2]
	   \arrow[from=1-1, to=2-2]
	   \arrow[from=1-2, to=1-3]
	   \arrow[from=2-2, to=1-3]
    \end{tikzcd}\]
    The universal property of the fiber product induces an $A'$-action on $X_{\tilde{Y}}\times_{(\tilde{Y}\times A)^\eta}(\tilde{Y}\times A')^{\eta'}$ such that the upper row is $A'$-equivariant. This induces an $A'$-action on $Z$. The proof of \Cref{good_minimal_model_ascend_finite_etale_cover} shows that there is a small bimeromorphic morphism $X'^{\mathrm{min}}\rightarrow Z$ from a good minimal model of $X'$. We thus obtain for each $a'\in A'$ a bimeromorphic map $\mathrm{tr}(a')\colon X'^{\mathrm{min}}\dashrightarrow X'^{\mathrm{min}}$. Repeating the above argument, one shows that the bimeromorphic map $\mathrm{tr}(a')\colon X'^{\mathrm{min}}\dashrightarrow X'^{\mathrm{min}}$ is a morphism. In particular, we obtain an $A'$-action on $X'^{\mathrm{min}}$.\par
    By \Cref{good_minimal_model_ascend_finite_etale_cover}, the Iitaka fibration $f'\colon X'^{\mathrm{min}}\rightarrow Y'$ is given by the fibration part of the Stein factorization of $X'^{\mathrm{min}}\rightarrow X^{\mathrm{min}}\rightarrow Y$. Note that this composition factors bimeromorphically as
    \begin{equation*}
        X_{\tilde{Y}}\times_{(\tilde{Y}\times A)^\eta}(\tilde{Y}\times A')^{\eta'}\longrightarrow (\tilde{Y}\times A')^{\eta'}\longrightarrow\tilde{Y}\longrightarrow Y,
    \end{equation*}
    which is a composition of morphisms with connected fibers. Thus, $Y'=Y$.\par
    Note that $X'^{\mathrm{min}}\times_{Y}\tilde{Y}$ contains a unique irreducible component that is bimeromorphic to $X'^{\mathrm{min}}$. Denote by $X'_{\tilde{Y}}$ the normalization of this component, and denote by $f'_{\tilde{Y}}\colon X'_{\tilde{Y}}\rightarrow\tilde{Y}$ the induced morphism. Since $A'$ acts on $X'^{\mathrm{min}}$ over $Y$, we inherit an $A'$-action on $X'_{\tilde{Y}}$. Moreover, the induced morphism
    \begin{equation*}
        h'\colon X'_{\tilde{Y}}\longrightarrow (\tilde{Y}\times A')^{\eta'}
    \end{equation*}
    is $A'$-equivariant. Note that the base change along $(\tilde{Y}\times A')^{\eta'}\rightarrow (\tilde{Y}\times A)^\eta$ does not affect the fibers of $h$. Hence, the base change
    \begin{equation*}
        X_{\tilde{Y}}\times_{(\tilde{Y}\times A)^\eta}(\tilde{Y}\times A')^{\eta'}\longrightarrow (\tilde{Y}\times A')^{\eta'}
    \end{equation*}
    of $h$ is a morphism with connected fibers. Since the morphism $X_{\tilde{Y}}\times_{(\tilde{Y}\times A)^\eta}(\tilde{Y}\times A')^{\eta'}\rightarrow (\tilde{Y}\times A')^{\eta'}$ is bimeromorphic to $h'$, also $h'$ has connected fibers. This concludes the proof of \Cref{proposition_torus_action_h_equivariant}.
\end{proof}
From now on, we assume that we are in the situation and notation from \Cref{proposition_torus_action_h_equivariant}.
\begin{construction}\label{construction_W_higher_codim}
    Choose an open cover $\{U_i\}_{i\in I}$ of $\tilde{Y}$ and isomorphisms
    \begin{equation*}
        \varphi_i\colon (\pi^\eta)^{-1}(U_i)\stackrel{\sim}{\longrightarrow}U_i\times A
    \end{equation*}
    as in \Cref{special_local_trivialization}. For all $a\in A$, consider the subvariety of $f^{-1}_{\tilde{Y}}(U_i)$
    \begin{equation*}
        h^{-1}(\varphi_i^{-1}(U_i\times \{a\}))\subset f^{-1}_{\tilde{Y}}(U_i).
    \end{equation*}
    Since $h$ and $f_{\tilde{Y}}$ have connected fibers by \Cref{h_connected_fibers_c(eta)=0}, there is exactly one irreducible component of $h^{-1}(\varphi_i^{-1}(U_i\times \{a\})$, which dominates $U_i$. Denote this component by
    \begin{equation*}
        W_i(a)\subset h^{-1}(\varphi_i^{-1}(U_i\times \{a\})).
    \end{equation*}
    Fix a point $a_0\in A$. Note that over $U_i\cap U_j$, we have identifications of subvarieties in $f^{-1}_{\tilde{Y}}(U_i\cap U_j)$
    \begin{equation*}
        W_i(a_0)\vert_{U_i\cap U_j}= W_j(a_0+a_{ij})\vert_{U_i\cap U_j}\subset f^{-1}_{\tilde{Y}}(U_i\cap U_j).
    \end{equation*}
    Composing with $\mathrm{tr}(-a_{ij})$ thus yields the isomorphism
    \begin{equation*}
        \lambda_{ij}\colon W_i(a_0)\vert_{U_i\cap U_j}= W_j(a_0+a_{ij})\vert_{U_i\cap U_j}\stackrel{\sim}{\longrightarrow}W_j(a_0)\vert_{U_i\cap U_j}.
    \end{equation*}
    Note that the isomorphisms $\lambda_{ij}$ satisfy the cocycle condition. Thus, they allow to glue the local patches $W_i(a_0)\rightarrow U_i$ to $W\rightarrow\tilde{Y}$. Let $\tilde{W}\rightarrow W$ be a resolution of singularities, and denote the composition by $u\colon\tilde{W}\rightarrow\tilde{Y}$. Consider $(\tilde{W}\times A)^{u^*\eta}$.
\end{construction}
We are now in position to prove \Cref{precise_main_result_higher_codim}.
\begin{proof}[Proof of \Cref{precise_main_result_higher_codim}]
    In the notation of \Cref{construction_W_higher_codim}, consider the meromorphic map
    \begin{align*}
        f^{-1}_{\tilde{Y}}(U_i)&\longdashrightarrow W_i(a_0)\\
        x&\longmapsto\mathrm{tr}(a_0-p_2(\varphi_i(h(x))))(x),
    \end{align*}
    where $p_2\colon U_i\times A\rightarrow A$ is the projection onto the second factor. Note that this defines a meromorphic retraction of $W_i(a_0)\hookrightarrow f^{-1}_{\tilde{Y}}(U_i)$. Moreover, these local meromorphic maps glue to a meromorphic map
    \begin{equation*}
        X_{\tilde{Y}}\longdashrightarrow\tilde{W}.
    \end{equation*}
    The universal property of the fiber product gives rise to a meromorphic map
    \begin{equation*}
        X_{\tilde{Y}}\longdashrightarrow\tilde{W}\times_{\tilde{Y}}(\tilde{Y}\times A)^{\eta}\cong (\tilde{W}\times A)^{u^*\eta}.
    \end{equation*}
    It is not difficult to see that this meromorphic map is bimeromorphic.\par
    By Hironaka's resolution of indeterminacies, there is a proper bimeromorphic morphism $\tilde{X}\rightarrow X_{\tilde{Y}}$ from a compact connected K\"ahler manifold $\tilde{X}$ such that the meromorphic map $X_{\tilde{Y}}\dashrightarrow\tilde{W}$ extends to a morphism
    \begin{equation*}
        \tilde{X}\longrightarrow\tilde{W}.
    \end{equation*}
    This shows that $\tilde{W}$ is the holomorphic image of a compact connected K\"ahler manifold. By \cite[][Lem. 4.6]{Fujiki_closedness_Douady_space}, $\tilde{W}$ is hence bimeromorphic to a compact connected K\"ahler manifold. Thus, we can assume without loss of generality that $\tilde{W}$ is a compact connected K\"ahler manifold. Finally, $c(\eta)=0$ by \Cref{proposition_torus_action_h_equivariant}. Thus, also $c(u^*\eta)=0$. This concludes the proof of \Cref{precise_main_result_higher_codim}.
\end{proof}
\printbibliography

@book{Voisin_HT_1,
    title = {Hodge Theory and Complex Algebraic Geometry},
    volume = {1},
    author = {Voisin, Claire},
    editor = {Schneps, Leila},
    publisher = {Cambridge Univ. Press},
    series = {Cambridge Stud. Adv. Math.},
    number = {76},
    year={2002}
}

@book{Voisin_HT_2,
    title = {Hodge Theory and Complex Algebraic Geometry},
    volume = {2},
    author = {Voisin, Claire},
    editor = {Schneps, Leila},
    publisher = {Cambridge Univ. Press},
    series = {Cambridge Stud. Adv. Math.},
    number = {77},
    year={2003}
}

@article{Hao_Schreieder_3-folds,
    author = {Hao, Feng and Schreieder, Stefan},
	journal = {Geom. Topol},
	title = {Holomorphic one-forms without zeros on threefolds},
	volume = {25},
    number = {1},
    pages = {409--444},
	year = {2021}
}

@article{Kotschick_1-forms_fibrations_char-numbers,
    author = {Kotschick, Dieter},
    title = {Holomorphic one-forms, fibrations over the circle, and characteristic numbers of Kähler manifolds},
    journal = {Math. Proc. Cambridge Philos. Soc.},
    volume = {172},
    number = {1},
    pages = {95--103},
    year = {2022}
}

@article{Hao_Nowhere_Vanishing_Holomorphic_One-Forms_on_Varieties_of_Kodaira_Codimension_One,
    author = {Hao, Feng},
    title = {Nowhere Vanishing Holomorphic One-Forms on Varieties of Kodaira Codimension One},
    journal = {Int. Math. Res. Not. IMRN},
    volume = {2024},
    number = {6},
    pages = {4501--4515},
    year = {2023}
}

@article{Nakayama_elliptic-fibrations,
    author = {Nakayama, Noboru},
    title = {Local Structure of an Elliptic Fibration},
    journal = {Adv. Stud. Pure Math.},
    volume = {35},
    pages = {185--295},
    year = {2002}
}

@article{Claudon_Hoering_Lin_fundamental_group,
    author = {Claudon, Beno\^{i}t and Höring, Andreas and Lin, Hsueh-Yung},
    title = {The fundamental group of compact Kähler threefolds},
    journal = {Geom. Topol.},
    volume = {23},
    number = {7},
    pages = {3233--3271},
    year = {2019}
}

@article{Claudon_tori_fiber_bundles,
    author = {Claudon, Beno\^{i}t},
    title = {Smooth families of tori and linear Kähler groups},
    journal = {Ann. Fac. Sci. Toulouse Math.},
    volume = {27},
    number = {3},
    pages = {477--496},
    year = {2018}
}

@article{Popa_Schnell_Kodaira_dimension_zeros_1-forms,
    author = {Popa, Mihnea and Schnell, Christian},
    title = {Kodaira dimension and zeros of holomorphic one-forms},
    journal = {Ann. of Math.},
    volume = {179},
    number = {3},
    pages = {1109--1120},
    year = {2014}
}

@article{Varouchas_Kaehler_spaces_proper_open_morphisms,
    author = {Varouchas, Jean},
    title = {K\"ahler spaces and proper open morphisms},
    journal = {Math. Ann.},
    volume = {283},
    pages = {13--52},
    year = {1989}
}

@article{Catanese_quasi_etale,
    author = {Catanese, Fabrizio},
    title = {Q.E.D. for algebraic varieties},
    journal = {J. Differential Geom.},
    volume = {77},
    pages = {43--75},
    year = {2007}
}

@misc{church_gmm,
      title={Nowhere vanishing 1-forms on varieties admitting a good minimal model}, 
      author={Church, Benjamin},
      year={2024},
      eprint={2410.22753},
      archivePrefix={arXiv},
      primaryClass={math.AG},
      url={https://arxiv.org/abs/2410.22753}, 
}

@misc{Hao_Wang_Zhang_gmm,
      title={Good minimal models with nowhere vanishing holomorphic $1$-forms}, 
      author={Hao, Feng and Wang, Zichang and Zhang, Lei},
      year={2024},
      eprint={2412.12582},
      archivePrefix={arXiv},
      primaryClass={math.AG},
      url={https://arxiv.org/abs/2412.12582}, 
}

@misc{Pietig_1-forms_Kaehler_threefolds,
      title={Holomorphic 1-forms without zeros on K\"ahler threefolds}, 
      author={Pietig, Simon},
      year={2025},
      eprint={2506.22067},
      archivePrefix={arXiv},
      primaryClass={math.AG},
      url={https://arxiv.org/abs/2506.22067}, 
}

@book{Kollar_Mori,
    author = {Koll{\'a}r, J{\'a}nos and Mori, Shigefumi},
    title = {Birational Geometry of Algebraic Varieties},
    publisher = {Cambridge University Press},
    series = {Cambridge Tracts in Mathematics},
    mumber = {134},
    year = {2010}
}

@article{Kawamata_minimal_models_Kodaira_dim_fiber_spaces,
    author = {Kawamata, Yujiro},
    title = {Minimal models and the Kodaira dimension of algebraic fiber spaces.},
    journal = {J. reine angew. Math.},
    volume = {363},
    pages = {1--46},
    year = {1985}
}

@book{Hatcher_AT,
    title = {Algebraic topology},
	publisher = {Cambridge university press},
	author = {Hatcher, Allen},
	year = {2001},
}

@article{Fujiki_closedness_Douady_space,
    author = {Fujiki, Akira},
    title = {Closedness of the Douady Spaces of Compact K{\"a}hler Spaces},
    journal = {Publ. Res. Inst. Math. Sci.},
    volume = {14},
    pages = {1--52},
    year = {1978}
}

@misc{Correa_Kollar_Schreieder_Wang,
      title={Invisible singularities in complex algebraic geometry}, 
      author={Corr{\^e}a, Maur{\'i}cio and Koll{\'a}r, J{\'a}nos and Schreieder, Stefan and Wang, Botong},
      year={2026},
      eprint={2608.10973},
      archivePrefix={arXiv},
      primaryClass={math.AG},
      url={https://arxiv.org/abs/2608.10973}, 
}

@book{Lazarsfeld1,
    author = {Lazarsfeld, Robert},
    title = {Positivity in Algebraic Geometry I },
    publisher = {Springer},
    series = {Ergeb. Math. Grenzgeb.},
    number = {48},
    year = {2004}
}

@article{Bakker_Guenancia_Lehn,
  title={Algebraic approximation and the decomposition theorem for K{\"a}hler Calabi–Yau varieties},
  author={Bakker, Benjamin and Guenancia, Henri and Lehn, Christian},
  journal={Invent. Math.},
  year={2020},
  volume={228},
  pages={1255--1308},
}

@article{Saito_Kaehler_decomposition,
    author = {Saito, Morihiko},
    title = {Decomposition theorem for proper Kähler morphisms},
    journal = {Tohoku Math. J.},
    volume = {42},
    pages = {127--147},
    year = {1990}
}

@article{Villadsen_proof_Popa-Schnell,
    author = {Villadsen, Mads Bach},
    title = {Kodaira dimension and zeros of holomorphic one-forms, revisited},
    journal = {Math. Res. Lett.},
    volume = {29},
    pages = {1891–1898},
    year = {2022}
}

@article{Simpson_moduli_rank_1_local_systems,
    author = {Simpson, Carlos},
    title = {Subspaces of moduli spaces of rank one local systems},
    journal = {Ann. Sci. \'Ec. Norm. Sup\'er.},
    volume = {26},
    pages = {361--401},
    year = {1993}
}

@online{Schnell_generic_vanishing,
    author = {Schnell, Christian},
    title = {Generic vanishing theorem},
    url = {https://www.math.stonybrook.edu/~cschnell/pdf/notes/generic-vanishing.pdf},
    year = {2013}
}

@article{Takegoshi_relative_vanishing,
    author = {Takegoshi, Kensho},
    title = {Relative vanishing theorems in analytic spaces},
    journal = {Duke Math. J.},
    volume = {52},
    pages = {273--279},
    year = {1985}
}

@article{Wang_Iitaka_conjecture_Kaehler_fiber_spaces,
    author = {Wang, Juanyong},
    title = {On the Iitaka Conjecture $C_{n,m}$ for K{\"a}hler fiber spaces},
    journal = {Ann. Fac. Sci. Toulouse Math.},
    volume = {30},
    pages = {813--897},
    year = {2021}
}

@article{Bierstone_Milman,
    author = {Bierstone, Edward and Milman, Pierre},
    title = {Canonical desingularization in characteristic zero by blowing up the maximum strata of a local invariant},
    journal = {Invent. Math.},
    volume = {128},
    pages = {207–-302},
    year = {1997}
}

@misc{Hacon_Li_Xie_Q-factorialization,
      title={Fujiki Class $\mathcal C$ Varieties and a K\"ahler Criterion}, 
      author={Hacon, Christopher and Li, Yi and Xie, Lingyao},
      year={2026},
      eprint={2608.20588},
      archivePrefix={arXiv},
      primaryClass={math.AG},
      url={https://arxiv.org/abs/2608.20588}, 
}

@article{Grauert_analytische_Garbentheorie_Modulraeume,
    author = {Grauert, Hans},
    title = {Ein Theorem der analytischen Garbentheorie und die Modulr{\"a}ume komplexer Strukturen},
    journal = {Publ. Math.IH\'{E}S},
    volume = {5},
    pages = {5--64},
    year = {1960}
}

@book{SGA,
    author = {Grothendieck, Alexander and Raynaud, Mich{\`e}le},
    title = {Rev{\^e}tements {\'E}tales et Groupe Fondamental},
    publisher = {Springer},
    series = {Lecture Notes in Math},
    number = {224},
    year = {1971}
}

@article{Deroin_Guillot_foliations,
    author = {Deroin, Bertrand and Guillot, Adolfo},
    title = {Uniformizable foliated projective structures along singular foliations},
    journal = {Math. Z.},
    volume = {311},
    number = {85},
    year = {2025}
}

@article{Lojasiewicz_triangulation,
    author = {{\L}ojasiewicz, Stanis{\l}aw},
    title = {Triangulation of semi-analytic sets},
    journal = {Ann. Scuola Norm. Sup. Pisa Cl. Sci.},
    volume = {18},
    pages = {449--474},
    year = {1964}
}

@article{Kebekus_Schnell_pullback_reflexive_forms,
  title = {Extending holomorphic forms from the regular locus of a complex space to a resolution of singularities},
  author = {Kebekus, Stefan and Schnell, Christian},
  journal = {J. Am. Math. Soc.},
  year={2021},
  volume={34}
}

@misc{OAI:Log-abundance-for-compact-Kahler-spaces-under-logarithmic-Iitaka-subadditivity-October-4-2026,
  author = {{OpenAI}},
  title = {{Log abundance for compact K{\"a}hler spaces under logarithmic Iitaka subadditivity}},
  howpublished = {OpenAI Math Release preprint
                  \href{https://github.com/openai/math/blob/main/preprints/Log-abundance-for-compact-Kahler-spaces-under-logarithmic-Iitaka-subadditivity-October-4-2026/main.pdf}{OAI:Log-abundance-for-compact-Kahler-spaces-under-logarithmic-Iitaka-subadditivity-October-4-2026}},
  year = {2026}
}

@misc{OAI:Orbifold-and-logarithmic-Iitaka-subadditivity-September-26-2026,
  author = {{OpenAI}},
  title = {{Orbifold and logarithmic Iitaka subadditivity}},
  howpublished = {OpenAI Math Release preprint
                  \href{https://github.com/openai/math/blob/main/preprints/Orbifold-and-logarithmic-Iitaka-subadditivity-September-26-2026/paper.pdf}{OAI:Orbifold-and-logarithmic-Iitaka-subadditivity-September-26-2026}},
  year = {2026}
}
\end{document}